\documentclass[a4paper,12pt,english,reqno]{amsart}

\usepackage{color,soul}
\usepackage{fullpage}
\usepackage{listings}
\usepackage{hhline}
\usepackage{amssymb}
\usepackage [autostyle, english = american]{csquotes}
\usepackage{url}
\PassOptionsToPackage{unicode}{hyperref}
\usepackage{hyperref}
\MakeOuterQuote{"}
\usepackage{longtable}

\newtheorem{theorem}{Theorem}

\newtheorem{remark}{Remark}

\newtheorem{exam}{Example}

\newcommand{\Z}{{\mathbb Z}}

\newcommand{\Q}{{\mathbb Q}}

\title{Integers representable as a difference of two rational fourth powers}
\author[1]{Ashleigh Ratcliffe}
\address{School of Computing and Mathematical Sciences, University of Leicester, UK}
\email{amw64@leicester.ac.uk}
\author[2]{Nguyen Xuan Tho}
\address{Faculty of Mathematics and Informatics, Hanoi University of Science and Technology \\
Hanoi, Vietnam}
     \email{tho.nguyenxuan1@hust.edu.vn}
\keywords{{Computational number theory, Diophantine equations}, Fermat equations, rational points}
     \subjclass{14G05, 11D25, 11D41, 11Y50}
\begin{document}

\begin{abstract}  In Section 6.6 of the book {\it Number Theory, Volume I: Tools and Diophantine Equations,  Graduate Texts in Mathematics, Volume 239, Springer (2007)}, Cohen investigated the solubility of the equation $n=x^4+y^4$ in rational numbers $x,y$ for all positive integers $n \leq 10000$. Motivated by this, we study the equation $n=x^4-y^4$ and obtain the complete list of positive integers $n\leq 10000$ that can be represented in this form for some nonzero rational numbers $x$ and $y$. 
\end{abstract}
\maketitle
\section{Introduction} 
The only case of Fermat's Last Theorem proved by Fermat himself  is the case $n=4$, which states that the equation $x^4+y^4=z^4$ has no solutions in nonzero integers $x,y,z$. By rewriting this equation as $(x/z)^4+(y/z)^4=1$ or $(z/y)^4-(x/y)^4=1$, we conclude that $1$ cannot be expressed as either a sum or a difference of two nonzero rational fourth powers. It is therefore natural to ask which integers $n$ admit such representations.

The question about the sum $x^4+y^4$ has been well studied. Demjanenko \cite{MR0228426} investigated the solubility of 
\begin{equation}\label{eq:x4py4n}
x^4+y^4=n
\end{equation}
in nonzero rational numbers $x,y$ for positive integers $n$ and proved that if the curve $x^4+y^2=n$ has the rank at most $1$, then equation \eqref{eq:x4py4n} has no solutions. Serre \cite{Serre} investigated equation \eqref{eq:x4py4n} for $n\leq 100$ and gave the challenge of proving that all rational solutions to the equation $x^4+y^4=17$ are $(x,y)=(\pm 2,\pm 1),(\pm 1,\pm 2)$. This challenge was later resolved by Flynn and Wetherell \cite{Flynn}. Bremner \cite{Bremner1}  and  Silverman \cite{Silverman1} extended Demjanenko's work in different directions. Motivated by the work of Demjanenko \cite{MR0228426} and Serre \cite{Serre}, Cohen \cite[Section 6.6]{MR2312337} investigated equation \eqref{eq:x4py4n} and determined its solubility in rational numbers $x,y$ for all positive integers $n\leq 10000$, except for $n=7537$ and $n=8882$. Bremner and Tho \cite{MR4458887} completed these remaining cases, thereby finishing the list of all positive integers $n\leq 10000$ representable as the sum of two rational fourth powers.

Cohen \cite[Chapter 6, Exercise 53]{MR2312337} asked to determine all positive integers $n \leq 100$ for which the equation
\begin{equation}\label{eq:x4my4n}
	x^4-y^4=n
\end{equation}
is solvable in rational numbers $x,y$. This was done by Grechuk \cite[Section 6.3.4]{mainbook}, where the problem was in fact solved for all $n \leq 218$, and the case $n=219$ was left open. This case and the case $n=8575$ were resolved by Tho in \cite{THO2026125764}. Here, we combine the methods of Grechuk \cite[Section 6.3.4]{mainbook} and Tho \cite{THO2026125764} to obtain the complete list of positive integers $n \leq 10000$ which can be represented as a difference of two nonzero rational fourth powers\footnote{One may add perfect fourth powers to our list to obtain the list of all positive integers $n \leq 10000$ which can be represented as a difference of any (not necessarily nonzero) two rational fourth powers.}. This is on par with  the survey of Cohen \cite{MR2312337} and the work of Bremner and Tho \cite{MR4458887} for the sum of two rational fourth powers.  For the related problem of sum of two sixth powers, see the work of  Newton and Rouse \cite{NR}. We now state the main result of this paper. 
\begin{theorem}\label{th:main}
The only integers $1\leq n \leq 10000$ which can be represented as a difference of two nonzero rational fourth powers are precisely the numbers listed in Table \ref{tb:rationalsols}.
\end{theorem}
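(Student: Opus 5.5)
The proof is a computational classification: for every $1\le n\le 10000$ I either exhibit a rational point on
\[
C_n:\ x^4-y^4=n
\]
with $xy\neq 0$, or prove that no such point exists.

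\textbf{Step 1: reduce to fourth-power-free $n$ and to elliptic curves.} Replacing $(x,y)$ by $(x/d,y/d)$ shows $n$ is representable if and only if $nd^4$ is. So it suffices to treat fourth-power-free $n$ and then propagate the answers. The genus one curve $C_n$ maps to two elliptic curves. Writing $x^4-y^4=n$ as $x^4 = y^4+n$ and setting $v=x^2$ gives
\[
E_n:\ v^2=y^4+n,
\]
and writing it as $y^4 = x^4-n$ and setting $w=y^2$ gives
\[
E'_n:\ w^2=x^4-n.
\]
Each quartic has the rational point at infinity, so each is an elliptic curve, isomorphic to a Weierstrass model of the shape $Y^2=X^3\pm 4nX$ (up to twisting). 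A point on $C_n$ gives a point on both $E_n$ and $E'_n$ whose coordinate satisfies an extra square condition.

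\textbf{Step 2: handle the cases where a Mordell--Weil group is small.} I expect most non-representable $n$ to be settled this way.
\begin{itemize}
\item For each $n$, compute the ranks of $E_n$ and $E'_n$ by $2$-descent (these curves have a rational $2$-torsion point, so descent via $2$-isogeny is available).
\item If either rank is $0$, enumerate the finitely many torsion points and check that none comes from a point of $C_n$ with $xy\ne0$. This is the analogue of Demjanenko's criterion.
\item Local obstructions, i.e.\ failure of solubility modulo powers of $2$, $5$, or primes $p\equiv 3 \pmod 4$ dividing $n$, dispose of many $n$ immediately.
\item Where descent gives only an upper bound, use the $2$-Selmer groups of both isogenous curves, and a second descent or Cassels--Tate pairing to prove Sha is nontrivial.
\end{itemize}

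\textbf{Step 3: prove representability.} For each $n$ in Table \ref{tb:rationalsols}, I exhibit an explicit point. To find it:
\begin{itemize}
\item factor $x^4-y^4=(x-y)(x+y)(x^2+y^2)$;
\item search small-height pairs $(a,b)$, scaling by $d^4$;
\item for larger solutions, search on the elliptic curve: take multiples and combinations of generators of $E_n(\Q)$ and test the square condition on the coordinate.
\end{itemize}
Heegner-point or $4$-descent searches help when generators are large.

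\textbf{Main obstacle.} The hardest cases are $n$ where both $E_n$ and $E'_n$ have positive rank but $C_n$ still has no points, as in Tho's cases $n=219$ and $n=8575$. For these, I would apply elliptic Chabauty (Flynn--Wetherell). Factor over a number field $K$, for instance via $x^2+y^2=\delta u^2$ and $x^2-y^2=\delta' w^2$, so that each twist $\delta$ lies in a finite set. This reduces the problem to finding the points on genus one curves over $K$ whose image in $\mathbb P^1$ is rational. When the rank over $K$ is less than $[K:\Q]$, a $p$-adic Chabauty argument bounds these points. If the rank condition fails, I would fall back on a Mordell--Weil sieve or a different covering.

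Collecting the verdicts from Steps 2 and 3 over all fourth-power-free $n\le10000$, and propagating them to $nd^4\le 10000$ via Step 1, yields exactly Table \ref{tb:rationalsols}.
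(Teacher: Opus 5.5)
The proposal has a genuine gap: it is missing the descent that makes local methods usable, and its substitute for the hard cases does not work as written.

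\textbf{The local-obstruction step fails.} The third bullet of Step~2 cannot dispose of any $n$. The curve $x^4-y^4=nz^4$ always contains the smooth rational points $(1:\pm1:0)$. By Hensel's lemma it therefore has, at every prime $p$, primitive $p$-adic points with $xyz\neq0$, e.g.\ $(1:y:p)$ with $y^4=1-np^4$. So no congruence argument applied to $C_n$ itself can rule out a nontrivial solution.

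\textbf{The missing idea is a descent that separates off the trivial points.} This is the core of the paper's proof (Theorem~\ref{T2}). For pairwise coprime $x,y,z$, the factorisation $(x-y)(x+y)(x^2+y^2)$ becomes $8st(s^2+t^2)=nz^4$ or $st(s^2+t^2)=2nz^4$, with $s,t,s^2+t^2$ pairwise coprime. This forces $s=au^4$, $t=bv^4$, $s^2+t^2=cw^4$ with $abc\in\{n/8,2n\}$, giving finitely many coverings $a^2u^8+b^2v^8=cw^4$. The points with $z=0$ land only in the twists with $c=1$ and $\{a,b\}=\{n',1\}$, and these reduce to Fermat's $X^4-Y^4=Z^2$. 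Every other twist is then attacked by genuine local obstructions, by the rank-zero curves $Y^2=X^3+a^2b^2c^2X$, $Y^2=X^3-a^2b^4cX$, $Y^2=X^3-a^4b^2cX$, and by a Mordell--Weil sieve. The survivors are handled by the Pythagorean parametrisation or by a second descent over $\Z[i]$, $au^4+bv^4i=\alpha(s+ti)^4$, whose covering schemes in $\mathbb{P}^3$ are locally insoluble.

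\textbf{Elliptic Chabauty does not close the gap as written.} Your covering $x^2+y^2=\delta u^2$, $x^2-y^2=\delta'w^2$ only records $x^4-y^4=n\cdot\square$. It is a descent on the genus one quotient $nm^2=x^4-y^4$ (Jacobian isogenous to $Y^2=X^3-n^2X$), and it forgets that $z$ enters to the fourth power. So its rational points need not come from $C_n$; the factors must be fourth powers up to a bounded twist. Elliptic Chabauty also needs $\mathrm{rank}\,E(K)<[K:\Q]$, which you cannot guarantee, and ``fall back on a Mordell--Weil sieve or a different covering'' leaves exactly those cases unproved. Without the descent, every non-representable $n$ with $E_n$ and $E'_n$ of positive rank must go through this unverified stage. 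In the paper, the descent shrinks the hard residue to the few dozen values of Table~\ref{tb:factorisation}.

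\textbf{Two smaller corrections.} $C_n$ is a smooth plane quartic of genus $3$, not genus one. Its Jacobian is isogenous to the product of $Y^2=X^3+nX$, $Y^2=X^3-nX$ and $Y^2=X^3-n^2X$; you omit the third curve, whose rank-zero test costs nothing.
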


%In \cite{mainbook} this problem was solved for $0<n<219$, then in a recent paper of \cite{THO2026125764}, the equation \eqref{eq:x4my4mnz4} with $n=219$ is now solved. 
In Section \ref{sec:methods}, we explain and implement the methods used  in \cite{mainbook}, which solve \eqref{eq:x4my4n} for most values of $n$ in the range $1 \leq n \leq 10000$. In Section \ref{sec:factor}, we explain and implement the method from  \cite{THO2026125764} to prove the insolubility of the remaining cases. For each value of $n$, the methods in this paper either prove that there are no rational solutions to \eqref{eq:x4my4n}, or produce a rational solution. The values of $n\leq 10000$ for which equation \eqref{eq:x4my4n} has a rational solution are listed in Table \ref{tb:rationalsols}. 
% along with the representation of the integer as a difference of two rational fourth powers.
% Searching for rational solutions to these equations can be done easily in Magma.

An integer $n>0$ is expressible as the difference of two nonzero rational fourth powers if and only if the equation
	\begin{equation}\label{eq:x4my4mnz4}
		x^4-y^4=nz^4
	\end{equation} 
 has an integer solution $(x,y,z)$ with $xyz \neq 0$. 
 
 If $n=k^4 n'$ for some integer $k\geq 1$ and $n'$ fourth-power free, then $(x,y)$ is a rational solution to \eqref{eq:x4my4n} if and only if $(x/k,y/k)$ is a rational solution to $x^4-y^4=n'$. Hence, it suffices to determine the solubility of \eqref{eq:x4my4n} and \eqref{eq:x4my4mnz4} for fourth power free $n$. 
 
%If $n=k^4 n'$ for some integer $k\geq 1$ and $n'$ fourth-power free, then $(x,y,z)$ is an integer solution to \eqref{eq:x4my4mnz4} if and only if $(x/k,y/k,z)$ is an integer solution to $x^4-y^4=n'z^4$. Hence, it is sufficient to determine the solvability of \eqref{eq:x4my4mnz4} for fourth-power-free $n$. 

%If $n$ is a perfect fourth power, then $n=k^4$ for some integer $k>0$, hence equation \eqref{eq:x4my4mnz4} has a solution $(x,y,z)=(k,0,1)$, and any other solution will be a solution $(x,y,kz)$ to \eqref{eq:x4my4mnz4} with $n=n'$ where $n'$ is fourth-power free. 

\section{Elliptic Curves, Mordell-Weil sieve, and Pythagorean triples }\label{sec:methods}

\subsection{Easy reductions}\label{sec:easy}
Finding small integer solutions to \eqref{eq:x4my4mnz4} is easy with computer algebra systems such as Magma \cite{MR1484478}. For example, given $n$, the code:
\begin{lstlisting}
P<x,y,z> := ProjectiveSpace(Rationals(),2); 
C := Curve(P,x^4-y^4-n z^4); 
RationalPoints(C : Bound := 10000);
\end{lstlisting}
returns all solutions to \eqref{eq:x4my4mnz4} with height bounded by $10000$. This is how solutions in Table \ref{tb:rationalsols} were found. The difficult part is to prove that for the remaining values of $n$ equation \eqref{eq:x4my4mnz4} has no integer solution with $xyz \neq 0$. Several known methods can be implemented to prove nonexistence of solutions.
First, for any nontrivial integer solution $(x,y,z)$ to \eqref{eq:x4my4mnz4}:
\begin{itemize} 
	\item[] $(X,Y)=\left(\dfrac{y^2}{z^2},\dfrac{x^2y}{z^3}\right)$ is a rational solution to the equation $Y^2=X^3+nX$, 
	\item[] $(X,Y)=\left(\dfrac{x^2}{z^2},\dfrac{xy^2}{z^3}\right)$ is a rational solution to the equation $Y^2=X^3-nX$, 
	\item[] $(X,Y)=\left(\dfrac{nx^2}{y^2},\dfrac{n^2xz^2}{y^3}\right)$ is a rational solution to the equation $Y^2=X^3-n^2X$.
\end{itemize}
Hence, if we can prove that any of the curves
\begin{equation}\label{eq:easycurves}
Y^2=X^3-n^2X, \quad Y^2=X^3-nX, \quad Y^2=X^3+nX
\end{equation}
has no rational points corresponding to $xyz \neq 0$, then equation \eqref{eq:x4my4mnz4} has no nontrivial integer solutions. If any of the curves \eqref{eq:easycurves} have rank $0$, then it has finitely many rational points and these are its torsion points. We can use Magma \cite{MR1484478} to find all torsion points of a given elliptic curve, and then we can easily check whether these points correspond to integer solutions $(x,y,z)$ with $xyz \neq 0$ of \eqref{eq:x4my4mnz4}. This approach allows us to prove the insolubility of \eqref{eq:x4my4mnz4} for all integers $1\leq n \leq 10000$ for which one of the curves \eqref{eq:easycurves} has rank $0$. From now on, we consider only values of $n$ for which all the curves \eqref{eq:easycurves} have positive rank. For these values of $n$, we use the following theorem.  
	
\begin{theorem}\label{T2}\cite[Section 6.3.4]{mainbook} 
We can reduce the problem of solving \eqref{eq:x4my4mnz4} in integers to solving equations of the form
\begin{equation}\label{eq:abcuvw}
a^2u^8+b^2v^8=c w^4
\end{equation}
for pairwise coprime positive integers $a,b,c$ with $abc\in\{n/8,2n\}$ and integer variables $u,v,w$ such that
\begin{equation}\label{eq:cond}
\gcd(au,bv)= \gcd(au,cw)= \gcd(bv,cw)= 1.
\end{equation}
\end{theorem}

\begin{proof} 
To be self-contained, we include the proof. Assume that an integer solution $(x,y,z)$ with $xyz \neq 0$ to \eqref{eq:x4my4mnz4} exists. Without loss of generality, we can assume that $x,y,z >0$, since if $(x,y,z)$ is a solution, then so are $(\pm x,\pm y, \pm z)$. We may further assume that $\gcd(x,y,z)=1$, as if $d=\gcd(x,y,z)>1$, then $(x/d,y/d,z/d)$ is also a solution with $\gcd(x/d,y/d,z/d)=1$.  We can further assume that $x,y,z$ are pairwise coprime. If $x$ and $y$ share a common factor $p$, then the left-hand side of \eqref{eq:x4my4mnz4} is divisible by $p^4$, and therefore the right-hand side must also be divisible by $p^4$, and $n$ is not divisible by a fourth power, so $z$ must be. A similar argument shows that $\gcd(x,z)=\gcd(y,z)=1$. 
Therefore, either $x$ and $y$ are both odd, or $x$ and $y$ have different parities. 

We first consider the case that $x$ and $y$ are both odd. We use the change of variables $t=(x+y)/2$, $s=(x-y)/2$, where $t$ and $s$ have different parity and are coprime. After substituting into \eqref{eq:x4my4mnz4}, we obtain
$$
8st(s^2+t^2)=nz^4.
$$
We must have either that $n$ is divisible by $8$, in which case let $n'=n/8$ and $z=r$, otherwise, $z$ is even so can be written as $z=2r$ for integer $r$ and let $n'=2n$. In either case, we obtain 
\begin{equation}\label{eq:red}
st(s^2+t^2)=n'r^4.
\end{equation}

Next we consider the case that $x$ and $y$ have different parity. We use the change of variables $t=x+y$ and $s=x-y$, where $s$ and $t$ are both odd and coprime (because $x$ and $y$ are coprime). After substituting into \eqref{eq:x4my4mnz4}, we obtain
$$
st(s^2+t^2)=2nz^4,
$$
letting $n'=2n$ and $z=r$, we obtain \eqref{eq:red}.
%We must have either that $n$ is divisible by $8$, in which case let $n'=n/8$ and $z=r$, otherwise, $z$ is even so can be written as $z=2r$ for integer $r$, and we obtain \eqref{eq:red} where $n'=2n$. 

Because $s$ and $t$ are coprime, $s^2+t^2$ is also coprime to $s$ and $t$. Solutions to \eqref{eq:red} must satisfy
\begin{equation}\label{stabc}
s=au^4, \quad t=bv^4, \quad s^2+t^2=c w^4,
\end{equation}
where $\gcd(au,bv)=\gcd(au,cw)=\gcd(bv,cw)=1$ and $abc=n'$.
After substituting \eqref{stabc} into \eqref{eq:red}, we obtain 
$$
a^2u^8+b^2v^8=c w^4.
$$
\end{proof}
If $c$ is a perfect fourth power and $a$ or $b$ is a perfect square, the equation \eqref{eq:abcuvw} is easy to solve as we can reduce the equation to 
\begin{equation}\label{eq:mordell}
X^4-Y^4=Z^2,
\end{equation}
which only has integer solutions with $XYZ=0$ (see Mordell \cite[Theorem~2, page 17]{Mordell}). 

\subsection{Local obstructions and Mordell-Weil sieve}\label{sec:localsieve}
In many cases, the equation \eqref{eq:abcuvw} has local obstructions. That is, for some prime $p$ and positive integer $k$, all solutions to the equation modulo $p^k$ have at least two of $u,v,w$ divisible by $p$, which contradicts the pairwise coprimality of $u,v,w$. 

For other cases, we can implement a similar method as in Section \ref{sec:easy}, reducing the equations to elliptic curves. 
% Some of these equations can be solved by considering the corresponding homogeneous equations 
% \begin{equation}\label{eq:4abcuvw}
% a^2U^4+b^2V^4=c w^4=0
% \end{equation}
% where $(U,V)=(u^2,v^2)$. We can reduce this equation to the following elliptic curves.
The problem of solving \eqref{eq:abcuvw} can be reduced to finding rational points on the curves
\begin{equation}\label{ell:hardcurves}
(i) \,\, Y^2=X^3+a^2b^2c^2X, \quad (ii) \,\, Y^2=X^3-a^2b^4cX, \quad (iii) \,\, Y^2=X^3-a^4b^2cX.
\end{equation}
We have that for any nontrivial integer solution $(u,v,w)$ to \eqref{eq:abcuvw}:
\begin{itemize} 
	\item[] $(X,Y)=\left(\dfrac{b^2cv^4}{u^4},\dfrac{b^2c^2v^2w^2}{u^6}\right)$ is a rational solution to equation \eqref{ell:hardcurves}(i),%$Y^2=X^3+a^2b^2c^2X$, 
	\item[] $(X,Y)=\left(\dfrac{b^2cw^2}{u^4},\dfrac{b^4cv^4w}{u^6}\right)$ is a rational solution to equation \eqref{ell:hardcurves}(ii),% $Y^2=X^3-a^2b^4cX$, 
	\item[] $(X,Y)=\left(- \dfrac{a^2b^2v^4}{w^2},\dfrac{a^4b^2u^4v^2}{w^3}\right)$ is a rational solution to equation \eqref{ell:hardcurves}(iii).% $Y^2=X^3-a^4b^2cX$.
\end{itemize}
If any of the curves in \eqref{ell:hardcurves} has rank $0$, then the curve has finitely many rational points, we can use Magma \cite{MR1484478} to find them and to check whether these points correspond to integer solutions $(x,y,z)$ with $xyz \neq 0$ to \eqref{eq:x4my4mnz4}. However, in cases when all of these curves have positive rank, we must use different methods to solve \eqref{eq:abcuvw}.

The next method we use is the Mordell-Weil sieve. 
%We now present details of the Mordell-Weil sieve, first by describing the general method and then applying this to an example. 
The elliptic curves in \eqref{ell:hardcurves} are equivalent to the curves 
\begin{equation}\label{ell:hardcurveshom}
(i) \,\, \mathbf{Y}^2\mathbf{Z}=\mathbf{X}^3+a^2b^2c^2\mathbf{X}\mathbf{Z}^2, \quad (ii) \,\, \mathbf{Y}^2\mathbf{Z}=\mathbf{X}^3-a^2b^4c\mathbf{X}\mathbf{Z}^2, \quad (iii) \,\, \mathbf{Y}^2\mathbf{Z}=\mathbf{X}^3-a^4b^2c\mathbf{X}\mathbf{Z}^2
\end{equation}
which are the curves in homogeneous form. To transform curves \eqref{ell:hardcurves} to curves \eqref{ell:hardcurveshom} we use the change of variables $(X,Y)\to (\mathbf{X/Z},\mathbf{Y/Z})$. Any rational point $(X,Y)$ on \eqref{ell:hardcurves} can be expressed as an integer point $(\mathbf{X},\mathbf{Y},\mathbf{Z})$ on \eqref{ell:hardcurveshom}. For an elliptic curve in \eqref{ell:hardcurveshom} with positive rank, its rational points are generated by
a finite list of generators $(\mathbf{X}, \mathbf{Y},\mathbf{Z}) = (X_i : Y_i : Z_i)$. 
We first present Theorem 6.13 of \cite{mainbook}. Let $\psi$ be the ``reduction modulo $p$
map'', sending any $(X : Y : Z)$ to $(X \text{\, mod\,\,} p : Y \text{\, mod\,\,} p : Z \text{\, mod\,\,} p)$.
\begin{theorem}\label{th:mwsieve}
Let $A,B$ be integers such that $D= 4A^3 + 27B^2 \neq 0$. Let $P_1$ and $P_2$ be rational points on the elliptic curve 
\begin{equation}\label{eq:projshortwf}
Y^2Z=X^3+AXZ^2+BZ^3.
\end{equation}
Let $p$ be a prime not dividing $D$. Then
$$
\psi(P_1 + P_2)= \psi(P_1) + \psi(P_2),
$$
where the “$+$” on the left-hand side is the usual addition of points on elliptic curves, while the “$+$” on the right-hand side is defined by the same formulas but with arithmetic operations modulo $p$.
\end{theorem}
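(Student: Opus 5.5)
The plan is to show that reduction modulo a good prime $p$ is a group homomorphism $E(\Q)\to E(\mathbb{F}_p)$. The argument is essentially formal: the group law is given by rational formulas with integer coefficients, and it has good reduction at $p$ because $p\nmid D$.

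\textbf{Well-definedness of $\psi$.} Every rational point $P=(X:Y:Z)$ can be written with integer coordinates satisfying $\gcd(X,Y,Z)=1$. Then not all of $X,Y,Z$ are divisible by $p$, so $\psi(P)$ is a genuine point of $\mathbb{P}^2(\mathbb{F}_p)$. Reducing \eqref{eq:projshortwf} modulo $p$ shows that $\psi(P)$ lies on the reduced curve $\tilde E: Y^2Z=X^3+\bar A XZ^2+\bar BZ^3$. Since $p\nmid D$, the reduced discriminant is nonzero, so $\tilde E$ is an elliptic curve over $\mathbb{F}_p$. Its chord-and-tangent law, given by the same formulas read modulo $p$, is therefore a group law with identity $\psi(O)=(0:1:0)$. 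For $p=2$ or $p=3$, $D$ is compared with $-16(4A^3+27B^2)$; I would note this caveat or assume $p>3$.

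\textbf{Additivity.} The cleanest route is the standard explicit fact that the addition map $E\times E\to E$ is covered by finitely many triples of bihomogeneous polynomials with integer coefficients. Examples are the Lange--Ruppert / Bosma--Lenstra formulas $(F_X,F_Y,F_Z)(P_1,P_2)$, where each triple computes $P_1+P_2$ wherever it does not vanish identically, and the triples together have no common zero on $E\times E$. The point of having complete formulas is that they hold over any field in which the curve is nonsingular, in particular over $\mathbb{F}_p$ when $p\nmid D$.

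Given the integer primitive representatives of $P_1$ and $P_2$, I evaluate a triple that is nonzero modulo $p$. Such a triple exists because the triples have no common zero on $\tilde E\times\tilde E$. This evaluation is an integer representative of $P_1+P_2$, with the coordinates possibly sharing a common factor prime to $p$. Reducing it modulo $p$ then gives on one side $\psi(P_1+P_2)$, and on the other side the same formulas evaluated at $\psi(P_1),\psi(P_2)$, which is $\psi(P_1)+\psi(P_2)$.

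\textbf{Main obstacle.} The delicate step is that the naive affine slope formulas can degenerate modulo $p$ even when they are fine over $\Q$. This happens, for example, when $P_1\ne P_2$ but $\psi(P_1)=\psi(P_2)$, or when one of the points reduces to $O$ because $p$ divides $Z$. That is exactly why I would use the complete bihomogeneous formulas.

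An alternative avoids the explicit formulas. Characterize addition by collinearity: $P_1+P_2+P_3=O$ if and only if the three points are the intersection of a line with $E$, counted with multiplicity. Reduce a primitive integer equation of that line modulo $p$. It remains a line, and its intersection divisor with $\tilde E$ is the reduction of the divisor $P_1+P_2+P_3$, by specialization of the intersection. That divisor reduction needs a short argument with resultants, which I regard as the real content. Either route gives $\psi(P_1)+\psi(P_2)+\psi(P_3)=O$, hence the claim.
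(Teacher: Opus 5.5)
Your proof is correct, but your primary route is not the one the paper relies on. The paper proves nothing itself; it cites Silverman--Tate, Appendix~A.5, which obtains the homomorphism property by reducing lines. That is essentially your second route, and the step you left as ``the real content'' is short.

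\textbf{Filling in the line-reduction step.} Let $\ell$ be the primitive integer linear form of the line through $P_1,P_2$, and take a $\Z$-basis $u,v$ of $\Lambda=\ker(\ell\colon\Z^3\to\Z)$. Since $\Z^3/\Lambda\cong\Z$, the reductions $\tilde u,\tilde v$ form a basis of $\tilde L$.
\begin{itemize}
\item By Gauss's lemma, $F(su+tv)=c\prod_{i=1}^{3}(t_is-s_it)$ with $(s_i,t_i)$ coprime and $c\in\Z$. The corresponding points are $P_1,P_2$ and $P_3=-(P_1+P_2)$.
\item $p\nmid c$, since otherwise $\tilde L\subset\tilde E$, which is impossible for a nonsingular cubic.
\item $s_iu+t_iv$ is the primitive representative of $P_i$, because $\Lambda$ is saturated.
\end{itemize}
Hence $\tilde L\cdot\tilde E=\psi(P_1)+\psi(P_2)+\psi(P_3)$ with multiplicities. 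This automatically handles the degenerate cases you worried about: for example, $\psi(P_1)=\psi(P_2)$ with $P_1\neq P_2$ forces $\tilde L$ to be the tangent line. Together with $\psi(-P)=-\psi(P)$, this gives the claim.

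\textbf{Comparison with your complete-addition-law route.} That route puts all the content into the Lange--Ruppert/Bosma--Lenstra theorem: finitely many laws with universal integer coefficients have no common exceptional pair over any field with nonzero discriminant. Granting that, additivity follows from integrality. A law not vanishing mod $p$ at $(\psi(P_1),\psi(P_2))$ does not vanish over $\Q$, so it computes $P_1+P_2$, and its reduction computes $\psi(P_1)+\psi(P_2)$. This makes ``the same formulas modulo $p$'' literally true with no case distinctions, but it imports a much deeper theorem. The line argument needs only Gauss's lemma and irreducibility of $\tilde E$. A small wording fix: a law computes $P_1+P_2$ where it does not vanish \emph{at} $(P_1,P_2)$, not where it does not vanish identically.

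\textbf{The hypothesis on $p$.} The right condition is $p\nmid 2D$, not $p>3$.
\begin{itemize}
\item For $p=3$ nothing goes wrong. Here $3\nmid D$ forces $3\nmid A$, so $x^3+Ax+B$ is separable over $\mathbb{F}_3$ and $\tilde E$ is smooth. Excluding $p=3$ would needlessly restrict the paper's sieve, which uses any prime $p\nmid 2abc$.
\item For $p=2$ the statement as written fails. Every $Y^2Z=X^3+AXZ^2+BZ^3$ is singular over $\mathbb{F}_2$, at $(\bar A:\bar B:1)$. For instance, take $y^2=x^3+1$, where $D=27$ is odd. The flex $(0,1)$ of order $3$ reduces to that singular point, and the doubling formula becomes $0/0$.
\end{itemize}
In the paper's application $B=0$, so $D=4A^3$ is even and the case $p=2$ never arises.
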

The proof of Theorem \ref{th:mwsieve} can be found in \cite[Section A.5]{silverman1992rational}. All rational points on \eqref{eq:projshortwf} are given by
$$
P=\sum_{i=1}^m k_i P_i,
$$
where $\{P_1,\dots,P_m\}$ is a list of generators. By Theorem \ref{th:mwsieve},
$$
\psi(P)=\sum_{i=1}^m k_i \psi(P_i).
$$
The sequence $(n\psi(P_i))_{n\in \Z}$ must be periodic with some period $T_i$. Hence, we can assume that $0 \leq k_i <T_i$ for all $i$. This allows us to compute all possible values of $\psi(P)$. Therefore, although the multiplication of the points $P_i$ generates infinitely many integer triples $(X_i : Y_i : Z_i )$, by taking these triples modulo some prime $p$, which does not
divide $2abc$, we obtain a list of generators of a certain finite abelian group.

%Therefore, while an elliptic curve with positive rank has infinitely many rational points, the set of rational points modulo $p$ forms a cyclic group. 

\textbf{Computational implementation.}
Fix a prime $p$ not dividing $2abc$. We can solve equation \eqref{eq:abcuvw} modulo $p$. For each solution $(u,v,w)$, we can perform the change of variables $(u, v, w) \to (X, Y, Z)$ (to reduce the equation to an elliptic curve of the form \eqref{ell:hardcurveshom}) and obtain a list of possible integer points $(X,Y,Z)$ on the curve. We can compute the generators of the curve \eqref{ell:hardcurveshom} and obtain the group of solutions $(X,Y,Z)$ modulo $p$. If the intersection of both lists obtained is empty then there are no solutions to \eqref{eq:abcuvw}. For some equations, we may need to execute this procedure on more than one of the curves \eqref{ell:hardcurves} to obtain an empty intersection.

\begin{exam}
Let us consider $n=31$. In this case, all triples $(a,b,c)$ are in the set $$
\{(1,1,62),(2,1,31),(31,1,2),(62,1,1),(2,31,1) \}.
$$
For triple $(1,1,62)$, equation \eqref{eq:abcuvw} has no solutions in coprime integers as modulo $8$ analysis implies that at least two of the variables are divisible by $2$. For triple $(2,1,31)$, equation \eqref{eq:abcuvw} has no solutions in coprime integers as modulo $4$ analysis implies that at least two of the variables are divisible by $2$.  For triple $(62,1,1)$, equation \eqref{ell:hardcurves} (iii) has rank $0$. For triple $(2,31,1)$, equation \eqref{ell:hardcurves} (ii) has rank $0$. It remains to consider triple $(31,1,2)$, that is, we need to solve equation 
\begin{equation}\label{eq:trip3112}
961u^8+v^8=2 w^4
\end{equation}
in pairwise coprime variables $u,v,w$. As explained above, we can reduce the equation to the following elliptic curve
\begin{equation}\label{ex:elltrip3112}
Y^2Z=X^3+3844XZ^2
\end{equation}
with the change of variables $(X,Y,Z)=\left(2u^2v^4,4v^2w^2,u^6\right)$.

The group of rational points on curve \eqref{ex:elltrip3112} is generated by the rational points $$\{(98:-1148:1),(0:0:1)\}.$$ %The group of rational points on curve (ii) is generated by points $\{(50,170,1),(-62,-961,8),(0,0,1)\}$. The group of rational points on curve (iii) is generated by points $\{(1922,-59582,1),(0,0,1)\}$. 

The solutions of \eqref{eq:trip3112} modulo $5$ are 
$$
\begin{aligned}
(u,v,w)\in \{ & (0,0,0),(n_1,n_2,n_3): n_i \in \{1,2,3,4\}\},
\end{aligned}
$$
after transforming the solutions to variables $X,Y,Z$ modulo $5$, we obtain that the rational points on \eqref{ex:elltrip3112} modulo $5$ are
$$
\begin{aligned}
(X,Y,Z)\in \{ & (0,0,0),(2,1,1),(2,4,1)\}.
\end{aligned}
$$

Multiplying the generators, we obtain that the list of all possible points on \eqref{ex:elltrip3112} modulo $5$ is
$$
(X,Y,Z) \in \{(0,1,0),(0,0,1),(3,2,1),(3,3,1) \}.
$$
The intersection of these lists is empty. Therefore, there are no integer solutions of \eqref{eq:trip3112} in pairwise coprime variables.
\end{exam}

\begin{table}
\begin{center}
\begin{tabular}{|c|c||c|c|c|c|}
	\hline
	$n$  & Equation & $n$  & Equation  \\\hline \hline
    $219$ & $4u^8 + 9v^8 - 73w^4=0$ &
    $463$ & $214369 u^8 + v^8 - 2 w^4=0$ \\\hline
   $514$ & $16u^8+v^8-257w^4=0$ &
   $885$ & $87025u^8+36v^8-w^4=0$ \\\hline
$1269$ & $2916 u^8 + 2209 v^8 -  w^4=0$  &
$1305$ & $81 u^8 + 4 v^8 - 145  w^4=0$ \\\hline
$1305$ & $324 u^8 + v^8 - 145  w^4=0$  &
$1493$ & $4 u^8 + v^8 - 1493  w^4=0$ \\\hline
 1561 & $49729 u^8 + 196 v^8 -  w^4=0$ &
 1610 & $25921 u^8 + 400 v^8 -  w^4=0$ \\\hline
1640 & $25 u^8 + v^8 - 41  w^4=0$ &
 1731 & $36 u^8 + v^8 - 577  w^4=0$ \\\hline 
 2035 & $121 u^8 + 4 v^8 - 185  w^4=0$ &
 2130 & $5041 u^8 + 3600 v^8 -  w^4=0$ \\\hline
 2130 & $80656 u^8 + 225 v^8 -  w^4=0$ &
2213 & $4 u^8+v^8 -2213  w^4=0$  \\\hline
 2719 & $7392961 u^8 + v^8 - 2  w^4=0$ &
 2329 & $18769 u^8 + 1156 v^8 -  w^4=0$  \\\hline
 2810 & $ 25 u^8 + 16 v^8 - 281  w^4=0$ &
 3185 & $2401 u^8 + 4 v^8 - 65  w^4=0$ \\\hline
 3185 & $9604 u^8 + v^8 - 65  w^4=0$ &
 3265 & $4 u^8 + v^8 - 3265  w^4=0$ \\\hline
 3503 & $51076 u^8 + 961 v^8 -  w^4=0$ & 
 3540 & $87025 u^8 + 576 v^8 -  w^4=0$  \\\hline
3690 & $32400 u^8 + 1681 v^8 -  w^4=0$ &
3690 & $136161 u^8 + 400 v^8 -  w^4=0$ \\\hline
3906 & $15376 u^8 + 3969 v^8 -  w^4=0$ &
3906 & $63504 u^8 + 961 v^8 -  w^4=0$  \\\hline
3906 & $77841 u^8 + 784 v^8 -  w^4=0$&
3906 & $1245456 u^8 + 49 v^8 -  w^4=0$ \\\hline
 4100 & $64 u^8 + v^8 - 1025  w^4=0 $ & 
4165 & $2401 u^8 + 4 v^8 - 85  w^4=0$  \\\hline
4165 & $9604 u^8 + v^8 - 85  w^4=0$ &
 4317 & $2070721 u^8+36 v^8- w^4=0$ \\\hline
 4357 & $4 u^8 + v^8 - 4357  w^4=0$ &
 4559 &$ 37636 u^8 + 2209 v^8 -  w^4=0$  \\\hline
 4669& $ 1779556u^8+49v^8- w^4=0 $ &
 4901 & $4u^8+v^8-4901 w^4=0$ \\\hline
 4961 & $14641u^8+v^8-82 w^4=0$   &
 4991 & $188356 u^8 + 529 v^8 -  w^4=0$  \\\hline
 5076 & $46656 u^8 + 2209 v^8 -  w^4=0$ &
 5775& $1764 u^8 + 121 v^8 - 25  w^4=0$  \\\hline
 5807& $33721249 u^8+v^8 -2  w^4=0 $ &
 5911 & $66049 u^8 + 2116 v^8 -  w^4=0$ \\\hline
 5911 & $264196 u^8 + 529 v^8 -  w^4=0$ &
 5983 & $148996 u^8 + 961 v^8 -  w^4=0$  \\\hline
 6001 & $289 u^8+4 v^8-353  w^4=0$ &
6117& $4157521 u^8+36 v^8- w^4=0$ \\\hline
6244 & $49729 u^8 + 3136 v^8 -  w^4=0$ &
 6355 & $168100 u^8 + 961 v^8 -  w^4=0$ \\\hline
 6440 & $25921 u^8 + 25 v^8 -  w^4=0$ &
 6625 & $4 u^8 + v^8 - 6625  w^4=0$ \\\hline
 6625 & $15625 u^8 + v^8 - 106  w^4=0$ &
 6629 & $3587236 u^8+49 v^8 - w^4=0 $ \\\hline
 6644 & $22801 u^8 + 7744 v^8 -  w^4=0$ &
 6821 & $128881 u^8+ 1444 v^8- w^4=0 $ \\\hline
 6984 & $81 u^8 + v^8 - 97  w^4=0$ &
 7101 & $69169 u^8 + 2916 v^8 -  w^4=0$ \\\hline
 7769 & $208849 u^8 + 1156 v^8 -  w^4=0$  &
 7861 & $5044516 u^8+49 v^8- w^4=0$ \\\hline
 8001 & $64516 u^8 + 3969 v^8 -  w^4=0$  &
 8001 & $5225796 u^8 + 49 v^8 -  w^4=0$ \\\hline
 8005 & $100 u^8 + v^8 - 1601  w^4=0$  &
8075 & $361 u^8 + 4 v^8 - 425  w^4=0$ \\\hline
 8133 & $7349521 u^8 + 36 v^8 -  w^4=0$  &
 8205 & $7480225 u^8 + 36 v^8 -  w^4=0$ \\\hline
 8235 & $2916u^8+25v^8-61 w^4=0$ &
 8520 & $5041 u^8 + 225 v^8 -  w^4=0$   \\\hline
8636 & $18496 u^8 + 16129 v^8 -  w^4=0$ &
 8705 & $4u^8+v^8-8705 w^4=0$ \\\hline
 8931 & $36u^8+v^8-2977 w^4=0$  &
 8965 & $106276 u^8 + 3025 v^8 -  w^4=0$ \\\hline
 9015 & $36u^8+25v^8-601 w^4=0$  & 
 9357 & $9728161 u^8+36 v^8 - w^4=0$  \\\hline
 
  9860 & $21025 u^8 + 18496 v^8 -  w^4=0$ &
 9991 & $37636u^8+10609v^8- w^4=0$ \\\hline
\end{tabular}
\caption{Equations not solvable by the methods in Sections \ref{sec:easy} or \ref{sec:localsieve}. \label{tb:outputted}}
\end{center}
\end{table}
\subsection{Pythagorean Triples}\label{sec:pythag}
In many cases, the methods of Section \ref{sec:easy} and \ref{sec:localsieve} are enough to solve \eqref{eq:x4my4mnz4} completely. We next give an elementary method which can be implemented if $c$ is a perfect square.

It is well known that all integer solutions to the equation
\begin{equation}\label{eq:pythagoras}
x^2+y^2=z^2
\end{equation} 
can be described as a finite union of polynomial parametrisations and we call solutions $(x,y,z)$ Pythagorean triples.
Moreover, any solution $(x,y,z)$ to \eqref{eq:pythagoras} with $x,y,z$ pairwise coprime is called a primitive Pythagorean triple and it is well known \cite{mainbook} that these solutions (up to the exchange of $x$ and $y$) are completely described by
\begin{equation}\label{sol:pythag}
(x,y,z)=(2mn,m^2-n^2,m^2+n^2), \quad m,n \in \mathbb{Z}, \quad 2|mn \text{ and } \gcd(m,n)=1.
\end{equation}

By \eqref{eq:abcuvw}, $a$ and $b$ are either opposite parity or they are both odd. If $a$ and $b$ have opposite parity, we label the equation so that $a$ is even.
Given an equation \eqref{eq:abcuvw} with $c$ a perfect square, that is $c=k^2$ for some positive integer $k$. We can then reduce \eqref{eq:abcuvw} to \eqref{eq:pythagoras} using the change of variables $x=au^4$, $y=bv^4$ and $z=kw^2$. We then obtain the system of equations 
\begin{equation}\label{sys:pyabopp}
2mn=au^4, \quad m^2-n^2=bv^4, \quad m^2+n^2=kw^2.
\end{equation}
If $a$ and $b$ are both odd, then only one of $u$ or $v$ may be even, so we must also solve the system
\begin{equation}\label{sys:pyabodd}
m^2-n^2=au^4, \quad 2mn=bv^4, \quad m^2+n^2=kw^2.
\end{equation}
As $a,b,k$ are positive, we may assume that $m>n>0$. Then equation $2mn=au^4$ has two-monomials and for which we can describe all solutions with $m,n$ coprime, and these will be of the form $m=dU^4$, $n=eV^4$ where $d,e$ are coprime integers and $U,V$ are coprime variables. We then substitute these into $m^2-n^2=bv^4$ and obtain\footnote{The equation of the form \eqref{eq:depythag} for \eqref{sys:pyabodd} is obtained similarly.}
\begin{equation}\label{eq:depythag}
d^2U^8-e^2V^8=bv^4,
\end{equation}
and we inherit the condition
\begin{equation}\label{cond:depythag}
\gcd(dU, eV) = \gcd(dU, bv) = \gcd(eV, bv) = 1
\end{equation}
which is a similar equation to \eqref{eq:abcuvw} and we can use the same methods described earlier. Note that in this case, the curves \eqref{ell:hardcurves} are now
$$
(i) \,\, Y^2=X^3-d^2e^2b^2X, \quad (ii) \,\, Y^2=X^3-d^2e^4bX, \quad (iii) \,\, Y^2=X^3+d^4e^2bX.
$$

\begin{exam}
To illustrate this method, we consider \eqref{eq:x4my4mnz4} with $n=885$. The methods of Sections \ref{sec:easy} and \ref{sec:localsieve} cannot solve the equation
\begin{equation}\label{eq:n885}
    36u^8+87025v^8=w^4,
\end{equation}
which is \eqref{eq:abcuvw} with $(a,b,c)=(6,295,1)$.
We can then reduce \eqref{eq:n885} to \eqref{eq:pythagoras} using the change of variables $x=6u^4$, $y=295v^4$ and $z=w^2$. By \eqref{sol:pythag}, we then obtain the system of equations 
$$
2mn=6u^4, \quad m^2-n^2=295v^4, \quad m^2+n^2=w^2.
$$
We have $mn=3u^4$, whose solutions in coprime variables are $(m,n,u)=(3U^4,V^4,UV)$ or $(U^4,3V^4,UV)$ for coprime integers $U,V$. In the first case, substituting $(m,n)=(3U^4,V^4)$ into $m^2-n^2=295v^4$ we obtain 
$$
9U^8-V^8=295v^4
$$
and modulo $3$ analysis shows that $V$ and $v$ are both divisible by $3$, however then both $m$ and $n$ are divisible by $3$, a contradiction.
In the second case, substituting $(m,n)=(U^4,3V^4)$ into $m^2-n^2=295v^4$ we obtain 
$$
U^8-9V^8=295v^4
$$
and modulo $5$ analysis shows that this equation has no solutions with coprime $U,V$.
Therefore \eqref{eq:n885} has no integer solutions satisfying \eqref{eq:cond} and this completes the proof that \eqref{eq:x4my4mnz4} with $n=885$ has no integer solutions with $xyz \neq 0$.
\end{exam}
\subsection{Implementation for \texorpdfstring{$1\leq n \leq 10000$}{}}\label{sec:implement}

We implemented the methods of Sections \ref{sec:easy} and \ref{sec:localsieve} in Magma \cite{MR1484478}. The code first checks whether the equation has a nonzero integer solution. It next calculates the rank of the curves \eqref{eq:easycurves} and excludes equations which have rank $0$, it finds all triples $(a,b,c)$ satisfying $abc=n'$, checks for local solubility, and then proceeds to calculate rank of curves \eqref{ell:hardcurves}. If the rank of all the curves are positive, it tries to find a suitable prime to obtain a contradiction in the group of rational points. If it cannot find a suitable prime $p<229$ it outputs the unsolvable case, which needs to be investigated by hand as to whether it can be solved using the method of Section \ref{sec:factor}.  Continuing the investigation up to $n=10000$, the code outputs the equations in Table \ref{tb:outputted} as not solvable by methods of Sections \ref{sec:easy}, \ref{sec:localsieve}. As we can see from Table \ref{tb:factorisation}, the methods of Sections \ref{sec:easy}, \ref{sec:localsieve}, and \ref{sec:pythag} can determine the solubility of all equations \eqref{eq:x4my4mnz4} with $1\leq n \leq 218$, and the only exceptions in the range $1\leq n \leq 1000$ are $n=219,463$, and $514$. Overall, there are $33$ (non-equivalent) values of $n$ in the range $1\leq n \leq 10000$ for which these methods are not sufficient. Equation \eqref{eq:x4my4mnz4} with $n=219$ is solved with details in \cite{THO2026125764}. We will see in Section \ref{sec:factor} that all other remaining cases in the range $1\leq n \leq 10000$ are also solvable by the method in \cite{THO2026125764}.

\subsection{How to use the Magma program} \label{magma}
Load\footnote{The Magma files for this implementation are available on Github \url{https://github.com/ashleighratcliffe/DifferenceOfPowers}.} file ``diffpowerex.m'' (which also loads files ``fullsieve.m'' and ``sieve.m''), the program is executed using the command 
$$
\texttt{Diff4powers(k,m)}, 
$$
where $k$ and $m$ are integers which are the coefficients of the equation
 \begin{equation}
kx^4 - ky^4= mz^4.
\end{equation}
The outputs can be ``factor 4th power'', ``excluded at easy curves'', ``Exclude by Mordell's Theorem'', $[\,\, ]$, or a set of equations. The first means the integer is equivalent to solving one with smaller $n$\footnote{Because we did this investigation in order of $n$, the equation with smaller $n$ have already been solved.}. The second means that at least one of the curves \eqref{eq:easycurves} has rank $0$. The third means that some equations \eqref{eq:abcuvw} are reducible to solving \eqref{eq:mordell}. The fourth means that all triples $(a,b,c)$ have corresponding equations \eqref{eq:abcuvw} have either local obstructions or at least one of the corresponding curves \eqref{ell:hardcurves} have rank $0$, or the equation is solvable using the Mordell-Weil sieve method explained above. If the output is a set of equations, these equations must be solved using the method in Section \ref{sec:pythag} or \ref{sec:factor}.

\begin{remark}
In Table \ref{tb:outputted}, for any equations where $c$ is a perfect square, we can attempt to implement the method in Section \ref{sec:pythag}. This method works in all but 6 cases. The details to solve these equations are presented in Table \ref{tb:pythag}.  This significantly reduces the number of equations left to solve, and this list is presented in Table \ref{tb:factorisation}, and Section \ref{sec:factor} solves these equations explicitly. 
\end{remark}

After solving the equations in Table \ref{tb:pythag}, we obtain that the equations listed in Table \ref{tb:factorisation} are not solvable by the methods of Sections \ref{sec:easy}, \ref{sec:localsieve} or \ref{sec:pythag}. Solving the equations in Table \ref{tb:factorisation} is the subject of Section \ref{sec:factor}.
\begin{table}
\begin{center}
\begin{tabular}{|c|c||c|c|c|c|}
	\hline
	$n$  & Equation & $n$  & Equation  \\\hline \hline
    $219$ & $4u^8 + 9v^8 - 73w^4=0$ &
    $463$ & $214369 u^8 + v^8 - 2 w^4=0$ \\\hline
   $514$ & $16u^8+v^8-257w^4=0$ &
  % $885$ & $87025u^8+36v^8-w^4=0$ \\\hline
%$1269$ & $2916 u^8 + 2209 v^8 -  w^4=0$  &
$1305$ & $81 u^8 + 4 v^8 - 145  w^4=0$ \\\hline
$1305$ & $324 u^8 + v^8 - 145  w^4=0$  &
$1493$ & $4 u^8 + v^8 - 1493  w^4=0$ \\\hline
 %1561 & $49729 u^8 + 196 v^8 -  w^4=0$ &
 1610 & $25921 u^8 + 400 v^8 -  w^4=0$ &
1640 & $25 u^8 + v^8 - 41  w^4=0$ \\\hline
 1731 & $36 u^8 + v^8 - 577  w^4=0$ &
 2035 & $121 u^8 + 4 v^8 - 185  w^4=0$ \\\hline
% 2130 & $5041 u^8 + 3600 v^8 -  w^4=0$ \\\hline
 %2130 & $80656 u^8 + 225 v^8 -  w^4=0$ &
2213 & $4 u^8+v^8 -2213  w^4=0$  &
 2719 & $7392961 u^8 + v^8 - 2  w^4=0$ \\\hline
% 2329 & $18769 u^8 + 1156 v^8 -  w^4=0$  \\\hline
 2810 & $ 25 u^8 + 16 v^8 - 281  w^4=0$ &
 3185 & $2401 u^8 + 4 v^8 - 65  w^4=0$ \\\hline
 3185 & $9604 u^8 + v^8 - 65  w^4=0$ &
 3265 & $4 u^8 + v^8 - 3265  w^4=0$ \\\hline
% 3503 & $51076 u^8 + 961 v^8 -  w^4=0$ & 
% 3540 & $87025 u^8 + 576 v^8 -  w^4=0$  \\\hline
%3690 & $32400 u^8 + 1681 v^8 -  w^4=0$ &
%3690 & $136161 u^8 + 400 v^8 -  w^4=0$ \\\hline
3906 & $15376 u^8 + 3969 v^8 -  w^4=0$ &
3906 & $63504 u^8 + 961 v^8 -  w^4=0$  \\\hline
%3906 & $77841 u^8 + 784 v^8 -  w^4=0$&
%3906 & $1245456 u^8 + 49 v^8 -  w^4=0$ \\\hline
 4100 & $64 u^8 + v^8 - 1025  w^4=0 $ & 
4165 & $2401 u^8 + 4 v^8 - 85  w^4=0$  \\\hline
4165 & $9604 u^8 + v^8 - 85  w^4=0$ &
% 4317 & $2070721 u^8+36 v^8- w^4=0$ \\\hline
 4357 & $4 u^8 + v^8 - 4357  w^4=0$ \\\hline
% 4559 &$ 37636 u^8 + 2209 v^8 -  w^4=0$  \\\hline
 %4669& $ 1779556u^8+49v^8- w^4=0 $ &
 4901 & $4u^8+v^8-4901 w^4=0$ &
 4961 & $14641u^8+v^8-82 w^4=0$   \\\hline
% 4991 & $188356 u^8 + 529 v^8 -  w^4=0$  \\\hline
% 5076 & $46656 u^8 + 2209 v^8 -  w^4=0$ &
% 5775& $1764 u^8 + 121 v^8 - 25  w^4=0$  \\\hline
 5807& $33721249 u^8+v^8 -2  w^4=0 $ &
 5911 & $66049 u^8 + 2116 v^8 -  w^4=0$ \\\hline
% 5911 & $264196 u^8 + 529 v^8 -  w^4=0$ &
% 5983 & $148996 u^8 + 961 v^8 -  w^4=0$  \\\hline
 6001 & $289 u^8+4 v^8-353  w^4=0$ &
%6117& $4157521 u^8+36 v^8- w^4=0$ \\\hline
%6244 & $49729 u^8 + 3136 v^8 -  w^4=0$ &
% 6355 & $168100 u^8 + 961 v^8 -  w^4=0$ \\\hline
 6440 & $25921 u^8 + 25 v^8 -  w^4=0$ \\\hline
 6625 & $4 u^8 + v^8 - 6625  w^4=0$ &
 6625 & $15625 u^8 + v^8 - 106  w^4=0$ \\\hline
% 6629 & $3587236 u^8+49 v^8 - w^4=0 $ \\\hline
% 6644 & $22801 u^8 + 7744 v^8 -  w^4=0$ &
% 6821 & $128881 u^8+ 1444 v^8- w^4=0 $ \\\hline
 6984 & $81 u^8 + v^8 - 97  w^4=0$ &
% 7101 & $69169 u^8 + 2916 v^8 -  w^4=0$ \\\hline
% 7769 & $208849 u^8 + 1156 v^8 -  w^4=0$  &
% 7861 & $5044516 u^8+49 v^8- w^4=0$ \\\hline
 8001 & $64516 u^8 + 3969 v^8 -  w^4=0$  \\\hline
% 8001 & $5225796 u^8 + 49 v^8 -  w^4=0$ \\\hline
 8005 & $100 u^8 + v^8 - 1601  w^4=0$  &
8075 & $361 u^8 + 4 v^8 - 425  w^4=0$ \\\hline
% 8133 & $7349521 u^8 + 36 v^8 -  w^4=0$  &
 %8205 & $7480225 u^8 + 36 v^8 -  w^4=0$ \\\hline
 8235 & $2916u^8+25v^8-61 w^4=0$ &
% 8520 & $5041 u^8 + 225 v^8 -  w^4=0$   \\\hline
%8636 & $18496 u^8 + 16129 v^8 -  w^4=0$ &
 8705 & $4u^8+v^8-8705 w^4=0$ \\\hline
 8931 & $36u^8+v^8-2977 w^4=0$  &
% 8965 & $106276 u^8 + 3025 v^8 -  w^4=0$ \\\hline
 9015 & $36u^8+25v^8-601 w^4=0$ \\\hline
% 9357 & $9728161 u^8+36 v^8 - w^4=0$  \\\hline
 
%  9860 & $21025 u^8 + 18496 v^8 -  w^4=0$ &
% 9991 & $37636u^8+10609v^8- w^4=0$ \\\hline
\end{tabular}
\caption{Equations not solvable by the methods in Sections \ref{sec:easy}, \ref{sec:localsieve} or \ref{sec:pythag}. \label{tb:factorisation}}
\end{center}
\end{table}

\section{Factorization in \texorpdfstring{$\Z[i]$}{}}\label{sec:factor}  

In this section, we use the method from \cite{THO2026125764} to show that none of the equations listed in Table\footnote{In this section we choose to solve all equations in Table \ref{tb:outputted} to show details for solving all equations not excluded by the diffpowers program.} \ref{tb:outputted} has integer solutions $(u,v,w)$ satisfying \eqref{eq:cond}. This will complete the proof of Theorem \ref{th:main}.
The main idea of this method is as follows. First, we write equation \eqref{eq:abcuvw} as
\[(au^4+bv^4i)(au^4-bv^4i)=cw^4.\]
Then there exist integers $s,t$, and a Gaussian integer $\alpha|c$ such that  \begin{equation}\label{abst}au^4+bv^4i=\alpha (s+ti)^4.\end{equation}
Equating the real and imaginary parts on both sides of \eqref{abst}, we obtain a system
\begin{equation}\label{abFG}
    \begin{cases}
        u^4=F_{\alpha}(s,t),\\
        v^4=G_{\alpha}(s,t),
    \end{cases}
\end{equation}
where $F_{\alpha}(s,t)$ and $G_{\alpha}(s,t)$ are degree four homogeneous polynomials in $\Z[s,t]$. System \eqref{abFG} defines a scheme in the  projective space $\mathbb{P}^3(\Q)$. Using Magma \cite{MR1484478}, one can check that this scheme is locally insoluble at a suitable prime.

We now implement this method to solve all the remaining equations in Table \ref{tb:factorisation}. See Appendix \ref{AppendixB}.

\section{Conclusion}
In summary, the methods of this paper allow us to solve all equations \eqref{eq:x4my4mnz4} with $1\leq n\leq 10000$ in integers with $xyz \neq 0$. Table \ref{tb:rationalsols} lists all integers $1\leq n\leq 10000$ that can be expressed as a difference of two rational fourth powers, and such a representation. The lists of representable integers are available on the On-line Encyclopedia of Integer Sequences (OEIS) \cite{oeis}, specifically, the positive integers expressible as a difference of two nonzero rational fourth powers is \href{https://oeis.org/search?q=A393616&language=english&go=Search}{A393616} and the positive integers expressible as a difference of two rational fourth powers is \href{https://oeis.org/search?q=A393617&language=english&go=Search}{A393617}. For all the equations which are outputted by the Magma program described in Section \ref{magma}, the necessary information and computations to prove the non-existence of integer solutions to these equations are given in the appendices of the extended version of this paper at \url{http://arxiv.org/abs/2604.15832}.

\section{Acknowledgments} 
Nguyen Xuan Tho is supported by the Vietnam National Foundation for Science and Technology Development (NAFOSTED), under grant number 101.04-2023.21.

\begin{table}
\begin{center}
\begin{tabular}{|c|c||c|c||c|c|}
	\hline
	$n$  & Representation & $n$  & Representation & $n$  & Representation \\\hline \hline
5 & $(3/2)^4 -(1/2)^4$ & 15 & $2^4-  1^4$ & 34 & $(5/2)^4-(3/2)^4$ \\\hline
%16 (2 : 0 : 1) \\\hline
39 &$(5/2)^4 -(1/2)^4$ & 65& $3^4-  2^4$ & 80 &$3^4  -1^4$ \\\hline
%81 (-3 : 0 : 1)
84 &$(31/10)^4-(17/10)^4$ & 111 & $(7/2)^4-(5/2)^4$ & 145& $(7/2)^4-(3/2)^4$ \\\hline
150 &$(7/2)^4-(1/2)^4$ & 175& $4^4-3^4$ & 239 & $(120/13)^4-(119/13)^4$ \\\hline
240 & $4^4-2^4$ &
 255 &$4^4-1^4$ &
%256 (4 : 0 : 1)
260 &$(9/2)^4-(7/2)^4$  \\\hline
369 &$5^4 - 4^4$ & 
 371& $(9/2)^4-(5/2)^4$ & 
 405 &$(9/2)^4-(3/2)^4$ \\\hline
410 &$(9/2)^4-(1/2)^4$ &
505& $ (11/2)^4-(9/2)^4$ &
527 &$(24/5)^4-(7/5)^4$ \\\hline
544& $5^4-  3^4$ & 609 & $5^4-2^4$ &
624& $5^4  -1^4$ \\\hline
%625 (-5 : 0 : 1)
671& $6^4 - 5^4$ &
765 &$(11/2)^4  -(7/2)^4$ &
870& $(13/2)^4  -(11/2)^4$ \\\hline
876 &$(11/2)^4-( 5/2)^4$ &
910 &$(11/2)^4 -( 3/2)^4$ & 915 &$(11/2)^4 -(1/2)^4$ \\\hline
1025& $(33/4)^4-(31/4)^4$ &
1040& $6^4 -4^4$ &
1105 &$ 7^4 -6^4$ \\\hline
1185 &$(41/5)^4  -(38/5)^4$ &
1215 &$6^4  -3^4$ &
1280 &$6^4- 2^4$ \\\hline
1295 & $6^4- 1^4$ & %\\\hline
%1296 (6 : 0 : 1)
1344 &$ (31/5)^4-(17/5)^4$  &
1375& $(13/2)^4-( 9/2)^4$ \\\hline % \\\hline
1379 &$(15/2)^4-(13/2)^4$ &
1635& $(13/2)^4 -(7/2)^4$ &
1695 &$8^4- 7^4$  \\\hline
1746& $(13/2)^4-( 5/2)^4$ &
1776 &$7^4-5^4$ &
1780 &$(13/2)^4-(3/2)^4$  \\\hline
1785& $(13/2)^4  -(1/2)^4$ &  % \\\hline
2056 & $(17/2)^4-(15/2)^4$  &
2145& $7^4-  4^4$  \\\hline
2249& $(15/2)^4  -(11/2)^4$ & % \\\hline
2320 &$7^4-3^4$ &
2385 & $7^4  -2^4 $   \\\hline
2400 &$7^4 -1^4$ &
%2401 (-7 : 0 : 1)
2465 &$9^4-  8^4$  & % \\\hline
2754& $(15/2)^4-(9/2)^4$ \\\hline
2800& $8^4 -6^4$& 
2925 & $(19/2)^4-(17/2)^4$ & % \\\hline
3014& $(15/2)^4 -(7/2)^4$  \\\hline
3099 & $(35/4)^4-( 29/4)^4$ & %\\\hline
3125& $(15/2)^4-(5/2)^4$ &
3159& $(15/2)^4-(3/2)^4$ \\\hline
3164 & $(15/2)^4  -(1/2 )^4$ & % \\\hline
3281& $(41/3)^4-(40/3)^4$ &
3435 &$(17/2)^4-(13/2 )^4$  \\\hline
3439 & $ 10^4- 9^4$ & %\\\hline
3471& $8^4  -5^4$ &
3502 & $(107/10)^4-(99/10 )^4$  \\\hline
3824 &$(240/13)^4-(238/13)^4$ &
3840 &$8^4-4^4$ &
3895 & $(79/10)^4 -( 3/10 )^4$  \\\hline
4010& $(21/2)^4-(19/2)^4$  &
4015& $8^4  -3^4$ & % \\\hline
4080 & $8^4-2^4$ \\\hline
4095 &$ 8^4- 1^4$ & % \\\hline
%4096 (8 : 0 : 1) \\\hline
4160 &$9^4- 7^4$ &
4305 &$(17/2)^4 -(11/2)^4$ \\\hline
4641 &$11^4 - 10^4$ & % \\\hline
4810& $(17/2)^4 -(9/2 )^4$ & % \\\hline
4823& $(183/10)^4-( 181/10 )^4$ \\\hline
4981 &$(19/2)^4-( 15/2 )^4$  & %\\\hline
5070& $(17/2)^4 - (7/2)^4$ & % \\\hline
5181& $(17/2)^4 -(5/2)^4$ \\\hline
5215& $(17/2)^4-( 3/2)^4$ & %\\\hline
5220 & $(17/2)^4- ( 1/2 )^4$ & %\\\hline
5245& $(37/4)^4  -(27/4 )^4$ \\\hline
5265 & $9^4  -6^4$& 
5335& $(23/2)^4  -(21/2)^4$ & %\\\hline
5904 &$10^4 -8^4$ \\\hline
5936 &$9^4-5^4$&
6095 & $12^4  -11^4$ & %\\\hline
6305 & $9^4 -4^4$ \\\hline
6360 & $(19/2)^4- (13/2 )^4$ & % \\\hline
6480& $9^4-3^4$ &
6545& $9^4- 2^4$ \\\hline
6560& $9^4-1^4$ &
%6561 (9 : 0 : 1)
6565 &$(83/6)^4-( 79/6 )^4$ &
6649& $(271/30)^4-( 53/30 )^4$   \\\hline
6804 &$(93/10)^4-(51/10)^4$&
6924& $(25/2)^4-( 23/2 )^4$ & % \\\hline
6935& $(21/2)^4- (17/2 )^4$ \\\hline
7140& $(239/26)^4-(1/26 )^4$ & % \\\hline
7230 & $(19/2)^4 -(11/2 )^4$ & % \\\hline
7511& $(39/4)^4-(25/4)^4$ \\\hline
7585& $(113/12)^4  -(49/12 )^4$ &% \\\hline
7599 & $10^4- 7^4$&%\\\hline
7735 & $(19/2)^4-(9/2)^4$ \\\hline
7825& $13^4  -12^4$ &%\\\hline
7995 &$(19/2)^4- (7/2)^4$ &%\\\hline
8080& $11^4- 9^4$ \\\hline
8106 & $(19/2)^4 -(5/2 )^4$ &
8140& $(19/2)^4  -(3/2 )^4$ &%\\\hline
8145& $(19/2)^4  -(1/2 )^4$ \\\hline
8194& $(65/4)^4  -(63/4 )^4$ &
8432& $(48/5)^4-(14/5)^4$ &
8704& $10^4 -6^4$ \\\hline
8801& $(27/2)^4  -(25/2)^4$&% \\\hline
8991 &$(21/2)^4 -( 15/2)^4$ &
9345 &$(23/2)^4 -(19/2 )^4$ \\\hline
9375& $10^4 - 5^4$ &
9744 &$10^4 - 4^4$&
9855& $14^4- 13^4$ \\\hline
9919& $10^4  -3^4$ &%\\\hline
9945 & $(41/4)^4 -( 23/4 )^4$ &% \\\hline
9984 &$10^4  -2^4$ \\\hline
9999& $10^4 - 1 ^4$ &&&& \\\hline
%10000 (-10 : 0 : 1)

\end{tabular}
\caption{Positive integers $n\leq 10000$ representable as a difference of two non-zero rational fourth powers, and their representations. \label{tb:rationalsols}}
\end{center}
\end{table}

\appendix
\section{Details for primitive Pythagorean triple method}
The details in Table \ref{tb:pythag} provide for each $n$, the equations of the form \eqref{eq:abcuvw} displayed in Table \ref{tb:outputted} which can be solved with the method in Section \ref{sec:pythag}. For each equation of the form \eqref{eq:abcuvw}, we list all the coprime pairs $(d,e)$ such that $2de=a$ (and $2de=b$ in the case \eqref{sys:pyabodd}) for which we need to solve \eqref{eq:depythag} (or respectively $d^2U^8-e^2V^8=au^4$), then in the final column we provide an integer $n$ for which we obtain a contradiction with the conditions of \eqref{cond:depythag} by solving the equation modulo $n$.

\begin{center}
\begin{longtable}{|c|c|c|c|}
\caption{Necessary details to solve equations in Table \ref{tb:outputted} which are solvable by Pythagorean triple method. \label{tb:pythag}} \\

\hline \multicolumn{1}{|c|}{\textbf{$n$}} & \multicolumn{1}{c|}{\textbf{Equation \eqref{eq:abcuvw} }} & \multicolumn{1}{c|}{\textbf{$(d,e)$ in \eqref{eq:depythag}}} &  \multicolumn{1}{|c|}{\textbf{No solutions modulo}} \\ \hline 
\endfirsthead

\multicolumn{4}{c}%
{{\bfseries \tablename\ \thetable{} -- continued from previous page}} \\
\hline  \multicolumn{1}{|c|}{\textbf{$n$}} & \multicolumn{1}{c|}{\textbf{Equation \eqref{eq:abcuvw} }} & \multicolumn{1}{c|}{\textbf{$(d,e)$ in \eqref{eq:depythag}}} &  \multicolumn{1}{|c|}{\textbf{No solutions modulo}} \\ \hline 
\endhead

\hline \multicolumn{4}{|r|}{{Continued on next page}} \\ \hline
\endfoot

\hline 
\endlastfoot
%$n$ & Equation \eqref{eq:abcuvw} & $(d,e)$ in \eqref{eq:depythag} & No solutions modulo \\\hline
885 & $36u^8+87025v^8 =w^4$ & $(1,3)$ & 5 \\
& & $(3,1)$ & 3 \\\hline
1269 & $2916u^8 + 2209v^8= w^4$ & $(1,27)$ & 3 \\
&& $(27,1)$ & 5 \\\hline
1561 & $196 u^8 +  49729v^8= w^4$ & $(1,7)$ & 5 \\
&& $(7,1)$ & 13 \\\hline
2130 & $3600 u^8 + 5041 v^8 = w^4$ & $(1,30)$ & 3 \\ 
&& $(2,15)$ & 3 \\
&& $(3,10)$ & 4 \\
&& $(5,6)$ & 3 \\
&& $(6,5)$ & 17 \\
&& $(10,3)$ & 3 \\
&& $(15,2)$ & 4 \\ 
&& $(30,1)$ & 5 \\\cline{2-4}
& $80656 u^8 + 225 v^8=w^4$ & $(1,142)$ & 4 \\
&& $(2,71)$ & 5 \\
&& $(71,2)$ & 4 \\ 
&& $(142,1)$ & 5 \\\hline
2329 & $1156 u^8 + 18769 v^8=w^4$ & $(1,17)$ & 16 \\
&& $(17,1)$ & 5 \\\hline
3503 & $51076 u^8 + 961 v^8=w^4$ & $(1,113)$ & 113 \\
&& $(113,1)$ & 5 \\\hline
3540 & $576  u^8 +87025 v^8=w^4$ & $(1,12)$ & 4 \\
&& $(3,4)$ & 3 \\
&& $(4,3)$ & 5 \\
&& $(12,1)$ & 3 \\\hline
3690 & $32400 u^8 + 1681 v^8= w^4$ & $(1,90)$ & 3 \\
&& $(2,45)$ & 3 \\
&& $(5,18)$ & 3 \\
&& $(9,10)$ & 16 \\
&& $(10,9)$ & 3 \\
&& $(18,5)$ & 4 \\
&& $(45,2)$ & 41 \\
&& $(90,1)$ & 4 \\\cline{2-4}
& $400  u^8 + 136161v^8=  w^4$ & $(1,10)$ & 5 \\
&& $(2,5)$ & 4 \\
&& $(5,2)$ & 5 \\
&& $(10,1)$ & 4 \\\hline
3906 & $ 784 u^8 +77841 v^8=w^4$ & $(1,14)$ & 4 \\
&& $(2,7)$ & 7 \\
&& $(7,2)$ & 4 \\
&& $(14,1)$ & 16 \\\cline{2-4}
& $1245456 u^8 + 49 v^8 =w^4$ & $(1,558)$ & 4 \\
&& $(2,279)$ & 5 \\
&& $(9,62)$ & 3 \\
&& $(18,31)$ & 3 \\
&& $(31,18)$ & 4 \\
&& $(62,9)$ & 5 \\
&& $(279,2)$ & 3 \\
&& $(558,1)$ & 3 \\\hline
4317 & $36 u^8+ 2070721v^8= w^4$ & $(1,3)$ & 3 \\
&& $(3,1)$ & 73 \\\hline
4559 & $37636 u^8 + 2209 v^8= w^4$ & $(1,97)$ & 13 \\
&& $(97,1)$ & 5 \\\hline
4669 & $1779556u^8+49v^8= w^4$ & $(1,667)$ & 23 \\
&& $(23,29)$ & 5 \\
&& $(29,23)$ & 16 \\
&& $(667,1)$ & 5 \\\hline
4991 & $188356 u^8 + 529 v^8=  w^4$ & $(1,217)$ & 5 \\
&& $(7,31)$ & 7 \\
&& $(31,7)$ & 5 \\
&& $(217,1)$ & 7 \\\hline
$5076$ & $46656 u^8 + 2209 v^8=  w^4$ & $(1,108)$ & 3 \\
&& $(4,27)$ & 3 \\
&& $(27,4)$ & 4 \\
&& $(108,1)$ & 5 \\\hline
$5775$ & $1764 u^8 + 121 v^8=25  w^4$ & $(1,21)$ & 3 \\
&& $(3,7)$ & 8 \\
&& $(7,3)$ & 3 \\
&& $(21,1)$ & 7 \\\hline
5911 & $264196 u^8 + 529 v^8 =w^4$ & $(1,257)$ & 5 \\
&& $(257,1)$ & 16 \\\hline
5983 & $148996 u^8 + 961 v^8= w^4$ & $(1,193)$ & 193 \\
&& $(193,1)$ & 5 \\\hline
6117 & $36 u^8+4157521 v^8=w^4$ & $(1,3)$ & 3 \\
&& $(3,1)$ & 16 \\\hline
6244 & $3136  u^8 +49729 v^8=  w^4$ & $(1,28)$ & 4 \\
&& $(4,7)$ & 5 \\
&& $(7,4)$ & 4 \\
&& $(28,1)$ & 13 \\\hline
6355 & $168100 u^8 + 961 v^8=w^4$ & $(1,205)$ & 16 \\
&& $(5,41)$ & 5 \\
&& $(41,5)$ & 16 \\
&& $(205,1)$ & 5 \\\hline
6629 & $3587236 u^8+49 v^8=w^4$ & $(1,947)$ & 13 \\
&& $(947,1)$ & 5 \\\hline
6644 & $ 7744u^8 + 22801 v^8 =w^4$ & $(1,44)$ & 4 \\
&& $(4,11)$ & 11 \\
&& $(11,4)$ & 4 \\
&& $(44,1)$ & 16 \\\hline
6821 & $1444 u^8+ 128881 v^8= w^4$ & $(1,19)$ & 13 \\
&& $(19,1)$ & 16 \\\hline
7101 & $ 2916u^8 + 69169 v^8= w^4$ & $(1,27)$ & 3 \\
&& $(27,1)$ & 16 \\\hline
7769 & $1156 u^8 + 208849 v^8= w^4$ & $(1,17)$ & 16 \\
&& $(17,1)$ & 5 \\\hline
7861 & $5044516 u^8+49 v^8= w^4$ & $(1,1123)$ & 13 \\
&& $(1123,1)$ & 5 \\\hline
8001 & $5225796 u^8 + 49 v^8= w^4$ & $(1,1143)$ & 16 \\
&& $(9,127)$ & 3 \\
&& $(127,9)$ & 5 \\
&& $(1143,1)$ & 3 \\\hline
8133 & $36u^8 +7349521   v^8=w^4$ & $(1,3)$ & 3 \\
&& $(3,1)$ & 5 \\\hline
8205 & $36u^8+7480225v^8=w^4$ & $(1,3)$ & 3 \\
&& $(3,1)$ & 5 \\\hline
8520 & $5041 u^8 + 225 v^8=  w^4$ & $(1,568)$ & 4 \\
&& $(8,71)$ & 5 \\
&& $(71,8)$ & 4 \\
&& $(568,1)$ & 5 \\
&& $(1,120)$ & 3 \\
&& $(3,40)$ & 4 \\
&& $(5,24)$ & 3 \\
&& $(8,15)$ & 3 \\
&& $(15,8)$ & 4 \\
&& $(24,5)$ & 17 \\
&& $(40,3)$ & 3 \\
&& $(120,1)$ & 5  \\\hline
8636& $18496 u^8 + 16129 v^8 = w^4$ & $(1,68)$ & 4 \\
&& $(4,17)$ & 17 \\
&& $(17,4)$ & 4 \\
&& $(68,1)$ & 5 \\\hline
8965 & $106276 u^8 + 3025 v^8 = w^4$ & $(1,163)$ & 5 \\
&& $(163,1)$ & 5 \\\hline
9357 & $36 u^8+ 9728161 v^8 = w^4$ & $(1,3)$ & 3 \\
&& $(3,1)$ & 29 \\\hline
9860 & $18496 u^8 +  21025 v^8 = w^4$ & $(1,68)$ & 5 \\
&& $(4,17)$ & 4 \\
&& $(17,4)$ & 5 \\
&& $(68,1)$ & 4 \\\hline
9991 & $37636u^8+10609v^8 = w^4$ & $(1,97)$ & 5 \\
&& $(97,1)$ & 16 \\
%\end{tabular}
%\caption{Details to solve equations ... by Pythagorean triple method. \label{tb:pythag}}
%\end{center}
%\end{table}
\end{longtable}
\end{center}

\section{Details for factorization method}\label{AppendixB}
\subsection{The case of $n=219$.} %\[n=219\]  
This was solved by Tho \cite{THO2026125764}. 

\subsection{The case of $n=463$.}%\[n=463\] 
 According to Table \ref{tb:factorisation}, in the case $n=463$ it remains to
show that equation
\begin{equation}\label{n=463}
214369 u^8 + v^8=2 w^4.
\end{equation}
has no integer solutions satisfying \eqref{eq:cond}, which in this case reduces to
\[\gcd(463u,v)=\gcd(u,2w)=\gcd(v,2w)=1.\]
Hence, $2\nmid uv$.  
Write \eqref{n=463} as 
\begin{equation}\label{463.1} (463u^4+v^4i)(463u^4-v^4i)=(1+i)(1-i)w^4.
\end{equation}

Let $d\in \Z[i]$ such that $d|\gcd(463u^4+v^4i,463u^4-v^4i)$. Then $d|926 u^4$,$d|2v^4$, $d|2w^4$. Hence, $d|2$. Note that $u$ and $v$ are odd. 
This implies that there exist integers $s,t$ such that 
\[463u^4+v^4i=i^{\epsilon}(1+i)(s+ti)^4,\]
with $\epsilon \in \{0,1,2,3\}$.

{\bf Case 1:} $463u^4+v^4i=(1+i)(s+ti)^4$. Equating the real and imaginary parts on both sides of this equation gives
\begin{equation}\label{463-1}
\begin{cases}
s^4 - 4s^3t - 6s^2t^2 +4st^3 + t^4-463u^4=0,\\
s^4 + 4s^3t - 6s^2t^2 - 4st^3 + t^4-v^4=0.
\end{cases}
\end{equation}

%_<x>:=PolynomialRing(Rationals());
%k<i>:=NumberField(x^2+1);
%K<s,t>:=PolynomialRing(k,2);
%e:=0;
%F:=i^e*(1+i)*(s+t*i)^4;
%F;
%L<s,t>:=PolynomialRing(Rationals(),2);
%_<i>:=PolynomialRing(L);
%F:=(i + 1)*s^4 + (4*i - 4)*s^3*t + (-6*i - 6)*s^2*t^2 + (-4*i + 4)*s*t^3 + (i +
%    1)*t^4;
%F;
%A:= s^4 - 4*s^3*t - 6*s^2*t^2 + 4*s*t^3 + t^4;
%B:=s^4 + 4*s^3*t - 6*s^2*t^2 - 4*s*t^3 + t^4;
%F-A-i*B;
\begin{lstlisting}
P<s,t,u,v>:=ProjectiveSpace(Rationals(),3);
A:=s^4 - 4*s^3*t - 6*s^2*t^2 +4*s*t^3 + t^4-463*u^4;
B:=s^4 + 4*s^3*t - 6*s^2*t^2 - 4*s*t^3 + t^4-v^4;
S:=Scheme(P,[A,B]);
IsLocallySolvable(S,2);
\end{lstlisting}

The scheme defined by system \eqref{463-1} is locally insoluble at $2$.

{\bf Case 2:} $463u^4+v^4i=i(1+i)(s+ti)^4$. Equating the real and imaginary parts on both sides of this equation gives
\begin{equation}\label{463-2}
\begin{cases}
- s^4 - 4s^3t + 6s^2t^2 + 4st^3 - t^4-463u^4=0,\\
s^4 - 4s^3t - 6s^2t^2 + 4st^3 + t^4-v^4=0.
\end{cases}
\end{equation}

%_<x>:=PolynomialRing(Rationals());
%k<i>:=NumberField(x^2+1);
%K<s,t>:=PolynomialRing(k,2);
%e:=1;
%F:=i^e*(1+i)*(s+t*i)^4;
%F;
%L<s,t>:=PolynomialRing(Rationals(),2);
%_<i>:=PolynomialRing(L);
%F:=(i - 1)*s^4 + (-4*i - 4)*s^3*t + (-6*i + 6)*s^2*t^2 + (4*i + 4)*s*t^3 + (i -
%   1)*t^4;
%F;
%A:= - s^4 - 4*s^3*t + 6*s^2*t^2 +
%   4*s*t^3 - t^4;
%B:=s^4 - 4*s^3*t - 6*s^2*t^2 + 4*s*t^3 + t^4;
%F-A-i*B;
\begin{lstlisting}
P<s,t,u,v>:=ProjectiveSpace(Rationals(),3);
A:=- s^4 - 4*s^3*t + 6*s^2*t^2 + 4*s*t^3 - t^4-463*u^4;
B:=s^4 - 4*s^3*t - 6*s^2*t^2 + 4*s*t^3 + t^4-v^4;
S:=Scheme(P,[A,B]);
IsLocallySolvable(S,3);
\end{lstlisting}

The scheme defined by system \eqref{463-2} is locally insoluble at $3$.

{\bf Case 3:} $463u^4+v^4i=i^2(1+i)(s+ti)^4$. Equating the real and imaginary parts on both sides of this equation 
\begin{equation}\label{463-3}
\begin{cases}
- s^4 + 4s^3t + 6s^2t^2 - 4st^3 - t^4-463u^4=0,\\
-s^4 - 4s^3t + 6s^2t^2 + 4st^3 - t^4-v^4=0.
\end{cases}
\end{equation}
%_<x>:=PolynomialRing(Rationals());
%k<i>:=NumberField(x^2+1);
%K<s,t>:=PolynomialRing(k,2);
%e:=2;
%F:=i^e*(1+i)*(s+t*i)^4;
%F;
%L<s,t>:=PolynomialRing(Rationals(),2);
%_<i>:=PolynomialRing(L);
%F:=(-i - 1)*s^4 + (-4*i + 4)*s^3*t + (6*i + 6)*s^2*t^2 + (4*i - 4)*s*t^3 + (-i -
%    1)*t^4;
%F;
%A:=- s^4 + 4*s^3*t + 6*s^2*t^2 - 4*s*t^3 - t^4;
%B:=-s^4 - 4*s^3*t + 6*s^2*t^2 + 4*s*t^3 - t^4;
%F-A-i*B;

\begin{lstlisting}
P<s,t,u,v>:=ProjectiveSpace(Rationals(),3);
A:=- s^4 + 4*s^3*t + 6*s^2*t^2 - 4*s*t^3 - t^4-463*u^4;
B:=-s^4 - 4*s^3*t + 6*s^2*t^2 + 4*s*t^3 - t^4-v^4;
S:=Scheme(P,[A,B]);
IsLocallySolvable(S,2);
\end{lstlisting}

The scheme defined by system \eqref{463-3} is locally insoluble at $2$.

{\bf Case 4:} $463u^4+v^4i=i^3(1+i)(s+ti)^4$. Equating the real and imaginary parts on both sides of this equation gives
\begin{equation}\label{463-4}
\begin{cases}
s^4 + 4s^3t - 6s^2t^2 -  4st^3 + t^4-463u^4=0,\\
-s^4 + 4s^3t + 6s^2t^2 - 4st^3 - t^4-v^4=0.
\end{cases}
\end{equation}

%_<x>:=PolynomialRing(Rationals());
%k<i>:=NumberField(x^2+1);
%K<s,t>:=PolynomialRing(k,2);
%e:=3;
%F:=i^e*(1+i)*(s+t*i)^4;
%F;
%L<s,t>:=PolynomialRing(Rationals(),2);
%_<i>:=PolynomialRing(L);
%F:=(-i + 1)*s^4 + (4*i + 4)*s^3*t + (6*i - 6)*s^2*t^2 + (-4*i - 4)*s*t^3 + (-i +
%   1)*t^4;
%F;
%A:=s^4 + 4*s^3*t - 6*s^2*t^2 -  4*s*t^3 + t^4;
%B:=-s^4 + 4*s^3*t + 6*s^2*t^2 - 4*s*t^3 - t^4;
%F-A-i*B;
%
\begin{lstlisting}
P<s,t,u,v>:=ProjectiveSpace(Rationals(),3);
A:=s^4 + 4*s^3*t - 6*s^2*t^2 -  4*s*t^3 + t^4-463*u^4;
B:=-s^4 + 4*s^3*t + 6*s^2*t^2 - 4*s*t^3 - t^4-v^4;
S:=Scheme(P,[A,B]);
IsLocallySolvable(S,2);
\end{lstlisting}

The scheme defined by system \eqref{463-4} is locally insoluble at $2$.

\subsection{The case of $n=514$.}%\[n=514.\] 
According to Table \ref{tb:factorisation}, in the case $n=514$ it remains to
show that equation
\begin{equation}\label{n=514}
16u^8+v^8=257w^4
\end{equation}
has no integer solutions satisfying \eqref{eq:cond}, which in this case reduces to \[\gcd(2u,v)=\gcd(2u,257w)=\gcd(v,257w)=1.\]
Hence, $257\nmid uv$.  
Write \eqref{n=514} as 
\begin{equation}\label{514.1} (4u^4+v^4i)(4u^4-v^4i)=(1+16i)(1-16i)w^4.
\end{equation}

Let $d\in \Z[i]$ such that $d|\gcd(4u^4+v^4i,4u^4-v^4i)$. Then $d| 8u^4$,$d|2v^4$, $d|257w^4$. Hence, $d|1$. Thus $4u^4+v^4i$ and $4u^4-v^4i$ are coprime in $\Z[i]$. Note that $u$ and $v$ odd. 
This implies that there exist integers $s,t$ such that 
\[4u^4+v^4i=i^{\epsilon}(1\pm 16i)(s+ti)^4,\]
with $\epsilon\in \{0,1,2,3\}$.

{\bf Case 1:} $4u^4+v^4i=(1+16i)(s+ti)^4$. Equating the real and imaginary parts on both sides of this equation gives
\begin{equation}\label{514-1}
\begin{cases}
s^4 - 64s^3t -6s^2t^2 + 64st^3 + t^4-4u^4=0,\\
16s^4 + 4s^3t - 96s^2t^2 - 4st^3 + 16t^4-v^4=0.
\end{cases}
\end{equation}

%_<x>:=PolynomialRing(Rationals());
%k<i>:=NumberField(x^2+1);
%K<s,t>:=PolynomialRing(k,2);
%e:=0;
%F:=i^e*(1+16*i)*(s+t*i)^4;
%F;
%L<s,t>:=PolynomialRing(Rationals(),2);
%_<i>:=PolynomialRing(L);
%F:=(16*i + 1)*s^4 + (4*i - 64)*s^3*t + (-96*i - 6)*s^2*t^2 + (-4*i + 64)*s*t^3 +
%   (16*i + 1)*t^4;
%F;
%A:=s^4 - 64*s^3*t -6*s^2*t^2 + 64*s*t^3 + t^4;
%B:=(16*s^4 + 4*s^3*t - 96*s^2*t^2 - 4*s*t^3 + 16*t^4);
\begin{lstlisting}
P<s,t,u,v>:=ProjectiveSpace(Rationals(),3);
A:=s^4 - 64*s^3*t -6*s^2*t^2 + 64*s*t^3 + t^4-4*u^4;
B:=16*s^4 + 4*s^3*t - 96*s^2*t^2 - 4*s*t^3 + 16*t^4-v^4;
S:=Scheme(P,[A,B]);
IsLocallySolvable(S,2);
\end{lstlisting}
The scheme defined by system \eqref{514-1} is locally insoluble at $2$.

{\bf Case 2:} $4u^4+v^4i=i(1+16i)(s+ti)^4$. Equating the real and imaginary parts on both sides of this equation gives
\begin{equation}\label{514-2}
\begin{cases}
-16s^4 - 4s^3t + 96s^2t^2 + 4st^3 - 16t^4 - 4u^4=0,\\
s^4 - 64s^3t - 6s^2t^2 + 64st^3 + t^4 - v^4=0.
\end{cases}
\end{equation}
%_<x>:=PolynomialRing(Rationals());
%k<i>:=NumberField(x^2+1);
%K<s,t>:=PolynomialRing(k,2);
%e:=1;
%F:=i^e*(1+16*i)*(s+t*i)^4;
%F;
%L<s,t>:=PolynomialRing(Rationals(),2);
%_<i>:=PolynomialRing(L);
%F:=(i - 16)*s^4 + (-64*i - 4)*s^3*t + (-6*i + 96)*s^2*t^2 + (64*i + 4)*s*t^3 + (i -
%   16)*t^4;
%F;
%A:=- 16*s^4 - 4*s^3*t + 96*s^2*t^2
%   + 4*s*t^3 - 16*t^4;
%B:=(s^4 - 64*s^3*t - 6*s^2*t^2 + 64*s*t^3 + t^4);
%F-A-i*B;

\begin{lstlisting}
P<s,t,u,v>:=ProjectiveSpace(Rationals(),3);
A:=- 16*s^4 - 4*s^3*t + 96*s^2*t^2 + 4*s*t^3 - 16*t^4-4*u^4;
B:=s^4 - 64*s^3*t - 6*s^2*t^2 + 64*s*t^3 + t^4-v^4;
S:=Scheme(P,[A,B]);
IsLocallySolvable(S,3);
\end{lstlisting}

The scheme defined by system \eqref{514-2} is locally insoluble at $3$.

{\bf Case 3:} $4u^4+v^4i=i^2(1+16i)(s+ti)^4$. Equating the real and imaginary parts on both sides of this equation gives
\begin{equation}\label{514-3}
\begin{cases}
- s^4 + 64s^3t + 6s^2t^2 - 64st^3 - t^4-4u^4=0,\\
-16s^4 - 4s^3t + 96s^2t^2 + 4st^3 - 16t^4-v^4=0;
\end{cases}
\end{equation}

%_<x>:=PolynomialRing(Rationals());
%k<i>:=NumberField(x^2+1);
%K<s,t>:=PolynomialRing(k,2);
%e:=2;
%F:=i^e*(1+16*i)*(s+t*i)^4;
%F;
%L<s,t>:=PolynomialRing(Rationals(),2);
%_<i>:=PolynomialRing(L);
%F:=(-16*i - 1)*s^4 + (-4*i + 64)*s^3*t + (96*i + 6)*s^2*t^2 + (4*i - 64)*s*t^3 +
%   (-16*i - 1)*t^4;
%F;
%A:=- s^4 + 64*s^3*t + 6*s^2*t^2 - 64*s*t^3 - t^4;
%B:=(-16*s^4 - 4*s^3*t + 96*s^2*t^2 + 4*s*t^3 - 16*t^4);
%F-A-i*B;
\begin{lstlisting}
P<s,t,u,v>:=ProjectiveSpace(Rationals(),3);
A:=- s^4 + 64*s^3*t + 6*s^2*t^2 - 64*s*t^3 - t^4-4*u^4;
B:=-16*s^4 - 4*s^3*t + 96*s^2*t^2 + 4*s*t^3 - 16*t^4-v^4;
S:=Scheme(P,[A,B]);
IsLocallySolvable(S,5);
\end{lstlisting}

The scheme defined by system \eqref{514-3} is locally insoluble at $5$.

{\bf Case 4:} $4u^4+v^4i=i^3(1+16i)(s+ti)^4$. Equating the real and imaginary parts on both sides of this equation gives
\begin{equation}\label{514-4}
\begin{cases}
16s^4 + 4s^3t - 96s^2t^2- 4st^3 + 16t^4-4u^4=0,\\
-s^4 + 64s^3t + 6s^2t^2 - 64st^3 - t^4-v^4=0.
\end{cases}
\end{equation}

%_<x>:=PolynomialRing(Rationals());
%k<i>:=NumberField(x^2+1);
%K<s,t>:=PolynomialRing(k,2);
%e:=3;
%F:=i^e*(1+16*i)*(s+t*i)^4;
%F;
%L<s,t>:=PolynomialRing(Rationals(),2);
%_<i>:=PolynomialRing(L);
%F:=(-i + 16)*s^4 + (64*i + 4)*s^3*t + (6*i - 96)*s^2*t^2 + (-64*i - 4)*s*t^3 + (-i
%    + 16)*t^4;
%F;
%A:=16*s^4 + 4*s^3*t - 96*s^2*t^2- 4*s*t^3 + 16*t^4;
%B:=-s^4 + 64*s^3*t + 6*s^2*t^2 - 64*s*t^3 - t^4;
%F-A-i*B;

\begin{lstlisting}
P<s,t,u,v>:=ProjectiveSpace(Rationals(),3);
A:=16*s^4 + 4*s^3*t - 96*s^2*t^2- 4*s*t^3 + 16*t^4-4*u^4;
B:=-s^4 + 64*s^3*t + 6*s^2*t^2 - 64*s*t^3 - t^4-v^4;
S:=Scheme(P,[A,B]);
IsLocallySolvable(S,2);
\end{lstlisting}

The scheme defined by system \eqref{514-4} is locally insoluble at $2$.

{\bf Case 5:} $4u^4+v^4i=(1-16i)(s+ti)^4$. Equating the real and imaginary parts on both sides of this equation gives
\begin{equation}\label{514-5}
\begin{cases}
s^4 + 64s^3t -  6s^2t^2 - 64st^3 + t^4-4u^4=0,\\
-16s^4 + 4s^3t + 96s^2t^2 - 4st^3 - 16t^4-v^4=0.
\end{cases}
\end{equation}
The scheme defined by system \eqref{514-5} is locally insoluble at $2$.

%_<x>:=PolynomialRing(Rationals());
%k<i>:=NumberField(x^2+1);
%K<s,t>:=PolynomialRing(k,2);
%e:=0;
%F:=i^e*(1-16*i)*(s+t*i)^4;
%F;
%L<s,t>:=PolynomialRing(Rationals(),2);
%_<i>:=PolynomialRing(L);
%F:=(-16*i + 1)*s^4 + (4*i + 64)*s^3*t + (96*i - 6)*s^2*t^2 + (-4*i - 64)*s*t^3 +
%   (-16*i + 1)*t^4;
%F;
%A:=s^4 + 64*s^3*t -  6*s^2*t^2 - 64*s*t^3 + t^4;
%B:=-16*s^4 + 4*s^3*t + 96*s^2*t^2 - 4*s*t^3 - 16*t^4;
%F-A-i*B;
\begin{lstlisting}
P<s,t,u,v>:=ProjectiveSpace(Rationals(),3);
A:=s^4 + 64*s^3*t -  6*s^2*t^2 - 64*s*t^3 + t^4-4*u^4;
B:=-16*s^4 + 4*s^3*t + 96*s^2*t^2 - 4*s*t^3 - 16*t^4-v^4;
S:=Scheme(P,[A,B]);
IsLocallySolvable(S,2);
\end{lstlisting}

{\bf Case 6:} $4u^4+v^4i=i(1-16i)(s+ti)^4$. Equating the real and imaginary parts on both sides of this equation gives
\begin{equation}\label{514-6}
\begin{cases}
16s^4 - 4s^3t - 96s^2t^2 + 4st^3 + 16t^4-4u^4=0,\\
s^4 + 64s^3t - 6s^2t^2 - 64st^3 + t^4-v^4=0;
\end{cases}
\end{equation}
The scheme defined by system \eqref{514-6} is locally insoluble at $5$.

%_<x>:=PolynomialRing(Rationals());
%k<i>:=NumberField(x^2+1);
%K<s,t>:=PolynomialRing(k,2);
%e:=1;
%F:=i^e*(1-16*i)*(s+t*i)^4;
%F;
%L<s,t>:=PolynomialRing(Rationals(),2);
%_<i>:=PolynomialRing(L);
%F:=(i + 16)*s^4 + (64*i - 4)*s^3*t + (-6*i - 96)*s^2*t^2 + (-64*i + 4)*s*t^3 + (i +
%    16)*t^4;
%F;
%A:=16*s^4 - 4*s^3*t - 96*s^2*t^2 + 4*s*t^3 + 16*t^4;%B:=s^4 + 64*s^3*t - 6*s^2*t^2 - 64*s*t^3 + t^4;
%F-A-i*B;
%
\begin{lstlisting}
P<s,t,u,v>:=ProjectiveSpace(Rationals(),3);
A:=16*s^4 - 4*s^3*t - 96*s^2*t^2 + 4*s*t^3 + 16*t^4-4*u^4;
B:=s^4 + 64*s^3*t - 6*s^2*t^2 - 64*s*t^3 + t^4-v^4;
S:=Scheme(P,[A,B]);
IsLocallySolvable(S,5);
\end{lstlisting}

{\bf Case 7:} $4u^4+v^4i=i^2(1-16i)(s+ti)^4$. Equating the real and imaginary parts on both sides of this equation gives
\begin{equation}\label{514-7}
\begin{cases}
- s^4 - 64s^3t +6s^2t^2 + 64st^3 - t^4-4u^4=0,\\
16s^4 - 4s^3t - 96s^2t^2 + 4st^3 + 16t^4-v^4=0.
\end{cases}
\end{equation}
The scheme defined by system \eqref{514-7} is locally insoluble at $3$.

%_<x>:=PolynomialRing(Rationals());
%k<i>:=NumberField(x^2+1);
%K<s,t>:=PolynomialRing(k,2);
%e:=2;
%F:=i^e*(1-16*i)*(s+t*i)^4;
%F;
%L<s,t>:=PolynomialRing(Rationals(),2);
%_<i>:=PolynomialRing(L);
%F:=(16*i - 1)*s^4 + (-4*i - 64)*s^3*t + (-96*i + 6)*s^2*t^2 + (4*i + 64)*s*t^3 +
%    (16*i - 1)*t^4;
%F;
%A:=- s^4 - 64*s^3*t +6*s^2*t^2 + 64*s*t^3 - t^4;
%B:=16*s^4 - 4*s^3*t - 96*s^2*t^2 + 4*s*t^3 + 16*t^4;
%F-A-i*B;
\begin{lstlisting}
P<s,t,u,v>:=ProjectiveSpace(Rationals(),3);
A:=- s^4 - 64*s^3*t +6*s^2*t^2 + 64*s*t^3 - t^4-4*u^4;
B:=16*s^4 - 4*s^3*t - 96*s^2*t^2 + 4*s*t^3 + 16*t^4-v^4;
S:=Scheme(P,[A,B]);
IsLocallySolvable(S,3);
\end{lstlisting}

{\bf Case 8:} $4u^4+v^4i=i^3(1-16i)(s+ti)^4$. Equating the real and imaginary parts on both sides of this equation gives
\begin{equation}\label{514-8}
\begin{cases}
- 16s^4 + 4s^3t + 96s^2t^2 - 4st^3 - 16t^4-4u^4=0,\\
-s^4 - 64s^3t + 6s^2t^2 + 64st^3 - t^4-v^4=0,
\end{cases}
\end{equation}

The scheme defined by system \eqref{514-8} is locally insoluble at $2$.

%_<x>:=PolynomialRing(Rationals());
%k<i>:=NumberField(x^2+1);
%K<s,t>:=PolynomialRing(k,2);
%e:=3;
%F:=i^e*(1-16*i)*(s+t*i)^4;
%F;
%L<s,t>:=PolynomialRing(Rationals(),2);
%_<i>:=PolynomialRing(L);
%F:=(-i - 16)*s^4 + (-64*i + 4)*s^3*t + (6*i + 96)*s^2*t^2 + (64*i - 4)*s*t^3 + (-i
%   - 16)*t^4;
%F;
%A:= - 16*s^4 + 4*s^3*t + 96*s^2*t^2 - 4*s*t^3 - 16*t^4;
%B:=-s^4 - 64*s^3*t + 6*s^2*t^2 + 64*s*t^3 - t^4;
%F-A-i*B;

\begin{lstlisting}
P<s,t,u,v>:=ProjectiveSpace(Rationals(),3);
A:= - 16*s^4 + 4*s^3*t + 96*s^2*t^2 - 4*s*t^3 - 16*t^4-4*u^4;
B:=-s^4 - 64*s^3*t + 6*s^2*t^2 + 64*s*t^3 - t^4-v^4;
S:=Scheme(P,[A,B]);
IsLocallySolvable(S,2);
\end{lstlisting}

\subsection{The case of $n=1305$.}%\[n=1305\quad (I)\] 
According to Table \ref{tb:factorisation}, in the case $n=1305$ it remains to
show that equation
\begin{equation}\label{1305}
81u^8+4v^8=145w^4
\end{equation}
has no integer solutions satisfying \eqref{eq:cond}, which in this case reduces to \[\gcd(3u,2v)=\gcd(2u,145w)=\gcd(2v,145w)=1,\] 
and to show that equation
\begin{equation}\label{1305.2}
324 u^8 + v^8=145w^4
\end{equation}
has no integer solutions satisfying \eqref{eq:cond}, which in this case reduces to \[\gcd(18u,v)=\gcd(18u,145w)=\gcd(v,145w)=1.\]

\subsubsection{The case of \eqref{1305}.}
We first consider \eqref{1305}, and we can write this as
\begin{equation}\label{1305.1} (9u^4+2v^4i)(9u^4-2iv^4)=(1+2i)(1-2i)(5+2i)(5-2i)w^4.
\end{equation}
Hence, $\gcd(9u^4+2v^4i,9u^4-2v^4i)=1$ and \[9u^4+2v^4i\equiv 2i-1\pmod{5}.\]
Thus $1-2i|9u^4+2v^4i$. This implies that there exist integers $s,t$ such that 
\[9u^4+2v^4i=i^{\epsilon}(1-2i)(5\pm 2i)(s+ti)^4,\]
with $\epsilon \in \{0,1,2,3\}$.

{\bf Case 1:} $9u^4+2v^4i=(1-2i)(5+2i)(s+ti)^4$. Equating the real and imaginary parts on both sides of this equation gives
\begin{equation}\label{1305-1}
\begin{cases}
9s^4 + 32s^3t -54s^2t^2 - 32st^3 + 9t^4-9u^4=0,\\
-8s^4 + 36s^3t + 48s^2t^2 - 36st^3 - 8t^4-2v^4=0.
\end{cases}
\end{equation}
The scheme defined by system \eqref{1305-1} is locally insoluble at $2$.

%\begin{lstlisting}
%_<x>:=PolynomialRing(Rationals());
%k<i>:=NumberField(x^2+1);
%K<s,t>:=PolynomialRing(k,2);
%e:=0;
%F:=i^e*(1-2*i)*(5+2*i)*(s+t*i)^4;
%F;
%L<s,t>:=PolynomialRing(Rationals(),2);
%_<i>:=PolynomialRing(L);
%F:=(-8*i + 9)*s^4 + (36*i + 32)*s^3*t + (48*i - 54)*s^2*t^2 + (-36*i - 32)*s*t^3 +
%    (-8*i + 9)*t^4;
%F;
%A:=9*s^4 + 32*s^3*t -54*s^2*t^2 - 32*s*t^3 + 9*t^4;
%B:=-8*s^4 + 36*s^3*t + 48*s^2*t^2 - 36*s*t^3 - 8*t^4;
%F-A-i*B;

\begin{lstlisting}
P<s,t,u,v>:=ProjectiveSpace(Rationals(),3);
A:=9*s^4 + 32*s^3*t -54*s^2*t^2 - 32*s*t^3 + 9*t^4-9*u^4;
B:=-8*s^4 + 36*s^3*t + 48*s^2*t^2 - 36*s*t^3 - 8*t^4-2*v^4;
S:=Scheme(P,[A,B]);
IsLocallySolvable(S,2);
\end{lstlisting}
{\bf Case 2:} $9u^4+2v^4i=i(1-2i)(5+2i)(s+ti)^4$. Equating the real and imaginary parts on both sides of this equation gives
\begin{equation}\label{1305-2}
\begin{cases}
8s^4 - 36s^3t - 48s^2t^2 + 36st^3 + 8t^4-9u^4=0,\\
9s^4 + 32s^3t - 54s^2t^2 - 32st^3 + 9t^4-2v^4=0.
\end{cases}
\end{equation}
The scheme defined by system \eqref{1305-2} is locally insoluble at $2$.

%_<x>:=PolynomialRing(Rationals());
%k<i>:=NumberField(x^2+1);
%K<s,t>:=PolynomialRing(k,2);
%e:=1;
%F:=i^e*(1-2*i)*(5+2*i)*(s+t*i)^4;
%F;
%L<s,t>:=PolynomialRing(Rationals(),2);
%_<i>:=PolynomialRing(L);
%F:=(9*i + 8)*s^4 + (32*i - 36)*s^3*t + (-54*i - 48)*s^2*t^2 + (-32*i + 36)*s*t^3 +
%(9*i + 8)*t^4;
%F;
%A:= 8*s^4 - 36*s^3*t - 48*s^2*t^2 + 36*s*t^3 + 8*t^4;
%B:=9*s^4 + 32*s^3*t - 54*s^2*t^2 - 32*s*t^3 + 9*t^4;
%F-A-i*B;
\begin{lstlisting}
P<s,t,u,v>:=ProjectiveSpace(Rationals(),3);
A:= 8*s^4 - 36*s^3*t - 48*s^2*t^2 + 36*s*t^3 + 8*t^4-9*u^4;
B:=9*s^4 + 32*s^3*t - 54*s^2*t^2 - 32*s*t^3 + 9*t^4-2*v^4;
S:=Scheme(P,[A,B]);
IsLocallySolvable(S,2);
\end{lstlisting}

{\bf Case 3:} $9u^4+2v^4i=i^2(1-2i)(5+2i)(s+ti)^4$. Equating the real and imaginary parts on both sides of this equation gives
\begin{equation}\label{1305-3}
\begin{cases}
-9s^4 - 32s^3t + 54s^2t^2 + 32st^3 - 9t^4 - 9u^4=0,\\
8s^4 - 36s^3t - 48s^2t^2 + 36st^3 + 8t^4 - 2v^4=0.
\end{cases}
\end{equation}
The scheme defined by system \eqref{1305-3} is locally insoluble at $2$.

%_<x>:=PolynomialRing(Rationals());
%k<i>:=NumberField(x^2+1);
%K<s,t>:=PolynomialRing(k,2);
%e:=2;
%F:=i^e*(1-2*i)*(5+2*i)*(s+t*i)^4;
%F;
%L<s,t>:=PolynomialRing(Rationals(),2);
%_<i>:=PolynomialRing(L);
%F:=(8*i - 9)*s^4 + (-36*i - 32)*s^3*t + (-48*i + 54)*s^2*t^2 + (36*i + 32)*s*t^3 +
%   (8*i - 9)*t^4;
%F;
%A:=- 9*s^4 - 32*s^3*t +54*s^2*t^2 + 32*s*t^3 - 9*t^4;
%B:=8*s^4 - 36*s^3*t - 48*s^2*t^2 + 36*s*t^3 + 8*t^4;
%F-A-i*B;

\begin{lstlisting}
P<s,t,u,v>:=ProjectiveSpace(Rationals(),3);
A:=- 9*s^4 - 32*s^3*t +54*s^2*t^2 + 32*s*t^3 - 9*t^4-9*u^4;
B:=8*s^4 - 36*s^3*t - 48*s^2*t^2 + 36*s*t^3 + 8*t^4-2*v^4;
S:=Scheme(P,[A,B]);
IsLocallySolvable(S,2);
\end{lstlisting}

{\bf Case 4:} $9u^4+2v^4i=i^3(1-2i)(5+2i)(s+ti)^4$. Equating the real and imaginary parts on both sides of this equation gives
\begin{equation}\label{1305-4}
\begin{cases}
-8s^4 + 36s^3t + 48s^2t^2 - 36st^3 - 8t^4 - 9u^4=0,\\
-9s^4 - 32s^3t + 54s^2t^2 + 32st^3 - 9t^4 - 2v^4=0.
\end{cases}
\end{equation}
The scheme defined by system \eqref{1305-4} is locally insoluble at $2$.

%\begin{lstlisting}
%_<x>:=PolynomialRing(Rationals());
%k<i>:=NumberField(x^2+1);
%K<s,t>:=PolynomialRing(k,2);
%e:=3;
%F:=i^e*(1-2*i)*(5+2*i)*(s+t*i)^4;
%F;
%L<s,t>:=PolynomialRing(Rationals(),2);
%_<i>:=PolynomialRing(L);
%F:=(-9*i - 8)*s^4 + (-32*i + 36)*s^3*t + (54*i + 48)*s^2*t^2 + (32*i - 36)*s*t^3 +
%   (-9*i - 8)*t^4;
%F;
%A:=- 8*s^4 + 36*s^3*t +48*s^2*t^2 - 36*s*t^3 - 8*t^4;
%B:=-9*s^4 - 32*s^3*t + 54*s^2*t^2 + 32*s*t^3 - 9*t^4;
%F-A-i*B;

\begin{lstlisting}
P<s,t,u,v>:=ProjectiveSpace(Rationals(),3);
A:= - 8*s^4 + 36*s^3*t +48*s^2*t^2 - 36*s*t^3 - 8*t^4-9*u^4;
B:=-9*s^4 - 32*s^3*t + 54*s^2*t^2 + 32*s*t^3 - 9*t^4-2*v^4;
S:=Scheme(P,[A,B]);
IsLocallySolvable(S,2);
\end{lstlisting}

{\bf Case 5:} $9u^4+2v^4i=(1-2i)(5-2i)(s+ti)^4$. Equating the real and imaginary parts on both sides of this equation gives
\begin{equation}\label{1305-5}
\begin{cases}
s^4 + 48s^3t - 6s^2t^2 - 48st^3 + t^4 - 9u^4=0,\\
-12s^4 + 4s^3t + 72s^2t^2 - 4st^3 - 12t^4 - 2v^4=0.
\end{cases}
\end{equation}
The scheme defined by system \eqref{1305-5} is locally insoluble at $2$.

%\begin{lstlisting}
%_<x>:=PolynomialRing(Rationals());
%k<i>:=NumberField(x^2+1);
%K<s,t>:=PolynomialRing(k,2);
%e:=0;
%F:=i^e*(1-2*i)*(5-2*i)*(s+t*i)^4;
%F;
%L<s,t>:=PolynomialRing(Rationals(),2);
%_<i>:=PolynomialRing(L);
%F:=(-12*i + 1)*s^4 + (4*i + 48)*s^3*t + (72*i - 6)*s^2*t^2 + (-4*i - 48)*s*t^3 +
%(-12*i + 1)*t^4;
%F;
%A:=s^4 + 48*s^3*t - 6*s^2*t^2 - 48*s*t^3 + t^4;
%B:=-12*s^4 + 4*s^3*t + 72*s^2*t^2 - 4*s*t^3 - 12*t^4;
%F-A-i*B;

\begin{lstlisting}
P<s,t,u,v>:=ProjectiveSpace(Rationals(),3);
A:=s^4 + 48*s^3*t - 6*s^2*t^2 - 48*s*t^3 + t^4-9*u^4;
B:=-12*s^4 + 4*s^3*t + 72*s^2*t^2 - 4*s*t^3 - 12*t^4-2*v^4;
S:=Scheme(P,[A,B]);
IsLocallySolvable(S,2);
\end{lstlisting}
{\bf Case 6:} $9u^4+2v^4i=i(1-2i)(5-2i)(s+ti)^4$. Equating the real and imaginary parts on both sides of this equation gives
\begin{equation}\label{1305-6}
\begin{cases}
12s^4 - 4s^3t - 72s^2t^2 + 4st^3 + 12t^4 - 9u^4=0,\\
s^4 + 48s^3t - 6s^2t^2 - 48st^3 + t^4 - 2v^4=0.
\end{cases}
\end{equation}
The scheme defined by system \eqref{1305-6} is locally insoluble at $2$.

%\begin{lstlisting}
%_<x>:=PolynomialRing(Rationals());
%k<i>:=NumberField(x^2+1);
%K<s,t>:=PolynomialRing(k,2);
%e:=1;
%F:=i^e*(1-2*i)*(5-2*i)*(s+t*i)^4;
%F;
%L<s,t>:=PolynomialRing(Rationals(),2);
%_<i>:=PolynomialRing(L);
%F:=(i + 12)*s^4 + (48*i - 4)*s^3*t + (-6*i - 72)*s^2*t^2 + (-48*i + 4)*s*t^3 + (i +
%   12)*t^4;
%F;
%A:=12*s^4 - 4*s^3*t - 72*s^2*t^2+ 4*s*t^3 + 12*t^4;
%B:=s^4 + 48*s^3*t - 6*s^2*t^2 - 48*s*t^3 + t^4;
%F-A-i*B;

\begin{lstlisting}
P<s,t,u,v>:=ProjectiveSpace(Rationals(),3);
A:=12*s^4 - 4*s^3*t - 72*s^2*t^2+ 4*s*t^3 + 12*t^4-9*u^4;
B:=s^4 + 48*s^3*t - 6*s^2*t^2 - 48*s*t^3 + t^4-2*v^4;
S:=Scheme(P,[A,B]);
IsLocallySolvable(S,2);
\end{lstlisting}
{\bf Case 7:} $9u^4+2v^4i=i^2(1-2i)(5-2i)(s+ti)^4$. Equating the real and imaginary parts on both sides of this equation gives
\begin{equation}\label{1305-7}
\begin{cases}
-s^4 - 48s^3t + 6s^2t^2 + 48st^3 - t^4 - 9u^4=0,\\
12s^4 - 4s^3t - 72s^2t^2 + 4st^3 + 12t^4 - 2v^4=0.
\end{cases}
\end{equation}
The scheme defined by system \eqref{1305-7} is locally insoluble at $2$.

%\begin{lstlisting}
%_<x>:=PolynomialRing(Rationals());
%k<i>:=NumberField(x^2+1);
%K<s,t>:=PolynomialRing(k,2);
%e:=2;
%F:=i^e*(1-2*i)*(5-2*i)*(s+t*i)^4;
%F;
%L<s,t>:=PolynomialRing(Rationals(),2);
%_<i>:=PolynomialRing(L);
%F:=(12*i - 1)*s^4 + (-4*i - 48)*s^3*t + (-72*i + 6)*s^2*t^2 + (4*i + 48)*s*t^3 +
%    (12*i - 1)*t^4;
%F;
%A:=- s^4 - 48*s^3*t +6*s^2*t^2 + 48*s*t^3 - t^4;
%B:=12*s^4 - 4*s^3*t - 72*s^2*t^2 + 4*s*t^3 + 12*t^4;
%F-A-i*B;

\begin{lstlisting}
P<s,t,u,v>:=ProjectiveSpace(Rationals(),3);
A:=- s^4 - 48*s^3*t +6*s^2*t^2 + 48*s*t^3 - t^4-9*u^4;
B:=12*s^4 - 4*s^3*t - 72*s^2*t^2 + 4*s*t^3 + 12*t^4-2*v^4;
S:=Scheme(P,[A,B]);
IsLocallySolvable(S,2);
\end{lstlisting}
{\bf Case 8:} $9u^4+2v^4i=i^3(1-2i)(5-2i)(s+ti)^4$. Equating the real and imaginary parts on both sides of this equation gives
\begin{equation}\label{1305-8}
\begin{cases}
-12s^4 + 4s^3t + 72s^2t^2 - 4st^3 - 12t^4 - 9u^4=0,\\
-s^4 - 48s^3t + 6s^2t^2 + 48st^3 - t^4 - 2v^4=0,
\end{cases}
\end{equation}
The scheme defined by system \eqref{1305-8} is locally insoluble at $2$.

%\begin{lstlisting}
%_<x>:=PolynomialRing(Rationals());
%k<i>:=NumberField(x^2+1);
%K<s,t>:=PolynomialRing(k,2);
%e:=3;
%F:=i^e*(1-2*i)*(5-2*i)*(s+t*i)^4;
%F;
%L<s,t>:=PolynomialRing(Rationals(),2);
%_<i>:=PolynomialRing(L);
%F:=(-i - 12)*s^4 + (-48*i + 4)*s^3*t + (6*i + 72)*s^2*t^2 + (48*i - 4)*s*t^3 + (-i
%    - 12)*t^4;
%F;
%A:=- 12*s^4 + 4*s^3*t + 72*s^2*t^2- 4*s*t^3 - 12*t^4;
%B:=-s^4 - 48*s^3*t + 6*s^2*t^2 + 48*s*t^3 - t^4;
%F-A-i*B;

\begin{lstlisting}
P<s,t,u,v>:=ProjectiveSpace(Rationals(),3);
A:=- 12*s^4 + 4*s^3*t + 72*s^2*t^2- 4*s*t^3 - 12*t^4-9*u^4;
B:=-s^4 - 48*s^3*t + 6*s^2*t^2 + 48*s*t^3 - t^4-2*v^4;
S:=Scheme(P,[A,B]);
IsLocallySolvable(S,2);
\end{lstlisting}

\subsubsection{{\bf{The case of \eqref{1305.2}}.}}%\[n=1305\quad (II)\] 
We now consider \eqref{1305.2}, which we can write as
\begin{equation} (18u^4+v^4i)(18u^4-iv^4)=(1+2i)(1-2i)(5+2i)(5-2i)w^4.
\end{equation}
Hence, \[\begin{split}
    18u^4+iv^4&\equiv -2+i\pmod{5}\\
    &\equiv 0\pmod{1+2i}\\
    &\not\equiv 0\pmod{1-2i}.
\end{split}\] 
This implies that there exist integers $s,t$ such that 
\[18u^4+v^4i=i^{\epsilon}(1+2i)(5\pm 2i)(s+ti)^4,\]
with $\epsilon \in \{0,1,2,3\}$.

{\bf Case 1:} $18u^4+v^4i=(1+2i)(5+2i)(s+ti)^4$. Equating the real and imaginary parts on both sides of this equation gives
\begin{equation}\label{1305.2-1}
\begin{cases}
s^4 - 48s^3t - 6s^2t^2 + 48st^3 + t^4 - 18u^4=0,\\
12s^4 + 4s^3t - 72s^2t^2 - 4st^3 + 12t^4 - v^4=0.
\end{cases}
\end{equation}
The scheme defined by system \eqref{1305.2-1} is locally insoluble at $2$.

%\begin{lstlisting}
%_<x>:=PolynomialRing(Rationals());
%k<i>:=NumberField(x^2+1);
%K<s,t>:=PolynomialRing(k,2);
%e:=0;
%F:=i^e*(1+2*i)*(5+2*i)*(s+t*i)^4;
%F;
%L<s,t>:=PolynomialRing(Rationals(),2);
%_<i>:=PolynomialRing(L);
%F:=(12*i + 1)*s^4 + (4*i - 48)*s^3*t + (-72*i - 6)*s^2*t^2 + (-4*i + 48)*s*t^3 +
%    (12*i + 1)*t^4;
%F;
%A:= s^4 - 48*s^3*t -6*s^2*t^2 + 48*s*t^3 + t^4;
%B:=12*s^4 + 4*s^3*t - 72*s^2*t^2 - 4*s*t^3 + 12*t^4;
%F-A-i*B;
\begin{lstlisting}
P<s,t,u,v>:=ProjectiveSpace(Rationals(),3);
A:= s^4 - 48*s^3*t -6*s^2*t^2 + 48*s*t^3 + t^4-18*u^4;
B:=12*s^4 + 4*s^3*t - 72*s^2*t^2 - 4*s*t^3 + 12*t^4-v^4;
S:=Scheme(P,[A,B]);
IsLocallySolvable(S,2);
\end{lstlisting}
{\bf Case 2:} $18u^4+v^4i=i(1+2i)(5+2i)(s+ti)^4$. Equating the real and imaginary parts on both sides of this equation gives
\begin{equation}\label{1305.2-2}
\begin{cases}
-12s^4 - 4s^3t + 72s^2t^2 + 4st^3 - 12t^4 - 18u^4=0,\\
s^4 - 48s^3t - 6s^2t^2 + 48st^3 + t^4 - v^4=0.
\end{cases}
\end{equation}
The scheme defined by system \eqref{1305.2-2} is locally insoluble at $2$.

%\begin{lstlisting}
%_<x>:=PolynomialRing(Rationals());
%k<i>:=NumberField(x^2+1);
%K<s,t>:=PolynomialRing(k,2);
%e:=1;
%F:=i^e*(1+2*i)*(5+2*i)*(s+t*i)^4;
%F;
%L<s,t>:=PolynomialRing(Rationals(),2);
%_<i>:=PolynomialRing(L);
%F:=(i - 12)*s^4 + (-48*i - 4)*s^3*t + (-6*i + 72)*s^2*t^2 + (48*i + 4)*s*t^3 + (i -
%    12)*t^4;
%F;
%A:=- 12*s^4 - 4*s^3*t + 72*s^2*t^2  + 4*s*t^3 - 12*t^4;
%B:=s^4 - 48*s^3*t - 6*s^2*t^2 + 48*s*t^3 + t^4;
%F-A-i*B;
\begin{lstlisting}
P<s,t,u,v>:=ProjectiveSpace(Rationals(),3);
A:=- 12*s^4 - 4*s^3*t + 72*s^2*t^2  + 4*s*t^3 - 12*t^4-18*u^4;
B:=s^4 - 48*s^3*t - 6*s^2*t^2 + 48*s*t^3 + t^4-v^4;
S:=Scheme(P,[A,B]);
IsLocallySolvable(S,2);
\end{lstlisting}

{\bf Case 3:} $18u^4+v^4i=i^2(1+2i)(5+2i)(s+ti)^4$. Equating the real and imaginary parts on both sides of this equation gives
\begin{equation}\label{1305.2-3}
\begin{cases}
-s^4 + 48s^3t + 6s^2t^2 - 48st^3 - t^4 - 18u^4=0,\\
-12s^4 - 4s^3t + 72s^2t^2 + 4st^3 - 12t^4 - v^4=0.
\end{cases}
\end{equation}
The scheme defined by system \eqref{1305.2-3} is locally insoluble at $2$.

%\begin{lstlisting}
%_<x>:=PolynomialRing(Rationals());
%k<i>:=NumberField(x^2+1);
%K<s,t>:=PolynomialRing(k,2);
%e:=2;
%F:=i^e*(1+2*i)*(5+2*i)*(s+t*i)^4;
%F;
%L<s,t>:=PolynomialRing(Rationals(),2);
%_<i>:=PolynomialRing(L);
%F:=(-12*i - 1)*s^4 + (-4*i + 48)*s^3*t + (72*i + 6)*s^2*t^2 + (4*i - 48)*s*t^3 +
%    (-12*i - 1)*t^4;
%F;
%A:= - s^4 + 48*s^3*t + 6*s^2*t^2 - 48*s*t^3 - t^4;
%B:=-12*s^4 - 4*s^3*t + 72*s^2*t^2 + 4*s*t^3 - 12*t^4;
%F-A-i*B;
\begin{lstlisting}
P<s,t,u,v>:=ProjectiveSpace(Rationals(),3);
A:= - s^4 + 48*s^3*t + 6*s^2*t^2 - 48*s*t^3 - t^4-18*u^4;
B:=-12*s^4 - 4*s^3*t + 72*s^2*t^2 + 4*s*t^3 - 12*t^4-v^4;
S:=Scheme(P,[A,B]);
IsLocallySolvable(S,2);
\end{lstlisting}

{\bf Case 4:} $18u^4+v^4i=i^3(1+2i)(5+2i)(s+ti)^4$. Equating the real and imaginary parts on both sides of this equation gives
\begin{equation}\label{1305.2-4}
\begin{cases}
12s^4 + 4s^3t - 72s^2t^2 - 4st^3 + 12t^4 - 18u^4=0,\\
-s^4 + 48s^3t + 6s^2t^2 - 48st^3 - t^4 - v^4=0.
\end{cases}
\end{equation}
The scheme defined by system \eqref{1305.2-4} is locally insoluble at $2$.
%\begin{lstlisting}
%_<x>:=PolynomialRing(Rationals());
%k<i>:=NumberField(x^2+1);
%K<s,t>:=PolynomialRing(k,2);
%e:=3;
%F:=i^e*(1+2*i)*(5+2*i)*(s+t*i)^4;
%F;
%L<s,t>:=PolynomialRing(Rationals(),2);
%_<i>:=PolynomialRing(L);
%F:=(-i + 12)*s^4 + (48*i + 4)*s^3*t + (6*i - 72)*s^2*t^2 + (-48*i - 4)*s*t^3 + (-i
%    + 12)*t^4;
%F;
%A:=12*s^4 + 4*s^3*t - 72*s^2*t^2   - 4*s*t^3 + 12*t^4;
%B:=-s^4 + 48*s^3*t + 6*s^2*t^2 - 48*s*t^3 - t^4;
%F-A-i*B;

\begin{lstlisting}
P<s,t,u,v>:=ProjectiveSpace(Rationals(),3);
A:=12*s^4 + 4*s^3*t - 72*s^2*t^2   - 4*s*t^3 + 12*t^4-18*u^4;
B:=-s^4 + 48*s^3*t + 6*s^2*t^2 - 48*s*t^3 - t^4-v^4;
S:=Scheme(P,[A,B]);
IsLocallySolvable(S,2);
\end{lstlisting}

{\bf Case 5:} $18u^4+v^4i=(1+2i)(5-2i)(s+ti)^4$. Equating the real and imaginary parts on both sides of this equation gives
\begin{equation}\label{1305.2-5}
\begin{cases}
9s^4 - 32s^3t - 54s^2t^2 + 32st^3 + 9t^4 - 18u^4=0,\\
8s^4 + 36s^3t - 48s^2t^2 - 36st^3 + 8t^4 - v^4=0.
\end{cases}
\end{equation}
The scheme defined by system \eqref{1305.2-5} is locally insoluble at $2$.
%\begin{lstlisting}
%_<x>:=PolynomialRing(Rationals());
%k<i>:=NumberField(x^2+1);
%K<s,t>:=PolynomialRing(k,2);
%e:=0;
%F:=i^e*(1+2*i)*(5-2*i)*(s+t*i)^4;
%F;
%L<s,t>:=PolynomialRing(Rationals(),2);
%_<i>:=PolynomialRing(L);
%F:=(8*i + 9)*s^4 + (36*i - 32)*s^3*t + (-48*i - 54)*s^2*t^2 + (-36*i + 32)*s*t^3 +
%    (8*i + 9)*t^4;
%F;
%A:=9*s^4 - 32*s^3*t -  54*s^2*t^2 + 32*s*t^3 + 9*t^4;
%B:=8*s^4 + 36*s^3*t - 48*s^2*t^2 - 36*s*t^3 + 8*t^4;
%F-A-i*B;

\begin{lstlisting}
P<s,t,u,v>:=ProjectiveSpace(Rationals(),3);
A:=9*s^4 - 32*s^3*t -  54*s^2*t^2 + 32*s*t^3 + 9*t^4-18*u^4;
B:=8*s^4 + 36*s^3*t - 48*s^2*t^2 - 36*s*t^3 + 8*t^4-v^4;
S:=Scheme(P,[A,B]);
IsLocallySolvable(S,2);
\end{lstlisting}
{\bf Case 6:} $18u^4+v^4i=i(1+2i)(5-2i)(s+ti)^4$. Equating the real and imaginary parts on both sides of this equation gives
\begin{equation}\label{1305.2-6}
\begin{cases}
-8s^4 - 36s^3t + 48s^2t^2 + 36st^3 - 8t^4 - 18u^4=0,\\
9s^4 - 32s^3t - 54s^2t^2 + 32st^3 + 9t^4 - v^4=0.
\end{cases}
\end{equation}
The scheme defined by system \eqref{1305.2-6} is locally insoluble at $3$.
%\begin{lstlisting}
%_<x>:=PolynomialRing(Rationals());
%k<i>:=NumberField(x^2+1);
%K<s,t>:=PolynomialRing(k,2);
%e:=1;
%F:=i^e*(1+2*i)*(5-2*i)*(s+t*i)^4;
%F;
%L<s,t>:=PolynomialRing(Rationals(),2);
%_<i>:=PolynomialRing(L);
%F:=(9*i - 8)*s^4 + (-32*i - 36)*s^3*t + (-54*i + 48)*s^2*t^2 + (32*i + 36)*s*t^3 +
%    (9*i - 8)*t^4;
%F;
%A:= - 8*s^4 - 36*s^3*t +48*s^2*t^2 + 36*s*t^3 - 8*t^4;
%B:=9*s^4 - 32*s^3*t - 54*s^2*t^2 + 32*s*t^3 + 9*t^4;
%F-A-i*B;

\begin{lstlisting}
P<s,t,u,v>:=ProjectiveSpace(Rationals(),3);
A:= - 8*s^4 - 36*s^3*t +48*s^2*t^2 + 36*s*t^3 - 8*t^4-18*u^4;
B:=9*s^4 - 32*s^3*t - 54*s^2*t^2 + 32*s*t^3 + 9*t^4-v^4;
S:=Scheme(P,[A,B]);
IsLocallySolvable(S,3);
\end{lstlisting}
{\bf Case 7:} $18u^4+v^4i=i^2(1+2i)(5-2i)(s+ti)^4$. Equating the real and imaginary parts on both sides of this equation gives
\begin{equation}\label{1305.2-7}
\begin{cases}
-9s^4 + 32s^3t + 54s^2t^2 - 32st^3 - 9t^4 - 18u^4=0,\\
-8s^4 - 36s^3t + 48s^2t^2 + 36st^3 - 8t^4 - v^4=0.
\end{cases}
\end{equation}
The scheme defined by system \eqref{1305.2-7} is locally insoluble at $2$.
%\begin{lstlisting}
%_<x>:=PolynomialRing(Rationals());
%k<i>:=NumberField(x^2+1);
%K<s,t>:=PolynomialRing(k,2);
%e:=2;
%F:=i^e*(1+2*i)*(5-2*i)*(s+t*i)^4;
%F;
%L<s,t>:=PolynomialRing(Rationals(),2);
%_<i>:=PolynomialRing(L);
%F:=(-8*i - 9)*s^4 + (-36*i + 32)*s^3*t + (48*i + 54)*s^2*t^2 + (36*i - 32)*s*t^3 +
%    (-8*i - 9)*t^4;
%F;
%A:=- 9*s^4 + 32*s^3*t + 54*s^2*t^2 - 32*s*t^3 - 9*t^4;
%B:=-8*s^4 - 36*s^3*t + 48*s^2*t^2 + 36*s*t^3 - 8*t^4;
%F-A-i*B;

\begin{lstlisting}
P<s,t,u,v>:=ProjectiveSpace(Rationals(),3);
A:=- 9*s^4 + 32*s^3*t + 54*s^2*t^2 - 32*s*t^3 - 9*t^4-18*u^4;
B:=-8*s^4 - 36*s^3*t + 48*s^2*t^2 + 36*s*t^3 - 8*t^4-v^4;
S:=Scheme(P,[A,B]);
IsLocallySolvable(S,2);
\end{lstlisting}
{\bf Case 8:} $18u^4+v^4i=i^3(1+2i)(5-2i)(s+ti)^4$. Equating the real and imaginary parts on both sides of this equation gives
\begin{equation}\label{1305.2-8}
\begin{cases}
8s^4 + 36s^3t - 48s^2t^2 - 36st^3 + 8t^4 - 18u^4=0,\\
-9s^4 + 32s^3t + 54s^2t^2 - 32s*t^3 - 9t^4 - v^4=0.
\end{cases}
\end{equation}
The scheme defined by system \eqref{1305.2-8} is locally insoluble at $2$.

%\begin{lstlisting}
%_<x>:=PolynomialRing(Rationals());
%k<i>:=NumberField(x^2+1);
%K<s,t>:=PolynomialRing(k,2);
%e:=3;
%F:=i^e*(1+2*i)*(5-2*i)*(s+t*i)^4;
%F;
%L<s,t>:=PolynomialRing(Rationals(),2);
%_<i>:=PolynomialRing(L);
%F:=(-9*i + 8)*s^4 + (32*i + 36)*s^3*t + (54*i - 48)*s^2*t^2 + (-32*i - 36)*s*t^3 +
%    (-9*i + 8)*t^4;
%F;
%A:= 8*s^4 + 36*s^3*t -48*s^2*t^2 - 36*s*t^3 + 8*t^4;
%B:=-9*s^4 + 32*s^3*t + 54*s^2*t^2 - 32*s*t^3 - 9*t^4;
%F-A-i*B;

\begin{lstlisting}
P<s,t,u,v>:=ProjectiveSpace(Rationals(),3);
A:= 8*s^4 + 36*s^3*t -48*s^2*t^2 - 36*s*t^3 + 8*t^4-18*u^4;
B:=-9*s^4 + 32*s^3*t + 54*s^2*t^2 - 32*s*t^3 - 9*t^4-v^4;
S:=Scheme(P,[A,B]);
IsLocallySolvable(S,2);
\end{lstlisting}

\subsection{The case of $n=1493$.}%\[n=1493\] 
According to Table \ref{tb:factorisation}, in the case $n=1493$ it remains to
show that equation
\begin{equation}\label{1493}
4u^8+v^8=1493w^4
\end{equation}
has no integer solutions satisfying \eqref{eq:cond}, which in this case reduces to \[\gcd(2u,v)=\gcd(2u,1493w)=\gcd(v,1493w)=1.\]  Write \eqref{1493} as
\begin{equation}\label{1493.1} (2u^4+v^4i)(2u^4-v^4i)=(38+7i)(38-7i)w^4.
\end{equation}
This implies that there exist integers $s,t$ such that 
\[2u^4+v^4i=i^{\epsilon}(38\pm 7i)(s+ti)^4,\]
with $\epsilon \in \{0,1,2,3\}$.

{\bf Case 1:} $2u^4+v^4i=(38+7i)(s+ti)^4$. Equating the real and imaginary parts on both sides of this equation gives
\begin{equation}\label{1493-1}
\begin{cases}
38s^4 - 28s^3t - 228s^2t^2 + 28st^3 + 38t^4 - 2u^4=0,\\
7s^4 + 152s^3t - 42s^2t^2 - 152st^3 + 7t^4 - v^4=0.
\end{cases}
\end{equation}
The scheme defined by system \eqref{1493-1} is locally insoluble at $2$.

%\begin{lstlisting}
%_<x>:=PolynomialRing(Rationals());
%k<i>:=NumberField(x^2+1);
%K<s,t>:=PolynomialRing(k,2);
%e:=0;
%F:=i^e*(38+7*i)*(s+t*i)^4;
%F;
%L<s,t>:=PolynomialRing(Rationals(),2);
%_<i>:=PolynomialRing(L);
%F:=(7*i + 38)*s^4 + (152*i - 28)*s^3*t + (-42*i - 228)*s^2*t^2 + (-152*i +
%   28)*s*t^3 + (7*i + 38)*t^4;
%F;
%A:=38*s^4 - 28*s^3*t -228*s^2*t^2 + 28*s*t^3 + 38*t^4;
%B:=7*s^4 + 152*s^3*t - 42*s^2*t^2 - 152*s*t^3 + 7*t^4;
%F-A-i*B;

\begin{lstlisting}
P<s,t,u,v>:=ProjectiveSpace(Rationals(),3);
A:=38*s^4 - 28*s^3*t -228*s^2*t^2 + 28*s*t^3 + 38*t^4-2*u^4;
B:=7*s^4 + 152*s^3*t - 42*s^2*t^2 - 152*s*t^3 + 7*t^4-v^4;
S:=Scheme(P,[A,B]);
IsLocallySolvable(S,2);
\end{lstlisting}

{\bf Case 2:} $2u^4+v^4i=i(38+7i)(s+ti)^4$. Equating the real and imaginary parts on both sides of this equation gives
\begin{equation}\label{1493-2}
\begin{cases}
-7s^4 - 152s^3t + 42s^2t^2 + 152st^3 - 7t^4 - 2u^4=0,\\
38s^4 - 28s^3t - 228s^2t^2 + 28st^3 + 38t^4 - v^4=0.
\end{cases}
\end{equation}
The scheme defined by system \eqref{1493-2} is locally insoluble at $2$.
%\begin{lstlisting}
%_<x>:=PolynomialRing(Rationals());
%k<i>:=NumberField(x^2+1);
%K<s,t>:=PolynomialRing(k,2);
%e:=1;
%F:=i^e*(38+7*i)*(s+t*i)^4;
%F;
%L<s,t>:=PolynomialRing(Rationals(),2);
%_<i>:=PolynomialRing(L);
%F:=(38*i - 7)*s^4 + (-28*i - 152)*s^3*t + (-228*i + 42)*s^2*t^2 + (28*i +
%   152)*s*t^3 + (38*i - 7)*t^4;
%F;
%A:=- 7*s^4 - 152*s^3*t + 42*s^2*t^2 + 152*s*t^3 - 7*t^4;
%B:=38*s^4 - 28*s^3*t - 228*s^2*t^2 + 28*s*t^3 + 38*t^4;
%F-A-i*B;

\begin{lstlisting}
P<s,t,u,v>:=ProjectiveSpace(Rationals(),3);
A:=- 7*s^4 - 152*s^3*t + 42*s^2*t^2 + 152*s*t^3 - 7*t^4-2*u^4;
B:=38*s^4 - 28*s^3*t - 228*s^2*t^2 + 28*s*t^3 + 38*t^4-v^4;
S:=Scheme(P,[A,B]);
IsLocallySolvable(S,2);
\end{lstlisting}

{\bf Case 3:} $2u^4+v^4i=i^2(38+7i)(s+ti)^4$. Equating the real and imaginary parts on both sides of this equation gives
\begin{equation}\label{1493-3}
\begin{cases}
- 38s^4 + 28s^3t +228s^2t^2 - 28st^3 - 38t^4-2u^4=0,\\
-7s^4 - 152s^3t + 42s^2t^2 + 152st^3 - 7t^4-v^4=0;
\end{cases}
\end{equation}
The scheme defined by system \eqref{1493-3} is locally insoluble at $5$.

%\begin{lstlisting}
%_<x>:=PolynomialRing(Rationals());
%k<i>:=NumberField(x^2+1);
%K<s,t>:=PolynomialRing(k,2);
%e:=2;
%F:=i^e*(38+7*i)*(s+t*i)^4;
%F;
%L<s,t>:=PolynomialRing(Rationals(),2);
%_<i>:=PolynomialRing(L);
%F:=(-7*i - 38)*s^4 + (-152*i + 28)*s^3*t + (42*i + 228)*s^2*t^2 + (152*i -
%    28)*s*t^3 + (-7*i - 38)*t^4;
%F;
%A:=- 38*s^4 + 28*s^3*t +228*s^2*t^2 - 28*s*t^3 - 38*t^4;
%B:=-7*s^4 - 152*s^3*t + 42*s^2*t^2 + 152*s*t^3 - 7*t^4;
%F-A-i*B;

\begin{lstlisting}
P<s,t,u,v>:=ProjectiveSpace(Rationals(),3);
A:=- 38*s^4 + 28*s^3*t +228*s^2*t^2 - 28*s*t^3 - 38*t^4-2*u^4;
B:=-7*s^4 - 152*s^3*t + 42*s^2*t^2 + 152*s*t^3 - 7*t^4-v^4;
S:=Scheme(P,[A,B]);
IsLocallySolvable(S,5);
\end{lstlisting}

{\bf Case 4:} $2u^4+v^4i=i^3(38+7i)(s+ti)^4$. Equating the real and imaginary parts on both sides of this equation gives
\begin{equation}\label{1493-4}
\begin{cases}
7s^4 + 152s^3t - 42s^2t^2 - 152st^3 + 7t^4 - 2u^4=0,\\
-38s^4 + 28s^3t + 228s^2t^2 - 28st^3 - 38t^4 - v^4=0.
\end{cases}
\end{equation}
The scheme defined by system \eqref{1493-4} is locally insoluble at $2$.

\begin{lstlisting}
P<s,t,u,v>:=ProjectiveSpace(Rationals(),3);
A:=7*s^4 + 152*s^3*t - 42*s^2*t^2 - 152*s*t^3 + 7*t^4 - 2*u^4;
B:=-38*s^4 + 28*s^3*t + 228*s^2*t^2 - 28*s*t^3 - 38*t^4 - v^4;
S:=Scheme(P,[A,B]);
IsLocallySolvable(S,2);
\end{lstlisting}

{\bf Case 5:} $2u^4+v^4i=(38-7i)(s+ti)^4$. Equating the real and imaginary parts on both sides of this equation gives
\begin{equation}\label{1493-5}
\begin{cases}
38s^4 + 28s^3t - 228s^2t^2 - 28st^3 + 38t^4 - 2u^4=0,\\
-7s^4 + 152s^3t + 42s^2t^2 - 152st^3 - 7t^4 - v^4=0.
\end{cases}
\end{equation}
The scheme defined by system \eqref{1493-5} is locally insoluble at $2$.

%\begin{lstlisting}
%_<x>:=PolynomialRing(Rationals());
%k<i>:=NumberField(x^2+1);
%K<s,t>:=PolynomialRing(k,2);
%e:=0;
%F:=i^e*(38-7*i)*(s+t*i)^4;
%F;
%L<s,t>:=PolynomialRing(Rationals(),2);
%_<i>:=PolynomialRing(L);
%F:=(-7*i + 38)*s^4 + (152*i + 28)*s^3*t + (42*i - 228)*s^2*t^2 + (-152*i -
%    28)*s*t^3 + (-7*i + 38)*t^4;
%F;
%A:=38*s^4 + 28*s^3*t -228*s^2*t^2 - 28*s*t^3 + 38*t^4;
%B:=-7*s^4 + 152*s^3*t + 42*s^2*t^2 - 152*s*t^3 - 7*t^4;
%F-A-i*B;

\begin{lstlisting}
P<s,t,u,v>:=ProjectiveSpace(Rationals(),3);
A:=38*s^4 + 28*s^3*t -228*s^2*t^2 - 28*s*t^3 + 38*t^4-2*u^4;
B:=-7*s^4 + 152*s^3*t + 42*s^2*t^2 - 152*s*t^3 - 7*t^4-v^4;
S:=Scheme(P,[A,B]);
IsLocallySolvable(S,2);
\end{lstlisting}

{\bf Case 6:} $2u^4+v^4i=i(38-7i)(s+ti)^4$. Equating the real and imaginary parts on both sides of this equation gives
\begin{equation}\label{1493-6}
\begin{cases}
7s^4 - 152s^3t - 42s^2t^2 + 152st^3 + 7t^4 - 2u^4=0,\\
38s^4 + 28s^3t - 228s^2t^2 - 28st^3 + 38t^4 - v^4=0.
\end{cases}
\end{equation}
The scheme defined by system \eqref{1493-6} is locally insoluble at $2$.
%
%\begin{lstlisting}
%_<x>:=PolynomialRing(Rationals());
%k<i>:=NumberField(x^2+1);
%K<s,t>:=PolynomialRing(k,2);
%e:=1;
%F:=i^e*(38-7*i)*(s+t*i)^4;
%F;
%L<s,t>:=PolynomialRing(Rationals(),2);
%_<i>:=PolynomialRing(L);
%F:=(38*i + 7)*s^4 + (28*i - 152)*s^3*t + (-228*i - 42)*s^2*t^2 + (-28*i +
%   152)*s*t^3 + (38*i + 7)*t^4;
%F;
%A:=7*s^4 - 152*s^3*t -42*s^2*t^2 + 152*s*t^3 + 7*t^4;
%B:=38*s^4 + 28*s^3*t - 228*s^2*t^2 - 28*s*t^3 + 38*t^4;
%F-A-i*B;

\begin{lstlisting}
P<s,t,u,v>:=ProjectiveSpace(Rationals(),3);
A:=7*s^4 - 152*s^3*t -42*s^2*t^2 + 152*s*t^3 + 7*t^4-2*u^4;
B:=38*s^4 + 28*s^3*t - 228*s^2*t^2 - 28*s*t^3 + 38*t^4-v^4;
S:=Scheme(P,[A,B]);
IsLocallySolvable(S,2);
\end{lstlisting}

{\bf Case 7:} $2u^4+v^4i=i^2(38-7i)(s+ti)^4$. Equating the real and imaginary parts on both sides of this equation gives
\begin{equation}\label{1493-7}
\begin{cases}
-38s^4 - 28s^3t + 228s^2t^2 + 28st^3 - 38t^4 - 2u^4=0,\\
7s^4 - 152s^3t - 42s^2t^2 + 152st^3 + 7t^4 - v^4=0.
\end{cases}
\end{equation}
The scheme defined by system \eqref{1493-7} is locally insoluble at $2$.

%\begin{lstlisting}
%_<x>:=PolynomialRing(Rationals());
%k<i>:=NumberField(x^2+1);
%K<s,t>:=PolynomialRing(k,2);
%e:=2;
%F:=i^e*(38-7*i)*(s+t*i)^4;
%F;
%L<s,t>:=PolynomialRing(Rationals(),2);
%_<i>:=PolynomialRing(L);
%F:=(7*i - 38)*s^4 + (-152*i - 28)*s^3*t + (-42*i + 228)*s^2*t^2 + (152*i +
%    28)*s*t^3 + (7*i - 38)*t^4;
%F;
%A:=- 38*s^4 - 28*s^3*t + 228*s^2*t^2 + 28*s*t^3 - 38*t^4;
%B:=7*s^4 - 152*s^3*t - 42*s^2*t^2 + 152*s*t^3 + 7*t^4;
%F-A-i*B;

\begin{lstlisting}
P<s,t,u,v>:=ProjectiveSpace(Rationals(),3);
A:=- 38*s^4 - 28*s^3*t + 228*s^2*t^2 + 28*s*t^3 - 38*t^4-2*u^4;
B:=7*s^4 - 152*s^3*t - 42*s^2*t^2 + 152*s*t^3 + 7*t^4-v^4;
S:=Scheme(P,[A,B]);
IsLocallySolvable(S,2);
\end{lstlisting}

{\bf Case 8:} $2u^4+v^4i=i^3(38-7i)(s+ti)^4$. Equating the real and imaginary parts on both sides of this equation gives
\begin{equation}\label{1493-8}
\begin{cases}
-7s^4 + 152s^3t + 42s^2t^2 - 152st^3 - 7t^4 - 2u^4=0,\\
-38s^4 - 28s^3t + 228s^2t^2 + 28st^3 - 38t^4 - v^4=0.
\end{cases}
\end{equation}
The scheme defined by system \eqref{1493-8} is locally insoluble at $2$.

%\begin{lstlisting}
%_<x>:=PolynomialRing(Rationals());
%k<i>:=NumberField(x^2+1);
%K<s,t>:=PolynomialRing(k,2);
%e:=3;
%F:=i^e*(38-7*i)*(s+t*i)^4;
%F;
%L<s,t>:=PolynomialRing(Rationals(),2);
%_<i>:=PolynomialRing(L);
%F:=(-38*i - 7)*s^4 + (-28*i + 152)*s^3*t + (228*i + 42)*s^2*t^2 + (28*i -
%    152)*s*t^3 + (-38*i - 7)*t^4;
%F;
%A:=- 7*s^4 + 152*s^3*t +42*s^2*t^2 - 152*s*t^3 - 7*t^4;
%B:=-38*s^4 - 28*s^3*t + 228*s^2*t^2 + 28*s*t^3 - 38*t^4;
%F-A-i*B;

\begin{lstlisting}
P<s,t,u,v>:=ProjectiveSpace(Rationals(),3);
A:=- 7*s^4 + 152*s^3*t +42*s^2*t^2 - 152*s*t^3 - 7*t^4-2*u^4;
B:=-38*s^4 - 28*s^3*t + 228*s^2*t^2 + 28*s*t^3 - 38*t^4-v^4;
S:=Scheme(P,[A,B]);
IsLocallySolvable(S,2);
\end{lstlisting}

\subsection{The case of $n=1610$.}%\[n=1610.\] 
According to Table \ref{tb:factorisation}, in the case $n=1610$ it remains to
show that equation
\begin{equation}\label{1610}
25921 u^8 + 400 v^8= w^4
\end{equation}
has no integer solutions satisfying \eqref{eq:cond}, which in this case reduces to \[\gcd(161u,20v)=\gcd(161u,w)=\gcd(20v,w)=1.\] Write \eqref{1610} as
\begin{equation} (161 u^4+20v^4i)(161 u^4-20v^4i)=w^4.
\end{equation}
This implies that there exist integers $s,t$ such that 
\[161 u^4+20v^4i=i^{\epsilon}(s+ti)^4,\]
with $\epsilon \in \{0,1,2,3\}$.

{\bf Case 1:} $161 u^4+20v^4i=(s+ti)^4$. Equating the real and imaginary parts on both sides of this equation gives
\begin{equation}\label{1610-1}
\begin{cases}
s^4 - 6s^2t^2 + t^4-161u^4=0,\\
4s^3t - 4st^3-20v^4=0.
\end{cases}
\end{equation}
The scheme defined by system \eqref{1610-1} is locally insoluble at $17$.
%\begin{lstlisting}
%_<x>:=PolynomialRing(Rationals());
%k<i>:=NumberField(x^2+1);
%K<s,t>:=PolynomialRing(k,2);
%e:=0;
%F:=i^e*(s+t*i)^4;
%F;
%L<s,t>:=PolynomialRing(Rationals(),2);
%_<i>:=PolynomialRing(L);
%F:=s^4 + 4*i*s^3*t - 6*s^2*t^2 - 4*i*s*t^3 + t^4;
%F;
%A:=s^4 - 6*s^2*t^2 + t^4;
%B:=(4*s^3*t - 4*s*t^3);
%F-A-i*B;

\begin{lstlisting}
P<s,t,u,v>:=ProjectiveSpace(Rationals(),3);
A:=s^4 - 6*s^2*t^2 + t^4-161*u^4;
B:=4*s^3*t - 4*s*t^3-20*v^4;
S:=Scheme(P,[A,B]);
IsLocallySolvable(S,17);
\end{lstlisting}

{\bf Case 2:} $161 u^4+20v^4i=i(s+ti)^4$. Equating the real and imaginary parts on both sides of this equation gives
\begin{equation}\label{1610-2}
\begin{cases}
- 4s^3t + 4st^3-161u^4=0,\\
s^4 - 6s^2t^2 + t^4-20v^4=0.
\end{cases}
\end{equation}
The scheme defined by system \eqref{1610-2} is locally insoluble at $2$.

%\begin{lstlisting}
%_<x>:=PolynomialRing(Rationals());
%k<i>:=NumberField(x^2+1);
%K<s,t>:=PolynomialRing(k,2);
%e:=1;
%F:=i^e*(s+t*i)^4;
%F;
%L<s,t>:=PolynomialRing(Rationals(),2);
%_<i>:=PolynomialRing(L);
%F:=i*s^4 - 4*s^3*t - 6*i*s^2*t^2 + 4*s*t^3 + i*t^4;
%F;
%A:=- 4*s^3*t + 4*s*t^3;
%B:=(s^4 - 6*s^2*t^2 + t^4);
%F-A-i*B;

\begin{lstlisting}
P<s,t,u,v>:=ProjectiveSpace(Rationals(),3);
A:=- 4*s^3*t + 4*s*t^3-161*u^4;
B:=s^4 - 6*s^2*t^2 + t^4-20*v^4;
S:=Scheme(P,[A,B]);
IsLocallySolvable(S,2);
\end{lstlisting}

{\bf Case 3:} $161 u^4+20v^4i=i^2(s+ti)^4$. Equating the real and imaginary parts on both sides of this equation gives
\begin{equation}\label{1610-3}
\begin{cases}
- s^4 + 6s^2t^2 - t^4-161u^4=0,\\
-4s^3t + 4st^3-20v^4=0.
\end{cases}
\end{equation}
The scheme defined by system \eqref{1610-3} is locally insoluble at $2$.

%\begin{lstlisting}
%_<x>:=PolynomialRing(Rationals());
%k<i>:=NumberField(x^2+1);
%K<s,t>:=PolynomialRing(k,2);
%e:=2;
%F:=i^e*(s+t*i)^4;
%F;
%L<s,t>:=PolynomialRing(Rationals(),2);
%_<i>:=PolynomialRing(L);
%F:=-s^4 - 4*i*s^3*t + 6*s^2*t^2 + 4*i*s*t^3 - t^4;
%F;
%A:=- s^4 + 6*s^2*t^2 - t^4;
%B:=-4*s^3*t + 4*s*t^3;
%F-A-i*B;

\begin{lstlisting}
P<s,t,u,v>:=ProjectiveSpace(Rationals(),3);
A:=- s^4 + 6*s^2*t^2 - t^4-161*u^4;
B:=-4*s^3*t + 4*s*t^3-20*v^4;
S:=Scheme(P,[A,B]);
IsLocallySolvable(S,2);
\end{lstlisting}

{\bf Case 4:} $161 u^4+20v^4i=i^3(s+ti)^4$. Equating the real and imaginary parts on both sides of this equation gives
\begin{equation}\label{1610-4}
\begin{cases}
4s^3t - 4st^3-161u^4=0,\\
-s^4 + 6s^2t^2 - t^4-20v^4=0.
\end{cases}
\end{equation}
The scheme defined by system \eqref{1610-4} is locally insoluble at $2$.

%\begin{lstlisting}
%_<x>:=PolynomialRing(Rationals());
%k<i>:=NumberField(x^2+1);
%K<s,t>:=PolynomialRing(k,2);
%e:=3;
%F:=i^e*(s+t*i)^4;
%F;
%L<s,t>:=PolynomialRing(Rationals(),2);
%_<i>:=PolynomialRing(L);
%F:=-i*s^4 + 4*s^3*t + 6*i*s^2*t^2 - 4*s*t^3 - i*t^4;
%F;
%A:=4*s^3*t - 4*s*t^3;
%B:=-s^4 + 6*s^2*t^2 - t^4;
%F-A-i*B;

\begin{lstlisting}
P<s,t,u,v>:=ProjectiveSpace(Rationals(),3);
A:= 4*s^3*t - 4*s*t^3-161*u^4;
B:=-s^4 + 6*s^2*t^2 - t^4-20*v^4;
S:=Scheme(P,[A,B]);
IsLocallySolvable(S,2);
\end{lstlisting}

\subsection{The case of $n=1640$.}%\[n=1640.\] 
According to Table \ref{tb:factorisation}, in the case $n=1640$ it remains to
show that equation
\begin{equation}\label{1640}
25 u^8 + v^8= 41w^4
\end{equation}
has no integer solutions satisfying \eqref{eq:cond}, which in this case reduces to
$$
\gcd(5u,v)=\gcd(5u,41w)=\gcd(v,41w)=1.
$$
Analysis modulo $4$ shows that $2\nmid w$, while modulo $16$ analysis shows that $2\nmid u$ and $2|v$.
%Taking mod 4 shows that $2\nmid w$. Taking mod 16 shows that $2\nmid u$ and $2|v$.  
Write \eqref{1640} as
\begin{equation} (5u^4+v^4i)(5u^4-v^4i)=(5+4i)(5-4i)w^4.
\end{equation}
This implies that there exist integers $s,t$ such that 
\[5u^4+v^4i=i^{\epsilon}(5\pm 4i)(s+ti)^4,\]
with $\epsilon \in \{0,1,2,3\}$.

{\bf Case 1:} $5u^4+v^4i=(5+4i)(s+ti)^4$. Equating the real and imaginary parts on both sides of this equation gives
\begin{equation}\label{1640-1}
\begin{cases}
5s^4 - 16s^3t - 30s^2t^2 + 16st^3 + 5t^4 - 5u^4=0,\\
4s^4 + 20s^3t - 24s^2t^2 - 20st^3 + 4t^4 - v^4=0.
\end{cases}
\end{equation}
The scheme defined by system \eqref{1640-1} is locally insoluble at $2$.

%\begin{lstlisting}
%_<x>:=PolynomialRing(Rationals());
%k<i>:=NumberField(x^2+1);
%K<s,t>:=PolynomialRing(k,2);
%e:=0;
%F:=i^e*(5+4*i)*(s+t*i)^4;
%F;
%L<s,t>:=PolynomialRing(Rationals(),2);
%_<i>:=PolynomialRing(L);
%F:=(4*i + 5)*s^4 + (20*i - 16)*s^3*t + (-24*i - 30)*s^2*t^2 + (-20*i + 16)*s*t^3 +
%    (4*i + 5)*t^4;
%F;
%A:=5*s^4 - 16*s^3*t - 30*s^2*t^2 + 16*s*t^3 + 5*t^4;
%B:=4*s^4 + 20*s^3*t - 24*s^2*t^2 - 20*s*t^3 + 4*t^4;
%F-A-i*B;

\begin{lstlisting}
P<s,t,u,v>:=ProjectiveSpace(Rationals(),3);
A:=5*s^4 - 16*s^3*t - 30*s^2*t^2 + 16*s*t^3 + 5*t^4-5*u^4;
B:=4*s^4 + 20*s^3*t - 24*s^2*t^2 - 20*s*t^3 + 4*t^4-v^4;
S:=Scheme(P,[A,B]);
IsLocallySolvable(S,2);
\end{lstlisting}

{\bf Case 2:} $5u^4+v^4i=i(5+4i)(s+ti)^4$. Equating the real and imaginary parts on both sides of this equation gives
\begin{equation}\label{1640-2}
\begin{cases}
-4s^4 - 20s^3t + 24s^2t^2 + 20st^3 - 4t^4 - 5u^4=0,\\
5s^4 - 16s^3t - 30s^2t^2 + 16st^3 + 5t^4 - v^4=0.
\end{cases}
\end{equation}
The scheme defined by system \eqref{1640-2} is locally insoluble at $2$.

%\begin{lstlisting}
%_<x>:=PolynomialRing(Rationals());
%k<i>:=NumberField(x^2+1);
%K<s,t>:=PolynomialRing(k,2);
%e:=1;
%F:=i^e*(5+4*i)*(s+t*i)^4;
%F;
%L<s,t>:=PolynomialRing(Rationals(),2);
%_<i>:=PolynomialRing(L);
%F:=(5*i - 4)*s^4 + (-16*i - 20)*s^3*t + (-30*i + 24)*s^2*t^2 + (16*i + 20)*s*t^3 +
%   (5*i - 4)*t^4;
%F;
%A:=- 4*s^4 - 20*s^3*t +24*s^2*t^2 + 20*s*t^3 - 4*t^4;
%B:=5*s^4 - 16*s^3*t - 30*s^2*t^2 + 16*s*t^3 + 5*t^4;
%F-A-i*B;

\begin{lstlisting}
P<s,t,u,v>:=ProjectiveSpace(Rationals(),3);
A:=- 4*s^4 - 20*s^3*t +24*s^2*t^2 + 20*s*t^3 - 4*t^4-5*u^4;
B:=5*s^4 - 16*s^3*t - 30*s^2*t^2 + 16*s*t^3 + 5*t^4-v^4;
S:=Scheme(P,[A,B]);
IsLocallySolvable(S,2);
\end{lstlisting}

{\bf Case 3:} $5u^4+v^4i=i^2(5+4i)(s+ti)^4$. Equating the real and imaginary parts on both sides of this equation gives
\begin{equation}\label{1640-3}
\begin{cases}
-5s^4 + 16s^3t + 30s^2t^2 - 16st^3 - 5t^4 - 5u^4=0,\\
-4s^4 - 20s^3t + 24s^2t^2 + 20st^3 - 4t^4 - v^4=0.
\end{cases}
\end{equation}
The scheme defined by system \eqref{1640-3} is locally insoluble at $2$.

%\begin{lstlisting}
%_<x>:=PolynomialRing(Rationals());
%k<i>:=NumberField(x^2+1);
%K<s,t>:=PolynomialRing(k,2);
%e:=2;
%F:=i^e*(5+4*i)*(s+t*i)^4;
%F;
%L<s,t>:=PolynomialRing(Rationals(),2);
%_<i>:=PolynomialRing(L);
%F:=(-4*i - 5)*s^4 + (-20*i + 16)*s^3*t + (24*i + 30)*s^2*t^2 + (20*i - 16)*s*t^3 +
%    (-4*i - 5)*t^4;
%F;
%A:=- 5*s^4 + 16*s^3*t +30*s^2*t^2 - 16*s*t^3 - 5*t^4;
%B:=(-4*s^4 - 20*s^3*t + 24*s^2*t^2 + 20*s*t^3 - 4*t^4);
%F-A-i*B;

\begin{lstlisting}
P<s,t,u,v>:=ProjectiveSpace(Rationals(),3);
A:=- 5*s^4 + 16*s^3*t +30*s^2*t^2 - 16*s*t^3 - 5*t^4-5*u^4;
B:=-4*s^4 - 20*s^3*t + 24*s^2*t^2 + 20*s*t^3 - 4*t^4-v^4;
S:=Scheme(P,[A,B]);
IsLocallySolvable(S,2);
\end{lstlisting}

{\bf Case 4:} $5u^4+v^4i=i^3(5+4i)(s+ti)^4$. Equating the real and imaginary parts on both sides of this equation gives
\begin{equation}\label{1640-4}
\begin{cases}
4s^4 + 20s^3t - 24s^2t^2 - 20st^3 + 4t^4 - 5u^4=0,\\
-5s^4 + 16s^3t + 30s^2t^2 - 16st^3 - 5t^4 - v^4=0.
\end{cases}
\end{equation}
The scheme defined by system \eqref{1640-4} is locally insoluble at $2$.

%\begin{lstlisting}
%_<x>:=PolynomialRing(Rationals());
%k<i>:=NumberField(x^2+1);
%K<s,t>:=PolynomialRing(k,2);
%e:=3;
%F:=i^e*(5+4*i)*(s+t*i)^4;
%F;
%L<s,t>:=PolynomialRing(Rationals(),2);
%_<i>:=PolynomialRing(L);
%F:=(-5*i + 4)*s^4 + (16*i + 20)*s^3*t + (30*i - 24)*s^2*t^2 + (-16*i - 20)*s*t^3 +
%    (-5*i + 4)*t^4;
%F;
%A:=4*s^4 + 20*s^3*t -24*s^2*t^2 - 20*s*t^3 + 4*t^4;
%B:=-5*s^4 + 16*s^3*t + 30*s^2*t^2 - 16*s*t^3 - 5*t^4;
%F-A-i*B;

\begin{lstlisting}
P<s,t,u,v>:=ProjectiveSpace(Rationals(),3);
A:=4*s^4 + 20*s^3*t -24*s^2*t^2 - 20*s*t^3 + 4*t^4-5*u^4;
B:=-5*s^4 + 16*s^3*t + 30*s^2*t^2 - 16*s*t^3 - 5*t^4-v^4;
S:=Scheme(P,[A,B]);
IsLocallySolvable(S,2);
\end{lstlisting}
{\bf Case 5:} $5u^4+v^4i=(5-4i)(s+ti)^4$. Equating the real and imaginary parts on both sides of this equation gives
\begin{equation}\label{1640-5}
\begin{cases}
5s^4 + 16s^3t - 30s^2t^2 - 16st^3 + 5t^4 - 5u^4=0,\\
-4s^4 + 20s^3t + 24s^2t^2 - 20st^3 - 4t^4 - v^4=0.
\end{cases}
\end{equation}
The scheme defined by system \eqref{1640-5} is locally insoluble at $2$.

%\begin{lstlisting}
%_<x>:=PolynomialRing(Rationals());
%k<i>:=NumberField(x^2+1);
%K<s,t>:=PolynomialRing(k,2);
%e:=0;
%F:=i^e*(5-4*i)*(s+t*i)^4;
%F;
%L<s,t>:=PolynomialRing(Rationals(),2);
%_<i>:=PolynomialRing(L);
%F:=(-4*i + 5)*s^4 + (20*i + 16)*s^3*t + (24*i - 30)*s^2*t^2 + (-20*i - 16)*s*t^3 +
%    (-4*i + 5)*t^4;
%F;
%A:=5*s^4 + 16*s^3*t - 30*s^2*t^2 - 16*s*t^3 + 5*t^4;
%B:=-4*s^4 + 20*s^3*t + 24*s^2*t^2 - 20*s*t^3 - 4*t^4;
%F-A-i*B;

\begin{lstlisting}
P<s,t,u,v>:=ProjectiveSpace(Rationals(),3);
A:=5*s^4 + 16*s^3*t - 30*s^2*t^2 - 16*s*t^3 + 5*t^4-5*u^4;
B:=-4*s^4 + 20*s^3*t + 24*s^2*t^2 - 20*s*t^3 - 4*t^4-v^4;
S:=Scheme(P,[A,B]);
IsLocallySolvable(S,2);
\end{lstlisting}

{\bf Case 6:} $5u^4+v^4i=i(5-4i)(s+ti)^4$. Equating the real and imaginary parts on both sides of this equation gives
\begin{equation}\label{1640-6}
\begin{cases}
4s^4 - 20s^3t - 24s^2t^2 + 20st^3 + 4t^4 - 5u^4=0,\\
5s^4 + 16s^3t - 30s^2t^2 - 16st^3 + 5t^4 - v^4=0.
\end{cases}
\end{equation}
The scheme defined by system \eqref{1640-6} is locally insoluble at $2$.

%\begin{lstlisting}
%_<x>:=PolynomialRing(Rationals());
%k<i>:=NumberField(x^2+1);
%K<s,t>:=PolynomialRing(k,2);
%e:=1;
%F:=i^e*(5-4*i)*(s+t*i)^4;
%F;
%L<s,t>:=PolynomialRing(Rationals(),2);
%_<i>:=PolynomialRing(L);
%F:=(5*i + 4)*s^4 + (16*i - 20)*s^3*t + (-30*i - 24)*s^2*t^2 + (-16*i + 20)*s*t^3 +
%    (5*i + 4)*t^4;
%F;
%A:=4*s^4 - 20*s^3*t - 24*s^2*t^2 + 20*s*t^3 + 4*t^4;
%B:=5*s^4 + 16*s^3*t - 30*s^2*t^2 - 16*s*t^3 + 5*t^4;
%F-A-i*B;

\begin{lstlisting}
P<s,t,u,v>:=ProjectiveSpace(Rationals(),3);
A:=4*s^4 - 20*s^3*t - 24*s^2*t^2 + 20*s*t^3 + 4*t^4-5*u^4;
B:=5*s^4 + 16*s^3*t - 30*s^2*t^2 - 16*s*t^3 + 5*t^4-v^4;
S:=Scheme(P,[A,B]);
IsLocallySolvable(S,2);
\end{lstlisting}

{\bf Case 7:} $5u^4+v^4i=i^2(5-4i)(s+ti)^4$. Equating the real and imaginary parts on both sides of this equation gives
\begin{equation}\label{1640-7}
\begin{cases}
-5s^4 - 16s^3t + 30s^2t^2 + 16st^3 - 5t^4 - 5u^4=0,\\
4s^4 - 20s^3t - 24s^2t^2 + 20st^3 + 4t^4 - v^4=0.
\end{cases}
\end{equation}
The scheme defined by system \eqref{1640-7} is locally insoluble at $2$.

%\begin{lstlisting}
%_<x>:=PolynomialRing(Rationals());
%k<i>:=NumberField(x^2+1);
%K<s,t>:=PolynomialRing(k,2);
%e:=2;
%F:=i^e*(5-4*i)*(s+t*i)^4;
%F;
%L<s,t>:=PolynomialRing(Rationals(),2);
%_<i>:=PolynomialRing(L);
%F:=(4*i - 5)*s^4 + (-20*i - 16)*s^3*t + (-24*i + 30)*s^2*t^2 + (20*i + 16)*s*t^3 +
%    (4*i - 5)*t^4;
%F;
%A:=- 5*s^4 - 16*s^3*t +30*s^2*t^2 + 16*s*t^3 - 5*t^4;
%B:=4*s^4 - 20*s^3*t - 24*s^2*t^2 + 20*s*t^3 + 4*t^4;
%F-A-i*B;

\begin{lstlisting}
P<s,t,u,v>:=ProjectiveSpace(Rationals(),3);
A:=- 5*s^4 - 16*s^3*t +30*s^2*t^2 + 16*s*t^3 - 5*t^4-5*u^4;
B:=4*s^4 - 20*s^3*t - 24*s^2*t^2 + 20*s*t^3 + 4*t^4-v^4;
S:=Scheme(P,[A,B]);
IsLocallySolvable(S,2);
\end{lstlisting}

{\bf Case 8:} $5u^4+v^4i=i^3(5-4i)(s+ti)^4$. Equating the real and imaginary parts on both sides of this equation gives
\begin{equation}\label{1640-8}
\begin{cases}
-4s^4 + 20s^3t + 24s^2t^2 - 20st^3 - 4t^4 - 5u^4=0,\\
-5s^4 - 16s^3t + 30s^2t^2 + 16st^3 - 5t^4 - v^4=0.
\end{cases}
\end{equation}
The scheme defined by system \eqref{1640-8} is locally insoluble at $2$.

%\begin{lstlisting}
%_<x>:=PolynomialRing(Rationals());
%k<i>:=NumberField(x^2+1);
%K<s,t>:=PolynomialRing(k,2);
%e:=3;
%F:=i^e*(5-4*i)*(s+t*i)^4;
%F;
%L<s,t>:=PolynomialRing(Rationals(),2);
%_<i>:=PolynomialRing(L);
%F:=(-5*i - 4)*s^4 + (-16*i + 20)*s^3*t + (30*i + 24)*s^2*t^2 + (16*i - 20)*s*t^3 +
%    (-5*i - 4)*t^4;
%F;
%A:=- 4*s^4 + 20*s^3*t +24*s^2*t^2 - 20*s*t^3 - 4*t^4;
%B:=-5*s^4 - 16*s^3*t + 30*s^2*t^2 + 16*s*t^3 - 5*t^4;
%F-A-i*B;

\begin{lstlisting}
P<s,t,u,v>:=ProjectiveSpace(Rationals(),3);
A:=- 4*s^4 + 20*s^3*t +24*s^2*t^2 - 20*s*t^3 - 4*t^4-5*u^4;
B:=-5*s^4 - 16*s^3*t + 30*s^2*t^2 + 16*s*t^3 - 5*t^4-v^4;
S:=Scheme(P,[A,B]);
IsLocallySolvable(S,2);
\end{lstlisting}

\subsection{The case of $n=1731$.}%\[n=1731.\]

According to Table \ref{tb:factorisation}, in the case $n=1731$ it remains to
show that equation
\begin{equation}\label{1731}
36 u^8 + v^8=577  w^4
\end{equation}
has no integer solutions satisfying \eqref{eq:cond}, which in this case reduces to \[\gcd(6u,v)=\gcd(6u,577w)=\gcd(v,577w)=1.\]   Write \eqref{1731} as
\begin{equation} (6u^4+v^4i)(6u^4-v^4i)=(24+i)(24-i)w^4.
\end{equation}
This implies that there exist integers $s,t$ such that 
\[6u^4+v^4i=i^{\epsilon}(24\pm i)(s+ti)^4,\]
with $\epsilon \in \{0,1,2,3\}$.

{\bf Case 1:} $6u^4+v^4i=(24+i)(s+ti)^4$. Equating the real and imaginary parts on both sides of this equation gives
\begin{equation}\label{1731-1}
\begin{cases}
24s^4 - 4s^3t - 144s^2t^2 + 4st^3 + 24t^4 - 6u^4=0,\\
s^4 + 96s^3t - 6s^2t^2 - 96st^3 + t^4 - v^4=0.
\end{cases}
\end{equation}
The scheme defined by system \eqref{1731-1} is locally insoluble at $2$.

%\begin{lstlisting}
%_<x>:=PolynomialRing(Rationals());
%k<i>:=NumberField(x^2+1);
%K<s,t>:=PolynomialRing(k,2);
%e:=0;
%F:=i^e*(24+i)*(s+t*i)^4;
%F;
%L<s,t>:=PolynomialRing(Rationals(),2);
%_<i>:=PolynomialRing(L);
%F:=(i + 24)*s^4 + (96*i - 4)*s^3*t + (-6*i - 144)*s^2*t^2 + (-96*i + 4)*s*t^3 + (i
%    + 24)*t^4;
%F;
%A:=24*s^4 - 4*s^3*t - 144*s^2*t^2  + 4*s*t^3 + 24*t^4;
%B:=s^4 + 96*s^3*t - 6*s^2*t^2 - 96*s*t^3 + t^4;
%F-A-i*B;

\begin{lstlisting}
P<s,t,u,v>:=ProjectiveSpace(Rationals(),3);
A:=24*s^4 - 4*s^3*t - 144*s^2*t^2  + 4*s*t^3 + 24*t^4-6*u^4;
B:=s^4 + 96*s^3*t - 6*s^2*t^2 - 96*s*t^3 + t^4-v^4;
S:=Scheme(P,[A,B]);
IsLocallySolvable(S,2);
\end{lstlisting}

{\bf Case 2:} $6u^4+v^4i=i(24+i)(s+ti)^4$. Equating the real and imaginary parts on both sides of this equation gives
\begin{equation}\label{1731-2}
\begin{cases}
-s^4 - 96s^3t + 6s^2t^2 + 96st^3 - t^4 - 6u^4=0,\\
24s^4 - 4s^3t - 144s^2t^2 + 4st^3 + 24t^4 - v^4=0.
\end{cases}
\end{equation}
The scheme defined by system \eqref{1731-2} is locally insoluble at $2$.

%\begin{lstlisting}
%_<x>:=PolynomialRing(Rationals());
%k<i>:=NumberField(x^2+1);
%K<s,t>:=PolynomialRing(k,2);
%e:=1;
%F:=i^e*(24+i)*(s+t*i)^4;
%F;
%L<s,t>:=PolynomialRing(Rationals(),2);
%_<i>:=PolynomialRing(L);
%F:=(24*i - 1)*s^4 + (-4*i - 96)*s^3*t + (-144*i + 6)*s^2*t^2 + (4*i + 96)*s*t^3 +
%    (24*i - 1)*t^4;
%F;
%A:=- s^4 - 96*s^3*t +6*s^2*t^2 + 96*s*t^3 - t^4;
%B:=24*s^4 - 4*s^3*t - 144*s^2*t^2 + 4*s*t^3 + 24*t^4;
%F-A-i*B;

\begin{lstlisting}
P<s,t,u,v>:=ProjectiveSpace(Rationals(),3);
A:=- s^4 - 96*s^3*t +6*s^2*t^2 + 96*s*t^3 - t^4-6*u^4;
B:=24*s^4 - 4*s^3*t - 144*s^2*t^2 + 4*s*t^3 + 24*t^4-v^4;
S:=Scheme(P,[A,B]);
IsLocallySolvable(S,2);
\end{lstlisting}

{\bf Case 3:} $6u^4+v^4i=i^2(24+i)(s+ti)^4$. Equating the real and imaginary parts on both sides of this equation gives
\begin{equation}\label{1731-3}
\begin{cases}
-24s^4 + 4s^3t + 144s^2t^2 - 4st^3 - 24t^4 - 6u^4=0,\\
-s^4 - 96s^3t + 6s^2t^2 + 96st^3 - t^4 - v^4=0.
\end{cases}
\end{equation}
The scheme defined by system \eqref{1731-3} is locally insoluble at $2$.
%\begin{lstlisting}
%_<x>:=PolynomialRing(Rationals());
%k<i>:=NumberField(x^2+1);
%K<s,t>:=PolynomialRing(k,2);
%e:=2;
%F:=i^e*(24+i)*(s+t*i)^4;
%F;
%L<s,t>:=PolynomialRing(Rationals(),2);
%_<i>:=PolynomialRing(L);
%F:=(-i - 24)*s^4 + (-96*i + 4)*s^3*t + (6*i + 144)*s^2*t^2 + (96*i - 4)*s*t^3 + (-i
%   - 24)*t^4;
%F;
%A:=- 24*s^4 + 4*s^3*t +  144*s^2*t^2 - 4*s*t^3 - 24*t^4;
%B:=-s^4 - 96*s^3*t + 6*s^2*t^2 + 96*s*t^3 - t^4;
%F-A-i*B;

\begin{lstlisting}
P<s,t,u,v>:=ProjectiveSpace(Rationals(),3);
A:=- 24*s^4 + 4*s^3*t +  144*s^2*t^2 - 4*s*t^3 - 24*t^4-6*u^4;
B:=-s^4 - 96*s^3*t + 6*s^2*t^2 + 96*s*t^3 - t^4-v^4;
S:=Scheme(P,[A,B]);
IsLocallySolvable(S,2);
\end{lstlisting}

{\bf Case 4:} $6u^4+v^4i=i^3(24+i)(s+ti)^4$. Equating the real and imaginary parts on both sides of this equation gives
\begin{equation}\label{1731-4}
\begin{cases}
s^4 + 96s^3t - 6s^2t^2 - 96st^3 + t^4 - 6u^4=0,\\
-24s^4 + 4s^3t + 144s^2t^2 - 4st^3 - 24t^4 - v^4=0.
\end{cases}
\end{equation}
The scheme defined by system \eqref{1731-4} is locally insoluble at $2$.

%\begin{lstlisting}
%_<x>:=PolynomialRing(Rationals());
%k<i>:=NumberField(x^2+1);
%K<s,t>:=PolynomialRing(k,2);
%e:=3;
%F:=i^e*(24+i)*(s+t*i)^4;
%F;
%L<s,t>:=PolynomialRing(Rationals(),2);
%_<i>:=PolynomialRing(L);
%F:=(-24*i + 1)*s^4 + (4*i + 96)*s^3*t + (144*i - 6)*s^2*t^2 + (-4*i - 96)*s*t^3 +
%    (-24*i + 1)*t^4;
%F;
%A:=s^4 + 96*s^3*t - 6*s^2*t^2 - 96*s*t^3 + t^4;
%B:=-24*s^4 + 4*s^3*t + 144*s^2*t^2 - 4*s*t^3 - 24*t^4;
%F-A-i*B;

\begin{lstlisting}
P<s,t,u,v>:=ProjectiveSpace(Rationals(),3);
A:=s^4 + 96*s^3*t - 6*s^2*t^2 - 96*s*t^3 + t^4-6*u^4;
B:=-24*s^4 + 4*s^3*t + 144*s^2*t^2 - 4*s*t^3 - 24*t^4-v^4;
S:=Scheme(P,[A,B]);
IsLocallySolvable(S,2);
\end{lstlisting}

{\bf Case 5:} $6u^4+v^4i=(24-i)(s+ti)^4$. Equating the real and imaginary parts on both sides of this equation gives
\begin{equation}\label{1731-5}
\begin{cases}
24s^4 + 4s^3t - 144s^2t^2 - 4st^3 + 24t^4 - 6u^4=0,\\
-s^4 + 96s^3t + 6s^2t^2 - 96st^3 - t^4 - v^4=0.
\end{cases}
\end{equation}
The scheme defined by system \eqref{1731-5} is locally insoluble at $2$.

%\begin{lstlisting}
%_<x>:=PolynomialRing(Rationals());
%k<i>:=NumberField(x^2+1);
%K<s,t>:=PolynomialRing(k,2);
%e:=0;
%F:=i^e*(24-i)*(s+t*i)^4;
%F;
%L<s,t>:=PolynomialRing(Rationals(),2);
%_<i>:=PolynomialRing(L);
%F:=(-i + 24)*s^4 + (96*i + 4)*s^3*t + (6*i - 144)*s^2*t^2 + (-96*i - 4)*s*t^3 + (-i
%    + 24)*t^4;
%F;
%A:=24*s^4 + 4*s^3*t -  144*s^2*t^2 - 4*s*t^3 + 24*t^4;
%B:=-s^4 + 96*s^3*t + 6*s^2*t^2 - 96*s*t^3 - t^4;
%F-A-i*B;

\begin{lstlisting}
P<s,t,u,v>:=ProjectiveSpace(Rationals(),3);
A:=24*s^4 + 4*s^3*t -  144*s^2*t^2 - 4*s*t^3 + 24*t^4-6*u^4;
B:=-s^4 + 96*s^3*t + 6*s^2*t^2 - 96*s*t^3 - t^4-v^4;
S:=Scheme(P,[A,B]);
IsLocallySolvable(S,2);
\end{lstlisting}

{\bf Case 6:} $6u^4+v^4i=i(24-i)(s+ti)^4$. Equating the real and imaginary parts on both sides of this equation gives
\begin{equation}\label{1731-6}
\begin{cases}
s^4 - 96s^3t - 6s^2t^2 + 96st^3 + t^4 - 6u^4=0,\\
24s^4 + 4s^3t - 144s^2t^2 - 4st^3 + 24t^4 - v^4=0,
\end{cases}
\end{equation}
The scheme defined by system \eqref{1731-6} is locally insoluble at $2$.

%\begin{lstlisting}
%_<x>:=PolynomialRing(Rationals());
%k<i>:=NumberField(x^2+1);
%K<s,t>:=PolynomialRing(k,2);
%e:=1;
%F:=i^e*(24-i)*(s+t*i)^4;
%F;
%L<s,t>:=PolynomialRing(Rationals(),2);
%_<i>:=PolynomialRing(L);
%F:=(24*i + 1)*s^4 + (4*i - 96)*s^3*t + (-144*i - 6)*s^2*t^2 + (-4*i + 96)*s*t^3 +
%    (24*i + 1)*t^4;
%F;
%A:=s^4 - 96*s^3*t -6*s^2*t^2 + 96*s*t^3 + t^4;
%B:=24*s^4 + 4*s^3*t - 144*s^2*t^2 - 4*s*t^3 + 24*t^4;
%F-A-i*B;

\begin{lstlisting}
P<s,t,u,v>:=ProjectiveSpace(Rationals(),3);
A:=s^4 - 96*s^3*t -6*s^2*t^2 + 96*s*t^3 + t^4-6*u^4;
B:=24*s^4 + 4*s^3*t - 144*s^2*t^2 - 4*s*t^3 + 24*t^4-v^4;
S:=Scheme(P,[A,B]);
IsLocallySolvable(S,2);
\end{lstlisting}

{\bf Case 7:} $6u^4+v^4i=i^2(24-i)(s+ti)^4$. Equating the real and imaginary parts on both sides of this equation gives
\begin{equation}\label{1731-7}
\begin{cases}
-24s^4 - 4s^3t + 144s^2t^2 + 4st^3 - 24t^4 - 6u^4=0,\\
s^4 - 96s^3t - 6s^2t^2 + 96st^3 + t^4 - v^4=0
\end{cases}
\end{equation}
The scheme defined by system \eqref{1731-7} is locally insoluble at $2$.

%\begin{lstlisting}
%_<x>:=PolynomialRing(Rationals());
%k<i>:=NumberField(x^2+1);
%K<s,t>:=PolynomialRing(k,2);
%e:=2;
%F:=i^e*(24-i)*(s+t*i)^4;
%F;
%L<s,t>:=PolynomialRing(Rationals(),2);
%_<i>:=PolynomialRing(L);
%F:=(i - 24)*s^4 + (-96*i - 4)*s^3*t + (-6*i + 144)*s^2*t^2 + (96*i + 4)*s*t^3 + (i- 24)*t^4;
%F;
%A:=- 24*s^4 - 4*s^3*t + 144*s^2*t^2 + 4*s*t^3 - 24*t^4;
%B:=s^4 - 96*s^3*t - 6*s^2*t^2 + 96*s*t^3 + t^4;
%F-A-i*B;

\begin{lstlisting}
P<s,t,u,v>:=ProjectiveSpace(Rationals(),3);
A:=- 24*s^4 - 4*s^3*t + 144*s^2*t^2 + 4*s*t^3 - 24*t^4-6*u^4;
B:=s^4 - 96*s^3*t - 6*s^2*t^2 + 96*s*t^3 + t^4-v^4;
S:=Scheme(P,[A,B]);
IsLocallySolvable(S,2);
\end{lstlisting}

{\bf Case 8:} $6u^4+v^4i=i^3(24-i)(s+ti)^4$. Equating the real and imaginary parts on both sides of this equation gives
\begin{equation}\label{1731-8}
\begin{cases}
-s^4 + 96s^3t + 6s^2t^2 - 96st^3 - t^4 - 6u^4=0,\\
-24s^4 - 4s^3t + 144s^2t^2 + 4st^3 - 24t^4 - v^4=0.
\end{cases}
\end{equation}
The scheme defined by system \eqref{1731-8} is locally insoluble at $2$.

%\begin{lstlisting}
%_<x>:=PolynomialRing(Rationals());
%k<i>:=NumberField(x^2+1);
%K<s,t>:=PolynomialRing(k,2);
%e:=3;
%F:=i^e*(24-i)*(s+t*i)^4;
%F;
%L<s,t>:=PolynomialRing(Rationals(),2);
%_<i>:=PolynomialRing(L);
%F:=(-24*i - 1)*s^4 + (-4*i + 96)*s^3*t + (144*i + 6)*s^2*t^2 + (4*i - 96)*s*t^3 +
%    (-24*i - 1)*t^4;
%F;
%A:=- s^4 + 96*s^3*t +6*s^2*t^2 - 96*s*t^3 - t^4;
%B:=-24*s^4 - 4*s^3*t + 144*s^2*t^2 + 4*s*t^3 - 24*t^4;
%F-A-i*B;

\begin{lstlisting}
P<s,t,u,v>:=ProjectiveSpace(Rationals(),3);
A:=- s^4 + 96*s^3*t +6*s^2*t^2 - 96*s*t^3 - t^4-6*u^4;
B:=-24*s^4 - 4*s^3*t + 144*s^2*t^2 + 4*s*t^3 - 24*t^4-v^4;
S:=Scheme(P,[A,B]);
IsLocallySolvable(S,2);
\end{lstlisting}

\subsection{The case of $n=2035$.}%\[n=2035.\]

According to Table \ref{tb:factorisation}, in the case $n=2035$ it remains to
show that equation
\begin{equation}\label{2035}
121 u^8 + 4 v^8=185  w^4
\end{equation}
has no integer solutions satisfying \eqref{eq:cond}, which in this case reduces to \[\gcd(11u,2v)=\gcd(11u,185w)=\gcd(2v,185w)=1.\]   Write \eqref{2035} as
\begin{equation} (11u^4+2v^4i)(11u^4-2v^4i)=(1+2i)(1-2i)(6+i)(6-i)w^4.
\end{equation}
Since $5\nmid u,v$, we have \[11u^4+2v^4i\equiv 1+2i\pmod{5}.\]
Thus $1+2i|11u^4+2v^4i$. This implies that there exist integers $s,t$ such that 
\[11u^4+2v^4i=i^{\epsilon}(1+2i)(6\pm i)(s+ti)^4,\]
with $\epsilon \in \{0,1,2,3\}$.

{\bf Case 1:} $11u^4+2v^4i=(1+2i)(6+i)(s+ti)^4$. Equating the real and imaginary parts on both sides of this equation gives
\begin{equation}\label{2035-1}
\begin{cases}
4s^4 - 52s^3t - 24s^2t^2 + 52st^3 + 4t^4 - 11u^4=0,\\
13s^4 + 16s^3t - 78s^2t^2 - 16st^3 + 13t^4 - 2v^4=0.
\end{cases}
\end{equation}
The scheme defined by system \eqref{2035-1} is locally insoluble at $2$.

%\begin{lstlisting}
%_<x>:=PolynomialRing(Rationals());
%k<i>:=NumberField(x^2+1);
%K<s,t>:=PolynomialRing(k,2);
%e:=0;
%F:=i^e*(1+2*i)*(6+i)*(s+t*i)^4;
%F;
%L<s,t>:=PolynomialRing(Rationals(),2);
%_<i>:=PolynomialRing(L);
%F:=(13*i + 4)*s^4 + (16*i - 52)*s^3*t + (-78*i - 24)*s^2*t^2 + (-16*i + 52)*s*t^3 +
%    (13*i + 4)*t^4;
%F;
%A:=4*s^4 - 52*s^3*t - 24*s^2*t^2 + 52*s*t^3 + 4*t^4;
%B:=13*s^4 + 16*s^3*t - 78*s^2*t^2 - 16*s*t^3 + 13*t^4;
%F-A-i*B;

\begin{lstlisting}
P<s,t,u,v>:=ProjectiveSpace(Rationals(),3);
A:=4*s^4 - 52*s^3*t - 24*s^2*t^2 + 52*s*t^3 + 4*t^4-11*u^4;
B:=13*s^4 + 16*s^3*t - 78*s^2*t^2 - 16*s*t^3 + 13*t^4-2*v^4;
S:=Scheme(P,[A,B]);
IsLocallySolvable(S,2);
\end{lstlisting}

{\bf Case 2:} $11u^4+2v^4i=i(1+2i)(6+i)(s+ti)^4$. Equating the real and imaginary parts on both sides of this equation gives
\begin{equation}\label{2035-2}
\begin{cases}
-13s^4 - 16s^3t + 78s^2t^2 + 16st^3 - 13t^4 - 11u^4=0,\\
4s^4 - 52s^3t - 24s^2t^2 + 52st^3 + 4t^4 - 2v^4=0.
\end{cases}
\end{equation}
The scheme defined by system \eqref{2035-2} is locally insoluble at $2$.

%\begin{lstlisting}
%_<x>:=PolynomialRing(Rationals());
%k<i>:=NumberField(x^2+1);
%K<s,t>:=PolynomialRing(k,2);
%e:=1;
%F:=i^e*(1+2*i)*(6+i)*(s+t*i)^4;
%F;
%L<s,t>:=PolynomialRing(Rationals(),2);
%_<i>:=PolynomialRing(L);
%F:=(4*i - 13)*s^4 + (-52*i - 16)*s^3*t + (-24*i + 78)*s^2*t^2 + (52*i + 16)*s*t^3 +
%    (4*i - 13)*t^4;
%F;
%A:=- 13*s^4 - 16*s^3*t +78*s^2*t^2 + 16*s*t^3 - 13*t^4;
%B:=4*s^4 - 52*s^3*t - 24*s^2*t^2 + 52*s*t^3 + 4*t^4;
%F-A-i*B;

\begin{lstlisting}
P<s,t,u,v>:=ProjectiveSpace(Rationals(),3);
A:=- 13*s^4 - 16*s^3*t +78*s^2*t^2 + 16*s*t^3 - 13*t^4-11*u^4;
B:=4*s^4 - 52*s^3*t - 24*s^2*t^2 + 52*s*t^3 + 4*t^4-2*v^4;
S:=Scheme(P,[A,B]);
IsLocallySolvable(S,2);
\end{lstlisting}

{\bf Case 3:} $11u^4+2v^4i=i^2(1+2i)(6+i)(s+ti)^4$. Equating the real and imaginary parts on both sides of this equation gives
\begin{equation}\label{2035-3}
\begin{cases}
-4s^4 + 52s^3t + 24s^2t^2 - 52st^3 - 4t^4 - 11u^4=0,\\
-13s^4 - 16s^3t + 78s^2t^2 + 16st^3 - 13t^4 - 2v^4=0.
\end{cases}
\end{equation}
The scheme defined by system \eqref{2035-3} is locally insoluble at $2$.

%\begin{lstlisting}
%_<x>:=PolynomialRing(Rationals());
%k<i>:=NumberField(x^2+1);
%K<s,t>:=PolynomialRing(k,2);
%e:=2;
%F:=i^e*(1+2*i)*(6+i)*(s+t*i)^4;
%F;
%L<s,t>:=PolynomialRing(Rationals(),2);
%_<i>:=PolynomialRing(L);
%F:=(-13*i - 4)*s^4 + (-16*i + 52)*s^3*t + (78*i + 24)*s^2*t^2 + (16*i - 52)*s*t^3 +
%    (-13*i - 4)*t^4;
%F;
%A:=- 4*s^4 + 52*s^3*t +24*s^2*t^2 - 52*s*t^3 - 4*t^4;
%B:=-13*s^4 - 16*s^3*t + 78*s^2*t^2 + 16*s*t^3 - 13*t^4;
%F-A-i*B;

\begin{lstlisting}
P<s,t,u,v>:=ProjectiveSpace(Rationals(),3);
A:=- 4*s^4 + 52*s^3*t +24*s^2*t^2 - 52*s*t^3 - 4*t^4-11*u^4;
B:=-13*s^4 - 16*s^3*t + 78*s^2*t^2 + 16*s*t^3 - 13*t^4-2*v^4;
S:=Scheme(P,[A,B]);
IsLocallySolvable(S,2);
\end{lstlisting}
{\bf Case 4:} $11u^4+2v^4i=i^3(1+2i)(6+i)(s+ti)^4$. Equating the real and imaginary parts on both sides of this equation gives
\begin{equation}\label{2035-4}
\begin{cases}
13s^4 + 16s^3t - 78s^2t^2 - 16st^3 + 13t^4 - 11u^4=0,\\
-4s^4 + 52s^3t + 24s^2t^2 - 52st^3 - 4t^4 - 2v^4=0.
\end{cases}
\end{equation}
The scheme defined by system \eqref{2035-4} is locally insoluble at $2$.

%\begin{lstlisting}
%_<x>:=PolynomialRing(Rationals());
%k<i>:=NumberField(x^2+1);
%K<s,t>:=PolynomialRing(k,2);
%e:=3;
%F:=i^e*(1+2*i)*(6+i)*(s+t*i)^4;
%F;
%L<s,t>:=PolynomialRing(Rationals(),2);
%_<i>:=PolynomialRing(L);
%F:=(-4*i + 13)*s^4 + (52*i + 16)*s^3*t + (24*i - 78)*s^2*t^2 + (-52*i - 16)*s*t^3 +
%    (-4*i + 13)*t^4;
%F;
%A:=13*s^4 + 16*s^3*t -78*s^2*t^2 - 16*s*t^3 + 13*t^4;
%B:=-4*s^4 + 52*s^3*t + 24*s^2*t^2 - 52*s*t^3 - 4*t^4;
%F-A-i*B;

\begin{lstlisting}
P<s,t,u,v>:=ProjectiveSpace(Rationals(),3);
A:=13*s^4 + 16*s^3*t -78*s^2*t^2 - 16*s*t^3 + 13*t^4-11*u^4;
B:=-4*s^4 + 52*s^3*t + 24*s^2*t^2 - 52*s*t^3 - 4*t^4-2*v^4;
S:=Scheme(P,[A,B]);
IsLocallySolvable(S,2);
\end{lstlisting}

{\bf Case 5:} $11u^4+2v^4i=(1+2i)(6-i)(s+ti)^4$. Equating the real and imaginary parts on both sides of this equation gives
\begin{equation}\label{2035-5}
\begin{cases}
8s^4 - 44s^3t - 48s^2t^2 + 44st^3 + 8t^4 - 11u^4=0,\\
11s^4 + 32s^3t - 66s^2t^2 - 32st^3 + 11t^4 - 2v^4=0.
\end{cases}
\end{equation}
The scheme defined by system \eqref{2035-5} is locally insoluble at $2$.

%\begin{lstlisting}
%_<x>:=PolynomialRing(Rationals());
%k<i>:=NumberField(x^2+1);
%K<s,t>:=PolynomialRing(k,2);
%e:=0;
%F:=i^e*(1+2*i)*(6-i)*(s+t*i)^4;
%F;
%L<s,t>:=PolynomialRing(Rationals(),2);
%_<i>:=PolynomialRing(L);
%F:=(11*i + 8)*s^4 + (32*i - 44)*s^3*t + (-66*i - 48)*s^2*t^2 + (-32*i + 44)*s*t^3 +
%    (11*i + 8)*t^4;
%F;
%A:= 8*s^4 - 44*s^3*t -48*s^2*t^2 + 44*s*t^3 + 8*t^4;
%B:=11*s^4 + 32*s^3*t - 66*s^2*t^2 - 32*s*t^3 + 11*t^4;
%F-A-i*B;

\begin{lstlisting}
P<s,t,u,v>:=ProjectiveSpace(Rationals(),3);
A:=8*s^4 - 44*s^3*t -48*s^2*t^2 + 44*s*t^3 + 8*t^4-11*u^4;
B:=11*s^4 + 32*s^3*t - 66*s^2*t^2 - 32*s*t^3 + 11*t^4-2*v^4;
S:=Scheme(P,[A,B]);
IsLocallySolvable(S,2);
\end{lstlisting}
{\bf Case 6:} $11u^4+2v^4i=i(1+2i)(6-i)(s+ti)^4$. Equating the real and imaginary parts on both sides of this equation gives
\begin{equation}\label{2035-6}
\begin{cases}
-11s^4 - 32s^3t + 66s^2t^2 + 32st^3 - 11t^4 - 11u^4=0,\\
8s^4 - 44s^3t - 48s^2t^2 + 44st^3 + 8t^4 - 2v^4=0,
\end{cases}
\end{equation}
The scheme defined by system \eqref{2035-6} is locally insoluble at $2$.

%\begin{lstlisting}
%_<x>:=PolynomialRing(Rationals());
%k<i>:=NumberField(x^2+1);
%K<s,t>:=PolynomialRing(k,2);
%e:=1;
%F:=i^e*(1+2*i)*(6-i)*(s+t*i)^4;
%F;
%L<s,t>:=PolynomialRing(Rationals(),2);
%_<i>:=PolynomialRing(L);
%F:=(8*i - 11)*s^4 + (-44*i - 32)*s^3*t + (-48*i + 66)*s^2*t^2 + (44*i + 32)*s*t^3 +
%    (8*i - 11)*t^4;
%F;
%A:=- 11*s^4 - 32*s^3*t +   66*s^2*t^2 + 32*s*t^3 - 11*t^4;
%B:=8*s^4 - 44*s^3*t - 48*s^2*t^2 + 44*s*t^3 + 8*t^4;
%F-A-i*B;
\begin{lstlisting}
P<s,t,u,v>:=ProjectiveSpace(Rationals(),3);
A:=- 11*s^4 - 32*s^3*t +   66*s^2*t^2 + 32*s*t^3 - 11*t^4-11*u^4;
B:=8*s^4 - 44*s^3*t - 48*s^2*t^2 + 44*s*t^3 + 8*t^4-2*v^4;
S:=Scheme(P,[A,B]);
IsLocallySolvable(S,2);
\end{lstlisting}

{\bf Case 7:} $11u^4+2v^4i=i^2(1+2i)(6-i)(s+ti)^4$. Equating the real and imaginary parts on both sides of this equation gives
\begin{equation}\label{2035-7}
\begin{cases}
-8s^4 + 44s^3t + 48s^2t^2 - 44st^3 - 8t^4 - 11u^4=0,\\
-11s^4 - 32s^3t + 66s^2t^2 + 32st^3 - 11t^4 - 2v^4=0.
\end{cases}
\end{equation}
The scheme defined by system \eqref{2035-7} is locally insoluble at $2$.

%\begin{lstlisting}
%_<x>:=PolynomialRing(Rationals());
%k<i>:=NumberField(x^2+1);
%K<s,t>:=PolynomialRing(k,2);
%e:=2;
%F:=i^e*(1+2*i)*(6-i)*(s+t*i)^4;
%F;
%L<s,t>:=PolynomialRing(Rationals(),2);
%_<i>:=PolynomialRing(L);
%F:=(-11*i - 8)*s^4 + (-32*i + 44)*s^3*t + (66*i + 48)*s^2*t^2 + (32*i - 44)*s*t^3 +
%    (-11*i - 8)*t^4;
%F;
%A:=- 8*s^4 + 44*s^3*t +48*s^2*t^2 - 44*s*t^3 - 8*t^4;
%B:=-11*s^4 - 32*s^3*t + 66*s^2*t^2 + 32*s*t^3 - 11*t^4;
%F-A-i*B;

\begin{lstlisting}
P<s,t,u,v>:=ProjectiveSpace(Rationals(),3);
A:=- 8*s^4 + 44*s^3*t +48*s^2*t^2 - 44*s*t^3 - 8*t^4-11*u^4;
B:=-11*s^4 - 32*s^3*t + 66*s^2*t^2 + 32*s*t^3 - 11*t^4-2*v^4;
S:=Scheme(P,[A,B]);
IsLocallySolvable(S,2);
\end{lstlisting}
{\bf Case 8:} $11u^4+2v^4i=i^3(1+2i)(6-i)(s+ti)^4$. Equating the real and imaginary parts on both sides of this equation gives
\begin{equation}\label{2035-8}
\begin{cases}
11s^4 + 32s^3t - 66s^2t^2 - 32st^3 + 11t^4 - 11u^4=0,\\
-8s^4 + 44s^3t + 48s^2t^2 - 44st^3 - 8t^4 - 2v^4=0.
\end{cases}
\end{equation}
The scheme defined by system \eqref{2035-8} is locally insoluble at $2$.

%\begin{lstlisting}
%_<x>:=PolynomialRing(Rationals());
%k<i>:=NumberField(x^2+1);
%K<s,t>:=PolynomialRing(k,2);
%e:=3;
%F:=i^e*(1+2*i)*(6-i)*(s+t*i)^4;
%F;
%L<s,t>:=PolynomialRing(Rationals(),2);
%_<i>:=PolynomialRing(L);
%F:=(-8*i + 11)*s^4 + (44*i + 32)*s^3*t + (48*i - 66)*s^2*t^2 + (-44*i - 32)*s*t^3 +
%    (-8*i + 11)*t^4;
%F;
%A:=11*s^4 + 32*s^3*t -66*s^2*t^2 - 32*s*t^3 + 11*t^4;
%B:=-8*s^4 + 44*s^3*t + 48*s^2*t^2 - 44*s*t^3 - 8*t^4;
%F-A-i*B;

\begin{lstlisting}
P<s,t,u,v>:=ProjectiveSpace(Rationals(),3);
A:=11*s^4 + 32*s^3*t -66*s^2*t^2 - 32*s*t^3 + 11*t^4-11*u^4;
B:=-8*s^4 + 44*s^3*t + 48*s^2*t^2 - 44*s*t^3 - 8*t^4-2*v^4;
S:=Scheme(P,[A,B]);
IsLocallySolvable(S,2);
\end{lstlisting}

\subsection{The case of $n=2213$.}%\[n=2213.\]
According to Table \ref{tb:factorisation}, in the case $n=2213$ it remains to
show that equation 
\begin{equation}\label{2213}
4 u^8+v^8=2213  w^4
\end{equation}
has no integer solutions satisfying \eqref{eq:cond}, which in this case reduces to \[\gcd(2u,v)=\gcd(2u,2213w)=\gcd(v,2213w)=1.\]   Write \eqref{2213} as
\begin{equation} (2u^4+v^4i)(2u^4-v^4i)=(47+2i)(47-2i)w^4.
\end{equation}
This implies that there exist integers $s,t$ such that 
\[2u^4+v^4i=i^{\epsilon}(47\pm 2i)(s+ti)^4,\]
with $\epsilon \in \{0,1,2,3\}$.

{\bf Case 1:} $2u^4+v^4i=(47+2i)(s+ti)^4$. Equating the real and imaginary parts on both sides of this equation gives
\begin{equation}\label{2213-1}
\begin{cases}
47s^4 - 8s^3t - 282s^2t^2 + 8st^3 + 47t^4 - 2u^4=0,\\
2s^4 + 188s^3t - 12s^2t^2 - 188st^3 + 2t^4 - v^4=0.
\end{cases}
\end{equation}
The scheme defined by system \eqref{2213-1} is locally insoluble at $2$.

%\begin{lstlisting}
%_<x>:=PolynomialRing(Rationals());
%k<i>:=NumberField(x^2+1);
%K<s,t>:=PolynomialRing(k,2);
%e:=0;
%F:=i^e*(47+2*i)*(s+t*i)^4;
%F;
%L<s,t>:=PolynomialRing(Rationals(),2);
%_<i>:=PolynomialRing(L);
%F:=(2*i + 47)*s^4 + (188*i - 8)*s^3*t + (-12*i - 282)*s^2*t^2 + (-188*i + 8)*s*t^3
%    + (2*i + 47)*t^4;
%F;
%A:= 47*s^4 - 8*s^3*t -282*s^2*t^2 + 8*s*t^3 + 47*t^4;
%B:=2*s^4 + 188*s^3*t - 12*s^2*t^2 - 188*s*t^3 + 2*t^4;
%F-A-i*B;

\begin{lstlisting}
P<s,t,u,v>:=ProjectiveSpace(Rationals(),3);
A:= 47*s^4 - 8*s^3*t -282*s^2*t^2 + 8*s*t^3 + 47*t^4-2*u^4;
B:=2*s^4 + 188*s^3*t - 12*s^2*t^2 - 188*s*t^3 + 2*t^4-v^4;
S:=Scheme(P,[A,B]);
IsLocallySolvable(S,2);
\end{lstlisting}

{\bf Case 2:} $2u^4+v^4i=i(47+2i)(s+ti)^4$. Equating the real and imaginary parts on both sides of this equation gives
\begin{equation}\label{2213-2}
\begin{cases}
-2s^4 - 188s^3t + 12s^2t^2 + 188st^3 - 2t^4 - 2u^4=0,\\
47s^4 - 8s^3t - 282s^2t^2 + 8st^3 + 47t^4 - v^4=0.
\end{cases}
\end{equation}
The scheme defined by system \eqref{2213-2} is locally insoluble at $2$.

%\begin{lstlisting}
%_<x>:=PolynomialRing(Rationals());
%k<i>:=NumberField(x^2+1);
%K<s,t>:=PolynomialRing(k,2);
%e:=1;
%F:=i^e*(47+2*i)*(s+t*i)^4;
%F;
%L<s,t>:=PolynomialRing(Rationals(),2);
%_<i>:=PolynomialRing(L);
%F:=(47*i - 2)*s^4 + (-8*i - 188)*s^3*t + (-282*i + 12)*s^2*t^2 + (8*i + 188)*s*t^3
%    + (47*i - 2)*t^4;
%F;
%A:= - 2*s^4 - 188*s^3*t +12*s^2*t^2 + 188*s*t^3 - 2*t^4;
%B:=47*s^4 - 8*s^3*t - 282*s^2*t^2 + 8*s*t^3 + 47*t^4;
%F-A-i*B;

\begin{lstlisting}
P<s,t,u,v>:=ProjectiveSpace(Rationals(),3);
A:=  - 2*s^4 - 188*s^3*t +12*s^2*t^2 + 188*s*t^3 - 2*t^4-2*u^4;
B:=47*s^4 - 8*s^3*t - 282*s^2*t^2 + 8*s*t^3 + 47*t^4-v^4;
S:=Scheme(P,[A,B]);
IsLocallySolvable(S,2);
\end{lstlisting}

{\bf Case 3:} $2u^4+v^4i=i^2(47+2i)(s+ti)^4$. Equating the real and imaginary parts on both sides of this equation gives
\begin{equation}\label{2213-3}
\begin{cases}
-47s^4 + 8s^3t + 282s^2t^2 - 8st^3 - 47t^4 - 2u^4=0,\\
-2s^4 - 188s^3t + 12s^2t^2 + 188st^3 - 2t^4 - v^4=0.
\end{cases}
\end{equation}
The scheme defined by system \eqref{2213-3} is locally insoluble at $2$.

%\begin{lstlisting}
%_<x>:=PolynomialRing(Rationals());
%k<i>:=NumberField(x^2+1);
%K<s,t>:=PolynomialRing(k,2);
%e:=2;
%F:=i^e*(47+2*i)*(s+t*i)^4;
%F;
%L<s,t>:=PolynomialRing(Rationals(),2);
%_<i>:=PolynomialRing(L);
%F:=(-2*i - 47)*s^4 + (-188*i + 8)*s^3*t + (12*i + 282)*s^2*t^2 + (188*i - 8)*s*t^3
%    + (-2*i - 47)*t^4;
%F;
%A:=- 47*s^4 + 8*s^3*t +
%    282*s^2*t^2 - 8*s*t^3 - 47*t^4;
%B:=-2*s^4 - 188*s^3*t + 12*s^2*t^2 + 188*s*t^3 - 2*t^4;
%F-A-i*B;
\begin{lstlisting}
P<s,t,u,v>:=ProjectiveSpace(Rationals(),3);
A:=- 47*s^4 + 8*s^3*t +
    282*s^2*t^2 - 8*s*t^3 - 47*t^4-2*u^4;
B:=-2*s^4 - 188*s^3*t + 12*s^2*t^2 + 188*s*t^3 - 2*t^4-v^4;
S:=Scheme(P,[A,B]);
IsLocallySolvable(S,2);
\end{lstlisting}

{\bf Case 4:} $2u^4+v^4i=i^3(47+2i)(s+ti)^4$. Equating the real and imaginary parts on both sides of this equation gives
\begin{equation}\label{2213-4}
\begin{cases}
2s^4 + 188s^3t - 12s^2t^2 - 188st^3 + 2t^4-2u^4=0,\\
-47s^4 + 8s^3t + 282s^2t^2 - 8st^3 - 47t^4-v^4=0.
\end{cases}
\end{equation}
The scheme defined by system \eqref{2213-4} is locally insoluble at $5$.

%\begin{lstlisting}
%_<x>:=PolynomialRing(Rationals());
%k<i>:=NumberField(x^2+1);
%K<s,t>:=PolynomialRing(k,2);
%e:=3;
%F:=i^e*(47+2*i)*(s+t*i)^4;
%F;
%L<s,t>:=PolynomialRing(Rationals(),2);
%_<i>:=PolynomialRing(L);
%F:=(-47*i + 2)*s^4 + (8*i + 188)*s^3*t + (282*i - 12)*s^2*t^2 + (-8*i - 188)*s*t^3
%    + (-47*i + 2)*t^4;
%F;
%A:=2*s^4 + 188*s^3*t - 12*s^2*t^2 - 188*s*t^3 + 2*t^4;
%B:=-47*s^4 + 8*s^3*t + 282*s^2*t^2 - 8*s*t^3 - 47*t^4;
%F-A-i*B;

\begin{lstlisting}
P<s,t,u,v>:=ProjectiveSpace(Rationals(),3);
A:=2*s^4 + 188*s^3*t - 12*s^2*t^2 - 188*s*t^3 + 2*t^4-2*u^4;
B:=-47*s^4 + 8*s^3*t + 282*s^2*t^2 - 8*s*t^3 - 47*t^4-v^4;
S:=Scheme(P,[A,B]);
IsLocallySolvable(S,5);
\end{lstlisting}

{\bf Case 5:} $2u^4+v^4i=(47-2i)(s+ti)^4$. Equating the real and imaginary parts on both sides of this equation gives
\begin{equation}\label{2213-5}
\begin{cases}
47s^4 + 8s^3t - 282s^2t^2 - 8st^3 + 47t^4 - 2u^4=0,\\
-2s^4 + 188s^3t + 12s^2t^2 - 188st^3 - 2t^4 - v^4=0.
\end{cases}
\end{equation}
The scheme defined by system \eqref{2213-5} is locally insoluble at $2$.

%\begin{lstlisting}
%_<x>:=PolynomialRing(Rationals());
%k<i>:=NumberField(x^2+1);
%K<s,t>:=PolynomialRing(k,2);
%e:=0;
%F:=i^e*(47-2*i)*(s+t*i)^4;
%F;
%L<s,t>:=PolynomialRing(Rationals(),2);
%_<i>:=PolynomialRing(L);
%F:=(-2*i + 47)*s^4 + (188*i + 8)*s^3*t + (12*i - 282)*s^2*t^2 + (-188*i - 8)*s*t^3
%    + (-2*i + 47)*t^4;
%F;
%A:=47*s^4 + 8*s^3*t -
%    282*s^2*t^2 - 8*s*t^3 + 47*t^4;
%B:=-2*s^4 + 188*s^3*t + 12*s^2*t^2 - 188*s*t^3 - 2*t^4;
%F-A-i*B;

\begin{lstlisting}
P<s,t,u,v>:=ProjectiveSpace(Rationals(),3);
A:= 47*s^4 + 8*s^3*t -
    282*s^2*t^2 - 8*s*t^3 + 47*t^4-2*u^4;
B:=-2*s^4 + 188*s^3*t + 12*s^2*t^2 - 188*s*t^3 - 2*t^4-v^4;
S:=Scheme(P,[A,B]);
IsLocallySolvable(S,2);
\end{lstlisting}

{\bf Case 6:} $2u^4+v^4i=i(47-2i)(s+ti)^4$. Equating the real and imaginary parts on both sides of this equation gives
\begin{equation}\label{2213-6}
\begin{cases}
2s^4 - 188s^3t - 12s^2t^2 + 188st^3 + 2t^4 - 2u^4=0,\\
47s^4 + 8s^3t - 282s^2t^2 - 8st^3 + 47t^4 - v^4=0.
\end{cases}
\end{equation}
The scheme defined by system \eqref{2213-6} is locally insoluble at $2$.

%\begin{lstlisting}
%_<x>:=PolynomialRing(Rationals());
%k<i>:=NumberField(x^2+1);
%K<s,t>:=PolynomialRing(k,2);
%e:=1;
%F:=i^e*(47-2*i)*(s+t*i)^4;
%F;
%L<s,t>:=PolynomialRing(Rationals(),2);
%_<i>:=PolynomialRing(L);
%F:=(47*i + 2)*s^4 + (8*i - 188)*s^3*t + (-282*i - 12)*s^2*t^2 + (-8*i + 188)*s*t^3
%    + (47*i + 2)*t^4;
%F;
%A:= 2*s^4 - 188*s^3*t -
%    12*s^2*t^2 + 188*s*t^3 + 2*t^4;
%B:=47*s^4 + 8*s^3*t - 282*s^2*t^2 - 8*s*t^3 + 47*t^4;
%F-A-i*B;

\begin{lstlisting}
P<s,t,u,v>:=ProjectiveSpace(Rationals(),3);
A:= 2*s^4 - 188*s^3*t -
    12*s^2*t^2 + 188*s*t^3 + 2*t^4-2*u^4;
B:=47*s^4 + 8*s^3*t - 282*s^2*t^2 - 8*s*t^3 + 47*t^4-v^4;
S:=Scheme(P,[A,B]);
IsLocallySolvable(S,2);
\end{lstlisting}

{\bf Case 7:} $2u^4+v^4i=i^2(47-2i)(s+ti)^4$. Equating the real and imaginary parts on both sides of this equation gives
\begin{equation}\label{2213-7}
\begin{cases}
-47s^4 - 8s^3t + 282s^2t^2 + 8st^3 - 47t^4 - 2u^4=0,\\
2s^4 - 188s^3t - 12s^2t^2 + 188st^3 + 2t^4 - v^4=0.
\end{cases}
\end{equation}
The scheme defined by system \eqref{2213-7} is locally insoluble at $2$.

%\begin{lstlisting}
%_<x>:=PolynomialRing(Rationals());
%k<i>:=NumberField(x^2+1);
%K<s,t>:=PolynomialRing(k,2);
%e:=2;
%F:=i^e*(47-2*i)*(s+t*i)^4;
%F;
%L<s,t>:=PolynomialRing(Rationals(),2);
%_<i>:=PolynomialRing(L);
%F:=(2*i - 47)*s^4 + (-188*i - 8)*s^3*t + (-12*i + 282)*s^2*t^2 + (188*i + 8)*s*t^3
%    + (2*i - 47)*t^4;
%F;
%A:=- 47*s^4 - 8*s^3*t + 282*s^2*t^2 + 8*s*t^3 - 47*t^4;
%B:=2*s^4 - 188*s^3*t - 12*s^2*t^2 + 188*s*t^3 + 2*t^4;
%F-A-i*B;

\begin{lstlisting}
P<s,t,u,v>:=ProjectiveSpace(Rationals(),3);
A:=- 47*s^4 - 8*s^3*t + 282*s^2*t^2 + 8*s*t^3 - 47*t^4-2*u^4;
B:=2*s^4 - 188*s^3*t - 12*s^2*t^2 + 188*s*t^3 + 2*t^4-v^4;
S:=Scheme(P,[A,B]);
IsLocallySolvable(S,2);
\end{lstlisting}

{\bf Case 8:} $2u^4+v^4i=i^3(47-2i)(s+ti)^4$. Equating the real and imaginary parts on both sides of this equation gives
\begin{equation}\label{2213-8}
\begin{cases}
-2s^4 + 188s^3t + 12s^2t^2 - 188st^3 - 2t^4 - 2u^4=0,\\
-47s^4 - 8s^3t + 282s^2t^2 + 8st^3 - 47t^4 - v^4=0.
\end{cases}
\end{equation}
The scheme defined by system \eqref{2213-8} is locally insoluble at $2$.

%\begin{lstlisting}
%_<x>:=PolynomialRing(Rationals());
%k<i>:=NumberField(x^2+1);
%K<s,t>:=PolynomialRing(k,2);
%e:=3;
%F:=i^e*(47-2*i)*(s+t*i)^4;
%F;
%L<s,t>:=PolynomialRing(Rationals(),2);
%_<i>:=PolynomialRing(L);
%F:=(-47*i - 2)*s^4 + (-8*i + 188)*s^3*t + (282*i + 12)*s^2*t^2 + (8*i - 188)*s*t^3
%    + (-47*i - 2)*t^4;
%F;
%A:=- 2*s^4 + 188*s^3*t +
%    12*s^2*t^2 - 188*s*t^3 - 2*t^4;
%B:=-47*s^4 - 8*s^3*t + 282*s^2*t^2 + 8*s*t^3 - 47*t^4;
%F-A-i*B;

\begin{lstlisting}
P<s,t,u,v>:=ProjectiveSpace(Rationals(),3);
A:=- 2*s^4 + 188*s^3*t +   12*s^2*t^2 - 188*s*t^3 - 2*t^4-2*u^4;
B:=-47*s^4 - 8*s^3*t + 282*s^2*t^2 + 8*s*t^3 - 47*t^4-v^4;
S:=Scheme(P,[A,B]);
IsLocallySolvable(S,2);
\end{lstlisting}

\subsection{The case of $n=2719$.}%\[n= 2719.\]
According to Table \ref{tb:factorisation}, in the case $n=2719$ it remains to
show that equation 
\begin{equation}\label{2719}
7392961 u^8 + v^8 =2  w^4
\end{equation}
has no integer solutions satisfying \eqref{eq:cond}, which in this case reduces to
\begin{equation}\label{2719:cond}
\gcd(2719u,v)=\gcd(2719u,2w)=\gcd(v,2w)=1.
\end{equation}  Write \eqref{2719} as
\begin{equation} (2719u^4+v^4i)(2719u^4-v^4i)=(1+i)(1-i)w^4.
\end{equation}
This implies that there exist integers $s,t$ such that 
\[2719u^4+v^4i=i^{\epsilon}(s+ti)^4,\]
with $\epsilon \in \{0,1,2,3\}$.

{\bf Case 1:} $2719u^4+v^4i=(s+ti)^4$. Equating the real and imaginary parts on both sides of this equation gives
\begin{equation}\label{2719-1}
\begin{cases}
s^4 - 6s^2t^2 + t^4 - 2719u^4=0,\\
4s^3t - 4st^3 - v^4=0.
\end{cases}
\end{equation}
The scheme defined by system \eqref{2719-1} is locally insoluble at $2$.

%\begin{lstlisting}
%_<x>:=PolynomialRing(Rationals());
%k<i>:=NumberField(x^2+1);
%K<s,t>:=PolynomialRing(k,2);
%e:=0;
%F:=i^e*(s+t*i)^4;
%F;
%L<s,t>:=PolynomialRing(Rationals(),2);
%_<i>:=PolynomialRing(L);
%F:=s^4 + 4*i*s^3*t - 6*s^2*t^2 - 4*i*s*t^3 + t^4;
%F;
%A:= s^4 - 6*s^2*t^2 + t^4;
%B:=4*s^3*t - 4*s*t^3;
%F-A-i*B;
\begin{lstlisting}
P<s,t,u,v>:=ProjectiveSpace(Rationals(),3);
A:= s^4 - 6*s^2*t^2 + t^4-2719*u^4;
B:=4*s^3*t - 4*s*t^3-v^4;
S:=Scheme(P,[A,B]);
IsLocallySolvable(S,2);
\end{lstlisting}

{\bf Case 2:}  $2719u^4+v^4i=i(s+ti)^4$. Equating the real and imaginary parts on both sides of this equation gives
\begin{equation}\label{2719-2}
\begin{cases}
-4s^3t + 4st^3 - 2719u^4=0,\\
s^4 - 6s^2t^2 + t^4 - v^4=0.
\end{cases}
\end{equation}
%The scheme defined by system \eqref{2719-2} is locally solvable everywhere, but since t
The equation $X^4-6X^2Y^2+Y^4=Z^2$ has no integer solutions $(X,Y,Z)$ with $XY\neq 0$ (see Mordell \cite[Theorem~3,page 18]{Mordell}. It follows from this and the second equation in \eqref{2719-2} that $st=0$. Then the first equation forces $u=0$, which is not allowed. 

%\begin{lstlisting}
%_<x>:=PolynomialRing(Rationals());
%k<i>:=NumberField(x^2+1);
%K<s,t>:=PolynomialRing(k,2);
%e:=1;
%F:=i^e*(s+t*i)^4;
%F;
%L<s,t>:=PolynomialRing(Rationals(),2);
%_<i>:=PolynomialRing(L);
%F:=i*s^4 - 4*s^3*t - 6*i*s^2*t^2 + 4*s*t^3 + i*t^4;
%F;
%A:=- 4*s^3*t + 4*s*t^3;
%B:=s^4 - 6*s^2*t^2 + t^4;
%F-A-i*B;
%\end{lstlisting}

{\bf Case 3:}  $2719u^4+v^4i=i^2(s+ti)^4$. Equating the real and imaginary parts on both sides of this equation gives
\begin{equation}\label{2719-3}
\begin{cases}
-s^4 + 6s^2t^2 - t^4 - 2719u^4=0,\\
-4s^3t + 4st^3 - v^4=0
\end{cases}
\end{equation}
%The scheme defined by system \eqref{2719-3} is locally everywhere, but i
Modulo $2$ analysis on the second equation returns that any integer solution must have $v$ even, however this is a contradiction with \eqref{2719:cond}.
%It follows from the second equation that $4st(t^2-s^2)=v^4$. Hence, $(X,Y)=(t/s,v^2/2s^2)$ satisfies \[Y^2=X(X^2-1).\]
%Magma shows that the only rational points on this curve are $(0,0)$ and $(0,\pm 1)$. It follows that $t=0$. Hence, $v=0$, which is not allowed. 
%\begin{lstlisting}
%_<x>:=PolynomialRing(Rationals());
%k<i>:=NumberField(x^2+1);
%K<s,t>:=PolynomialRing(k,2);
%e:=2;
%F:=i^e*(s+t*i)^4;
%F;
%L<s,t>:=PolynomialRing(Rationals(),2);
%_<i>:=PolynomialRing(L);
%F:=-s^4 - 4*i*s^3*t + 6*s^2*t^2 + 4*i*s*t^3 - t^4;
%F;
%A:=- s^4 + 6*s^2*t^2 - t^4;
%B:=-4*s^3*t + 4*s*t^3;
%F-A-i*B;
%\end{lstlisting}

{\bf Case 4:}  $2719u^4+v^4i=i^3(s+ti)^4$. Equating the real and imaginary parts on both sides of this equation gives
\begin{equation}\label{2719-4}
\begin{cases}
4s^3t - 4st^3 - 2719u^4=0,\\
-s^4 + 6s^2t^2 - t^4 - v^4=0.
\end{cases}
\end{equation}
The scheme defined by system \eqref{2719-4} is locally insoluble at $2$.

%\begin{lstlisting}
%_<x>:=PolynomialRing(Rationals());
%k<i>:=NumberField(x^2+1);
%K<s,t>:=PolynomialRing(k,2);
%e:=3;
%F:=i^e*(s+t*i)^4;
%F;
%L<s,t>:=PolynomialRing(Rationals(),2);
%_<i>:=PolynomialRing(L);
%F:=-i*s^4 + 4*s^3*t + 6*i*s^2*t^2 - 4*s*t^3 - i*t^4;
%F;
%A:=4*s^3*t - 4*s*t^3;
%B:=-s^4 + 6*s^2*t^2 - t^4;
%F-A-i*B;

\begin{lstlisting}
P<s,t,u,v>:=ProjectiveSpace(Rationals(),3);
A:=4*s^3*t - 4*s*t^3-2719*u^4;
B:=-s^4 + 6*s^2*t^2 - t^4- v^4;
S:=Scheme(P,[A,B]);
IsLocallySolvable(S,2);
\end{lstlisting}

\subsection{The case of $n=2810$.}%\[n= 2810.\]
According to Table \ref{tb:factorisation}, in the case $n=2810$ it remains to
show that equation 
\begin{equation}\label{2810}
25 u^8 + 16 v^8=281  w^4
\end{equation}
has no integer solutions satisfying \eqref{eq:cond}, which in this case reduces to \[\gcd(5u,4v)=\gcd(5u,281w)=\gcd(4v,281w)=1.\]   Write \eqref{2810} as
\begin{equation} (5u^4+4v^4i)(5u^4-4v^4i)=(16+5i)(16-5i)w^4.
\end{equation}
This implies that there exist integers $s,t$ such that 
\[5u^4+4v^4i=i^{\epsilon}(16\pm 5i)(s+ti)^4,\]
with $\epsilon \in \{0,1,2,3\}$.

{\bf Case 1:} $5u^4+4v^4i=(16+5i)(s+ti)^4$. Equating the real and imaginary parts on both sides of this equation gives
\begin{equation}\label{2810-1}
\begin{cases}
16s^4 - 20s^3t - 96s^2t^2 + 20st^3 + 16t^4 - 5u^4=0,\\
5s^4 + 64s^3t - 30s^2t^2 - 64st^3 + 5t^4 - 4v^4=0
\end{cases}
\end{equation}
The scheme defined by system \eqref{2810-1} is locally insoluble at $2$.
%\begin{lstlisting}
%_<x>:=PolynomialRing(Rationals());
%k<i>:=NumberField(x^2+1);
%K<s,t>:=PolynomialRing(k,2);
%e:=0;
%F:=i^e*(16+5*i)*(s+t*i)^4;
%F;
%L<s,t>:=PolynomialRing(Rationals(),2);
%_<i>:=PolynomialRing(L);
%F:=(5*i + 16)*s^4 + (64*i - 20)*s^3*t + (-30*i - 96)*s^2*t^2 + (-64*i + 20)*s*t^3 +
%    (5*i + 16)*t^4;
%F;
%A:=16*s^4 - 20*s^3*t - 96*s^2*t^2 + 20*s*t^3 + 16*t^4;
%B:=5*s^4 + 64*s^3*t - 30*s^2*t^2 - 64*s*t^3 + 5*t^4;
%F-A-i*B;

\begin{lstlisting}
P<s,t,u,v>:=ProjectiveSpace(Rationals(),3);
A:=16*s^4 - 20*s^3*t - 96*s^2*t^2 + 20*s*t^3 + 16*t^4-5*u^4;
B:=5*s^4 + 64*s^3*t - 30*s^2*t^2 - 64*s*t^3 + 5*t^4-4*v^4;
S:=Scheme(P,[A,B]);
IsLocallySolvable(S,2);
\end{lstlisting}

{\bf Case 2:} $5u^4+4v^4i=i(16+5i)(s+ti)^4$. Equating the real and imaginary parts on both sides of this equation gives
\begin{equation}\label{2810-2}
\begin{cases}
-5s^4 - 64s^3t + 30s^2t^2 + 64st^3 - 5t^4 - 5u^4=0,\\
16s^4 - 20s^3t - 96s^2t^2 + 20st^3 + 16t^4 - 4v^4=0.
\end{cases}
\end{equation}
The scheme defined by system \eqref{2810-2} is locally insoluble at $2$.

%\begin{lstlisting}
%_<x>:=PolynomialRing(Rationals());
%k<i>:=NumberField(x^2+1);
%K<s,t>:=PolynomialRing(k,2);
%e:=1;
%F:=i^e*(16+5*i)*(s+t*i)^4;
%F;
%L<s,t>:=PolynomialRing(Rationals(),2);
%_<i>:=PolynomialRing(L);
%F:=(16*i - 5)*s^4 + (-20*i - 64)*s^3*t + (-96*i + 30)*s^2*t^2 + (20*i + 64)*s*t^3 +
%    (16*i - 5)*t^4;
%F;
%A:=- 5*s^4 - 64*s^3*t +
%    30*s^2*t^2 + 64*s*t^3 - 5*t^4;
%B:=16*s^4 - 20*s^3*t - 96*s^2*t^2 + 20*s*t^3 + 16*t^4;
%F-A-i*B;

\begin{lstlisting}
P<s,t,u,v>:=ProjectiveSpace(Rationals(),3);
A:= - 5*s^4 - 64*s^3*t + 30*s^2*t^2 + 64*s*t^3 - 5*t^4-5*u^4;
B:=16*s^4 - 20*s^3*t - 96*s^2*t^2 + 20*s*t^3 + 16*t^4-4*v^4;
S:=Scheme(P,[A,B]);
IsLocallySolvable(S,2);
\end{lstlisting}

{\bf Case 3:} $5u^4+4v^4i=i^{2}(16+5i)(s+ti)^4$. Equating the real and imaginary parts on both sides of this equation gives
\begin{equation}\label{2810-3}
\begin{cases}
-16s^4 + 20s^3t + 96s^2t^2 - 20st^3 - 16t^4 - 5u^4=0,\\
-5s^4 - 64s^3t + 30s^2t^2 + 64st^3 - 5t^4 - 4v^4=0.
\end{cases}
\end{equation}
The scheme defined by system \eqref{2810-3} is locally insoluble at $2$.
%\begin{lstlisting}
%_<x>:=PolynomialRing(Rationals());
%k<i>:=NumberField(x^2+1);
%K<s,t>:=PolynomialRing(k,2);
%e:=2;
%F:=i^e*(16+5*i)*(s+t*i)^4;
%F;
%L<s,t>:=PolynomialRing(Rationals(),2);
%_<i>:=PolynomialRing(L);
%F:=(-5*i - 16)*s^4 + (-64*i + 20)*s^3*t + (30*i + 96)*s^2*t^2 + (64*i - 20)*s*t^3 +
%    (-5*i - 16)*t^4;
%F;
%A:=- 16*s^4 + 20*s^3*t +96*s^2*t^2 - 20*s*t^3 - 16*t^4;
%B:=-5*s^4 - 64*s^3*t + 30*s^2*t^2 + 64*s*t^3 - 5*t^4;
%F-A-i*B;

\begin{lstlisting}
P<s,t,u,v>:=ProjectiveSpace(Rationals(),3);
A:=- 16*s^4 + 20*s^3*t +96*s^2*t^2 - 20*s*t^3 - 16*t^4-5*u^4;
B:=-5*s^4 - 64*s^3*t + 30*s^2*t^2 + 64*s*t^3 - 5*t^4-4*v^4;
S:=Scheme(P,[A,B]);
IsLocallySolvable(S,2);
\end{lstlisting}

{\bf Case 4:} $5u^4+4v^4i=i^{3}(16+5i)(s+ti)^4$. Equating the real and imaginary parts on both sides of this equation gives
\begin{equation}\label{2810-4}
\begin{cases}
5s^4 + 64s^3t - 30s^2t^2 - 64st^3 + 5t^4 - 5u^4=0,\\
-16s^4 + 20s^3t + 96s^2t^2 - 20st^3 - 16t^4 - 4v^4=0.
\end{cases}
\end{equation}
The scheme defined by system \eqref{2810-4} is locally insoluble at $3$.

%\begin{lstlisting}
%_<x>:=PolynomialRing(Rationals());
%k<i>:=NumberField(x^2+1);
%K<s,t>:=PolynomialRing(k,2);
%e:=3;
%F:=i^e*(16+5*i)*(s+t*i)^4;
%F;
%L<s,t>:=PolynomialRing(Rationals(),2);
%_<i>:=PolynomialRing(L);
%F:=(-16*i + 5)*s^4 + (20*i + 64)*s^3*t + (96*i - 30)*s^2*t^2 + (-20*i - 64)*s*t^3 +
%    (-16*i + 5)*t^4;
%F;
%A:=5*s^4 + 64*s^3*t - 30*s^2*t^2 - 64*s*t^3 + 5*t^4;
%B:=-16*s^4 + 20*s^3*t + 96*s^2*t^2 - 20*s*t^3 - 16*t^4;
%F-A-i*B;

\begin{lstlisting}
P<s,t,u,v>:=ProjectiveSpace(Rationals(),3);
A:=5*s^4 + 64*s^3*t - 30*s^2*t^2 - 64*s*t^3 + 5*t^4-5*u^4;
B:=-16*s^4 + 20*s^3*t + 96*s^2*t^2 - 20*s*t^3 - 16*t^4-4*v^4;
S:=Scheme(P,[A,B]);
IsLocallySolvable(S,3);
\end{lstlisting}

{\bf Case 5:} $5u^4+4v^4i=(16-5i)(s+ti)^4$. Equating the real and imaginary parts on both sides of this equation gives
\begin{equation}\label{2810-5}
\begin{cases}
16s^4 + 20s^3t - 96s^2t^2 - 20st^3 + 16t^4 - 5u^4=0,\\
-5s^4 + 64s^3t + 30s^2t^2 - 64st^3 - 5t^4 - 4v^4=0.
\end{cases}
\end{equation}
The scheme defined by system \eqref{2810-5} is locally insoluble at $2$.

%\begin{lstlisting}
%_<x>:=PolynomialRing(Rationals());
%k<i>:=NumberField(x^2+1);
%K<s,t>:=PolynomialRing(k,2);
%e:=0;
%F:=i^e*(16-5*i)*(s+t*i)^4;
%F;
%L<s,t>:=PolynomialRing(Rationals(),2);
%_<i>:=PolynomialRing(L);
%F:=(-5*i + 16)*s^4 + (64*i + 20)*s^3*t + (30*i - 96)*s^2*t^2 + (-64*i - 20)*s*t^3 +
%    (-5*i + 16)*t^4;
%F;
%A:=16*s^4 + 20*s^3*t -96*s^2*t^2 - 20*s*t^3 + 16*t^4;
%B:=-5*s^4 + 64*s^3*t + 30*s^2*t^2 - 64*s*t^3 - 5*t^4;
%F-A-i*B;

\begin{lstlisting}
P<s,t,u,v>:=ProjectiveSpace(Rationals(),3);
A:=16*s^4 + 20*s^3*t -96*s^2*t^2 - 20*s*t^3 + 16*t^4-5*u^4;
B:=-5*s^4 + 64*s^3*t + 30*s^2*t^2 - 64*s*t^3 - 5*t^4-4*v^4;
S:=Scheme(P,[A,B]);
IsLocallySolvable(S,2);
\end{lstlisting}
{\bf Case 6:} $5u^4+4v^4i=i(16-5i)(s+ti)^4$. Equating the real and imaginary parts on both sides of this equation gives

\begin{equation}\label{2810-6}
\begin{cases}
5s^4 - 64s^3t - 30s^2t^2 + 64st^3 + 5t^4 - 5u^4=0,\\
16s^4 + 20s^3t - 96s^2t^2 - 20st^3 + 16t^4 - 4v^4=0.
\end{cases}
\end{equation}
The scheme defined by system \eqref{2810-6} is locally insoluble at $5$.

%\begin{lstlisting}
%_<x>:=PolynomialRing(Rationals());
%k<i>:=NumberField(x^2+1);
%K<s,t>:=PolynomialRing(k,2);
%e:=1;
%F:=i^e*(16-5*i)*(s+t*i)^4;
%F;
%L<s,t>:=PolynomialRing(Rationals(),2);
%_<i>:=PolynomialRing(L);
%F:=(16*i + 5)*s^4 + (20*i - 64)*s^3*t + (-96*i - 30)*s^2*t^2 + (-20*i + 64)*s*t^3 +
%    (16*i + 5)*t^4;
%F;
%A:= 5*s^4 - 64*s^3*t -  30*s^2*t^2 + 64*s*t^3 + 5*t^4;
%B:=16*s^4 + 20*s^3*t - 96*s^2*t^2 - 20*s*t^3 + 16*t^4;
%F-A-i*B;

\begin{lstlisting}
P<s,t,u,v>:=ProjectiveSpace(Rationals(),3);
A:=5*s^4 - 64*s^3*t -  30*s^2*t^2 + 64*s*t^3 + 5*t^4-5*u^4;
B:=16*s^4 + 20*s^3*t - 96*s^2*t^2 - 20*s*t^3 + 16*t^4-4*v^4;
S:=Scheme(P,[A,B]);
IsLocallySolvable(S,5);
\end{lstlisting}

{\bf Case 7:} $5u^4+4v^4i=i^{2}(16-5i)(s+ti)^4$. Equating the real and imaginary parts on both sides of this equation gives
\begin{equation}\label{2810-7}
\begin{cases}
-16s^4 - 20s^3t + 96s^2t^2 + 20st^3 - 16t^4 - 5u^4=0,\\
5s^4 - 64s^3t - 30s^2t^2 + 64st^3 + 5t^4 - 4v^4=0.
\end{cases}
\end{equation}
The scheme defined by system \eqref{2810-3} is locally insoluble at $2$.

%\begin{lstlisting}
%_<x>:=PolynomialRing(Rationals());
%k<i>:=NumberField(x^2+1);
%K<s,t>:=PolynomialRing(k,2);
%e:=2;
%F:=i^e*(16-5*i)*(s+t*i)^4;
%F;
%L<s,t>:=PolynomialRing(Rationals(),2);
%_<i>:=PolynomialRing(L);
%F:=(5*i - 16)*s^4 + (-64*i - 20)*s^3*t + (-30*i + 96)*s^2*t^2 + (64*i + 20)*s*t^3 +    (5*i - 16)*t^4;
%F;
%A:=- 16*s^4 - 20*s^3*t +96*s^2*t^2 + 20*s*t^3 - 16*t^4;
%B:=5*s^4 - 64*s^3*t - 30*s^2*t^2 + 64*s*t^3 + 5*t^4;
%F-A-i*B;

\begin{lstlisting}
P<s,t,u,v>:=ProjectiveSpace(Rationals(),3);
A:=- 16*s^4 - 20*s^3*t +96*s^2*t^2 + 20*s*t^3 - 16*t^4-5*u^4;
B:=5*s^4 - 64*s^3*t - 30*s^2*t^2 + 64*s*t^3 + 5*t^4-4*v^4;
S:=Scheme(P,[A,B]);
IsLocallySolvable(S,2);
\end{lstlisting}

{\bf Case 8:} $5u^4+4v^4i=i^{3}(16-5i)(s+ti)^4$. Equating the real and imaginary parts on both sides of this equation gives
\begin{equation}\label{2810-8}
\begin{cases}
-5s^4 + 64s^3t + 30s^2t^2 - 64st^3 - 5t^4 - 5u^4=0,\\
-16s^4 - 20s^3t + 96s^2t^2 + 20st^3 - 16t^4 - 4v^4=0.
\end{cases}
\end{equation}
The scheme defined by system \eqref{2810-8} is locally insoluble at $2$.

%\begin{lstlisting}
%_<x>:=PolynomialRing(Rationals());
%k<i>:=NumberField(x^2+1);
%K<s,t>:=PolynomialRing(k,2);
%e:=3;
%F:=i^e*(16-5*i)*(s+t*i)^4;
%F;
%L<s,t>:=PolynomialRing(Rationals(),2);
%_<i>:=PolynomialRing(L);
%F:=(-16*i - 5)*s^4 + (-20*i + 64)*s^3*t + (96*i + 30)*s^2*t^2 + (20*i - 64)*s*t^3 +
%    (-16*i - 5)*t^4;
%F;
%A:=- 5*s^4 + 64*s^3*t +  30*s^2*t^2 - 64*s*t^3 - 5*t^4;
%B:=-16*s^4 - 20*s^3*t + 96*s^2*t^2 + 20*s*t^3 - 16*t^4;
%F-A-i*B;

\begin{lstlisting}
P<s,t,u,v>:=ProjectiveSpace(Rationals(),3);
A:=- 5*s^4 + 64*s^3*t +  30*s^2*t^2 - 64*s*t^3 - 5*t^4-5*u^4;
B:=-16*s^4 - 20*s^3*t + 96*s^2*t^2 + 20*s*t^3 - 16*t^4-4*v^4;
S:=Scheme(P,[A,B]);
IsLocallySolvable(S,2);
\end{lstlisting}

\subsection{The case of $n=3185$.}
%\[n=3185 \quad (I)\]

According to Table \ref{tb:factorisation}, in the case $n=3185$ it remains to
show that equation
\begin{equation}\label{3185.1}
2401 u^8 + 4 v^8= 65  w^4 
\end{equation}
has no integer solutions satisfying \eqref{eq:cond}, which in this case reduces to \[\gcd(7u,2v)=\gcd(7u,65w)=\gcd(2v,65w)=1,\]  and to show that equation
\begin{equation}\label{3185.2}
9604 u^8 + v^8=65  w^4
\end{equation}
has no integer solutions satisfying \eqref{eq:cond}, which in this case reduces to \[\gcd(98u,v)=\gcd(98u,65w)=\gcd(v,65w)=1.\]

\subsubsection{The case of \eqref{3185.1}.}
We first consider \eqref{3185.1}, which we can write as
\begin{equation} (49u^4+2v^4i)(49u^4-2v^4i)=(1+2i)(1-2i)(3+2i)(3-2i)w^4.
\end{equation}
Since $5\nmid u,v$, we have
\[49u^4+2v^4i\equiv -1+2i\pmod{5}.\]
Hence, $1-2i|49u^4+2v^4i$. This implies that there exist integers $s,t$ such that 
\[49u^4+2v^4i=i^{\epsilon}(1-2i)(3\pm 2i)(s+ti)^4,\]
with $\epsilon \in \{0,1,2,3\}$.

{\bf Case 1:} $49u^4+2v^4i=(1-2i)(3+2i)(s+ti)^4$. Equating the real and imaginary parts on both sides of this equation gives
\begin{equation}\label{3185.1-1}
\begin{cases}
7s^4 + 16s^3t - 42s^2t^2 - 16st^3 + 7t^4 - 49u^4=0,\\
-4s^4 + 28s^3t + 24s^2t^2 - 28st^3 - 4t^4 - 2v^4=0.
\end{cases}
\end{equation}
The scheme defined by system \eqref{3185.1-1} is locally insoluble at $2$.
%\begin{lstlisting}
%_<x>:=PolynomialRing(Rationals());
%k<i>:=NumberField(x^2+1);
%K<s,t>:=PolynomialRing(k,2);
%e:=0;
%F:=i^e*(1-2*i)*(3+2*i)*(s+t*i)^4;
%F;
%L<s,t>:=PolynomialRing(Rationals(),2);
%_<i>:=PolynomialRing(L);
%F:=(-4*i + 7)*s^4 + (28*i + 16)*s^3*t + (24*i - 42)*s^2*t^2 + (-28*i - 16)*s*t^3 +
%    (-4*i + 7)*t^4;
%F;
%A:=7*s^4 + 16*s^3*t -42*s^2*t^2 - 16*s*t^3 + 7*t^4;
%B:=-4*s^4 + 28*s^3*t + 24*s^2*t^2 - 28*s*t^3 - 4*t^4;
%F-A-i*B;
\begin{lstlisting}
P<s,t,u,v>:=ProjectiveSpace(Rationals(),3);
A:=7*s^4 + 16*s^3*t -42*s^2*t^2 - 16*s*t^3 + 7*t^4-49*u^4;
B:=-4*s^4 + 28*s^3*t + 24*s^2*t^2 - 28*s*t^3 - 4*t^4-2*v^4;
S:=Scheme(P,[A,B]);
IsLocallySolvable(S,2);
\end{lstlisting}

{\bf Case 2:} $49u^4+2v^4i=i(1-2i)(3+2i)(s+ti)^4$. Equating the real and imaginary parts on both sides of this equation gives
\begin{equation}\label{3185.1-2}
\begin{cases}
4s^4 - 28s^3t - 24s^2t^2 + 28st^3 + 4t^4 - 49u^4=0,\\
7s^4 + 16s^3t - 42s^2t^2 - 16st^3 + 7t^4 - 2v^4=0.
\end{cases}
\end{equation}
The scheme defined by system \eqref{3185.1-2} is locally insoluble at $2$.
%\begin{lstlisting}
%_<x>:=PolynomialRing(Rationals());
%k<i>:=NumberField(x^2+1);
%K<s,t>:=PolynomialRing(k,2);
%e:=1;
%F:=i^e*(1-2*i)*(3+2*i)*(s+t*i)^4;
%F;
%L<s,t>:=PolynomialRing(Rationals(),2);
%_<i>:=PolynomialRing(L);
%F:=(7*i + 4)*s^4 + (16*i - 28)*s^3*t + (-42*i - 24)*s^2*t^2 + (-16*i + 28)*s*t^3 +
%    (7*i + 4)*t^4;
%F;
%A:= 4*s^4 - 28*s^3*t -  24*s^2*t^2 + 28*s*t^3 + 4*t^4;
%B:=7*s^4 + 16*s^3*t - 42*s^2*t^2 - 16*s*t^3 + 7*t^4;
%F-A-i*B;

\begin{lstlisting}
P<s,t,u,v>:=ProjectiveSpace(Rationals(),3);
A:=4*s^4 - 28*s^3*t -  24*s^2*t^2 + 28*s*t^3 + 4*t^4-49*u^4;
B:=7*s^4 + 16*s^3*t - 42*s^2*t^2 - 16*s*t^3 + 7*t^4-2*v^4;
S:=Scheme(P,[A,B]);
IsLocallySolvable(S,2);
\end{lstlisting}

{\bf Case 3:} $49u^4+2v^4i=i^2(1-2i)(3+2i)(s+ti)^4$. Equating the real and imaginary parts on both sides of this equation gives
\begin{equation}\label{3185.1-3}
\begin{cases}
-7s^4 - 16s^3t + 42s^2t^2 + 16st^3 - 7t^4 - 49u^4=0,\\
4s^4 - 28s^3t - 24s^2t^2 + 28st^3 + 4t^4 - 2v^4=0.
\end{cases}
\end{equation}
The scheme defined by system \eqref{3185.1-3} is locally insoluble at $2$.
%\begin{lstlisting}
%_<x>:=PolynomialRing(Rationals());
%k<i>:=NumberField(x^2+1);
%K<s,t>:=PolynomialRing(k,2);
%e:=2;
%F:=i^e*(1-2*i)*(3+2*i)*(s+t*i)^4;
%F;
%L<s,t>:=PolynomialRing(Rationals(),2);
%_<i>:=PolynomialRing(L);
%F:=(4*i - 7)*s^4 + (-28*i - 16)*s^3*t + (-24*i + 42)*s^2*t^2 + (28*i + 16)*s*t^3 +
%    (4*i - 7)*t^4;
%F;
%A:=- 7*s^4 - 16*s^3*t +42*s^2*t^2 + 16*s*t^3 - 7*t^4;
%B:=4*s^4 - 28*s^3*t - 24*s^2*t^2 + 28*s*t^3 + 4*t^4;
%F-A-i*B;

\begin{lstlisting}
P<s,t,u,v>:=ProjectiveSpace(Rationals(),3);
A:=- 7*s^4 - 16*s^3*t +42*s^2*t^2 + 16*s*t^3 - 7*t^4-49*u^4;
B:=4*s^4 - 28*s^3*t - 24*s^2*t^2 + 28*s*t^3 + 4*t^4-2*v^4;
S:=Scheme(P,[A,B]);
IsLocallySolvable(S,2);
\end{lstlisting}
{\bf Case 4:} $49u^4+2v^4i=i^3(1-2i)(3+2i)(s+ti)^4$. Equating the real and imaginary parts on both sides of this equation gives
\begin{equation}\label{3185.1-4}
\begin{cases}
-4s^4 + 28s^3t + 24s^2t^2 - 28st^3 - 4t^4 - 49u^4=0,\\
-7s^4 - 16s^3t + 42s^2t^2 + 16st^3 - 7t^4 - 2v^4=0.
\end{cases}
\end{equation}
The scheme defined by system \eqref{3185.1-4} is locally insoluble at $2$.
%
%\begin{lstlisting}
%_<x>:=PolynomialRing(Rationals());
%k<i>:=NumberField(x^2+1);
%K<s,t>:=PolynomialRing(k,2);
%e:=3;
%F:=i^e*(1-2*i)*(3+2*i)*(s+t*i)^4;
%F;
%L<s,t>:=PolynomialRing(Rationals(),2);
%_<i>:=PolynomialRing(L);
%F:=(-7*i - 4)*s^4 + (-16*i + 28)*s^3*t + (42*i + 24)*s^2*t^2 + (16*i - 28)*s*t^3 +
%    (-7*i - 4)*t^4;
%F;
%A:=- 4*s^4 + 28*s^3*t +  24*s^2*t^2 - 28*s*t^3 - 4*t^4;
%B:=-7*s^4 - 16*s^3*t + 42*s^2*t^2 + 16*s*t^3 - 7*t^4;
%F-A-i*B;

\begin{lstlisting}
P<s,t,u,v>:=ProjectiveSpace(Rationals(),3);
A:=- 4*s^4 + 28*s^3*t +  24*s^2*t^2 - 28*s*t^3 - 4*t^4-49*u^4;
B:=-7*s^4 - 16*s^3*t + 42*s^2*t^2 + 16*s*t^3 - 7*t^4-2*v^4;
S:=Scheme(P,[A,B]);
IsLocallySolvable(S,2);
\end{lstlisting}

{\bf Case 5:} $49u^4+2v^4i=(1-2i)(3-2i)(s-ti)^4$. Equating the real and imaginary parts on both sides of this equation gives
\begin{equation}\label{3185.1-5}
\begin{cases}
-s^4 + 32s^3t + 6s^2t^2 - 32st^3 - t^4 - 49u^4=0,\\
-8s^4 - 4s^3t + 48s^2t^2 + 4st^3 - 8t^4 - 2v^4=0.
\end{cases}
\end{equation}
The scheme defined by system \eqref{3185.1-5} is locally insoluble at $2$.

%\begin{lstlisting}
%_<x>:=PolynomialRing(Rationals());
%k<i>:=NumberField(x^2+1);
%K<s,t>:=PolynomialRing(k,2);
%e:=0;
%F:=i^e*(1-2*i)*(3-2*i)*(s+t*i)^4;
%F;
%L<s,t>:=PolynomialRing(Rationals(),2);
%_<i>:=PolynomialRing(L);
%F:=(-8*i - 1)*s^4 + (-4*i + 32)*s^3*t + (48*i + 6)*s^2*t^2 + (4*i - 32)*s*t^3 +
%    (-8*i - 1)*t^4;
%F;
%A:=- s^4 + 32*s^3*t + 6*s^2*t^2  - 32*s*t^3 - t^4;
%B:=-8*s^4 - 4*s^3*t + 48*s^2*t^2 + 4*s*t^3 - 8*t^4;
%F-A-i*B;
%
\begin{lstlisting}
P<s,t,u,v>:=ProjectiveSpace(Rationals(),3);
A:=- s^4 + 32*s^3*t + 6*s^2*t^2  - 32*s*t^3 - t^4-49*u^4;
B:=-8*s^4 - 4*s^3*t + 48*s^2*t^2 + 4*s*t^3 - 8*t^4-2*v^4;
S:=Scheme(P,[A,B]);
IsLocallySolvable(S,2);
\end{lstlisting}

{\bf Case 6:} $49u^4+2v^4i=i(1-2i)(3-2i)(s+ti)^4$. Equating the real and imaginary parts on both sides of this equation gives
\begin{equation}\label{3185.1-6}
\begin{cases}
8s^4 + 4s^3t - 48s^2t^2 - 4st^3 + 8t^4 - 49u^4=0,\\
-s^4 + 32s^3t + 6s^2t^2 - 32st^3 - t^4 - 2v^4=0.
\end{cases}
\end{equation}
The scheme defined by system \eqref{3185.1-6} is locally insoluble at $2$.

%\begin{lstlisting}
%_<x>:=PolynomialRing(Rationals());
%k<i>:=NumberField(x^2+1);
%K<s,t>:=PolynomialRing(k,2);
%e:=1;
%F:=i^e*(1-2*i)*(3-2*i)*(s+t*i)^4;
%F;
%L<s,t>:=PolynomialRing(Rationals(),2);
%_<i>:=PolynomialRing(L);
%F:=(-i + 8)*s^4 + (32*i + 4)*s^3*t + (6*i - 48)*s^2*t^2 + (-32*i - 4)*s*t^3 + (-i +
%    8)*t^4;
%F;
%A:=8*s^4 + 4*s^3*t - 48*s^2*t^2- 4*s*t^3 + 8*t^4;
%B:=-s^4 + 32*s^3*t + 6*s^2*t^2 - 32*s*t^3 - t^4;
%F-A-i*B;

\begin{lstlisting}
P<s,t,u,v>:=ProjectiveSpace(Rationals(),3);
A:=8*s^4 + 4*s^3*t - 48*s^2*t^2- 4*s*t^3 + 8*t^4-49*u^4;
B:=-s^4 + 32*s^3*t + 6*s^2*t^2 - 32*s*t^3 - t^4-2*v^4;
S:=Scheme(P,[A,B]);
IsLocallySolvable(S,2);
\end{lstlisting}

{\bf Case 7:} $49u^4+2v^4i=i^2(1-2i)(3-2i)(s+ti)^4$. Equating the real and imaginary parts on both sides of this equation gives
\begin{equation}\label{3185.1-7}
\begin{cases}
s^4 - 32s^3t - 6s^2t^2 + 32st^3 + t^4 - 49u^4=0,\\
8s^4 + 4s^3t - 48s^2t^2 - 4st^3 + 8t^4 - 2v^4=0.
\end{cases}
\end{equation}
The scheme defined by system \eqref{3185.1-7} is locally insoluble at $2$.

%\begin{lstlisting}
%_<x>:=PolynomialRing(Rationals());
%k<i>:=NumberField(x^2+1);
%K<s,t>:=PolynomialRing(k,2);
%e:=2;
%F:=i^e*(1-2*i)*(3-2*i)*(s+t*i)^4;
%F;
%L<s,t>:=PolynomialRing(Rationals(),2);
%_<i>:=PolynomialRing(L);
%F:=(8*i + 1)*s^4 + (4*i - 32)*s^3*t + (-48*i - 6)*s^2*t^2 + (-4*i + 32)*s*t^3 +
%    (8*i + 1)*t^4;
%F;
%A:=s^4 - 32*s^3*t - 6*s^2*t^2 + 32*s*t^3 + t^4;
%B:=8*s^4 + 4*s^3*t - 48*s^2*t^2 - 4*s*t^3 + 8*t^4;
%F-A-i*B;

\begin{lstlisting}
P<s,t,u,v>:=ProjectiveSpace(Rationals(),3);
A:=s^4 - 32*s^3*t - 6*s^2*t^2 + 32*s*t^3 + t^4-49*u^4;
B:=8*s^4 + 4*s^3*t - 48*s^2*t^2 - 4*s*t^3 + 8*t^4-2*v^4;
S:=Scheme(P,[A,B]);
IsLocallySolvable(S,2);
\end{lstlisting}

{\bf Case 8:} $49u^4+2v^4i=i^3(1-2i)(3-2i)(s+ti)^4$. Equating the real and imaginary parts on both sides of this equation gives
\begin{equation}\label{3185.1-8}
\begin{cases}
-8s^4 - 4s^3t + 48s^2t^2 + 4st^3 - 8t^4 - 49u^4=0,\\
s^4 - 32s^3t - 6s^2t^2 + 32st^3 + t^4 - 2v^4=0.
\end{cases}
\end{equation}
The scheme defined by system \eqref{3185.1-8} is locally insoluble at $2$.
%\begin{lstlisting}
%_<x>:=PolynomialRing(Rationals());
%k<i>:=NumberField(x^2+1);
%K<s,t>:=PolynomialRing(k,2);
%e:=3;
%F:=i^e*(1-2*i)*(3-2*i)*(s+t*i)^4;
%F;
%L<s,t>:=PolynomialRing(Rationals(),2);
%_<i>:=PolynomialRing(L);
%F:=(i - 8)*s^4 + (-32*i - 4)*s^3*t + (-6*i + 48)*s^2*t^2 + (32*i + 4)*s*t^3 + (i -
%    8)*t^4;
%F;
%A:=- 8*s^4 - 4*s^3*t + 48*s^2*t^2 +   4*s*t^3 - 8*t^4;
%B:=s^4 - 32*s^3*t - 6*s^2*t^2 + 32*s*t^3 + t^4;
%F-A-i*B;

\begin{lstlisting}
P<s,t,u,v>:=ProjectiveSpace(Rationals(),3);
A:=- 8*s^4 - 4*s^3*t + 48*s^2*t^2 + 4*s*t^3 - 8*t^4-49*u^4;
B:=s^4 - 32*s^3*t - 6*s^2*t^2 + 32*s*t^3 + t^4-2*v^4;
S:=Scheme(P,[A,B]);
IsLocallySolvable(S,2);
\end{lstlisting}

\subsubsection{\bf{The case of \eqref{3185.2}.}}
We next consider \eqref{3185.2}, which we can write as
\begin{equation} (98u^4+v^4i)(98u^4-v^4i)=(1+2i)(1-2i)(3+2i)(3-2i)w^4.
\end{equation}
Since $5\nmid u,v$, we have
\[98u^4+v^4i\equiv i-2\pmod{5}.\]
Hence, $1+2i|98u^4+v^4i$. This implies that there exist integers $s,t$ such that 
\[98u^4+v^4i=i^{\epsilon}(1+2i)(3\pm 2i)(s+ti)^4,\]
with $\epsilon \in \{0,1,2,3\}$.

{\bf Case 1:} $98u^4+v^4i=(1+2i)(3+2i)(s+ti)^4$. Equating the real and imaginary parts on both sides of this equation gives
\begin{equation}\label{3185.2-1}
\begin{cases}
-s^4 - 32s^3t + 6s^2t^2 + 32st^3 - t^4 - 98u^4=0,\\
8s^4 - 4s^3t - 48s^2t^2 + 4st^3 + 8t^4 - v^4=0.
\end{cases}
\end{equation}
The scheme defined by system \eqref{3185.2-1} is locally insoluble at $2$.
%\begin{lstlisting}
%    _<x>:=PolynomialRing(Rationals());
%k<i>:=NumberField(x^2+1);
%K<s,t>:=PolynomialRing(k,2);
%e:=0;
%F:=i^e*(1+2*i)*(3+2*i)*(s+t*i)^4;
%F;
%L<s,t>:=PolynomialRing(Rationals(),2);
%_<i>:=PolynomialRing(L);
%F:=(8*i - 1)*s^4 + (-4*i - 32)*s^3*t + (-48*i + 6)*s^2*t^2 + (4*i + 32)*s*t^3 +
%    (8*i - 1)*t^4;
%F;
%A:=- s^4 - 32*s^3*t + 6*s^2*t^2 + 32*s*t^3 - t^4;
%B:=8*s^4 - 4*s^3*t - 48*s^2*t^2 + 4*s*t^3 + 8*t^4;
%F-A-i*B;

\begin{lstlisting}
P<s,t,u,v>:=ProjectiveSpace(Rationals(),3);
A:=- s^4 - 32*s^3*t + 6*s^2*t^2 + 32*s*t^3 - t^4-98*u^4;
B:=8*s^4 - 4*s^3*t - 48*s^2*t^2 + 4*s*t^3 + 8*t^4-v^4;
S:=Scheme(P,[A,B]);
IsLocallySolvable(S,2);
\end{lstlisting}

{\bf Case 2:} $98u^4+v^4i=i(1+2i)(3+2i)(s+ti)^4$. Equating the real and imaginary parts on both sides of this equation gives
\begin{equation}\label{3185.2-2}
\begin{cases}
-8s^4 + 4s^3t + 48s^2t^2 - 4st^3 - 8t^4 - 98u^4=0,\\
-s^4 - 32s^3t + 6s^2t^2 + 32st^3 - t^4 - v^4=0.
\end{cases}
\end{equation}
The scheme defined by system \eqref{3185.2-2} is locally insoluble at $2$.

%\begin{lstlisting}
%    _<x>:=PolynomialRing(Rationals());
%k<i>:=NumberField(x^2+1);
%K<s,t>:=PolynomialRing(k,2);
%e:=1;
%F:=i^e*(1+2*i)*(3+2*i)*(s+t*i)^4;
%F;
%L<s,t>:=PolynomialRing(Rationals(),2);
%_<i>:=PolynomialRing(L);
%F:= (-i - 8)*s^4 + (-32*i + 4)*s^3*t + (6*i + 48)*s^2*t^2 + (32*i - 4)*s*t^3 + (-i -
%    8)*t^4;
%F;
%A:=- 8*s^4 + 4*s^3*t + 48*s^2*t^2 - 4*s*t^3 - 8*t^4;
%B:=-s^4 - 32*s^3*t + 6*s^2*t^2 + 32*s*t^3 - t^4;
%F-A-i*B;

\begin{lstlisting}
P<s,t,u,v>:=ProjectiveSpace(Rationals(),3);
A:=- 8*s^4 + 4*s^3*t + 48*s^2*t^2 - 4*s*t^3 - 8*t^4-98*u^4;
B:=-s^4 - 32*s^3*t + 6*s^2*t^2 + 32*s*t^3 - t^4-v^4;
S:=Scheme(P,[A,B]);
IsLocallySolvable(S,2);
\end{lstlisting}

{\bf Case 3:}  $98u^4+v^4i=i^2(1+2i)(3+2i)(s+ti)^4$. Equating the real and imaginary parts on both sides of this equation gives
\begin{equation}\label{3185.2-3}
\begin{cases}
s^4 + 32s^3t - 6s^2t^2 - 32st^3 + t^4 - 98u^4=0, \\
-8s^4 + 4s^3t + 48s^2t^2 - 4st^3 - 8t^4 - v^4=0.
%-8*s^4 + 4*s^3*t + 48*s^2*t^2 - 4*s*t^3 - 8*t^4 - 98*u^4=0,\\
%-s^4 - 32*s^3*t + 6*s^2*t^2 + 32*s*t^3 - t^4 - v^4=0.
\end{cases}
\end{equation}
The scheme defined by system \eqref{3185.2-3} is locally insoluble at $2$.
%\begin{lstlisting}
%    _<x>:=PolynomialRing(Rationals());
%k<i>:=NumberField(x^2+1);
%K<s,t>:=PolynomialRing(k,2);
%e:=2;
%F:=i^e*(1+2*i)*(3+2*i)*(s+t*i)^4;
%F;
%L<s,t>:=PolynomialRing(Rationals(),2);
%_<i>:=PolynomialRing(L);
%F:= (-i - 8)*s^4 + (-32*i + 4)*s^3*t + (6*i + 48)*s^2*t^2 + (32*i - 4)*s*t^3 + (-i -
%    8)*t^4;
%F;
%A:=- 8*s^4 + 4*s^3*t + 48*s^2*t^2 - 4*s*t^3 - 8*t^4;
%B:=-s^4 - 32*s^3*t + 6*s^2*t^2 + 32*s*t^3 - t^4;
%F-A-i*B;

\begin{lstlisting}
P<s,t,u,v>:=ProjectiveSpace(Rationals(),3);
A:=s^4 + 32*s^3*t - 6*s^2*t^2 - 32*s*t^3 + t^4 - 98*u^4;
B:=-8*s^4 + 4*s^3*t + 48*s^2*t^2 - 4*s*t^3 - 8*t^4 - v^4;
S:=Scheme(P,[A,B]);
IsLocallySolvable(S,2);
\end{lstlisting}

{\bf Case 4:}  $98u^4+v^4i=i^3(1+2i)(3+2i)(s+ti)^4$. Equating the real and imaginary parts on both sides of this equation gives
\begin{equation}\label{3185.2-4}
\begin{cases}
8s^4 - 4s^3t - 48s^2t^2 + 4st^3 + 8t^4 - 98u^4=0,\\
s^4 + 32s^3t - 6s^2t^2 - 32st^3 + t^4 - v^4=0.
\end{cases}
\end{equation}
The scheme defined by system \eqref{3185.2-4} is locally insoluble at $2$.

%\begin{lstlisting}
%    _<x>:=PolynomialRing(Rationals());
%k<i>:=NumberField(x^2+1);
%K<s,t>:=PolynomialRing(k,2);
%e:=3;
%F:=i^e*(1+2*i)*(3+2*i)*(s+t*i)^4;
%F;
%L<s,t>:=PolynomialRing(Rationals(),2);
%_<i>:=PolynomialRing(L);
%F:= (i + 8)*s^4 + (32*i - 4)*s^3*t + (-6*i - 48)*s^2*t^2 + (-32*i + 4)*s*t^3 + (i +
%    8)*t^4;
%F;
%A:=8*s^4 - 4*s^3*t - 48*s^2*t^2 +4*s*t^3 + 8*t^4;
%B:=s^4 + 32*s^3*t - 6*s^2*t^2 - 32*s*t^3 + t^4;
%F-A-i*B;
\begin{lstlisting}
P<s,t,u,v>:=ProjectiveSpace(Rationals(),3);
A:=8*s^4 - 4*s^3*t - 48*s^2*t^2 +4*s*t^3 + 8*t^4-98*u^4;
B:=s^4 + 32*s^3*t - 6*s^2*t^2 - 32*s*t^3 + t^4-v^4;
S:=Scheme(P,[A,B]);
IsLocallySolvable(S,2);
\end{lstlisting}

{\bf Case 5:} $98u^4+v^4i=(1-2i)(3-2i)(s-ti)^4$.  Equating the real and imaginary parts on both sides of this equation gives
\begin{equation}\label{3185.2-5}
\begin{cases}
-s^4 + 32*s^3*t + 6*s^2*t^2 - 32*s*t^3 - t^4 - 98*u^4=0,\\
-8*s^4 - 4*s^3*t + 48*s^2*t^2 + 4*s*t^3 - 8*t^4 - v^4=0.
\end{cases}
\end{equation}
The scheme defined by system \eqref{3185.2-5} is locally insoluble at $2$.

%\begin{lstlisting}
%_<x>:=PolynomialRing(Rationals());
%k<i>:=NumberField(x^2+1);
%K<s,t>:=PolynomialRing(k,2);
%e:=0;
%F:=i^e*(1-2*i)*(3-2*i)*(s+t*i)^4;
%F;
%L<s,t>:=PolynomialRing(Rationals(),2);
%_<i>:=PolynomialRing(L);
%F:=(-8*i - 1)*s^4 + (-4*i + 32)*s^3*t + (48*i + 6)*s^2*t^2 + (4*i - 32)*s*t^3 +
%    (-8*i - 1)*t^4;
%F;
%A:=- s^4 + 32*s^3*t + 6*s^2*t^2  - 32*s*t^3 - t^4;
%B:=-8*s^4 - 4*s^3*t + 48*s^2*t^2 + 4*s*t^3 - 8*t^4;
%F-A-i*B;

\begin{lstlisting}
P<s,t,u,v>:=ProjectiveSpace(Rationals(),3);
A:=- s^4 + 32*s^3*t + 6*s^2*t^2  - 32*s*t^3 - t^4-98*u^4;
B:=-8*s^4 - 4*s^3*t + 48*s^2*t^2 + 4*s*t^3 - 8*t^4-v^4;
S:=Scheme(P,[A,B]);
IsLocallySolvable(S,2);
\end{lstlisting}

{\bf Case 6:} $98u^4+v^4i=i(1-2i)(3-2i)(s+ti)^4$.  Equating the real and imaginary parts on both sides of this equation gives
\begin{equation}\label{3185.2-6}
\begin{cases}
8*s^4 + 4*s^3*t - 48*s^2*t^2 - 4*s*t^3 + 8*t^4 - 98*u^4=0,\\
-s^4 + 32*s^3*t + 6*s^2*t^2 - 32*s*t^3 - t^4 - v^4=0.
\end{cases}
\end{equation}
The scheme defined by system \eqref{3185.2-6} is locally insoluble at $2$.

%\begin{lstlisting}
%    _<x>:=PolynomialRing(Rationals());
%k<i>:=NumberField(x^2+1);
%K<s,t>:=PolynomialRing(k,2);
%e:=1;
%F:=i^e*(1-2*i)*(3-2*i)*(s+t*i)^4;
%F;
%L<s,t>:=PolynomialRing(Rationals(),2);
%_<i>:=PolynomialRing(L);
%F:=(-i + 8)*s^4 + (32*i + 4)*s^3*t + (6*i - 48)*s^2*t^2 + (-32*i - 4)*s*t^3 + (-i +
%    8)*t^4;
%F;
%A:=8*s^4 + 4*s^3*t - 48*s^2*t^2- 4*s*t^3 + 8*t^4;
%B:=-s^4 + 32*s^3*t + 6*s^2*t^2 - 32*s*t^3 - t^4;
%F-A-i*B;

\begin{lstlisting}
P<s,t,u,v>:=ProjectiveSpace(Rationals(),3);
A:=8*s^4 + 4*s^3*t - 48*s^2*t^2- 4*s*t^3 + 8*t^4-98*u^4;
B:=-s^4 + 32*s^3*t + 6*s^2*t^2 - 32*s*t^3 - t^4-v^4;
S:=Scheme(P,[A,B]);
IsLocallySolvable(S,2);
\end{lstlisting}

{\bf Case 7:} $98u^4+v^4i=i^2(1-2i)(3-2i)(s+ti)^4$.  Equating the real and imaginary parts on both sides of this equation gives
\begin{equation}\label{3185.2-7}
\begin{cases}
s^4 - 32*s^3*t - 6*s^2*t^2 + 32*s*t^3 + t^4 - 98*u^4=0,\\
8*s^4 + 4*s^3*t - 48*s^2*t^2 - 4*s*t^3 + 8*t^4 - v^4=0.
\end{cases}
\end{equation}
The scheme defined by system \eqref{3185.1-7} is locally insoluble at $2$.
%
%\begin{lstlisting}
%_<x>:=PolynomialRing(Rationals());
%k<i>:=NumberField(x^2+1);
%K<s,t>:=PolynomialRing(k,2);
%e:=2;
%F:=i^e*(1-2*i)*(3-2*i)*(s+t*i)^4;
%F;
%L<s,t>:=PolynomialRing(Rationals(),2);
%_<i>:=PolynomialRing(L);
%F:=(8*i + 1)*s^4 + (4*i - 32)*s^3*t + (-48*i - 6)*s^2*t^2 + (-4*i + 32)*s*t^3 +
%    (8*i + 1)*t^4;
%F;
%A:=s^4 - 32*s^3*t - 6*s^2*t^2 + 32*s*t^3 + t^4;
%B:=8*s^4 + 4*s^3*t - 48*s^2*t^2 - 4*s*t^3 + 8*t^4;
%F-A-i*B;

\begin{lstlisting}
P<s,t,u,v>:=ProjectiveSpace(Rationals(),3);
A:=s^4 - 32*s^3*t - 6*s^2*t^2 + 32*s*t^3 + t^4-98*u^4;
B:=8*s^4 + 4*s^3*t - 48*s^2*t^2 - 4*s*t^3 + 8*t^4-v^4;
S:=Scheme(P,[A,B]);
IsLocallySolvable(S,2);
\end{lstlisting}

{\bf Case 8:} $98u^4+v^4i=i^3(1-2i)(3-2i)(s+ti)^4$.  Equating the real and imaginary parts on both sides of this equation gives
\begin{equation}\label{3185.2-8}
\begin{cases}
-8*s^4 - 4*s^3*t + 48*s^2*t^2 + 4*s*t^3 - 8*t^4 - 98*u^4=0,\\
s^4 - 32*s^3*t - 6*s^2*t^2 + 32*s*t^3 + t^4 - v^4=0.
\end{cases}
\end{equation}
The scheme defined by system \eqref{3185.2-8} is locally insoluble at $2$.
%\begin{lstlisting}
%_<x>:=PolynomialRing(Rationals());
%k<i>:=NumberField(x^2+1);
%K<s,t>:=PolynomialRing(k,2);
%e:=3;
%F:=i^e*(1-2*i)*(3-2*i)*(s+t*i)^4;
%F;
%L<s,t>:=PolynomialRing(Rationals(),2);
%_<i>:=PolynomialRing(L);
%F:=(i - 8)*s^4 + (-32*i - 4)*s^3*t + (-6*i + 48)*s^2*t^2 + (32*i + 4)*s*t^3 + (i -
%    8)*t^4;
%F;
%A:=- 8*s^4 - 4*s^3*t + 48*s^2*t^2 +   4*s*t^3 - 8*t^4;
%B:=s^4 - 32*s^3*t - 6*s^2*t^2 + 32*s*t^3 + t^4;
%F-A-i*B;

\begin{lstlisting}
P<s,t,u,v>:=ProjectiveSpace(Rationals(),3);
A:=- 8*s^4 - 4*s^3*t + 48*s^2*t^2 +   4*s*t^3 - 8*t^4-98*u^4;
B:=s^4 - 32*s^3*t - 6*s^2*t^2 + 32*s*t^3 + t^4-v^4;
S:=Scheme(P,[A,B]);
IsLocallySolvable(S,2);
\end{lstlisting}

\subsection{The case of $n=3265$.}% \[n=3265.\]
According to Table \ref{tb:factorisation}, in the case $n=3265$ it remains to
show that equation 
\begin{equation}\label{3265}
4 u^8 + v^8=3265  w^4
\end{equation}
has no integer solutions satisfying \eqref{eq:cond}, which in this case reduces to \[\gcd(2u,v)=\gcd(2u,3265w)=\gcd(v,3265w)=1.\]
Write \eqref{3265} as
\begin{equation} (2u^4+v^4i)(2u^4-v^4i)=(2+i)(2-i)(22+13i)(22-13i)w^4.
\end{equation}
Since $5\nmid u,v$, we have 
\[2u^4+v^4i\equiv 2+i\pmod{5}.\]

Hence, $2+i|2u^4+v^4i$. This implies that there exist integers $s,t$ such that 
\[2u^4+v^4i=i^{\epsilon}(2+i)(22\pm 13i)(s+ti)^4,\]
with $\epsilon \in \{0,1,2,3\}$.

{\bf Case 1:} $2u^4+v^4i=(2+i)(22+13i)(s+ti)^4$. Equating the real and imaginary parts on both sides of this equation gives
\begin{equation}\label{3265-1}
\begin{cases}
31s^4 - 192s^3t - 186s^2t^2 + 192st^3 + 31t^4 - 2u^4=0,\\
48s^4 + 124s^3t - 288s^2t^2 - 124st^3 + 48t^4 - v^4=0.
\end{cases}
\end{equation}
The scheme defined by system \eqref{3265-1} is locally insoluble at $2$.

%\begin{lstlisting}
%    _<x>:=PolynomialRing(Rationals());
%k<i>:=NumberField(x^2+1);
%K<s,t>:=PolynomialRing(k,2);
%e:=0;
%F:=i^e*(2+i)*(22+13*i)*(s+t*i)^4;
%F;
%L<s,t>:=PolynomialRing(Rationals(),2);
%_<i>:=PolynomialRing(L);
%F:=(48*i + 31)*s^4 + (124*i - 192)*s^3*t + (-288*i - 186)*s^2*t^2 + (-124*i + 192)*s*t^3 + (48*i + 31)*t^4;
%F;
%A:=31*s^4 - 192*s^3*t - 186*s^2*t^2 + 192*s*t^3 + 31*t^4;
%B:=48*s^4 + 124*s^3*t - 288*s^2*t^2 - 124*s*t^3 + 48*t^4;
%F-A-i*B;

\begin{lstlisting}
P<s,t,u,v>:=ProjectiveSpace(Rationals(),3);
A:=31*s^4 - 192*s^3*t - 186*s^2*t^2 + 192*s*t^3 + 31*t^4-2*u^4;
B:=48*s^4 + 124*s^3*t - 288*s^2*t^2 - 124*s*t^3 + 48*t^4-v^4;
S:=Scheme(P,[A,B]);
IsLocallySolvable(S,2);
\end{lstlisting}

{\bf Case 2:} $2u^4+v^4i=i(2+i)(22+13i)(s+ti)^4$. Equating the real and imaginary parts on both sides of this equation gives
\begin{equation}\label{3265-2}
\begin{cases}
-48s^4 - 124s^3t + 288s^2t^2 + 124st^3 - 48t^4 - 2u^4=0,\\
31s^4 - 192s^3t - 186s^2t^2 + 192st^3 + 31t^4 - v^4=0
\end{cases}
\end{equation}
The scheme defined by system \eqref{3265-2} is locally insoluble at $2$.

%\begin{lstlisting}
%_<x>:=PolynomialRing(Rationals());
%k<i>:=NumberField(x^2+1);
%K<s,t>:=PolynomialRing(k,2);
%e:=1;
%F:=i^e*(2+i)*(22+13*i)*(s+t*i)^4;
%F;
%L<s,t>:=PolynomialRing(Rationals(),2);
%_<i>:=PolynomialRing(L);
%F:=(31*i - 48)*s^4 + (-192*i - 124)*s^3*t + (-186*i + 288)*s^2*t^2 + (192*i +
%    124)*s*t^3 + (31*i - 48)*t^4;
%F;
%A:=- 48*s^4 - 124*s^3*t +288*s^2*t^2 + 124*s*t^3 - 48*t^4;
%B:=31*s^4 - 192*s^3*t - 186*s^2*t^2 + 192*s*t^3 + 31*t^4;
%F-A-i*B;

\begin{lstlisting}
P<s,t,u,v>:=ProjectiveSpace(Rationals(),3);
A:=- 48*s^4 - 124*s^3*t +288*s^2*t^2 + 124*s*t^3 - 48*t^4-2*u^4;
B:=31*s^4 - 192*s^3*t - 186*s^2*t^2 + 192*s*t^3 + 31*t^4-v^4;
S:=Scheme(P,[A,B]);
IsLocallySolvable(S,2);
\end{lstlisting}

{\bf Case 3:} $2u^4+v^4i=i^2(2+i)(22+13i)(s+ti)^4$. Equating the real and imaginary parts on both sides of this equation gives
\begin{equation}\label{3265-3}
\begin{cases}
-31s^4 + 192s^3t + 186s^2t^2 - 192st^3 - 31t^4 - 2u^4=0,\\
-48s^4 - 124s^3t + 288s^2t^2 + 124st^3 - 48t^4 - v^4=0.
\end{cases}
\end{equation}
The scheme defined by system \eqref{3265-3} is locally insoluble at $2$.

%\begin{lstlisting}
%_<x>:=PolynomialRing(Rationals());
%k<i>:=NumberField(x^2+1);
%K<s,t>:=PolynomialRing(k,2);
%e:=2;
%F:=i^e*(2+i)*(22+13*i)*(s+t*i)^4;
%F;
%L<s,t>:=PolynomialRing(Rationals(),2);
%_<i>:=PolynomialRing(L);
%F:=(-48*i - 31)*s^4 + (-124*i + 192)*s^3*t + (288*i + 186)*s^2*t^2 + (124*i -
%    192)*s*t^3 + (-48*i - 31)*t^4;
%F;
%A:=- 31*s^4 + 192*s^3*t + 186*s^2*t^2 - 192*s*t^3 - 31*t^4;
%B:=-48*s^4 - 124*s^3*t + 288*s^2*t^2 + 124*s*t^3 - 48*t^4;
%F-A-i*B;

\begin{lstlisting}
P<s,t,u,v>:=ProjectiveSpace(Rationals(),3);
A:=- 31*s^4 + 192*s^3*t + 186*s^2*t^2 - 192*s*t^3 - 31*t^4-2*u^4;
B:=-48*s^4 - 124*s^3*t + 288*s^2*t^2 + 124*s*t^3 - 48*t^4-v^4;
S:=Scheme(P,[A,B]);
IsLocallySolvable(S,2);
\end{lstlisting}

{\bf Case 4:} $2u^4+v^4i=i^3(2+i)(22+13i)(s+ti)^4$. Equating the real and imaginary parts on both sides of this equation gives
\begin{equation}\label{3265-4}
\begin{cases}
48s^4 + 124s^3t - 288s^2t^2 - 124st^3 + 48t^4 - 2u^4=0,\\
-31s^4 + 192s^3t + 186s^2t^2 - 192st^3 - 31t^4 - v^4=0.
\end{cases}
\end{equation}
The scheme defined by system \eqref{3265-4} is locally insoluble at $5$.

%\begin{lstlisting}
%_<x>:=PolynomialRing(Rationals());
%k<i>:=NumberField(x^2+1);
%K<s,t>:=PolynomialRing(k,2);
%e:=3;
%F:=i^e*(2+i)*(22+13*i)*(s+t*i)^4;
%F;
%L<s,t>:=PolynomialRing(Rationals(),2);
%_<i>:=PolynomialRing(L);
%F:=(-31*i + 48)*s^4 + (192*i + 124)*s^3*t + (186*i - 288)*s^2*t^2 + (-192*i -
%    124)*s*t^3 + (-31*i + 48)*t^4;
%F;
%A:=48*s^4 + 124*s^3*t - 288*s^2*t^2 - 124*s*t^3 + 48*t^4;
%B:=-31*s^4 + 192*s^3*t + 186*s^2*t^2 - 192*s*t^3 - 31*t^4;
%F-A-i*B;

\begin{lstlisting}
P<s,t,u,v>:=ProjectiveSpace(Rationals(),3);
A:=48*s^4 + 124*s^3*t - 288*s^2*t^2 - 124*s*t^3 + 48*t^4-2*u^4;
B:=-31*s^4 + 192*s^3*t + 186*s^2*t^2 - 192*s*t^3 - 31*t^4-v^4;
S:=Scheme(P,[A,B]);
IsLocallySolvable(S,5);
\end{lstlisting}

{\bf Case 5:} $2u^4+v^4i=(2+i)(22-13i)(s+ti)^4$. Equating the real and imaginary parts on both sides of this equation gives
\begin{equation}\label{3265-5}
\begin{cases}
57s^4 + 16s^3t - 342s^2t^2 - 16st^3 + 57t^4 - 2u^4=0,\\
-4s^4 + 228s^3t + 24s^2t^2 - 228st^3 - 4t^4 - v^4=0.
\end{cases}
\end{equation}
The scheme defined by system \eqref{3265-5} is locally insoluble at $2$.
%\begin{lstlisting}
%_<x>:=PolynomialRing(Rationals());
%k<i>:=NumberField(x^2+1);
%K<s,t>:=PolynomialRing(k,2);
%e:=0;
%F:=i^e*(2+i)*(22-13*i)*(s+t*i)^4;
%F;
%L<s,t>:=PolynomialRing(Rationals(),2);
%_<i>:=PolynomialRing(L);
%F:=(-4*i + 57)*s^4 + (228*i + 16)*s^3*t + (24*i - 342)*s^2*t^2 + (-228*i -
%    16)*s*t^3 + (-4*i + 57)*t^4;
%F;
%A:=57*s^4 + 16*s^3*t - 342*s^2*t^2 - 16*s*t^3 + 57*t^4;
%B:=-4*s^4 + 228*s^3*t + 24*s^2*t^2 - 228*s*t^3 - 4*t^4;
%F-A-i*B;

\begin{lstlisting}
P<s,t,u,v>:=ProjectiveSpace(Rationals(),3);
A:=57*s^4 + 16*s^3*t - 342*s^2*t^2 - 16*s*t^3 + 57*t^4-2*u^4;
B:=-4*s^4 + 228*s^3*t + 24*s^2*t^2 - 228*s*t^3 - 4*t^4-v^4;
S:=Scheme(P,[A,B]);
IsLocallySolvable(S,2);
\end{lstlisting}

{\bf Case 6:} $2u^4+v^4i=i(2+i)(22-13i)(s+ti)^4$. Equating the real and imaginary parts on both sides of this equation gives
\begin{equation}\label{3265-6}
\begin{cases}
4s^4 - 228s^3t - 24s^2t^2 + 228st^3 + 4t^4 - 2u^4=0,\\
57s^4 + 16s^3t - 342s^2t^2 - 16st^3 + 57t^4 - v^4=0.
\end{cases}
\end{equation}
The scheme defined by system \eqref{3265-6} is locally insoluble at $2$.

%\begin{lstlisting}
%_<x>:=PolynomialRing(Rationals());
%k<i>:=NumberField(x^2+1);
%K<s,t>:=PolynomialRing(k,2);
%e:=1;
%F:=i^e*(2+i)*(22-13*i)*(s+t*i)^4;
%F;
%L<s,t>:=PolynomialRing(Rationals(),2);
%_<i>:=PolynomialRing(L);
%F:=(57*i + 4)*s^4 + (16*i - 228)*s^3*t + (-342*i - 24)*s^2*t^2 + (-16*i +
%    228)*s*t^3 + (57*i + 4)*t^4;
%F;
%A:=4*s^4 - 228*s^3*t -24*s^2*t^2 + 228*s*t^3 + 4*t^4;
%B:=57*s^4 + 16*s^3*t - 342*s^2*t^2 - 16*s*t^3 + 57*t^4;
%F-A-i*B;

\begin{lstlisting}
P<s,t,u,v>:=ProjectiveSpace(Rationals(),3);
A:=4*s^4 - 228*s^3*t -24*s^2*t^2 + 228*s*t^3 + 4*t^4-2*u^4;
B:=57*s^4 + 16*s^3*t - 342*s^2*t^2 - 16*s*t^3 + 57*t^4-v^4;
S:=Scheme(P,[A,B]);
IsLocallySolvable(S,2);
\end{lstlisting}

{\bf Case 7:} $2u^4+v^4i=i^2(2+i)(22-13i)(s+ti)^4$. Equating the real and imaginary parts on both sides of this equation gives
\begin{equation}\label{3265-7}
\begin{cases}
-57s^4 - 16s^3t + 342s^2t^2 + 16st^3 - 57t^4 - 2u^4=0,\\
4s^4 - 228s^3t - 24s^2t^2 + 228st^3 + 4t^4 - v^4=0.
\end{cases}
\end{equation}
The scheme defined by system \eqref{3265-7} is locally insoluble at $2$.

%\begin{lstlisting}
%_<x>:=PolynomialRing(Rationals());
%k<i>:=NumberField(x^2+1);
%K<s,t>:=PolynomialRing(k,2);
%e:=2;
%F:=i^e*(2+i)*(22-13*i)*(s+t*i)^4;
%F;
%L<s,t>:=PolynomialRing(Rationals(),2);
%_<i>:=PolynomialRing(L);
%F:=(4*i - 57)*s^4 + (-228*i - 16)*s^3*t + (-24*i + 342)*s^2*t^2 + (228*i +
%    16)*s*t^3 + (4*i - 57)*t^4;
%F;
%A:=- 57*s^4 - 16*s^3*t +342*s^2*t^2 + 16*s*t^3 - 57*t^4;
%B:=4*s^4 - 228*s^3*t - 24*s^2*t^2 + 228*s*t^3 + 4*t^4;
%F-A-i*B;

\begin{lstlisting}
P<s,t,u,v>:=ProjectiveSpace(Rationals(),3);
A:=- 57*s^4 - 16*s^3*t +342*s^2*t^2 + 16*s*t^3 - 57*t^4-2*u^4;
B:=4*s^4 - 228*s^3*t - 24*s^2*t^2 + 228*s*t^3 + 4*t^4-v^4;
S:=Scheme(P,[A,B]);
IsLocallySolvable(S,2);
\end{lstlisting}

{\bf Case 8:} $2u^4+v^4i=i^3(2+i)(22-13i)(s+ti)^4$. Equating the real and imaginary parts on both sides of this equation gives
\begin{equation}\label{3265-8}
\begin{cases}
-4s^4 + 228s^3t + 24s^2t^2 - 228st^3 - 4t^4 - 2u^4=0,\\
-57s^4 - 16s^3t + 342s^2t^2 + 16st^3 - 57t^4 - v^4=0.
\end{cases}
\end{equation}
The scheme defined by system \eqref{3265-8} is locally insoluble at $2$.
%\begin{lstlisting}
%_<x>:=PolynomialRing(Rationals());
%k<i>:=NumberField(x^2+1);
%K<s,t>:=PolynomialRing(k,2);
%e:=3;
%F:=i^e*(2+i)*(22-13*i)*(s+t*i)^4;
%F;
%L<s,t>:=PolynomialRing(Rationals(),2);
%_<i>:=PolynomialRing(L);
%F:=(-57*i - 4)*s^4 + (-16*i + 228)*s^3*t + (342*i + 24)*s^2*t^2 + (16*i -
%    228)*s*t^3 + (-57*i - 4)*t^4;
%F;
%A:=- 4*s^4 + 228*s^3*t + 24*s^2*t^2 - 228*s*t^3 - 4*t^4;
%B:=-57*s^4 - 16*s^3*t + 342*s^2*t^2 + 16*s*t^3 - 57*t^4;
%F-A-i*B;
\begin{lstlisting}
P<s,t,u,v>:=ProjectiveSpace(Rationals(),3);
A:=- 4*s^4 + 228*s^3*t + 24*s^2*t^2 - 228*s*t^3 - 4*t^4-2*u^4;
B:=-57*s^4 - 16*s^3*t + 342*s^2*t^2 + 16*s*t^3 - 57*t^4-v^4;
S:=Scheme(P,[A,B]);
IsLocallySolvable(S,2);
\end{lstlisting}

\subsection{The case of $n=3906$.} %\[n=3906\text{ (I) }\]
According to Table \ref{tb:factorisation}, in the case $n=3906$ it remains to
show that equation 
\begin{equation}\label{3906.1}
15376 u^8 + 3969 v^8=w^4
\end{equation}
has no integer solutions satisfying \eqref{eq:cond}, which in this case reduces to \[\gcd(124u,63v)=\gcd(124u,w)=\gcd(63v,w)=1,\]
and to show that equation 
\begin{equation}\label{3906.2}
63504 u^8 + 961 v^8=w^4
\end{equation}
has no integer solutions satisfying \eqref{eq:cond}, which in this case reduces to \[\gcd(252u,31v)=\gcd(252u,w)=\gcd(31v,w)=1.\]

\subsubsection{The case of \eqref{3906.1}.}
We first consider \eqref{3906.1}, which we can write as
\begin{equation} (124u^4+63v^4i)(124u^4-63v^4i)=w^4.
\end{equation}
This implies that there exist integers $s,t$ such that 
\[124u^4+63v^4i=i^{\epsilon}(s+ti)^4,\]
with $\epsilon \in \{0,1,2,3\}$.

{\bf Case 1:} $124u^4+63v^4i=(s+ti)^4$. Equating the real and imaginary parts on both sides of this equation gives
\begin{equation}\label{3906.1-1}
\begin{cases}
s^4 - 6s^2t^2 + t^4 - 124u^4=0,\\
4s^3t - 4st^3 - 63v^4=0.
\end{cases}
\end{equation}
The scheme defined by system \eqref{3906.1-1} is locally insoluble at $5$.

%\begin{lstlisting}
%    _<x>:=PolynomialRing(Rationals());
%k<i>:=NumberField(x^2+1);
%K<s,t>:=PolynomialRing(k,2);
%e:=0;
%F:=i^e*(s+t*i)^4;
%F;
%L<s,t>:=PolynomialRing(Rationals(),2);
%_<i>:=PolynomialRing(L);
%F:=s^4 + 4*i*s^3*t - 6*s^2*t^2 - 4*i*s*t^3 + t^4;
%F;
%A:=s^4 - 6*s^2*t^2 + t^4;
%B:=4*s^3*t - 4*s*t^3;
%F-A-i*B;
\begin{lstlisting}
P<s,t,u,v>:=ProjectiveSpace(Rationals(),3);
A:=s^4 - 6*s^2*t^2 + t^4-124*u^4;
B:=4*s^3*t - 4*s*t^3-63*v^4;
S:=Scheme(P,[A,B]);
IsLocallySolvable(S,5);
\end{lstlisting}

{\bf Case 2:} $124u^4+63v^4i=i(s+ti)^4$. Equating the real and imaginary parts on both sides of this equation gives
\begin{equation}\label{3906.1-2}
\begin{cases}
-4s^3t + 4st^3 - 124u^4=0,\\
s^4 - 6s^2t^2 + t^4 - 63v^4=0.
\end{cases}
\end{equation}
The scheme defined by system \eqref{3906.1-2} is locally insoluble at $2$.

%
%\begin{lstlisting}
%    _<x>:=PolynomialRing(Rationals());
%k<i>:=NumberField(x^2+1);
%K<s,t>:=PolynomialRing(k,2);
%e:=1;
%F:=i^e*(s+t*i)^4;
%F;
%L<s,t>:=PolynomialRing(Rationals(),2);
%_<i>:=PolynomialRing(L);
%F:=i*s^4 - 4*s^3*t - 6*i*s^2*t^2 + 4*s*t^3 + i*t^4;
%F;
%A:= - 4*s^3*t + 4*s*t^3;
%B:=s^4 - 6*s^2*t^2 + t^4;
%F-A-i*B;

\begin{lstlisting}
P<s,t,u,v>:=ProjectiveSpace(Rationals(),3);
A:=- 4*s^3*t + 4*s*t^3-124*u^4;
B:=s^4 - 6*s^2*t^2 + t^4-63*v^4;
S:=Scheme(P,[A,B]);
IsLocallySolvable(S,2);
\end{lstlisting}

{\bf Case 3:} $124u^4+63v^4i=i^2(s+ti)^4$. Equating the real and imaginary parts on both sides of this equation gives
\begin{equation}\label{3906.1-3}
\begin{cases}
-s^4 + 6s^2t^2 - t^4 -124u^4=0,\\
-4s^3t + 4st^3 - 63v^4=0.
\end{cases}
\end{equation}
The scheme defined by system \eqref{3906.1-3} is locally insoluble at $2$.

%\begin{lstlisting}
%    _<x>:=PolynomialRing(Rationals());
%k<i>:=NumberField(x^2+1);
%K<s,t>:=PolynomialRing(k,2);
%e:=2;
%F:=i^e*(s+t*i)^4;
%F;
%L<s,t>:=PolynomialRing(Rationals(),2);
%_<i>:=PolynomialRing(L);
%F:=-s^4 - 4*i*s^3*t + 6*s^2*t^2 + 4*i*s*t^3 - t^4;
%F;
%A:=- s^4 + 6*s^2*t^2 - t^4;
%B:=-4*s^3*t + 4*s*t^3;
%F-A-i*B;
\begin{lstlisting}
P<s,t,u,v>:=ProjectiveSpace(Rationals(),3);
A:=- s^4 + 6*s^2*t^2 - t^4-124*u^4;
B:=-4*s^3*t + 4*s*t^3-63*v^4;
S:=Scheme(P,[A,B]);
IsLocallySolvable(S,2);
\end{lstlisting}

{\bf Case 4:} $124u^4+63v^4i=i^3(s+ti)^4$. Equating the real and imaginary parts on both sides of this equation gives
\begin{equation}\label{3906.1-4}
\begin{cases}
4s^3t - 4st^3 - 124u^4=0,\\
-s^4 + 6s^2t^2 - t^4 - 63v^4=0.
\end{cases}
\end{equation}
The scheme defined by system \eqref{3906.1-4} is locally insoluble at $3$.

%\begin{lstlisting}
%    _<x>:=PolynomialRing(Rationals());
%k<i>:=NumberField(x^2+1);
%K<s,t>:=PolynomialRing(k,2);
%e:=3;
%F:=i^e*(s+t*i)^4;
%F;
%L<s,t>:=PolynomialRing(Rationals(),2);
%_<i>:=PolynomialRing(L);
%F:=-i*s^4 + 4*s^3*t + 6*i*s^2*t^2 - 4*s*t^3 - i*t^4;
%F;
%A:=4*s^3*t - 4*s*t^3;
%B:=-s^4 + 6*s^2*t^2 - t^4;
%F-A-i*B;
\begin{lstlisting}
P<s,t,u,v>:=ProjectiveSpace(Rationals(),3);
A:=4*s^3*t - 4*s*t^3-124*u^4;
B:=-s^4 + 6*s^2*t^2 - t^4-63*v^4;
S:=Scheme(P,[A,B]);
IsLocallySolvable(S,3);
\end{lstlisting}

\subsubsection{\bf{The case of \eqref{3906.2}}} %\[n=3906\text{ (II) }\]
We now consider \eqref{3906.2}, which we can write as
\begin{equation} (252u^4+31v^4i)(252u^4-31v^4i)=w^4.
\end{equation}
This implies that there exist integers $s,t$ such that 
\[252u^4+31v^4i=i^{\epsilon}(s+ti)^4,\]
with $\epsilon \in \{0,1,2,3\}$.

{\bf Case 1:} $252u^4+31v^4i=(s+ti)^4$. Equating the real and imaginary parts on both sides of this equation gives
\begin{equation}\label{3906.2-1}
\begin{cases}
s^4 - 6s^2t^2 + t^4 - 252u^4=0,\\
4s^3t - 4st^3 - 31v^4=0.
\end{cases}
\end{equation}
The scheme defined by system \eqref{3906.2-1} is locally insoluble at $5$.

%
%\begin{lstlisting}
%    _<x>:=PolynomialRing(Rationals());
%k<i>:=NumberField(x^2+1);
%K<s,t>:=PolynomialRing(k,2);
%e:=0;
%F:=i^e*(s+t*i)^4;
%F;
%L<s,t>:=PolynomialRing(Rationals(),2);
%_<i>:=PolynomialRing(L);
%F:=s^4 + 4*i*s^3*t - 6*s^2*t^2 - 4*i*s*t^3 + t^4;
%F;
%A:=s^4 - 6*s^2*t^2 + t^4;
%B:=4*s^3*t - 4*s*t^3;
%F-A-i*B;
\begin{lstlisting}
P<s,t,u,v>:=ProjectiveSpace(Rationals(),3);
A:=s^4 - 6*s^2*t^2 + t^4-252*u^4;
B:=4*s^3*t - 4*s*t^3-31*v^4;
S:=Scheme(P,[A,B]);
IsLocallySolvable(S,5);
\end{lstlisting}

{\bf Case 2:} $252u^4+31v^4i=i(s+ti)^4$. Equating the real and imaginary parts on both sides of this equation gives
\begin{equation}\label{3906.2-2}
\begin{cases}
-4s^3t + 4st^3 - 252u^4=0,\\
s^4 - 6s^2t^2 + t^4 - 31v^4=0.
\end{cases}
\end{equation}
The scheme defined by system \eqref{3906.2-2} is locally insoluble at $2$.

%
%\begin{lstlisting}
%    _<x>:=PolynomialRing(Rationals());
%k<i>:=NumberField(x^2+1);
%K<s,t>:=PolynomialRing(k,2);
%e:=1;
%F:=i^e*(s+t*i)^4;
%F;
%L<s,t>:=PolynomialRing(Rationals(),2);
%_<i>:=PolynomialRing(L);
%F:=i*s^4 - 4*s^3*t - 6*i*s^2*t^2 + 4*s*t^3 + i*t^4;
%F;
%A:= - 4*s^3*t + 4*s*t^3;
%B:=s^4 - 6*s^2*t^2 + t^4;
%F-A-i*B;
\begin{lstlisting}
P<s,t,u,v>:=ProjectiveSpace(Rationals(),3);
A:=- 4*s^3*t + 4*s*t^3-252*u^4;
B:=s^4 - 6*s^2*t^2 + t^4-31*v^4;
S:=Scheme(P,[A,B]);
IsLocallySolvable(S,2);
\end{lstlisting}

{\bf Case 3:} $124u^4+63v^4i=i^2(s+ti)^4$. Equating the real and imaginary parts on both sides of this equation gives
\begin{equation}\label{3906.2-3}
\begin{cases}
-s^4 + 6s^2t^2 - t^4 -252u^4=0,\\
-4s^3t + 4st^3 - 31v^4=0.
\end{cases}
\end{equation}
The scheme defined by system \eqref{3906.2-3} is locally insoluble at $2$.
%
%\begin{lstlisting}
%    _<x>:=PolynomialRing(Rationals());
%k<i>:=NumberField(x^2+1);
%K<s,t>:=PolynomialRing(k,2);
%e:=2;
%F:=i^e*(s+t*i)^4;
%F;
%L<s,t>:=PolynomialRing(Rationals(),2);
%_<i>:=PolynomialRing(L);
%F:=-s^4 - 4*i*s^3*t + 6*s^2*t^2 + 4*i*s*t^3 - t^4;
%F;
%A:=- s^4 + 6*s^2*t^2 - t^4;
%B:=-4*s^3*t + 4*s*t^3;
%F-A-i*B;
\begin{lstlisting}
P<s,t,u,v>:=ProjectiveSpace(Rationals(),3);
A:=- s^4 + 6*s^2*t^2 - t^4-252*u^4;
B:=-4*s^3*t + 4*s*t^3-31*v^4;
S:=Scheme(P,[A,B]);
IsLocallySolvable(S,2);
\end{lstlisting}

{\bf Case 4:} $252u^4+31v^4i=i^3(s+ti)^4$. Equating the real and imaginary parts on both sides of this equation gives
\begin{equation}\label{3906.2-4}
\begin{cases}
4s^3t - 4st^3 - 252u^4=0,\\
-s^4 + 6s^2t^2 - t^4 - 31v^4=0.
\end{cases}
\end{equation}
The scheme defined by system \eqref{3906.2-4} is locally insoluble at $5$.

%\begin{lstlisting}
%    _<x>:=PolynomialRing(Rationals());
%k<i>:=NumberField(x^2+1);
%K<s,t>:=PolynomialRing(k,2);
%e:=3;
%F:=i^e*(s+t*i)^4;
%F;
%L<s,t>:=PolynomialRing(Rationals(),2);
%_<i>:=PolynomialRing(L);
%F:=-i*s^4 + 4*s^3*t + 6*i*s^2*t^2 - 4*s*t^3 - i*t^4;
%F;
%A:=4*s^3*t - 4*s*t^3;
%B:=-s^4 + 6*s^2*t^2 - t^4;
%F-A-i*B;
\begin{lstlisting}
P<s,t,u,v>:=ProjectiveSpace(Rationals(),3);
A:=4*s^3*t - 4*s*t^3-252*u^4;
B:=-s^4 + 6*s^2*t^2 - t^4-31*v^4;
S:=Scheme(P,[A,B]);
IsLocallySolvable(S,5);
\end{lstlisting}

\subsection{The case of $n=4100$.} %\[n=4100.\]
According to Table \ref{tb:factorisation}, in the case $n=4100$ it remains to
show that equation
\begin{equation}\label{4100}
64 u^8 + v^8=1025  w^4
\end{equation}
has no integer solutions satisfying \eqref{eq:cond}, which in this case reduces to \[\gcd(8u,v)=\gcd(8u,1025w)=\gcd(v,1025w)=1.\]
Write \eqref{4100} as
\begin{equation} (8u^4+v^4i)(8u^4-v^4i)=(2+i)^2(2-i)^2(5+4i)(5-4i)w^4.
\end{equation}
Since $5\nmid u,v$, we have 
\[\begin{split}8u^4+v^4i&\equiv -2+i\pmod{5}\\
&\equiv 0\pmod{2-i}\\
&\not\equiv 0\pmod{2+i}.\end{split}\]
Hence, $(2-i)^2|8u^4+v^4i$. This implies that there exist integers $s,t$ such that 
\[8u^4+v^4i=i^{\epsilon}(2-i)^2(5\pm 4i)(s+ti)^4,\]
with $\epsilon \in \{0,1,2,3\}$.

{\bf Case 1:} $8u^4+v^4i=(2-i)^2(5+4i)(s+ti)^4$. Equating the real and imaginary parts on both sides of this equation gives
\begin{equation}\label{4100-1}
\begin{cases}
31s^4 + 32s^3t -186s^2t^2 - 32st^3 + 31t^4-8u^4=0,\\
-8s^4 + 124s^3t + 48s^2t^2 - 124st^3 - 8t^4-v^4=0.
\end{cases}
\end{equation}
The scheme defined by system \eqref{4100-1} is locally insoluble at $2$.

% _<x>:=PolynomialRing(Rationals());
%k<i>:=NumberField(x^2+1);
%K<s,t>:=PolynomialRing(k,2);
%e:=0;
%F:=i^e*(2-i)^2*(5+4*i)*(s+t*i)^4;
%F;
%L<s,t>:=PolynomialRing(Rationals(),2);
%_<i>:=PolynomialRing(L);
%F:=(-8*i + 31)*s^4 + (124*i + 32)*s^3*t + (48*i - 186)*s^2*t^2 + (-124*i -
%    32)*s*t^3 + (-8*i + 31)*t^4;
%F;
%A:=31*s^4 + 32*s^3*t -186*s^2*t^2 - 32*s*t^3 + 31*t^4;
%B:=-8*s^4 + 124*s^3*t + 48*s^2*t^2 - 124*s*t^3 - 8*t^4;
%F-A-i*B;

\begin{lstlisting}
P<s,t,u,v>:=ProjectiveSpace(Rationals(),3);
A:=31*s^4 + 32*s^3*t -186*s^2*t^2 - 32*s*t^3 + 31*t^4-8*u^4;
B:=-8*s^4 + 124*s^3*t + 48*s^2*t^2 - 124*s*t^3 - 8*t^4-v^4;
S:=Scheme(P,[A,B]);
IsLocallySolvable(S,2);
\end{lstlisting}

{\bf Case 2:} $8u^4+v^4i=i(2-i)^2(5+4i)(s+ti)^4$. Equating the real and imaginary parts on both sides of this equation gives
\begin{equation}\label{4100-2}
\begin{cases}
8s^4 - 124s^3t -48s^2t^2 + 124st^3 + 8t^4-8u^4=0.\\
31s^4 + 32s^3t - 186s^2t^2 - 32st^3 + 31t^4-v^4=0.
\end{cases}
\end{equation}
The scheme defined by system \eqref{4100-2} is locally insoluble at $2$.

% _<x>:=PolynomialRing(Rationals());
%k<i>:=NumberField(x^2+1);
%K<s,t>:=PolynomialRing(k,2);
%e:=1;
%F:=i^e*(2-i)^2*(5+4*i)*(s+t*i)^4;
%F;
%L<s,t>:=PolynomialRing(Rationals(),2);
%_<i>:=PolynomialRing(L);
%F:=(31*i + 8)*s^4 + (32*i - 124)*s^3*t + (-186*i - 48)*s^2*t^2 + (-32*i +
%    124)*s*t^3 + (31*i + 8)*t^4;
%F;
%A:=8*s^4 - 124*s^3*t -48*s^2*t^2 + 124*s*t^3 + 8*t^4;
%B:=31*s^4 + 32*s^3*t - 186*s^2*t^2 - 32*s*t^3 + 31*t^4;
%F-A-i*B;

\begin{lstlisting}
P<s,t,u,v>:=ProjectiveSpace(Rationals(),3);
A:=8*s^4 - 124*s^3*t -48*s^2*t^2 + 124*s*t^3 + 8*t^4-8*u^4;
B:=31*s^4 + 32*s^3*t - 186*s^2*t^2 - 32*s*t^3 + 31*t^4-v^4;
S:=Scheme(P,[A,B]);
IsLocallySolvable(S,2);
\end{lstlisting}

{\bf Case 3:} $8u^4+v^4i=i^2(2-i)^2(5+4i)(s+ti)^4$. Equating the real and imaginary parts on both sides of this equation gives
\begin{equation}\label{4100-3}
\begin{cases}
- 31s^4 - 32s^3t +186s^2t^2 + 32st^3 - 31t^4-8u^4=0,\\
8s^4 - 124s^3t - 48s^2t^2 + 124st^3 + 8t^4-v^4=0.
\end{cases}
\end{equation}
The scheme defined by system \eqref{4100-3} is locally insoluble at $2$.

% _<x>:=PolynomialRing(Rationals());
%k<i>:=NumberField(x^2+1);
%K<s,t>:=PolynomialRing(k,2);
%e:=2;
%F:=i^e*(2-i)^2*(5+4*i)*(s+t*i)^4;
%F;
%L<s,t>:=PolynomialRing(Rationals(),2);
%_<i>:=PolynomialRing(L);
%F:=(8*i - 31)*s^4 + (-124*i - 32)*s^3*t + (-48*i + 186)*s^2*t^2 + (124*i +
%    32)*s*t^3 + (8*i - 31)*t^4;
%F;
%A:=- 31*s^4 - 32*s^3*t +186*s^2*t^2 + 32*s*t^3 - 31*t^4;
%B:=8*s^4 - 124*s^3*t - 48*s^2*t^2 + 124*s*t^3 + 8*t^4;
%F-A-i*B;

\begin{lstlisting}
P<s,t,u,v>:=ProjectiveSpace(Rationals(),3);
A:=- 31*s^4 - 32*s^3*t +186*s^2*t^2 + 32*s*t^3 - 31*t^4-8*u^4;
B:=8*s^4 - 124*s^3*t - 48*s^2*t^2 + 124*s*t^3 + 8*t^4-v^4;
S:=Scheme(P,[A,B]);
IsLocallySolvable(S,2);
\end{lstlisting}

{\bf Case 4:} $8u^4+v^4i=i^3(2-i)^2(5+4i)(s+ti)^4$. Equating the real and imaginary parts on both sides of this equation gives
\begin{equation}\label{4100-4}
\begin{cases}
- 8s^4 + 124s^3t +48s^2t^2 - 124st^3 - 8t^4-8u^4=0,\\
-31s^4 - 32s^3t + 186s^2t^2 + 32st^3 - 31t^4-v^4=0.
\end{cases}
\end{equation}
The scheme defined by system \eqref{4100-4} is locally insoluble at $5$.

% _<x>:=PolynomialRing(Rationals());
%k<i>:=NumberField(x^2+1);
%K<s,t>:=PolynomialRing(k,2);
%e:=3;
%F:=i^e*(2-i)^2*(5+4*i)*(s+t*i)^4;
%F;
%L<s,t>:=PolynomialRing(Rationals(),2);
%_<i>:=PolynomialRing(L);
%F:=(-31*i - 8)*s^4 + (-32*i + 124)*s^3*t + (186*i + 48)*s^2*t^2 + (32*i -
%    124)*s*t^3 + (-31*i - 8)*t^4;
%F;
%A:=- 8*s^4 + 124*s^3*t +48*s^2*t^2 - 124*s*t^3 - 8*t^4;
%B:=-31*s^4 - 32*s^3*t + 186*s^2*t^2 + 32*s*t^3 - 31*t^4;
%F-A-i*B;

\begin{lstlisting}
P<s,t,u,v>:=ProjectiveSpace(Rationals(),3);
A:=- 8*s^4 + 124*s^3*t +48*s^2*t^2 - 124*s*t^3 - 8*t^4-8*u^4;
B:=-31*s^4 - 32*s^3*t + 186*s^2*t^2 + 32*s*t^3 - 31*t^4-v^4;
S:=Scheme(P,[A,B]);
IsLocallySolvable(S,5);
\end{lstlisting}

{\bf Case 5:} $8u^4+v^4i=(2-i)^2(5-4i)(s+ti)^4$. Equating the real and imaginary parts on both sides of this equation gives
\begin{equation}\label{4100-5}
\begin{cases}
- s^4 + 128s^3t +6s^2t^2 - 128st^3 - t^4-8u^4=0,\\
-32s^4 - 4s^3t + 192s^2t^2 + 4st^3 - 32t^4-v^4=0.
\end{cases}
\end{equation}
The scheme defined by system \eqref{4100-5} is locally insoluble at $2$.

% _<x>:=PolynomialRing(Rationals());
%k<i>:=NumberField(x^2+1);
%K<s,t>:=PolynomialRing(k,2);
%e:=0;
%F:=i^e*(2-i)^2*(5-4*i)*(s+t*i)^4;
%F;
%L<s,t>:=PolynomialRing(Rationals(),2);
%_<i>:=PolynomialRing(L);
%F:=(-32*i - 1)*s^4 + (-4*i + 128)*s^3*t + (192*i + 6)*s^2*t^2 + (4*i - 128)*s*t^3 +
%    (-32*i - 1)*t^4;
%F;
%A:=- s^4 + 128*s^3*t +6*s^2*t^2 - 128*s*t^3 - t^4;
%B:=-32*s^4 - 4*s^3*t + 192*s^2*t^2 + 4*s*t^3 - 32*t^4;
%F-A-i*B;

\begin{lstlisting}
P<s,t,u,v>:=ProjectiveSpace(Rationals(),3);
A:=- s^4 + 128*s^3*t +6*s^2*t^2 - 128*s*t^3 - t^4-8*u^4;
B:=-32*s^4 - 4*s^3*t + 192*s^2*t^2 + 4*s*t^3 - 32*t^4-v^4;
S:=Scheme(P,[A,B]);
IsLocallySolvable(S,2);
\end{lstlisting}

{\bf Case 6:} $8u^4+v^4i=i(2-i)^2(5-4i)(s+ti)^4$. Equating the real and imaginary parts on both sides of this equation gives
\begin{equation}\label{4100-6}
\begin{cases}
32s^4 + 4s^3t -192s^2t^2 - 4st^3 + 32t^4-8u^4=0,\\
-s^4 + 128s^3t + 6s^2t^2 - 128st^3 - t^4-v^4=0.
\end{cases}
\end{equation}
The scheme defined by system \eqref{4100-6} is locally insoluble at $2$.

% _<x>:=PolynomialRing(Rationals());
%k<i>:=NumberField(x^2+1);
%K<s,t>:=PolynomialRing(k,2);
%e:=1;
%F:=i^e*(2-i)^2*(5-4*i)*(s+t*i)^4;
%F;
%L<s,t>:=PolynomialRing(Rationals(),2);
%_<i>:=PolynomialRing(L);
%F:=(-i + 32)*s^4 + (128*i + 4)*s^3*t + (6*i - 192)*s^2*t^2 + (-128*i - 4)*s*t^3 +
%    (-i + 32)*t^4;
%F;
%A:=32*s^4 + 4*s^3*t -192*s^2*t^2 - 4*s*t^3 + 32*t^4;
%B:=-s^4 + 128*s^3*t + 6*s^2*t^2 - 128*s*t^3 - t^4;
%F-A-i*B;

\begin{lstlisting}
P<s,t,u,v>:=ProjectiveSpace(Rationals(),3);
A:=32*s^4 + 4*s^3*t -192*s^2*t^2 - 4*s*t^3 + 32*t^4-8*u^4;
B:=-s^4 + 128*s^3*t + 6*s^2*t^2 - 128*s*t^3 - t^4-v^4;
S:=Scheme(P,[A,B]);
IsLocallySolvable(S,2);
\end{lstlisting}

{\bf Case 7:} $8u^4+v^4i=i^2(2-i)^2(5-4i)(s+ti)^4$. Equating the real and imaginary parts on both sides of this equation gives
\begin{equation}\label{4100-7}
\begin{cases}
s^4 - 128s^3t - 6s^2t^2 + 128st^3 + t^4 - 8u^4=0,\\
32s^4 +4s^3t - 192s^2t^2 -4st^3 + 32t^4 - v^4=0.
\end{cases}
\end{equation}
The scheme defined by system \eqref{4100-7} is locally insoluble at $2$.

% _<x>:=PolynomialRing(Rationals());
%k<i>:=NumberField(x^2+1);
%K<s,t>:=PolynomialRing(k,2);
%e:=2;
%F:=i^e*(2-i)^2*(5-4*i)*(s+t*i)^4;
%F;
%L<s,t>:=PolynomialRing(Rationals(),2);
%_<i>:=PolynomialRing(L);
%F:=(32*i + 1)*s^4 + (4*i - 128)*s^3*t + (-192*i - 6)*s^2*t^2 + (-4*i + 128)*s*t^3 +
%    (32*i + 1)*t^4;
%F;
%A:=s^4 - 128*s^3*t - 6*s^2*t^2 + 128*s*t^3 + t^4;
%B:=32*s^4 + 4*s^3*t - 192*s^2*t^2 - 4*s*t^3 + 32*t^4;
%F-A-i*B;

\begin{lstlisting}
P<s,t,u,v>:=ProjectiveSpace(Rationals(),3);
A:=s^4 - 128*s^3*t - 6*s^2*t^2 + 128*s*t^3 + t^4-8*u^4;
B:=32*s^4 + 4*s^3*t - 192*s^2*t^2 - 4*s*t^3 + 32*t^4-v^4;
S:=Scheme(P,[A,B]);
IsLocallySolvable(S,2);
\end{lstlisting}

{\bf Case 8:} $8u^4+v^4i=i^3(2-i)^2(5-4i)(s+ti)^4$. Equating the real and imaginary parts on both sides of this equation gives
\begin{equation}\label{4100-8}
\begin{cases}
- 32s^4 - 4s^3t + 192s^2t^2 + 4st^3 - 32t^4-8u^4 =0,\\
s^4 - 128s^3t - 6s^2t^2 + 128st^3 + t^4-v^4=0.
\end{cases}
\end{equation}
The scheme defined by system \eqref{4100-8} is locally insoluble at $3$.

% _<x>:=PolynomialRing(Rationals());
%k<i>:=NumberField(x^2+1);
%K<s,t>:=PolynomialRing(k,2);
%e:=3;
%F:=i^e*(2-i)^2*(5-4*i)*(s+t*i)^4;
%F;
%L<s,t>:=PolynomialRing(Rationals(),2);
%_<i>:=PolynomialRing(L);
%F:=(i - 32)*s^4 + (-128*i - 4)*s^3*t + (-6*i + 192)*s^2*t^2 + (128*i + 4)*s*t^3 +
%    (i - 32)*t^4;
%F;
%A:=- 32*s^4 - 4*s^3*t + 192*s^2*t^2 + 4*s*t^3 - 32*t^4;
%B:=s^4 - 128*s^3*t - 6*s^2*t^2 + 128*s*t^3 + t^4;
%F-A-i*B;

\begin{lstlisting}
P<s,t,u,v>:=ProjectiveSpace(Rationals(),3);
A:=- 32*s^4 - 4*s^3*t + 192*s^2*t^2 + 4*s*t^3 - 32*t^4-8*u^4;
B:=s^4 - 128*s^3*t - 6*s^2*t^2 + 128*s*t^3 + t^4-v^4;
S:=Scheme(P,[A,B]);
IsLocallySolvable(S,3);
\end{lstlisting}

\subsection{The case of $n=4165$.} %\[n=4165 \quad (I)\]
According to Table \ref{tb:factorisation}, in the case $n=4165$ it remains to
show that equation 
\begin{equation}\label{4165.1}
2401 u^8 + 4 v^8=85  w^4
\end{equation}
has no integer solutions satisfying \eqref{eq:cond}, which in this case reduces to  \begin{equation}\label{E22}\gcd(7u,2v)=\gcd(7u,85w)=\gcd(2v,85w)=1,\end{equation} 
and to show that equation 
\begin{equation}\label{4165.2}
9604 u^8 + v^8=85  w^4
\end{equation}
has no integer solutions satisfying \eqref{eq:cond}, which in this case reduces to \[\gcd(98u,v)=\gcd(98u,85w)=\gcd(v,85w)=1.\]

\subsubsection{The case of \eqref{4165.1}.}
We first consider \eqref{4165.1}, which we can write \eqref{4165.1} as 
\begin{equation}\label{4165-1} (49u^4+2v^4i)(49u^4-2iv^4)=(2+i)(2-i)(4+i)(4-i)w^4.
\end{equation}
By \eqref{E22} we have that $5\nmid uv$, hence $u^4\equiv v^4\equiv 1$ (mod $5$). Therefore, \[\begin{cases}49u^4+2v^4i&\equiv -1+2i\pmod{5}\\
&\equiv 0\pmod{2+i}\\
&\not\equiv 0\pmod{2-i}.
\end{cases}.\]
Let $d\in \Z[i]$ such that $d|\gcd(49u^4+2v^4i,49u^4-2v^4i)$. Then $d|2\cdot 49u^4$,$d|4v^4$, $d|85w^4$. Combining this with \eqref{E22} shows that  $d|1$. This implies that there exist integers $s,t$ such that 
\[49u^4+2v^4i=i^{\epsilon}(2+i)(4\pm i)(s+ti)^4,\]
with $\epsilon\in \{0,1,2,3\}$.

{\bf Case 1:} $49u^4+2v^4i=(2+i)(4+i)(s+ti)^4$.  Equating the real and imaginary parts on both sides of this equation gives
\begin{equation}\label{4165.1-1}
\begin{cases}
7s^4 - 24s^3t - 42s^2t^2 + 24st^3 + 7t^4 - 49u^4=0,\\
6s^4 + 28s^3t - 36s^2t^2 - 28st^3 + 6t^4 - 2v^4=0.
\end{cases}
\end{equation}
The scheme defined by system \eqref{4165.1-1} is locally insoluble at $2$.

%\begin{lstlisting}
%    _<x>:=PolynomialRing(Rationals());
%k<i>:=NumberField(x^2+1);
%K<s,t>:=PolynomialRing(k,2);
%e:=0;
%F:=i^e*(2+i)*(4+i)*(s+t*i)^4;
%F;
%L<s,t>:=PolynomialRing(Rationals(),2);
%_<i>:=PolynomialRing(L);
%F:=(6*i + 7)*s^4 + (28*i - 24)*s^3*t + (-36*i - 42)*s^2*t^2 + (-28*i + 24)*s*t^3 +
%    (6*i + 7)*t^4;
%F;
%A:=7*s^4 - 24*s^3*t -42*s^2*t^2 + 24*s*t^3 + 7*t^4;
%B:=(6*s^4 + 28*s^3*t - 36*s^2*t^2 - 28*s*t^3 + 6*t^4);
%F-A-i*B;

\begin{lstlisting}
P<s,t,u,v>:=ProjectiveSpace(Rationals(),3);
A:=7*s^4 - 24*s^3*t -42*s^2*t^2 + 24*s*t^3 + 7*t^4-49*u^4;
B:=6*s^4 + 28*s^3*t - 36*s^2*t^2 - 28*s*t^3 + 6*t^4-2*v^4;
S:=Scheme(P,[A,B]);
IsLocallySolvable(S,2);
\end{lstlisting}

{\bf Case 2:} $49u^4+2v^4i=i(2+i)(4+i)(s+ti)^4$. Equating the real and imaginary parts on both sides of this equation gives
\begin{equation}\label{4165.1-2}
\begin{cases}
-6s^4 - 28s^3t + 36s^2t^2 + 28st^3 - 6t^4 - 49u^4=0,\\
7s^4 - 24s^3t - 42s^2t^2 + 24st^3 + 7t^4 - 2v^4=0.
\end{cases}
\end{equation}
The scheme defined by system \eqref{4165.1-2} is locally insoluble at $2$.

%\begin{lstlisting}
%    _<x>:=PolynomialRing(Rationals());
%k<i>:=NumberField(x^2+1);
%K<s,t>:=PolynomialRing(k,2);
%e:=1;
%F:=i^e*(2+i)*(4+i)*(s+t*i)^4;
%F;
%L<s,t>:=PolynomialRing(Rationals(),2);
%_<i>:=PolynomialRing(L);
%F:=(7*i - 6)*s^4 + (-24*i - 28)*s^3*t + (-42*i + 36)*s^2*t^2 + (24*i + 28)*s*t^3 +
%    (7*i - 6)*t^4;
%F;
%A:=- 6*s^4 - 28*s^3*t +36*s^2*t^2 + 28*s*t^3 - 6*t^4;
%B:=7*s^4 - 24*s^3*t - 42*s^2*t^2 + 24*s*t^3 + 7*t^4;
%F-A-i*B;
\begin{lstlisting}
P<s,t,u,v>:=ProjectiveSpace(Rationals(),3);
A:=- 6*s^4 - 28*s^3*t +36*s^2*t^2 + 28*s*t^3 - 6*t^4-49*u^4;
B:=7*s^4 - 24*s^3*t - 42*s^2*t^2 + 24*s*t^3 + 7*t^4-2*v^4;
S:=Scheme(P,[A,B]);
IsLocallySolvable(S,2);
\end{lstlisting}

{\bf Case 3:} $49u^4+2v^4i=i^2(2+i)(4+i)(s+ti)^4$. Equating the real and imaginary parts on both sides of this equation gives
\begin{equation}\label{4165.1-3}
\begin{cases}
-7s^4 + 24s^3t + 42s^2t^2 - 24st^3 - 7t^4 - 49u^4=0,\\
-6s^4 - 28s^3t + 36s^2t^2 + 28st^3 - 6t^4 - 2v^4=0.
\end{cases}
\end{equation}
The scheme defined by system \eqref{4165.1-3} is locally insoluble at $5$.

%\begin{lstlisting}
%    _<x>:=PolynomialRing(Rationals());
%k<i>:=NumberField(x^2+1);
%K<s,t>:=PolynomialRing(k,2);
%e:=2;
%F:=i^e*(2+i)*(4+i)*(s+t*i)^4;
%F;
%L<s,t>:=PolynomialRing(Rationals(),2);
%_<i>:=PolynomialRing(L);
%F:=(-6*i - 7)*s^4 + (-28*i + 24)*s^3*t + (36*i + 42)*s^2*t^2 + (28*i - 24)*s*t^3 +
%    (-6*i - 7)*t^4;
%F;
%A:=- 7*s^4 + 24*s^3*t + 42*s^2*t^2 - 24*s*t^3 - 7*t^4;
%B:=-6*s^4 - 28*s^3*t + 36*s^2*t^2 + 28*s*t^3 - 6*t^4;
%F-A-i*B;
\begin{lstlisting}
P<s,t,u,v>:=ProjectiveSpace(Rationals(),3);
A:=- 7*s^4 + 24*s^3*t + 42*s^2*t^2 - 24*s*t^3 - 7*t^4-49*u^4;
B:=-6*s^4 - 28*s^3*t + 36*s^2*t^2 + 28*s*t^3 - 6*t^4-2*v^4;
S:=Scheme(P,[A,B]);
IsLocallySolvable(S,5);
\end{lstlisting}

{\bf Case 4:} $49u^4+2v^4i=i^3(2+i)(4+i)(s+ti)^4$. Equating the real and imaginary parts on both sides of this equation gives
\begin{equation}\label{4165.1-4}
\begin{cases}
6s^4 + 28s^3t - 36s^2t^2 - 28st^3 + 6t^4 - 49u^4=0,\\
-7s^4 + 24s^3t + 42s^2t^2 - 24st^3 - 7t^4 - 2v^4=0.
\end{cases}
\end{equation}
The scheme defined by system \eqref{4165.1-4} is locally insoluble at $2$.

%\begin{lstlisting}
%    _<x>:=PolynomialRing(Rationals());
%k<i>:=NumberField(x^2+1);
%K<s,t>:=PolynomialRing(k,2);
%e:=3;
%F:=i^e*(2+i)*(4+i)*(s+t*i)^4;
%F;
%L<s,t>:=PolynomialRing(Rationals(),2);
%_<i>:=PolynomialRing(L);
%F:=(-7*i + 6)*s^4 + (24*i + 28)*s^3*t + (42*i - 36)*s^2*t^2 + (-24*i - 28)*s*t^3 +
%    (-7*i + 6)*t^4;
%F;
%A:= 6*s^4 + 28*s^3*t -36*s^2*t^2 - 28*s*t^3 + 6*t^4;
%B:=-7*s^4 + 24*s^3*t + 42*s^2*t^2 - 24*s*t^3 - 7*t^4;
%F-A-i*B;
\begin{lstlisting}
P<s,t,u,v>:=ProjectiveSpace(Rationals(),3);
A:= 6*s^4 + 28*s^3*t -36*s^2*t^2 - 28*s*t^3 + 6*t^4-49*u^4;
B:=-7*s^4 + 24*s^3*t + 42*s^2*t^2 - 24*s*t^3 - 7*t^4-2*v^4;
S:=Scheme(P,[A,B]);
IsLocallySolvable(S,2);
\end{lstlisting}

{\bf Case 5:} $49u^4+2v^4i=(2+i)(4-i)(s+ti)^4$. Equating the real and imaginary parts on both sides of this equation gives
\begin{equation}\label{4165.1-5}
\begin{cases}
9s^4 - 8s^3t - 54s^2t^2 + 8st^3 + 9t^4 - 49u^4=0,\\
2s^4 + 36s^3t - 12s^2t^2 - 36st^3 + 2t^4 - 2v^4=0.
\end{cases}
\end{equation}
The scheme defined by system \eqref{4165.1-5} is locally insoluble at $2$.

%\begin{lstlisting}
%    _<x>:=PolynomialRing(Rationals());
%k<i>:=NumberField(x^2+1);
%K<s,t>:=PolynomialRing(k,2);
%e:=0;
%F:=i^e*(2+i)*(4-i)*(s+t*i)^4;
%F;
%L<s,t>:=PolynomialRing(Rationals(),2);
%_<i>:=PolynomialRing(L);
%F:=(2*i + 9)*s^4 + (36*i - 8)*s^3*t + (-12*i - 54)*s^2*t^2 + (-36*i + 8)*s*t^3 +
%    (2*i + 9)*t^4;
%F;
%A:=9*s^4 - 8*s^3*t -54*s^2*t^2 + 8*s*t^3 + 9*t^4;
%B:=2*s^4 + 36*s^3*t - 12*s^2*t^2 - 36*s*t^3 + 2*t^4;
%F-A-i*B;
\begin{lstlisting}
P<s,t,u,v>:=ProjectiveSpace(Rationals(),3);
A:=9*s^4 - 8*s^3*t -54*s^2*t^2 + 8*s*t^3 + 9*t^4-49*u^4;
B:=2*s^4 + 36*s^3*t - 12*s^2*t^2 - 36*s*t^3 + 2*t^4-2*v^4;
S:=Scheme(P,[A,B]);
IsLocallySolvable(S,2);
\end{lstlisting}

{\bf Case 6:}  $49u^4+2v^4i=i(2+i)(4-i)(s+ti)^4$. Equating the real and imaginary parts on both sides of this equation gives
\begin{equation}\label{4165.1-6}
\begin{cases}
-2s^4 - 36s^3t + 12s^2t^2 + 36st^3 - 2t^4 - 49u^4=0,\\
9s^4 - 8s^3t - 54s^2t^2 + 8st^3 + 9t^4 - 2v^4=0.
\end{cases}
\end{equation}
The scheme defined by system \eqref{4165.1-6} is locally insoluble at $2$.

%\begin{lstlisting}
%    _<x>:=PolynomialRing(Rationals());
%k<i>:=NumberField(x^2+1);
%K<s,t>:=PolynomialRing(k,2);
%e:=1;
%F:=i^e*(2+i)*(4-i)*(s+t*i)^4;
%F;
%L<s,t>:=PolynomialRing(Rationals(),2);
%_<i>:=PolynomialRing(L);
%F:=(9*i - 2)*s^4 + (-8*i - 36)*s^3*t + (-54*i + 12)*s^2*t^2 + (8*i + 36)*s*t^3 +
%    (9*i - 2)*t^4;
%F;
%A:=- 2*s^4 - 36*s^3*t +12*s^2*t^2 + 36*s*t^3 - 2*t^4;
%B:=9*s^4 - 8*s^3*t - 54*s^2*t^2 + 8*s*t^3 + 9*t^4;
%F-A-i*B;
\begin{lstlisting}
P<s,t,u,v>:=ProjectiveSpace(Rationals(),3);
A:=- 2*s^4 - 36*s^3*t +12*s^2*t^2 + 36*s*t^3 - 2*t^4-49*u^4;
B:=9*s^4 - 8*s^3*t - 54*s^2*t^2 + 8*s*t^3 + 9*t^4-2*v^4;
S:=Scheme(P,[A,B]);
IsLocallySolvable(S,2);
\end{lstlisting}

{\bf Case 7:}  $49u^4+2v^4i=i^2(2+i)(4-i)(s+ti)^4$. Equating the real and imaginary parts on both sides of this equation gives
\begin{equation}\label{4165.1-7}
\begin{cases}
-9s^4 + 8s^3t + 54s^2t^2 - 8st^3 - 9t^4 - 49u^4=0,\\
-2s^4 - 36s^3t + 12s^2t^2 + 36st^3 - 2t^4 - 2v^4=0.
\end{cases}
\end{equation}
The scheme defined by system \eqref{4165.1-7} is locally insoluble at $2$.

%\begin{lstlisting}
%    _<x>:=PolynomialRing(Rationals());
%k<i>:=NumberField(x^2+1);
%K<s,t>:=PolynomialRing(k,2);
%e:=2;
%F:=i^e*(2+i)*(4-i)*(s+t*i)^4;
%F;
%L<s,t>:=PolynomialRing(Rationals(),2);
%_<i>:=PolynomialRing(L);
%F:=(-2*i - 9)*s^4 + (-36*i + 8)*s^3*t + (12*i + 54)*s^2*t^2 + (36*i - 8)*s*t^3 +
%    (-2*i - 9)*t^4;
%F;
%A:=- 9*s^4 + 8*s^3*t +54*s^2*t^2 - 8*s*t^3 - 9*t^4;
%B:=-2*s^4 - 36*s^3*t + 12*s^2*t^2 + 36*s*t^3 - 2*t^4;
%F-A-i*B;
\begin{lstlisting}
P<s,t,u,v>:=ProjectiveSpace(Rationals(),3);
A:=- 9*s^4 + 8*s^3*t +54*s^2*t^2 - 8*s*t^3 - 9*t^4-49*u^4;
B:=-2*s^4 - 36*s^3*t + 12*s^2*t^2 + 36*s*t^3 - 2*t^4-2*v^4;
S:=Scheme(P,[A,B]);
IsLocallySolvable(S,2);
\end{lstlisting}

{\bf Case 8:} $49u^4+2v^4i=i^3(2+i)(4-i)(s+ti)^4$. Equating the real and imaginary parts on both sides of this equation gives
\begin{equation}\label{4165.1-8}
\begin{cases}
2s^4 + 36s^3t - 12s^2t^2 - 36st^3 + 2t^4 - 49u^4=0,\\
-9s^4 + 8s^3t + 54s^2t^2 - 8st^3 - 9t^4 - 2v^4=0.
\end{cases}
\end{equation}
The scheme defined by system \eqref{4165.1-8} is locally insoluble at $2$.

%\begin{lstlisting}
%    _<x>:=PolynomialRing(Rationals());
%k<i>:=NumberField(x^2+1);
%K<s,t>:=PolynomialRing(k,2);
%e:=3;
%F:=i^e*(2+i)*(4-i)*(s+t*i)^4;
%F;
%L<s,t>:=PolynomialRing(Rationals(),2);
%_<i>:=PolynomialRing(L);
%F:=(-9*i + 2)*s^4 + (8*i + 36)*s^3*t + (54*i - 12)*s^2*t^2 + (-8*i - 36)*s*t^3 +
%    (-9*i + 2)*t^4;
%F;
%A:=2*s^4 + 36*s^3*t -12*s^2*t^2 - 36*s*t^3 + 2*t^4;
%B:=-9*s^4 + 8*s^3*t + 54*s^2*t^2 - 8*s*t^3 - 9*t^4;
%F-A-i*B;
\begin{lstlisting}
P<s,t,u,v>:=ProjectiveSpace(Rationals(),3);
A:=2*s^4 + 36*s^3*t -12*s^2*t^2 - 36*s*t^3 + 2*t^4-49*u^4;
B:=-9*s^4 + 8*s^3*t + 54*s^2*t^2 - 8*s*t^3 - 9*t^4-2*v^4;
S:=Scheme(P,[A,B]);
IsLocallySolvable(S,2);
\end{lstlisting}

\subsubsection{\bf{The case of \eqref{4165.2}.}}% \[n=4165 \quad (II).\]
We now consider \eqref{4165.2}, which we can write \eqref{4165.2} as
\begin{equation} (98u^4+v^4i)(98u^4-v^4i)=(2+i)(2-i)(4+i)(4-i)w^4.
\end{equation}
Since $5\nmid u,v$, we have 
\[\begin{split}98u^4+v^4i&\equiv -2+i\pmod{5}\\
&\equiv 0\pmod{2-i}\\
&\not\equiv 0\pmod{2+i}.\end{split}\]
Hence, $2-i|98u^4+v^4i$. This implies that there exist integers $s,t$ such that 
\[98u^4+v^4i=i^{\epsilon}(2-i)(4\pm i)(s+ti)^4,\]
with $\epsilon \in \{0,1,2,3\}$.

{\bf Case 1:} $98u^4+v^4i=(2-i)(4+i)(s+ti)^4$. Equating the real and imaginary parts on both sides of this equation gives
\begin{equation}\label{4165.2-1}
\begin{cases}
9s^4 + 8s^3t - 54s^2t^2 - 8st^3 + 9t^4 - 98u^4=0,\\
-2s^4 + 36s^3t + 12s^2t^2 - 36st^3 - 2t^4 - v^4=0.
\end{cases}
\end{equation}
The scheme defined by system \eqref{4165.2-1} is locally insoluble at $2$.

%\begin{lstlisting}
%    _<x>:=PolynomialRing(Rationals());
%k<i>:=NumberField(x^2+1);
%K<s,t>:=PolynomialRing(k,2);
%e:=0;
%F:=i^e*(2-i)*(4+i)*(s+t*i)^4;
%F;
%L<s,t>:=PolynomialRing(Rationals(),2);
%_<i>:=PolynomialRing(L);
%F:=(-2*i + 9)*s^4 + (36*i + 8)*s^3*t + (12*i - 54)*s^2*t^2 + (-36*i - 8)*s*t^3 +
%    (-2*i + 9)*t^4;
%F;
%A:=9*s^4 + 8*s^3*t - 54*s^2*t^2 - 8*s*t^3 + 9*t^4;
%B:=-2*s^4 + 36*s^3*t + 12*s^2*t^2 - 36*s*t^3 - 2*t^4;
%F-A-i*B;
\begin{lstlisting}
P<s,t,u,v>:=ProjectiveSpace(Rationals(),3);
A:=9*s^4 + 8*s^3*t - 54*s^2*t^2 - 8*s*t^3 + 9*t^4-98*u^4;
B:=-2*s^4 + 36*s^3*t + 12*s^2*t^2 - 36*s*t^3 - 2*t^4-v^4;
S:=Scheme(P,[A,B]);
IsLocallySolvable(S,2);
\end{lstlisting}

{\bf Case 2:} $98u^4+v^4i=i(2-i)(4+i)(s+ti)^4$. Equating the real and imaginary parts on both sides of this equation gives
\begin{equation}\label{4165.2-2}
\begin{cases}
2s^4 - 36s^3t - 12s^2t^2 + 36st^3 + 2t^4 - 98u^4=0,\\
9s^4 + 8s^3t - 54s^2t^2 - 8st^3 + 9t^4 - v^4=0.
\end{cases}
\end{equation}
The scheme defined by system \eqref{4165.2-2} is locally insoluble at $2$.

%\begin{lstlisting}
%    _<x>:=PolynomialRing(Rationals());
%k<i>:=NumberField(x^2+1);
%K<s,t>:=PolynomialRing(k,2);
%e:=1;
%F:=i^e*(2-i)*(4+i)*(s+t*i)^4;
%F;
%L<s,t>:=PolynomialRing(Rationals(),2);
%_<i>:=PolynomialRing(L);
%F:=(9*i + 2)*s^4 + (8*i - 36)*s^3*t + (-54*i - 12)*s^2*t^2 + (-8*i + 36)*s*t^3 +
%    (9*i + 2)*t^4;
%F;
%A:=2*s^4 - 36*s^3*t -12*s^2*t^2 + 36*s*t^3 + 2*t^4;
%B:=9*s^4 + 8*s^3*t - 54*s^2*t^2 - 8*s*t^3 + 9*t^4;
%F-A-i*B;

\begin{lstlisting}
P<s,t,u,v>:=ProjectiveSpace(Rationals(),3);
A:=2*s^4 - 36*s^3*t -12*s^2*t^2 + 36*s*t^3 + 2*t^4-98*u^4;
B:=9*s^4 + 8*s^3*t - 54*s^2*t^2 - 8*s*t^3 + 9*t^4-v^4;
S:=Scheme(P,[A,B]);
IsLocallySolvable(S,2);
\end{lstlisting}

{\bf Case 3:} $98u^4+v^4i=i^2(2-i)(4+i)(s+ti)^4$. Equating the real and imaginary parts on both sides of this equation gives
\begin{equation}\label{4165.2-3}
\begin{cases}
-9s^4 - 8s^3t + 54s^2t^2 + 8st^3 - 9t^4 - 98u^4=0,\\
2s^4 - 36s^3t - 12s^2t^2 + 36st^3 + 2t^4 - v^4=0.
\end{cases}
\end{equation}
The scheme defined by system \eqref{4165.2-3} is locally insoluble at $2$.

%\begin{lstlisting}
%    _<x>:=PolynomialRing(Rationals());
%k<i>:=NumberField(x^2+1);
%K<s,t>:=PolynomialRing(k,2);
%e:=2;
%F:=i^e*(2-i)*(4+i)*(s+t*i)^4;
%F;
%L<s,t>:=PolynomialRing(Rationals(),2);
%_<i>:=PolynomialRing(L);
%F:=(2*i - 9)*s^4 + (-36*i - 8)*s^3*t + (-12*i + 54)*s^2*t^2 + (36*i + 8)*s*t^3 +
%    (2*i - 9)*t^4;
%F;
%A:=- 9*s^4 - 8*s^3*t + 54*s^2*t^2 + 8*s*t^3 - 9*t^4;
%B:=2*s^4 - 36*s^3*t - 12*s^2*t^2 + 36*s*t^3 + 2*t^4;
%F-A-i*B;
\begin{lstlisting}
P<s,t,u,v>:=ProjectiveSpace(Rationals(),3);
A:=- 9*s^4 - 8*s^3*t + 54*s^2*t^2 + 8*s*t^3 - 9*t^4-98*u^4;
B:=2*s^4 - 36*s^3*t - 12*s^2*t^2 + 36*s*t^3 + 2*t^4-v^4;
S:=Scheme(P,[A,B]);
IsLocallySolvable(S,2);
\end{lstlisting}

{\bf Case 4:} $98u^4+v^4i=i^3(2-i)(4+i)(s+ti)^4$. Equating the real and imaginary parts on both sides of this equation gives
\begin{equation}\label{4165.2-4}
\begin{cases}
-2s^4 + 36s^3t + 12s^2t^2 - 36st^3 - 2t^4 - 98u^4=0,\\
-9s^4 - 8s^3t + 54s^2t^2 + 8st^3 - 9t^4 - v^4=0.
\end{cases}
\end{equation}
The scheme defined by system \eqref{4165.2-4} is locally insoluble at $2$.

%\begin{lstlisting}
%    _<x>:=PolynomialRing(Rationals());
%k<i>:=NumberField(x^2+1);
%K<s,t>:=PolynomialRing(k,2);
%e:=3;
%F:=i^e*(2-i)*(4+i)*(s+t*i)^4;
%F;
%L<s,t>:=PolynomialRing(Rationals(),2);
%_<i>:=PolynomialRing(L);
%F:=(-9*i - 2)*s^4 + (-8*i + 36)*s^3*t + (54*i + 12)*s^2*t^2 + (8*i - 36)*s*t^3 +
%    (-9*i - 2)*t^4;
%F;
%A:=- 2*s^4 + 36*s^3*t + 12*s^2*t^2 - 36*s*t^3 - 2*t^4;
%B:=-9*s^4 - 8*s^3*t + 54*s^2*t^2 + 8*s*t^3 - 9*t^4;
%F-A-i*B;
\begin{lstlisting}
P<s,t,u,v>:=ProjectiveSpace(Rationals(),3);
A:=- 2*s^4 + 36*s^3*t + 12*s^2*t^2 - 36*s*t^3 - 2*t^4-98*u^4;
B:=-9*s^4 - 8*s^3*t + 54*s^2*t^2 + 8*s*t^3 - 9*t^4-v^4;
S:=Scheme(P,[A,B]);
IsLocallySolvable(S,2);
\end{lstlisting}

{\bf Case 5:} $98u^4+v^4i=(2-i)(4-i)(s+ti)^4$. Equating the real and imaginary parts on both sides of this equation gives
\begin{equation}\label{4165.2-5}
\begin{cases}
7s^4 + 24s^3t - 42s^2t^2 - 24st^3 + 7t^4 - 98u^4=0,\\
-6s^4 + 28s^3t + 36s^2t^2 - 28st^3 - 6t^4 - v^4=0.
\end{cases}
\end{equation}
The scheme defined by system \eqref{4165.2-5} is locally insoluble at $2$.

%\begin{lstlisting}
%    _<x>:=PolynomialRing(Rationals());
%k<i>:=NumberField(x^2+1);
%K<s,t>:=PolynomialRing(k,2);
%e:=0;
%F:=i^e*(2-i)*(4-i)*(s+t*i)^4;
%F;
%L<s,t>:=PolynomialRing(Rationals(),2);
%_<i>:=PolynomialRing(L);
%F:=(-6*i + 7)*s^4 + (28*i + 24)*s^3*t + (36*i - 42)*s^2*t^2 + (-28*i - 24)*s*t^3 +
%    (-6*i + 7)*t^4;
%F;
%A:=7*s^4 + 24*s^3*t - 42*s^2*t^2 - 24*s*t^3 + 7*t^4;
%B:=-6*s^4 + 28*s^3*t + 36*s^2*t^2 - 28*s*t^3 - 6*t^4;
%F-A-i*B;
\begin{lstlisting}
P<s,t,u,v>:=ProjectiveSpace(Rationals(),3);
A:=7*s^4 + 24*s^3*t - 42*s^2*t^2 - 24*s*t^3 + 7*t^4-98*u^4;
B:=-6*s^4 + 28*s^3*t + 36*s^2*t^2 - 28*s*t^3 - 6*t^4-v^4;
S:=Scheme(P,[A,B]);
IsLocallySolvable(S,2);
\end{lstlisting}

{\bf Case 6:}  $98u^4+v^4i=i(2-i)(4-i)(s+ti)^4$. Equating the real and imaginary parts on both sides of this equation gives
\begin{equation}\label{4165.2-6}
\begin{cases}
6s^4 - 28s^3t - 36s^2t^2 + 28st^3 + 6t^4 - 98u^4=0,\\
7s^4 + 24s^3t - 42s^2t^2 - 24st^3 + 7t^4 - v^4=0.
\end{cases}
\end{equation}
The scheme defined by system \eqref{4165.2-6} is locally insoluble at $2$.

%\begin{lstlisting}
%    _<x>:=PolynomialRing(Rationals());
%k<i>:=NumberField(x^2+1);
%K<s,t>:=PolynomialRing(k,2);
%e:=1;
%F:=i^e*(2-i)*(4-i)*(s+t*i)^4;
%F;
%L<s,t>:=PolynomialRing(Rationals(),2);
%_<i>:=PolynomialRing(L);
%F:=(7*i + 6)*s^4 + (24*i - 28)*s^3*t + (-42*i - 36)*s^2*t^2 + (-24*i + 28)*s*t^3 +
%    (7*i + 6)*t^4;
%F;
%A:=6*s^4 - 28*s^3*t -36*s^2*t^2 + 28*s*t^3 + 6*t^4;
%B:=7*s^4 + 24*s^3*t - 42*s^2*t^2 - 24*s*t^3 + 7*t^4;
%F-A-i*B;
\begin{lstlisting}
P<s,t,u,v>:=ProjectiveSpace(Rationals(),3);
A:=6*s^4 - 28*s^3*t - 36*s^2*t^2 + 28*s*t^3 + 6*t^4-98*u^4;
B:=7*s^4 + 24*s^3*t - 42*s^2*t^2 - 24*s*t^3 + 7*t^4-v^4;
S:=Scheme(P,[A,B]);
IsLocallySolvable(S,2);
\end{lstlisting}

{\bf Case 7:}  $98u^4+v^4i=i^2(2-i)(4-i)(s+ti)^4$. Equating the real and imaginary parts on both sides of this equation gives
\begin{equation}\label{4165.2-7}
\begin{cases}
-7s^4 - 24s^3t + 42s^2t^2 + 24st^3 - 7t^4 - 98u^4=0,\\
6s^4 - 28s^3t - 36s^2t^2 + 28st^3 + 6t^4 - v^4=0.
\end{cases}
\end{equation}
The scheme defined by system \eqref{4165.2-7} is locally insoluble at $2$.

%\begin{lstlisting}
%    _<x>:=PolynomialRing(Rationals());
%k<i>:=NumberField(x^2+1);
%K<s,t>:=PolynomialRing(k,2);
%e:=2;
%F:=i^e*(2-i)*(4-i)*(s+t*i)^4;
%F;
%L<s,t>:=PolynomialRing(Rationals(),2);
%_<i>:=PolynomialRing(L);
%F:=(6*i - 7)*s^4 + (-28*i - 24)*s^3*t + (-36*i + 42)*s^2*t^2 + (28*i + 24)*s*t^3 +
%    (6*i - 7)*t^4;
%F;
%A:=- 7*s^4 - 24*s^3*t +42*s^2*t^2 + 24*s*t^3 - 7*t^4;
%B:=6*s^4 - 28*s^3*t - 36*s^2*t^2 + 28*s*t^3 + 6*t^4;
%F-A-i*B;
\begin{lstlisting}
P<s,t,u,v>:=ProjectiveSpace(Rationals(),3);
A:=- 7*s^4 - 24*s^3*t +42*s^2*t^2 + 24*s*t^3 - 7*t^4-98*u^4;
B:=6*s^4 - 28*s^3*t - 36*s^2*t^2 + 28*s*t^3 + 6*t^4-v^4;
S:=Scheme(P,[A,B]);
IsLocallySolvable(S,2);
\end{lstlisting}

{\bf Case 8:} $98u^4+v^4i=i^3(2-i)(4-i)(s+ti)^4$. Equating the real and imaginary parts on both sides of this equation gives
\begin{equation}\label{4165.2-8}
\begin{cases}
-6s^4 + 28s^3t + 36s^2t^2 - 28st^3 - 6t^4 - 98u^4=0,\\
-7s^4 - 24s^3t + 42s^2t^2 + 24st^3 - 7t^4 - v^4=0.
\end{cases}
\end{equation}
The scheme defined by system \eqref{4165.2-8} is locally insoluble at $5$.

%\begin{lstlisting}
%    _<x>:=PolynomialRing(Rationals());
%k<i>:=NumberField(x^2+1);
%K<s,t>:=PolynomialRing(k,2);
%e:=3;
%F:=i^e*(2-i)*(4-i)*(s+t*i)^4;
%F;
%L<s,t>:=PolynomialRing(Rationals(),2);
%_<i>:=PolynomialRing(L);
%F:=(-7*i - 6)*s^4 + (-24*i + 28)*s^3*t + (42*i + 36)*s^2*t^2 + (24*i - 28)*s*t^3 +
%    (-7*i - 6)*t^4;
%F;
%A:=- 6*s^4 + 28*s^3*t +36*s^2*t^2 - 28*s*t^3 - 6*t^4;
%B:=-7*s^4 - 24*s^3*t + 42*s^2*t^2 + 24*s*t^3 - 7*t^4;
%F-A-i*B;
\begin{lstlisting}
P<s,t,u,v>:=ProjectiveSpace(Rationals(),3);
A:=- 6*s^4 + 28*s^3*t +36*s^2*t^2 - 28*s*t^3 - 6*t^4-98*u^4;
B:=-7*s^4 - 24*s^3*t + 42*s^2*t^2 + 24*s*t^3 - 7*t^4-v^4;
S:=Scheme(P,[A,B]);
IsLocallySolvable(S,5);
\end{lstlisting}

\subsection{The case of $n=4357$.}% \[n=4357\] 
According to Table \ref{tb:factorisation}, in the case $n=4357$ it remains to
show that equation 
\begin{equation}\label{4357}
4 u^8 + v^8= 4357  w^4
\end{equation}
has no integer solutions satisfying \eqref{eq:cond}, which in this case reduces to \[\gcd(2u,v)=\gcd(u,4357w)=\gcd(v,4357w)=1.\]
Write \eqref{4357} as
\begin{equation} (2u^4+v^4i)(2u^4-v^4i)=(66+i)(66-i)w^4.
\end{equation}
This implies that there exist integers $s,t$ such that 
\[2u^4+v^4i=i^{\epsilon}(66\pm i)(s+ti)^4,\]
with $\epsilon \in \{0,1,2,3\}$.

{\bf Case 1:} $2u^4+v^4i=(66+i)(s+ti)^4$. Equating the real and imaginary parts on both sides of this equation gives
\begin{equation}\label{4357-1}
\begin{cases}
66s^4 - 4s^3t - 396s^2t^2 + 4st^3 + 66t^4 - 2u^4=0,\\
s^4 + 264s^3t - 6s^2t^2 - 264st^3 + t^4 - v^4=0.
\end{cases}
\end{equation}
The scheme defined by system \eqref{4357-1} is locally insoluble at $5$.

%\begin{lstlisting}
%    _<x>:=PolynomialRing(Rationals());
%k<i>:=NumberField(x^2+1);
%K<s,t>:=PolynomialRing(k,2);
%e:=0;
%F:=i^e*(66+i)*(s+t*i)^4;
%F;
%L<s,t>:=PolynomialRing(Rationals(),2);
%_<i>:=PolynomialRing(L);
%F:=(i + 66)*s^4 + (264*i - 4)*s^3*t + (-6*i - 396)*s^2*t^2 + (-264*i + 4)*s*t^3 +
%    (i + 66)*t^4;
%F;
%A:=66*s^4 - 4*s^3*t - 396*s^2*t^2 + 4*s*t^3 + 66*t^4;
%B:=s^4 + 264*s^3*t - 6*s^2*t^2 - 264*s*t^3 + t^4;
%F-A-i*B;
\begin{lstlisting}
P<s,t,u,v>:=ProjectiveSpace(Rationals(),3);
A:=66*s^4 - 4*s^3*t - 396*s^2*t^2 + 4*s*t^3 + 66*t^4-2*u^4;
B:=s^4 + 264*s^3*t - 6*s^2*t^2 - 264*s*t^3 + t^4-v^4;
S:=Scheme(P,[A,B]);
IsLocallySolvable(S,5);
\end{lstlisting}

{\bf Case 2:} $2u^4+v^4i=i(66+i)(s+ti)^4$. Equating the real and imaginary parts on both sides of this equation gives
\begin{equation}\label{4357-2}
\begin{cases}
-s^4 - 264s^3t + 6s^2t^2 + 264st^3 - t^4 - 2u^4=0,\\
66s^4 - 4s^3t - 396s^2t^2 + 4st^3 + 66t^4 - v^4=0.
\end{cases}
\end{equation}
The scheme defined by system \eqref{4357-2} is locally insoluble at $2$.

%\begin{lstlisting}
%    _<x>:=PolynomialRing(Rationals());
%k<i>:=NumberField(x^2+1);
%K<s,t>:=PolynomialRing(k,2);
%e:=1;
%F:=i^e*(66+i)*(s+t*i)^4;
%F;
%L<s,t>:=PolynomialRing(Rationals(),2);
%_<i>:=PolynomialRing(L);
%F:=(66*i - 1)*s^4 + (-4*i - 264)*s^3*t + (-396*i + 6)*s^2*t^2 + (4*i + 264)*s*t^3 +
%    (66*i - 1)*t^4;
%F;
%A:= - s^4 - 264*s^3*t + 6*s^2*t^2 + 264*s*t^3 - t^4;
%B:=66*s^4 - 4*s^3*t - 396*s^2*t^2 + 4*s*t^3 + 66*t^4;
%F-A-i*B;
\begin{lstlisting}
P<s,t,u,v>:=ProjectiveSpace(Rationals(),3);
A:= - s^4 - 264*s^3*t + 6*s^2*t^2 + 264*s*t^3 - t^4-2*u^4;
B:=66*s^4 - 4*s^3*t - 396*s^2*t^2 + 4*s*t^3 + 66*t^4-v^4;
S:=Scheme(P,[A,B]);
IsLocallySolvable(S,2);
\end{lstlisting}

{\bf Case 3:} $2u^4+v^4i=i^2(66+i)(s+ti)^4$. Equating the real and imaginary parts on both sides of this equation gives
\begin{equation}\label{4357-3}
\begin{cases}
-66s^4 + 4s^3t + 396s^2t^2 - 4st^3 - 66t^4 - 2u^4=0,\\
-s^4 - 264s^3t + 6s^2t^2 + 264st^3 - t^4 - v^4=0.
\end{cases}
\end{equation}
The scheme defined by system \eqref{4357-3} is locally insoluble at $2$.

%\begin{lstlisting}
%    _<x>:=PolynomialRing(Rationals());
%k<i>:=NumberField(x^2+1);
%K<s,t>:=PolynomialRing(k,2);
%e:=2;
%F:=i^e*(66+i)*(s+t*i)^4;
%F;
%L<s,t>:=PolynomialRing(Rationals(),2);
%_<i>:=PolynomialRing(L);
%F:=(-i - 66)*s^4 + (-264*i + 4)*s^3*t + (6*i + 396)*s^2*t^2 + (264*i - 4)*s*t^3 +
%    (-i - 66)*t^4;
%F;
%A:=- 66*s^4 + 4*s^3*t +396*s^2*t^2 - 4*s*t^3 - 66*t^4;
%B:=-s^4 - 264*s^3*t + 6*s^2*t^2 + 264*s*t^3 - t^4;
%F-A-i*B;
\begin{lstlisting}
P<s,t,u,v>:=ProjectiveSpace(Rationals(),3);
A:=- 66*s^4 + 4*s^3*t +396*s^2*t^2 - 4*s*t^3 - 66*t^4-2*u^4;
B:=-s^4 - 264*s^3*t + 6*s^2*t^2 + 264*s*t^3 - t^4-v^4;
S:=Scheme(P,[A,B]);
IsLocallySolvable(S,2);
\end{lstlisting}

{\bf Case 4:} $2u^4+v^4i=i^3(66+i)(s+ti)^4$. Equating the real and imaginary parts on both sides of this equation gives
\begin{equation}\label{4357-4}
\begin{cases}
s^4 + 264s^3t - 6s^2t^2 - 264st^3 + t^4 - 2u^4=0,\\
-66s^4 + 4s^3t + 396s^2t^2 - 4st^3 - 66t^4 - v^4=0.
\end{cases}
\end{equation}
The scheme defined by system \eqref{4357-4} is locally insoluble at $2$.
%\begin{lstlisting}
%    _<x>:=PolynomialRing(Rationals());
%k<i>:=NumberField(x^2+1);
%K<s,t>:=PolynomialRing(k,2);
%e:=3;
%F:=i^e*(66+i)*(s+t*i)^4;
%F;
%L<s,t>:=PolynomialRing(Rationals(),2);
%_<i>:=PolynomialRing(L);
%F:=(-66*i + 1)*s^4 + (4*i + 264)*s^3*t + (396*i - 6)*s^2*t^2 + (-4*i - 264)*s*t^3 +
%    (-66*i + 1)*t^4;
%F;
%A:=s^4 + 264*s^3*t -6*s^2*t^2 - 264*s*t^3 + t^4;
%B:=-66*s^4 + 4*s^3*t + 396*s^2*t^2 - 4*s*t^3 - 66*t^4;
%F-A-i*B;
\begin{lstlisting}
P<s,t,u,v>:=ProjectiveSpace(Rationals(),3);
A:=s^4 + 264*s^3*t -6*s^2*t^2 - 264*s*t^3 + t^4-2*u^4;
B:=-66*s^4 + 4*s^3*t + 396*s^2*t^2 - 4*s*t^3 - 66*t^4-v^4;
S:=Scheme(P,[A,B]);
IsLocallySolvable(S,2);
\end{lstlisting}

{\bf Case 5:} $2u^4+v^4i=(66-i)(s+ti)^4$. Equating the real and imaginary parts on both sides of this equation gives
\begin{equation}\label{4357-5}
\begin{cases}
66s^4 + 4s^3t - 396s^2t^2 - 4st^3 + 66t^4 - 2u^4=0,\\
-s^4 + 264s^3t + 6s^2t^2 - 264st^3 - t^4 - v^4=0.
\end{cases}
\end{equation}
The scheme defined by system \eqref{4357-5} is locally insoluble at $2$.

%\begin{lstlisting}
%    _<x>:=PolynomialRing(Rationals());
%k<i>:=NumberField(x^2+1);
%K<s,t>:=PolynomialRing(k,2);
%e:=0;
%F:=i^e*(66-i)*(s+t*i)^4;
%F;
%L<s,t>:=PolynomialRing(Rationals(),2);
%_<i>:=PolynomialRing(L);
%F:=(-i + 66)*s^4 + (264*i + 4)*s^3*t + (6*i - 396)*s^2*t^2 + (-264*i - 4)*s*t^3 +
%    (-i + 66)*t^4;
%F;
%A:=66*s^4 + 4*s^3*t -396*s^2*t^2 - 4*s*t^3 + 66*t^4;
%B:=-s^4 + 264*s^3*t + 6*s^2*t^2 - 264*s*t^3 - t^4;
%F-A-i*B;
\begin{lstlisting}
P<s,t,u,v>:=ProjectiveSpace(Rationals(),3);
A:=66*s^4 + 4*s^3*t -396*s^2*t^2 - 4*s*t^3 + 66*t^4-2*u^4;
B:=-s^4 + 264*s^3*t + 6*s^2*t^2 - 264*s*t^3 - t^4-v^4;
S:=Scheme(P,[A,B]);
IsLocallySolvable(S,2);
\end{lstlisting}
{\bf Case 6:} $2u^4+v^4i=i(66-i)(s+ti)^4$. Equating the real and imaginary parts on both sides of this equation gives

\begin{equation}\label{4357-6}
\begin{cases}
s^4 - 264s^3t - 6s^2t^2 + 264st^3 + t^4 - 2u^4=0,\\
66s^4 + 4s^3t - 396s^2t^2 - 4st^3 + 66t^4 - v^4=0.
\end{cases}
\end{equation}
The scheme defined by system \eqref{4357-6} is locally insoluble at $2$.

%\begin{lstlisting}
%    _<x>:=PolynomialRing(Rationals());
%k<i>:=NumberField(x^2+1);
%K<s,t>:=PolynomialRing(k,2);
%e:=1;
%F:=i^e*(66-i)*(s+t*i)^4;
%F;
%L<s,t>:=PolynomialRing(Rationals(),2);
%_<i>:=PolynomialRing(L);
%F:=(66*i + 1)*s^4 + (4*i - 264)*s^3*t + (-396*i - 6)*s^2*t^2 + (-4*i + 264)*s*t^3 +
%    (66*i + 1)*t^4;
%F;
%A:=s^4 - 264*s^3*t - 6*s^2*t^2 + 264*s*t^3 + t^4;
%B:=66*s^4 + 4*s^3*t - 396*s^2*t^2 - 4*s*t^3 + 66*t^4;
%F-A-i*B;
\begin{lstlisting}
P<s,t,u,v>:=ProjectiveSpace(Rationals(),3);
A:=s^4 - 264*s^3*t - 6*s^2*t^2 + 264*s*t^3 + t^4-2*u^4;
B:=66*s^4 + 4*s^3*t - 396*s^2*t^2 - 4*s*t^3 + 66*t^4-v^4;
S:=Scheme(P,[A,B]);
IsLocallySolvable(S,2);
\end{lstlisting}

{\bf Case 7:} $2u^4+v^4i=i^2(66+i)(s+ti)^4$. Equating the real and imaginary parts on both sides of this equation gives
\begin{equation}\label{4357-7}
\begin{cases}
-66s^4 - 4s^3t + 396s^2t^2 + 4st^3 - 66t^4 - 2u^4=0,\\
s^4 - 264s^3t - 6s^2t^2 + 264st^3 + t^4 - v^4=0.
\end{cases}
\end{equation}
The scheme defined by system \eqref{4357-7} is locally insoluble at $2$.

%\begin{lstlisting}
%    _<x>:=PolynomialRing(Rationals());
%k<i>:=NumberField(x^2+1);
%K<s,t>:=PolynomialRing(k,2);
%e:=2;
%F:=i^e*(66-i)*(s+t*i)^4;
%F;
%L<s,t>:=PolynomialRing(Rationals(),2);
%_<i>:=PolynomialRing(L);
%F:=(i - 66)*s^4 + (-264*i - 4)*s^3*t + (-6*i + 396)*s^2*t^2 + (264*i + 4)*s*t^3 +
%    (i - 66)*t^4;
%F;
%A:=- 66*s^4 - 4*s^3*t + 396*s^2*t^2 + 4*s*t^3 - 66*t^4;
%B:=s^4 - 264*s^3*t - 6*s^2*t^2 + 264*s*t^3 + t^4;
%F-A-i*B;
\begin{lstlisting}
P<s,t,u,v>:=ProjectiveSpace(Rationals(),3);
A:=- 66*s^4 - 4*s^3*t + 396*s^2*t^2 + 4*s*t^3 - 66*t^4-2*u^4;
B:=s^4 - 264*s^3*t - 6*s^2*t^2 + 264*s*t^3 + t^4-v^4;
S:=Scheme(P,[A,B]);
IsLocallySolvable(S,2);
\end{lstlisting}

{\bf Case 8:} $2u^4+v^4i=i^3(66-i)(s+ti)^4$. Equating the real and imaginary parts on both sides of this equation gives
\begin{equation}\label{4357-8}
\begin{cases}
-s^4 + 264s^3t + 6s^2t^2 - 264st^3 - t^4 - 2u^4=0,\\
-66s^4 - 4s^3t + 396s^2t^2 + 4st^3 - 66t^4 - v^4=0.
\end{cases}
\end{equation}
The scheme defined by system \eqref{4357-8} is locally insoluble at $2$.
%\begin{lstlisting}
%    _<x>:=PolynomialRing(Rationals());
%k<i>:=NumberField(x^2+1);
%K<s,t>:=PolynomialRing(k,2);
%e:=3;
%F:=i^e*(66-i)*(s+t*i)^4;
%F;
%L<s,t>:=PolynomialRing(Rationals(),2);
%_<i>:=PolynomialRing(L);
%F:=(-66*i - 1)*s^4 + (-4*i + 264)*s^3*t + (396*i + 6)*s^2*t^2 + (4*i - 264)*s*t^3 +
%    (-66*i - 1)*t^4;
%F;
%A:=- s^4 + 264*s^3*t + 6*s^2*t^2 - 264*s*t^3 - t^4;
%B:=-66*s^4 - 4*s^3*t + 396*s^2*t^2 + 4*s*t^3 - 66*t^4;
%F-A-i*B;
\begin{lstlisting}
P<s,t,u,v>:=ProjectiveSpace(Rationals(),3);
A:=- s^4 + 264*s^3*t + 6*s^2*t^2 - 264*s*t^3 - t^4-2*u^4;
B:=-66*s^4 - 4*s^3*t + 396*s^2*t^2 + 4*s*t^3 - 66*t^4-v^4;
S:=Scheme(P,[A,B]);
IsLocallySolvable(S,2);
\end{lstlisting}

\subsection{The case of $n=4901$.} %\[n=4901\]
According to Table \ref{tb:factorisation}, in the case $n=4901$ it remains to
show that equation 
\begin{equation}\label{4901}
4u^8+v^8=4901w^4
\end{equation}
has no integer solutions satisfying \eqref{eq:cond}, which in this case reduces to \[\gcd(2u,v)=\gcd(2u,4901w)=\gcd(v,4901w)=1.\]
Write \eqref{4901} as
\begin{equation} (2u^4+v^4i)(2u^4-v^4i)=(3+2i)^2(3-2i)^2(5+2i)(5-2i)w^4.
\end{equation}
This implies that there exist integers $s,t$ such that 
\[2u^4+v^4i=i^{\epsilon}(3\pm 2i)^2(5\pm 2i)(s+ti)^4,\]
with $\epsilon \in \{0,1,2,3\}$.

{\bf Case 1:} $2u^4+v^4i=(3+ 2i)^2(5+ 2i)(s+ti)^4$. Equating the real and imaginary parts on both sides of this equation gives
\begin{equation}\label{4901-1}
\begin{cases}
s^4 - 280s^3t - 6s^2t^2 + 280st^3 + t^4 - 2u^4=0,\\
70s^4 + 4s^3t - 420s^2t^2 - 4st^3 + 70t^4 - v^4=0.
\end{cases}
\end{equation}
The scheme defined by system \eqref{4901-1} is locally insoluble at $2$.

%\begin{lstlisting}
%    _<x>:=PolynomialRing(Rationals());
%k<i>:=NumberField(x^2+1);
%K<s,t>:=PolynomialRing(k,2);
%e:=0;
%F:=i^e*(3+2*i)^2*(5+2*i)*(s+t*i)^4;
%F;
%L<s,t>:=PolynomialRing(Rationals(),2);
%_<i>:=PolynomialRing(L);
%F:=(70*i + 1)*s^4 + (4*i - 280)*s^3*t + (-420*i - 6)*s^2*t^2 + (-4*i + 280)*s*t^3 +
%    (70*i + 1)*t^4;
%F;
%A:=s^4 - 280*s^3*t - 6*s^2*t^2 + 280*s*t^3 + t^4;
%B:=70*s^4 + 4*s^3*t - 420*s^2*t^2 - 4*s*t^3 + 70*t^4;
%F-A-i*B;

\begin{lstlisting}
P<s,t,u,v>:=ProjectiveSpace(Rationals(),3);
A:=s^4 - 280*s^3*t - 6*s^2*t^2 + 280*s*t^3 + t^4-2*u^4;
B:=70*s^4 + 4*s^3*t - 420*s^2*t^2 - 4*s*t^3 + 70*t^4-v^4;
S:=Scheme(P,[A,B]);
IsLocallySolvable(S,2);
\end{lstlisting}

{\bf Case 2:} $2u^4+v^4i=i(3+ 2i)^2(5+ 2i)(s+ti)^4$. Equating the real and imaginary parts on both sides of this equation gives
\begin{equation}\label{4901-2}
\begin{cases}
-70s^4 - 4s^3t + 420s^2t^2 + 4st^3 - 70t^4 - 2u^4=0,\\
s^4 - 280s^3t - 6s^2t^2 + 280st^3 + t^4 - v^4=0.
\end{cases}
\end{equation}
The scheme defined by system \eqref{4901-2} is locally insoluble at $2$.

%\begin{lstlisting}
%    _<x>:=PolynomialRing(Rationals());
%k<i>:=NumberField(x^2+1);
%K<s,t>:=PolynomialRing(k,2);
%e:=1;
%F:=i^e*(3+2*i)^2*(5+2*i)*(s+t*i)^4;
%F;
%L<s,t>:=PolynomialRing(Rationals(),2);
%_<i>:=PolynomialRing(L);
%F:=(i - 70)*s^4 + (-280*i - 4)*s^3*t + (-6*i + 420)*s^2*t^2 + (280*i + 4)*s*t^3 +
%    (i - 70)*t^4;
%F;
%A:=- 70*s^4 - 4*s^3*t +420*s^2*t^2 + 4*s*t^3 - 70*t^4;
%B:=s^4 - 280*s^3*t - 6*s^2*t^2 + 280*s*t^3 + t^4;
%F-A-i*B;

\begin{lstlisting}
P<s,t,u,v>:=ProjectiveSpace(Rationals(),3);
A:=- 70*s^4 - 4*s^3*t +420*s^2*t^2 + 4*s*t^3 - 70*t^4-2*u^4;
B:=s^4 - 280*s^3*t - 6*s^2*t^2 + 280*s*t^3 + t^4-v^4;
S:=Scheme(P,[A,B]);
IsLocallySolvable(S,2);
\end{lstlisting}

{\bf Case 3:} $2u^4+v^4i=i^2(3+ 2i)^2(5+ 2i)(s+ti)^4$.  Equating the real and imaginary parts on both sides of this equation gives
\begin{equation}\label{4901-3}
\begin{cases}
-s^4 + 280s^3t + 6s^2t^2 - 280st^3 - t^4 - 2u^4=0,\\
-70s^4 - 4s^3t + 420s^2t^2 + 4st^3 - 70t^4 - v^4=0.
\end{cases}
\end{equation}
The scheme defined by system \eqref{4901-3} is locally insoluble at $2$.

%\begin{lstlisting}
%    _<x>:=PolynomialRing(Rationals());
%k<i>:=NumberField(x^2+1);
%K<s,t>:=PolynomialRing(k,2);
%e:=2;
%F:=i^e*(3+2*i)^2*(5+2*i)*(s+t*i)^4;
%F;
%L<s,t>:=PolynomialRing(Rationals(),2);
%_<i>:=PolynomialRing(L);
%F:=(-70*i - 1)*s^4 + (-4*i + 280)*s^3*t + (420*i + 6)*s^2*t^2 + (4*i - 280)*s*t^3 +
%    (-70*i - 1)*t^4;
%F;
%A:=- s^4 + 280*s^3*t + 6*s^2*t^2 - 280*s*t^3 - t^4;
%B:=-70*s^4 - 4*s^3*t + 420*s^2*t^2 + 4*s*t^3 - 70*t^4;
%F-A-i*B;

\begin{lstlisting}
P<s,t,u,v>:=ProjectiveSpace(Rationals(),3);
A:=- s^4 + 280*s^3*t + 6*s^2*t^2 - 280*s*t^3 - t^4-2*u^4;
B:=-70*s^4 - 4*s^3*t + 420*s^2*t^2 + 4*s*t^3 - 70*t^4-v^4;
S:=Scheme(P,[A,B]);
IsLocallySolvable(S,2);
\end{lstlisting}

{\bf Case 4:} $2u^4+v^4i=i^3(3+ 2i)^2(5+ 2i)(s+ti)^4$.  Equating the real and imaginary parts on both sides of this equation gives
\begin{equation}\label{4901-4}
\begin{cases}
70s^4 + 4s^3t - 420s^2t^2 - 4st^3 + 70t^4 - 2u^4=0,\\
-s^4 + 280s^3t + 6s^2t^2 - 280st^3 - t^4 - v^4=0.
\end{cases}
\end{equation}
The scheme defined by system \eqref{4901-4} is locally insoluble at $2$.

%\begin{lstlisting}
%_<x>:=PolynomialRing(Rationals());
%k<i>:=NumberField(x^2+1);
%K<s,t>:=PolynomialRing(k,2);
%e:=3;
%F:=i^e*(3+2*i)^2*(5+2*i)*(s+t*i)^4;
%F;
%L<s,t>:=PolynomialRing(Rationals(),2);
%_<i>:=PolynomialRing(L);
%F:=(-i + 70)*s^4 + (280*i + 4)*s^3*t + (6*i - 420)*s^2*t^2 + (-280*i - 4)*s*t^3 +
%    (-i + 70)*t^4;
%F;
%A:=70*s^4 + 4*s^3*t -420*s^2*t^2 - 4*s*t^3 + 70*t^4;
%B:=-s^4 + 280*s^3*t + 6*s^2*t^2 - 280*s*t^3 - t^4;
%F-A-i*B;
\begin{lstlisting}
P<s,t,u,v>:=ProjectiveSpace(Rationals(),3);
A:=70*s^4 + 4*s^3*t -420*s^2*t^2 - 4*s*t^3 + 70*t^4-2*u^4;
B:=-s^4 + 280*s^3*t + 6*s^2*t^2 - 280*s*t^3 - t^4-v^4;
S:=Scheme(P,[A,B]);
IsLocallySolvable(S,2);
\end{lstlisting}

{\bf Case 5:} $2u^4+v^4i=(3+ 2i)^2(5-2i)(s+ti)^4$.  Equating the real and imaginary parts on both sides of this equation gives
\begin{equation}\label{4901-5}
\begin{cases}
49s^4 - 200s^3t - 294s^2t^2 + 200st^3 + 49t^4 - 2u^4=0,\\
50s^4 + 196s^3t - 300s^2t^2 - 196st^3 + 50t^4 - v^4=0.
\end{cases}
\end{equation}
The scheme defined by system \eqref{4901-5} is locally insoluble at $2$.

%\begin{lstlisting}
%_<x>:=PolynomialRing(Rationals());
%k<i>:=NumberField(x^2+1);
%K<s,t>:=PolynomialRing(k,2);
%e:=0;
%F:=i^e*(3+2*i)^2*(5-2*i)*(s+t*i)^4;
%F;
%L<s,t>:=PolynomialRing(Rationals(),2);
%_<i>:=PolynomialRing(L);
%F:=(50*i + 49)*s^4 + (196*i - 200)*s^3*t + (-300*i - 294)*s^2*t^2 + (-196*i +
%    200)*s*t^3 + (50*i + 49)*t^4;
%F;
%A:=49*s^4 - 200*s^3*t - 294*s^2*t^2 + 200*s*t^3 + 49*t^4;
%B:=50*s^4 + 196*s^3*t - 300*s^2*t^2 - 196*s*t^3 + 50*t^4;
%F-A-i*B;
\begin{lstlisting}
P<s,t,u,v>:=ProjectiveSpace(Rationals(),3);
A:=49*s^4 - 200*s^3*t - 294*s^2*t^2 + 200*s*t^3 + 49*t^4-2*u^4;
B:=50*s^4 + 196*s^3*t - 300*s^2*t^2 - 196*s*t^3 + 50*t^4-v^4;
S:=Scheme(P,[A,B]);
IsLocallySolvable(S,2);
\end{lstlisting}

{\bf Case 6:} $2u^4+v^4i=i(3+ 2i)^2(5-2i)(s+ti)^4$.  Equating the real and imaginary parts on both sides of this equation gives
\begin{equation}\label{4901-6}
\begin{cases}
-50s^4 - 196s^3t + 300s^2t^2 + 196st^3 - 50t^4 - 2u^4=0,\\
49s^4 - 200s^3t - 294s^2t^2 + 200st^3 + 49t^4 - v^4=0.
\end{cases}
\end{equation}
The scheme defined by system \eqref{4901-6} is locally insoluble at $2$.

%\begin{lstlisting}
%_<x>:=PolynomialRing(Rationals());
%k<i>:=NumberField(x^2+1);
%K<s,t>:=PolynomialRing(k,2);
%e:=1;
%F:=i^e*(3+2*i)^2*(5-2*i)*(s+t*i)^4;
%F;
%L<s,t>:=PolynomialRing(Rationals(),2);
%_<i>:=PolynomialRing(L);
%F:=(49*i - 50)*s^4 + (-200*i - 196)*s^3*t + (-294*i + 300)*s^2*t^2 + (200*i +
%    196)*s*t^3 + (49*i - 50)*t^4;
%F;
%A:=- 50*s^4 - 196*s^3*t + 300*s^2*t^2 + 196*s*t^3 - 50*t^4;
%B:=49*s^4 - 200*s^3*t - 294*s^2*t^2 + 200*s*t^3 + 49*t^4;
%F-A-i*B;
\begin{lstlisting}
P<s,t,u,v>:=ProjectiveSpace(Rationals(),3);
A:=- 50*s^4 - 196*s^3*t + 300*s^2*t^2 + 196*s*t^3 - 50*t^4-2*u^4;
B:=49*s^4 - 200*s^3*t - 294*s^2*t^2 + 200*s*t^3 + 49*t^4-v^4;
S:=Scheme(P,[A,B]);
IsLocallySolvable(S,2);
\end{lstlisting}

{\bf Case 7:} $2u^4+v^4i=i^2(3+ 2i)^2(5-2i)(s+ti)^4$.  Equating the real and imaginary parts on both sides of this equation gives
\begin{equation}\label{4901-7}
\begin{cases}
-49s^4 + 200s^3t + 294s^2t^2 - 200st^3 - 49t^4 - 2u^4=0,\\
-50s^4 - 196s^3t + 300s^2t^2 + 196st^3 - 50t^4 - v^4=0.
\end{cases}
\end{equation}
The scheme defined by system \eqref{4901-7} is locally insoluble at $2$.

%\begin{lstlisting}
%_<x>:=PolynomialRing(Rationals());
%k<i>:=NumberField(x^2+1);
%K<s,t>:=PolynomialRing(k,2);
%e:=2;
%F:=i^e*(3+2*i)^2*(5-2*i)*(s+t*i)^4;
%F;
%L<s,t>:=PolynomialRing(Rationals(),2);
%_<i>:=PolynomialRing(L);
%F:=(-50*i - 49)*s^4 + (-196*i + 200)*s^3*t + (300*i + 294)*s^2*t^2 + (196*i -
%    200)*s*t^3 + (-50*i - 49)*t^4;
%F;
%A:=- 49*s^4 + 200*s^3*t + 294*s^2*t^2 - 200*s*t^3 - 49*t^4;
%B:=-50*s^4 - 196*s^3*t + 300*s^2*t^2 + 196*s*t^3 - 50*t^4;
%F-A-i*B;
\begin{lstlisting}
P<s,t,u,v>:=ProjectiveSpace(Rationals(),3);
A:=- 49*s^4 + 200*s^3*t + 294*s^2*t^2 - 200*s*t^3 - 49*t^4-2*u^4;
B:=-50*s^4 - 196*s^3*t + 300*s^2*t^2 + 196*s*t^3 - 50*t^4-v^4;
S:=Scheme(P,[A,B]);
IsLocallySolvable(S,2);
\end{lstlisting}

{\bf Case 8:} $2u^4+v^4i=i^3(3+ 2i)^2(5-2i)(s+ti)^4$.  Equating the real and imaginary parts on both sides of this equation gives
\begin{equation}\label{4901-8}
\begin{cases}
50s^4 + 196s^3t - 300s^2t^2 - 196st^3 + 50t^4 - 2u^4=0,\\
-49s^4 + 200s^3t + 294s^2t^2 - 200st^3 - 49t^4 - v^4=0.
\end{cases}
\end{equation}
The scheme defined by system \eqref{4901-8} is locally insoluble at $2$.

%\begin{lstlisting}
%_<x>:=PolynomialRing(Rationals());
%k<i>:=NumberField(x^2+1);
%K<s,t>:=PolynomialRing(k,2);
%e:=3;
%F:=i^e*(3+2*i)^2*(5-2*i)*(s+t*i)^4;
%F;
%L<s,t>:=PolynomialRing(Rationals(),2);
%_<i>:=PolynomialRing(L);
%F:=(-49*i + 50)*s^4 + (200*i + 196)*s^3*t + (294*i - 300)*s^2*t^2 + (-200*i -
%    196)*s*t^3 + (-49*i + 50)*t^4;
%F;
%A:=50*s^4 + 196*s^3*t - 300*s^2*t^2 - 196*s*t^3 + 50*t^4;
%B:=-49*s^4 + 200*s^3*t + 294*s^2*t^2 - 200*s*t^3 - 49*t^4;
%F-A-i*B;

\begin{lstlisting}
P<s,t,u,v>:=ProjectiveSpace(Rationals(),3);
A:=50*s^4 + 196*s^3*t - 300*s^2*t^2 - 196*s*t^3 + 50*t^4-2*u^4;
B:=-49*s^4 + 200*s^3*t + 294*s^2*t^2 - 200*s*t^3 - 49*t^4-v^4;
S:=Scheme(P,[A,B]);
IsLocallySolvable(S,2);
\end{lstlisting}

{\bf Case 9:} $2u^4+v^4i=(3- 2i)^2(5+2i)(s+ti)^4$.  Equating the real and imaginary parts on both sides of this equation gives
\begin{equation}\label{4901-9}
\begin{cases}
49s^4 + 200s^3t - 294s^2t^2 - 200st^3 + 49t^4 - 2u^4=0,\\
-50s^4 + 196s^3t + 300s^2t^2 - 196st^3 - 50t^4 - v^4=0.
\end{cases}
\end{equation}
The scheme defined by system \eqref{4901-9} is locally insoluble at $2$.

%\begin{lstlisting}
%_<x>:=PolynomialRing(Rationals());
%k<i>:=NumberField(x^2+1);
%K<s,t>:=PolynomialRing(k,2);
%e:=0;
%F:=i^e*(3-2*i)^2*(5+2*i)*(s+t*i)^4;
%F;
%L<s,t>:=PolynomialRing(Rationals(),2);
%_<i>:=PolynomialRing(L);
%F:=(-50*i + 49)*s^4 + (196*i + 200)*s^3*t + (300*i - 294)*s^2*t^2 + (-196*i -
%    200)*s*t^3 + (-50*i + 49)*t^4;
%F;
%A:= 49*s^4 + 200*s^3*t  - 294*s^2*t^2 - 200*s*t^3 + 49*t^4;
%B:=-50*s^4 + 196*s^3*t + 300*s^2*t^2 - 196*s*t^3 - 50*t^4;
%F-A-i*B;
\begin{lstlisting}
P<s,t,u,v>:=ProjectiveSpace(Rationals(),3);
A:= 49*s^4 + 200*s^3*t  - 294*s^2*t^2 - 200*s*t^3 + 49*t^4-2*u^4;
B:=-50*s^4 + 196*s^3*t + 300*s^2*t^2 - 196*s*t^3 - 50*t^4-v^4;
S:=Scheme(P,[A,B]);
IsLocallySolvable(S,2);
\end{lstlisting}

{\bf Case 10:} $2u^4+v^4i=i(3- 2i)^2(5+2i)(s+ti)^4$.  Equating the real and imaginary parts on both sides of this equation gives
\begin{equation}\label{4901-10}
\begin{cases}
50s^4 - 196s^3t - 300s^2t^2 + 196st^3 + 50t^4 - 2u^4=0,\\
49s^4 + 200s^3t - 294s^2t^2 - 200st^3 + 49t^4 - v^4=0.
\end{cases}
\end{equation}
The scheme defined by system \eqref{4901-10} is locally insoluble at $5$.

%\begin{lstlisting}
%_<x>:=PolynomialRing(Rationals());
%k<i>:=NumberField(x^2+1);
%K<s,t>:=PolynomialRing(k,2);
%e:=1;
%F:=i^e*(3-2*i)^2*(5+2*i)*(s+t*i)^4;
%F;
%L<s,t>:=PolynomialRing(Rationals(),2);
%_<i>:=PolynomialRing(L);
%F:=(49*i + 50)*s^4 + (200*i - 196)*s^3*t + (-294*i - 300)*s^2*t^2 + (-200*i +
%    196)*s*t^3 + (49*i + 50)*t^4;
%F;
%A:=50*s^4 - 196*s^3*t - 300*s^2*t^2 + 196*s*t^3 + 50*t^4;
%B:=49*s^4 + 200*s^3*t - 294*s^2*t^2 - 200*s*t^3 + 49*t^4;
%F-A-i*B;
\begin{lstlisting}
P<s,t,u,v>:=ProjectiveSpace(Rationals(),3);
A:=50*s^4 - 196*s^3*t - 300*s^2*t^2 + 196*s*t^3 + 50*t^4-2*u^4;
B:=49*s^4 + 200*s^3*t - 294*s^2*t^2 - 200*s*t^3 + 49*t^4-v^4;
S:=Scheme(P,[A,B]);
IsLocallySolvable(S,5);
\end{lstlisting}

{\bf Case 11:} $2u^4+v^4i=i^2(3- 2i)^2(5+2i)(s+ti)^4$.  Equating the real and imaginary parts on both sides of this equation gives
\begin{equation}\label{4901-11}
\begin{cases}
-49s^4 - 200s^3t + 294s^2t^2 + 200st^3 - 49t^4 - 2u^4=0,\\
50s^4 - 196s^3t - 300s^2t^2 + 196st^3 + 50t^4 - v^4=0.
\end{cases}
\end{equation}
The scheme defined by system \eqref{4901-11} is locally insoluble at $2$.

%\begin{lstlisting}
%_<x>:=PolynomialRing(Rationals());
%k<i>:=NumberField(x^2+1);
%K<s,t>:=PolynomialRing(k,2);
%e:=2;
%F:=i^e*(3-2*i)^2*(5+2*i)*(s+t*i)^4;
%F;
%L<s,t>:=PolynomialRing(Rationals(),2);
%_<i>:=PolynomialRing(L);
%F:=(50*i - 49)*s^4 + (-196*i - 200)*s^3*t + (-300*i + 294)*s^2*t^2 + (196*i +
%    200)*s*t^3 + (50*i - 49)*t^4;
%F;
%A:=- 49*s^4 - 200*s^3*t +294*s^2*t^2 + 200*s*t^3 - 49*t^4;
%B:=50*s^4 - 196*s^3*t - 300*s^2*t^2 + 196*s*t^3 + 50*t^4;
%F-A-i*B;
\begin{lstlisting}
P<s,t,u,v>:=ProjectiveSpace(Rationals(),3);
A:=- 49*s^4 - 200*s^3*t +294*s^2*t^2 + 200*s*t^3 - 49*t^4-2*u^4;
B:=50*s^4 - 196*s^3*t - 300*s^2*t^2 + 196*s*t^3 + 50*t^4-v^4;
S:=Scheme(P,[A,B]);
IsLocallySolvable(S,2);
\end{lstlisting}

{\bf Case 12:} $2u^4+v^4i=i^3(3- 2i)^2(5+2i)(s+ti)^4$.  Equating the real and imaginary parts on both sides of this equation gives
\begin{equation}\label{4901-12}
\begin{cases}
-50s^4 + 196s^3t + 300s^2t^2 - 196st^3 - 50t^4 - 2u^4=0,\\
-49s^4 - 200s^3t + 294s^2t^2 + 200st^3 - 49t^4 - v^4=0.
\end{cases}
\end{equation}
The scheme defined by system \eqref{4901-12} is locally insoluble at $2$.

%\begin{lstlisting}
%_<x>:=PolynomialRing(Rationals());
%k<i>:=NumberField(x^2+1);
%K<s,t>:=PolynomialRing(k,2);
%e:=3;
%F:=i^e*(3-2*i)^2*(5+2*i)*(s+t*i)^4;
%F;
%L<s,t>:=PolynomialRing(Rationals(),2);
%_<i>:=PolynomialRing(L);
%F:=(-49*i - 50)*s^4 + (-200*i + 196)*s^3*t + (294*i + 300)*s^2*t^2 + (200*i -
%    196)*s*t^3 + (-49*i - 50)*t^4;
%F;
%A:= - 50*s^4 + 196*s^3*t + 300*s^2*t^2 - 196*s*t^3 - 50*t^4;
%B:=-49*s^4 - 200*s^3*t + 294*s^2*t^2 + 200*s*t^3 - 49*t^4;
%F-A-i*B;
\begin{lstlisting}
P<s,t,u,v>:=ProjectiveSpace(Rationals(),3);
A:= - 50*s^4 + 196*s^3*t + 300*s^2*t^2 - 196*s*t^3 - 50*t^4-2*u^4;
B:=-49*s^4 - 200*s^3*t + 294*s^2*t^2 + 200*s*t^3 - 49*t^4-v^4;
S:=Scheme(P,[A,B]);
IsLocallySolvable(S,2);
\end{lstlisting}

{\bf Case 13:} $2u^4+v^4i=(3- 2i)^2(5-2i)(s+ti)^4$.  Equating the real and imaginary parts on both sides of this equation gives
\begin{equation}\label{4901-13}
\begin{cases}
s^4 + 280s^3t - 6s^2t^2 - 280st^3 + t^4 - 2u^4=0,\\
-70s^4 + 4s^3t + 420s^2t^2 - 4st^3 - 70t^4 - v^4=0.
\end{cases}
\end{equation}
The scheme defined by system \eqref{4901-13} is locally insoluble at $2$.

%\begin{lstlisting}
%_<x>:=PolynomialRing(Rationals());
%k<i>:=NumberField(x^2+1);
%K<s,t>:=PolynomialRing(k,2);
%e:=0;
%F:=i^e*(3-2*i)^2*(5-2*i)*(s+t*i)^4;
%F;
%L<s,t>:=PolynomialRing(Rationals(),2);
%_<i>:=PolynomialRing(L);
%F:=(-70*i + 1)*s^4 + (4*i + 280)*s^3*t + (420*i - 6)*s^2*t^2 + (-4*i - 280)*s*t^3 +
%    (-70*i + 1)*t^4;
%F;
%A:=  s^4 + 280*s^3*t - 6*s^2*t^2 - 280*s*t^3 + t^4;
%B:=-70*s^4 + 4*s^3*t + 420*s^2*t^2 - 4*s*t^3 - 70*t^4;
%F-A-i*B;
\begin{lstlisting}
P<s,t,u,v>:=ProjectiveSpace(Rationals(),3);
A:=  s^4 + 280*s^3*t - 6*s^2*t^2 - 280*s*t^3 + t^4-2*u^4;
B:=-70*s^4 + 4*s^3*t + 420*s^2*t^2 - 4*s*t^3 - 70*t^4-v^4;
S:=Scheme(P,[A,B]);
IsLocallySolvable(S,2);
\end{lstlisting}

{\bf Case 14:} $2u^4+v^4i=i(3- 2i)^2(5-2i)(s+ti)^4$.  Equating the real and imaginary parts on both sides of this equation gives
\begin{equation}\label{4901-14}
\begin{cases}
70s^4 - 4s^3t - 420s^2t^2 + 4st^3 + 70t^4 - 2u^4=0,\\
s^4 + 280s^3t - 6s^2t^2 - 280st^3 + t^4 - v^4=0.
\end{cases}
\end{equation}
The scheme defined by system \eqref{4901-14} is locally insoluble at $2$.

%\begin{lstlisting}
%_<x>:=PolynomialRing(Rationals());
%k<i>:=NumberField(x^2+1);
%K<s,t>:=PolynomialRing(k,2);
%e:=1;
%F:=i^e*(3-2*i)^2*(5-2*i)*(s+t*i)^4;
%F;
%L<s,t>:=PolynomialRing(Rationals(),2);
%_<i>:=PolynomialRing(L);
%F:=(i + 70)*s^4 + (280*i - 4)*s^3*t + (-6*i - 420)*s^2*t^2 + (-280*i + 4)*s*t^3 +
%    (i + 70)*t^4;
%F;
%A:=70*s^4 - 4*s^3*t -  420*s^2*t^2 + 4*s*t^3 + 70*t^4;
%B:=s^4 + 280*s^3*t - 6*s^2*t^2 - 280*s*t^3 + t^4;
%F-A-i*B;

\begin{lstlisting}
P<s,t,u,v>:=ProjectiveSpace(Rationals(),3);
A:=70*s^4 - 4*s^3*t -  420*s^2*t^2 + 4*s*t^3 + 70*t^4-2*u^4;
B:=s^4 + 280*s^3*t - 6*s^2*t^2 - 280*s*t^3 + t^4-v^4;
S:=Scheme(P,[A,B]);
IsLocallySolvable(S,2);
\end{lstlisting}

{\bf Case 15:} $2u^4+v^4i=i^2(3- 2i)^2(5-2i)(s+ti)^4$.  Equating the real and imaginary parts on both sides of this equation gives
\begin{equation}\label{4901-15}
\begin{cases}
-s^4 - 280s^3t + 6s^2t^2 + 280st^3 - t^4 - 2u^4=0,\\
70s^4 - 4s^3t - 420s^2t^2 + 4st^3 + 70t^4 - v^4=0.
\end{cases}
\end{equation}
The scheme defined by system \eqref{4901-15} is locally insoluble at $2$.

%\begin{lstlisting}
%_<x>:=PolynomialRing(Rationals());
%k<i>:=NumberField(x^2+1);
%K<s,t>:=PolynomialRing(k,2);
%e:=2;
%F:=i^e*(3-2*i)^2*(5-2*i)*(s+t*i)^4;
%F;
%L<s,t>:=PolynomialRing(Rationals(),2);
%_<i>:=PolynomialRing(L);
%F:=(70*i - 1)*s^4 + (-4*i - 280)*s^3*t + (-420*i + 6)*s^2*t^2 + (4*i + 280)*s*t^3 +
%    (70*i - 1)*t^4;
%F;
%A:=- s^4 - 280*s^3*t +6*s^2*t^2 + 280*s*t^3 - t^4;
%B:=70*s^4 - 4*s^3*t - 420*s^2*t^2 + 4*s*t^3 + 70*t^4;
%F-A-i*B;
\begin{lstlisting}
P<s,t,u,v>:=ProjectiveSpace(Rationals(),3);
A:=- s^4 - 280*s^3*t +6*s^2*t^2 + 280*s*t^3 - t^4-2*u^4;
B:=70*s^4 - 4*s^3*t - 420*s^2*t^2 + 4*s*t^3 + 70*t^4-v^4;
S:=Scheme(P,[A,B]);
IsLocallySolvable(S,2);
\end{lstlisting}

{\bf Case 16:} $2u^4+v^4i=i^3(3- 2i)^2(5-2i)(s+ti)^4$.  Equating the real and imaginary parts on both sides of this equation gives
\begin{equation}\label{4901-16}
\begin{cases}
-70s^4 + 4s^3t + 420s^2t^2 - 4st^3 - 70t^4 - 2u^4=0,\\
-s^4 - 280s^3t + 6s^2t^2 + 280st^3 - t^4 - v^4=0.
\end{cases}
\end{equation}
The scheme defined by system \eqref{4901-16} is locally insoluble at $2$.

%\begin{lstlisting}
%_<x>:=PolynomialRing(Rationals());
%k<i>:=NumberField(x^2+1);
%K<s,t>:=PolynomialRing(k,2);
%e:=3;
%F:=i^e*(3-2*i)^2*(5-2*i)*(s+t*i)^4;
%F;
%L<s,t>:=PolynomialRing(Rationals(),2);
%_<i>:=PolynomialRing(L);
%F:=(-i - 70)*s^4 + (-280*i + 4)*s^3*t + (6*i + 420)*s^2*t^2 + (280*i - 4)*s*t^3 +
%    (-i - 70)*t^4;
%F;
%A:=- 70*s^4 + 4*s^3*t + 420*s^2*t^2 - 4*s*t^3 - 70*t^4;
%B:=-s^4 - 280*s^3*t + 6*s^2*t^2 + 280*s*t^3 - t^4;
%F-A-i*B;
\begin{lstlisting}
P<s,t,u,v>:=ProjectiveSpace(Rationals(),3);
A:=- 70*s^4 + 4*s^3*t + 420*s^2*t^2 - 4*s*t^3 - 70*t^4-2*u^4;
B:=-s^4 - 280*s^3*t + 6*s^2*t^2 + 280*s*t^3 - t^4-v^4;
S:=Scheme(P,[A,B]);
IsLocallySolvable(S,2);
\end{lstlisting}

\subsection{The case of $n=4961$.} %\[n=4961\]
According to Table \ref{tb:factorisation}, in the case $n=4961$ it remains to
show that equation 
\begin{equation}\label{4961}
14641u^8+v^8=82 w^4
\end{equation}
has no integer solutions satisfying \eqref{eq:cond}, which in this case reduces to \[\gcd(121u,v)=\gcd(121u,82w)=\gcd(v,82w)=1.\]
Write \eqref{4961} as
\begin{equation} (121u^4+v^4i)(121u^4-v^4i)=(1+i)(1-i)(5+4i)(5-4i)w^4.
\end{equation}
This implies that there exist integers $s,t$ such that
\[121u^4+v^4i=i^{\epsilon}(1+i)(5\pm 4i)(s+ti)^4,\]
with $\epsilon \in \{0,1,2,3\}$.

{\bf Case 1:} $121u^4+v^4i=(1+i)(5+ 4i)(s+ti)^4$. Equating the real and imaginary parts on both sides of this equation gives
\begin{equation}\label{4961-1}
\begin{cases}
s^4 - 36s^3t - 6s^2t^2 + 36st^3 + t^4 - 121u^4=0,\\
9s^4 + 4s^3t - 54s^2t^2 - 4st^3 + 9t^4 - v^4=0.
\end{cases}
\end{equation}
The scheme defined by system \eqref{4961-1} is locally insoluble at $2$.

%\begin{lstlisting}
%_<x>:=PolynomialRing(Rationals());
%k<i>:=NumberField(x^2+1);
%K<s,t>:=PolynomialRing(k,2);
%e:=0;
%F:=i^e*(1+i)*(5+4*i)*(s+t*i)^4;
%F;
%L<s,t>:=PolynomialRing(Rationals(),2);
%_<i>:=PolynomialRing(L);
%F:=(9*i + 1)*s^4 + (4*i - 36)*s^3*t + (-54*i - 6)*s^2*t^2 + (-4*i + 36)*s*t^3 +
%    (9*i + 1)*t^4;
%F;
%A:=s^4 - 36*s^3*t - 6*s^2*t^2+ 36*s*t^3 + t^4;
%B:=9*s^4 + 4*s^3*t - 54*s^2*t^2 - 4*s*t^3 + 9*t^4;
%F-A-i*B;
\begin{lstlisting}
P<s,t,u,v>:=ProjectiveSpace(Rationals(),3);
A:=s^4 - 36*s^3*t - 6*s^2*t^2+ 36*s*t^3 + t^4-121*u^4;
B:=9*s^4 + 4*s^3*t - 54*s^2*t^2 - 4*s*t^3 + 9*t^4-v^4;
S:=Scheme(P,[A,B]);
IsLocallySolvable(S,2);
\end{lstlisting}

{\bf Case 2:} $121u^4+v^4i=i(1+i)(5+ 4i)(s+ti)^4$.  Equating the real and imaginary parts on both sides of this equation gives
\begin{equation}\label{4961-2}
\begin{cases}
-9s^4 - 4s^3t + 54s^2t^2 + 4st^3 - 9t^4 - 121u^4=0,\\
s^4 - 36s^3t - 6s^2t^2 + 36st^3 + t^4 - v^4=0.
\end{cases}
\end{equation}
The scheme defined by system \eqref{4961-2} is locally insoluble at $2$.

%\begin{lstlisting}
%_<x>:=PolynomialRing(Rationals());
%k<i>:=NumberField(x^2+1);
%K<s,t>:=PolynomialRing(k,2);
%e:=1;
%F:=i^e*(1+i)*(5+4*i)*(s+t*i)^4;
%F;
%L<s,t>:=PolynomialRing(Rationals(),2);
%_<i>:=PolynomialRing(L);
%F:=(i - 9)*s^4 + (-36*i - 4)*s^3*t + (-6*i + 54)*s^2*t^2 + (36*i + 4)*s*t^3 + (i -
%    9)*t^4;
%F;
%A:=- 9*s^4 - 4*s^3*t + 54*s^2*t^2 +4*s*t^3 - 9*t^4;
%B:=s^4 - 36*s^3*t - 6*s^2*t^2 + 36*s*t^3 + t^4;
%F-A-i*B;
\begin{lstlisting}
P<s,t,u,v>:=ProjectiveSpace(Rationals(),3);
A:=- 9*s^4 - 4*s^3*t + 54*s^2*t^2 +4*s*t^3 - 9*t^4-121*u^4;
B:=s^4 - 36*s^3*t - 6*s^2*t^2 + 36*s*t^3 + t^4-v^4;
S:=Scheme(P,[A,B]);
IsLocallySolvable(S,2);
\end{lstlisting}

{\bf Case 3:} $121u^4+v^4i=i^2(1+i)(5+ 4i)(s+ti)^4$.  Equating the real and imaginary parts on both sides of this equation gives
\begin{equation}\label{4961-3}
\begin{cases}
-s^4 + 36s^3t + 6s^2t^2 - 36st^3 - t^4 - 121u^4=0,\\
-9s^4 - 4s^3t + 54s^2t^2 + 4st^3 - 9t^4 - v^4=0.
\end{cases}
\end{equation}
The scheme defined by system \eqref{4961-3} is locally insoluble at $2$.

%\begin{lstlisting}
%_<x>:=PolynomialRing(Rationals());
%k<i>:=NumberField(x^2+1);
%K<s,t>:=PolynomialRing(k,2);
%e:=2;
%F:=i^e*(1+i)*(5+4*i)*(s+t*i)^4;
%F;
%L<s,t>:=PolynomialRing(Rationals(),2);
%_<i>:=PolynomialRing(L);
%F:=(-9*i - 1)*s^4 + (-4*i + 36)*s^3*t + (54*i + 6)*s^2*t^2 + (4*i - 36)*s*t^3 +
%    (-9*i - 1)*t^4;
%F;
%A:=- s^4 + 36*s^3*t + 6*s^2*t^2  - 36*s*t^3 - t^4;
%B:=-9*s^4 - 4*s^3*t + 54*s^2*t^2 + 4*s*t^3 - 9*t^4;
%F-A-i*B;
\begin{lstlisting}
P<s,t,u,v>:=ProjectiveSpace(Rationals(),3);
A:=- s^4 + 36*s^3*t + 6*s^2*t^2  - 36*s*t^3 - t^4-121*u^4;
B:=-9*s^4 - 4*s^3*t + 54*s^2*t^2 + 4*s*t^3 - 9*t^4-v^4;
S:=Scheme(P,[A,B]);
IsLocallySolvable(S,2);
\end{lstlisting}

{\bf Case 4:} $121u^4+v^4i=i^3(1+i)(5+ 4i)(s+ti)^4$.  Equating the real and imaginary parts on both sides of this equation gives
\begin{equation}\label{4961-4}
\begin{cases}
9s^4 + 4s^3t - 54s^2t^2 - 4st^3 + 9t^4 - 121u^4=0, \\
-s^4 + 36s^3t + 6s^2t^2 - 36st^3 - t^4 - v^4=0.
%-s^4 + 36*s^3*t + 6*s^2*t^2 - 36*s*t^3 - t^4 - 121*u^4=0,\\
%-9*s^4 - 4*s^3*t + 54*s^2*t^2 + 4*s*t^3 - 9*t^4 - v^4=0.
\end{cases}
\end{equation}
The scheme defined by system \eqref{4961-4} is locally insoluble at $2$.

%\begin{lstlisting}
%_<x>:=PolynomialRing(Rationals());
%k<i>:=NumberField(x^2+1);
%K<s,t>:=PolynomialRing(k,2);
%e:=2;
%F:=i^e*(1+i)*(5+4*i)*(s+t*i)^4;
%F;
%L<s,t>:=PolynomialRing(Rationals(),2);
%_<i>:=PolynomialRing(L);
%F:=(-9*i - 1)*s^4 + (-4*i + 36)*s^3*t + (54*i + 6)*s^2*t^2 + (4*i - 36)*s*t^3 +
%    (-9*i - 1)*t^4;
%F;
%A:=- s^4 + 36*s^3*t + 6*s^2*t^2  - 36*s*t^3 - t^4;
%B:=-9*s^4 - 4*s^3*t + 54*s^2*t^2 + 4*s*t^3 - 9*t^4;
%F-A-i*B;
\begin{lstlisting}
P<s,t,u,v>:=ProjectiveSpace(Rationals(),3);
A:=9*s^4 + 4*s^3*t - 54*s^2*t^2 - 4*s*t^3 + 9*t^4 - 121*u^4;
B:=-s^4 + 36*s^3*t + 6*s^2*t^2 - 36*s*t^3 - t^4 - v^4;
S:=Scheme(P,[A,B]);
IsLocallySolvable(S,2);
\end{lstlisting}

{\bf Case 5:} $121u^4+v^4i=(1+i)(5- 4i)(s+ti)^4$.  Equating the real and imaginary parts on both sides of this equation gives
\begin{equation}\label{4961-5}
\begin{cases}
9s^4 - 4s^3t - 54s^2t^2 + 4st^3 + 9t^4 - 121u^4=0,\\
s^4 + 36s^3t - 6s^2t^2 - 36st^3 + t^4 - v^4=0.
\end{cases}
\end{equation}
The scheme defined by system \eqref{4961-5} is locally insoluble at $5$.

%\begin{lstlisting}
%_<x>:=PolynomialRing(Rationals());
%k<i>:=NumberField(x^2+1);
%K<s,t>:=PolynomialRing(k,2);
%e:=0;
%F:=i^e*(1+i)*(5-4*i)*(s+t*i)^4;
%F;
%L<s,t>:=PolynomialRing(Rationals(),2);
%_<i>:=PolynomialRing(L);
%F:=(i + 9)*s^4 + (36*i - 4)*s^3*t + (-6*i - 54)*s^2*t^2 + (-36*i + 4)*s*t^3 + (i +
%    9)*t^4;
%F;
%A:= 9*s^4 - 4*s^3*t - 54*s^2*t^2 +4*s*t^3 + 9*t^4;
%B:=s^4 + 36*s^3*t - 6*s^2*t^2 - 36*s*t^3 + t^4;
%F-A-i*B;
\begin{lstlisting}
P<s,t,u,v>:=ProjectiveSpace(Rationals(),3);
A:= 9*s^4 - 4*s^3*t - 54*s^2*t^2 +4*s*t^3 + 9*t^4-121*u^4;
B:=s^4 + 36*s^3*t - 6*s^2*t^2 - 36*s*t^3 + t^4-v^4;
S:=Scheme(P,[A,B]);
IsLocallySolvable(S,5);
\end{lstlisting}

{\bf Case 6:} $121u^4+v^4i=i(1+i)(5- 4i)(s+ti)^4$.  Equating the real and imaginary parts on both sides of this equation gives
\begin{equation}\label{4961-6}
\begin{cases}
-s^4 - 36s^3t + 6s^2t^2 + 36st^3 - t^4 - 121u^4=0,\\
9s^4 - 4s^3t - 54s^2t^2 + 4st^3 + 9t^4 - v^4=0.
\end{cases}
\end{equation}
The scheme defined by system \eqref{4961-6} is locally insoluble at $2$.

%\begin{lstlisting}
%_<x>:=PolynomialRing(Rationals());
%k<i>:=NumberField(x^2+1);
%K<s,t>:=PolynomialRing(k,2);
%e:=1;
%F:=i^e*(1+i)*(5-4*i)*(s+t*i)^4;
%F;
%L<s,t>:=PolynomialRing(Rationals(),2);
%_<i>:=PolynomialRing(L);
%F:=(9*i - 1)*s^4 + (-4*i - 36)*s^3*t + (-54*i + 6)*s^2*t^2 + (4*i + 36)*s*t^3 +
%    (9*i - 1)*t^4;
%F;
%A:=- s^4 - 36*s^3*t + 6*s^2*t^2 + 36*s*t^3 - t^4;
%B:=9*s^4 - 4*s^3*t - 54*s^2*t^2 + 4*s*t^3 + 9*t^4;
%F-A-i*B;
\begin{lstlisting}
P<s,t,u,v>:=ProjectiveSpace(Rationals(),3);
A:=- s^4 - 36*s^3*t + 6*s^2*t^2 + 36*s*t^3 - t^4-121*u^4;
B:=9*s^4 - 4*s^3*t - 54*s^2*t^2 + 4*s*t^3 + 9*t^4-v^4;
S:=Scheme(P,[A,B]);
IsLocallySolvable(S,2);
\end{lstlisting}

{\bf Case 7:} $121u^4+v^4i=i^2(1+i)(5- 4i)(s+ti)^4$.  Equating the real and imaginary parts on both sides of this equation gives
\begin{equation}\label{4961-7}
\begin{cases}
-9s^4 + 4s^3t + 54s^2t^2 - 4st^3 - 9t^4 - 121u^4=0,\\
-s^4 - 36s^3t + 6s^2t^2 + 36st^3 - t^4 - v^4=0.
\end{cases}
\end{equation}
The scheme defined by system \eqref{4961-7} is locally insoluble at $2$.

%\begin{lstlisting}
%_<x>:=PolynomialRing(Rationals());
%k<i>:=NumberField(x^2+1);
%K<s,t>:=PolynomialRing(k,2);
%e:=2;
%F:=i^e*(1+i)*(5-4*i)*(s+t*i)^4;
%F;
%L<s,t>:=PolynomialRing(Rationals(),2);
%_<i>:=PolynomialRing(L);
%F:=(-i - 9)*s^4 + (-36*i + 4)*s^3*t + (6*i + 54)*s^2*t^2 + (36*i - 4)*s*t^3 + (-i -
%    9)*t^4;
%F;
%A:= - 9*s^4 + 4*s^3*t + 54*s^2*t^2  - 4*s*t^3 - 9*t^4;
%B:=-s^4 - 36*s^3*t + 6*s^2*t^2 + 36*s*t^3 - t^4;
%F-A-i*B;
\begin{lstlisting}
P<s,t,u,v>:=ProjectiveSpace(Rationals(),3);
A:=- 9*s^4 + 4*s^3*t + 54*s^2*t^2 - 4*s*t^3 - 9*t^4-121*u^4;
B:=-s^4 - 36*s^3*t + 6*s^2*t^2 + 36*s*t^3 - t^4-v^4;
S:=Scheme(P,[A,B]);
IsLocallySolvable(S,2);
\end{lstlisting}

{\bf Case 8:} $121u^4+v^4i=i^3(1+i)(5- 4i)(s+ti)^4$.  Equating the real and imaginary parts on both sides of this equation gives
\begin{equation}\label{4961-8}
\begin{cases}
s^4 + 36s^3t - 6s^2t^2 - 36st^3 + t^4 - 121u^4=0,\\
-9s^4 + 4s^3t + 54s^2t^2 - 4st^3 - 9t^4 - v^4=0.
\end{cases}
\end{equation}
The scheme defined by system \eqref{4961-8} is locally insoluble at $2$.

%\begin{lstlisting}
%_<x>:=PolynomialRing(Rationals());
%k<i>:=NumberField(x^2+1);
%K<s,t>:=PolynomialRing(k,2);
%e:=3;
%F:=i^e*(1+i)*(5-4*i)*(s+t*i)^4;
%F;
%L<s,t>:=PolynomialRing(Rationals(),2);
%_<i>:=PolynomialRing(L);
%F:=(-9*i + 1)*s^4 + (4*i + 36)*s^3*t + (54*i - 6)*s^2*t^2 + (-4*i - 36)*s*t^3 +
%    (-9*i + 1)*t^4;
%F;
%A:=s^4 + 36*s^3*t - 6*s^2*t^2  - 36*s*t^3 + t^4;
%B:=-9*s^4 + 4*s^3*t + 54*s^2*t^2 - 4*s*t^3 - 9*t^4;
%F-A-i*B;
%
\begin{lstlisting}
P<s,t,u,v>:=ProjectiveSpace(Rationals(),3);
A:=s^4 + 36*s^3*t - 6*s^2*t^2  - 36*s*t^3 + t^4-121*u^4;
B:=-9*s^4 + 4*s^3*t + 54*s^2*t^2 - 4*s*t^3 - 9*t^4-v^4;
S:=Scheme(P,[A,B]);
IsLocallySolvable(S,2);
\end{lstlisting}

\subsection{The case of $n=5807$.}%\[n=5807.\]
According to Table \ref{tb:factorisation}, in the case $n=5807$ it remains to
show that equation
\begin{equation}\label{5807}
33721249 u^8+v^8=2  w^4
\end{equation}
has no integer solutions satisfying \eqref{eq:cond}, which in this case reduces to \[\gcd(5807u,v)=\gcd(5807u,2w)=\gcd(v,2w)=1.\] 
Write \eqref{5807} as
\begin{equation} (5807u^4+v^4i)(5807u^4-v^4i)=(1+i)(1-i)w^4.
\end{equation}
This implies that there exist integers $s,t$ such that
\[5807u^4+v^4i=i^{\epsilon}(1+i)(s+ti)^4,\]
with $\epsilon \in \{0,1,2,3\}$.

{\bf Case 1:} $5807u^4+v^4i=(1+i)(s+ti)^4$. Equating the real and imaginary parts on both sides of this equation gives
\begin{equation}\label{5807-1}
\begin{cases}
s^4 - 4s^3t - 6s^2t^2 + 4st^3 + t^4 - 5807u^4=0,\\
s^4 + 4s^3t - 6s^2t^2 - 4st^3 + t^4 - v^4=0.
\end{cases}
\end{equation}
The scheme defined by system \eqref{5807-1} is locally insoluble at $2$.

%\begin{lstlisting}
%_<x>:=PolynomialRing(Rationals());
%k<i>:=NumberField(x^2+1);
%K<s,t>:=PolynomialRing(k,2);
%e:=0;
%F:=i^e*(1+i)*(s+t*i)^4;
%F;
%L<s,t>:=PolynomialRing(Rationals(),2);
%_<i>:=PolynomialRing(L);
%F:=(i + 1)*s^4 + (4*i - 4)*s^3*t + (-6*i - 6)*s^2*t^2 + (-4*i + 4)*s*t^3 + (i +
%    1)*t^4;
%F;
%A:= s^4 - 4*s^3*t - 6*s^2*t^2 + 4*s*t^3 + t^4;
%B:=s^4 + 4*s^3*t - 6*s^2*t^2 - 4*s*t^3 + t^4;
%F-A-i*B;
\begin{lstlisting}
P<s,t,u,v>:=ProjectiveSpace(Rationals(),3);
A:= s^4 - 4*s^3*t - 6*s^2*t^2 + 4*s*t^3 + t^4-5807*u^4;
B:=s^4 + 4*s^3*t - 6*s^2*t^2 - 4*s*t^3 + t^4-v^4;
S:=Scheme(P,[A,B]);
IsLocallySolvable(S,2);
\end{lstlisting}

{\bf Case 2:} $5807u^4+v^4i=i(1+i)(s+ti)^4$. Equating the real and imaginary parts on both sides of this equation gives
\begin{equation}\label{5807-2}
\begin{cases}
-s^4 - 4s^3t + 6s^2t^2 + 4st^3 - t^4 - 5807u^4=0,\\
s^4 - 4s^3t - 6s^2t^2 + 4st^3 + t^4 - v^4=0.
\end{cases}
\end{equation}
The scheme defined by system \eqref{5807-2} is locally insoluble at $5$.

%\begin{lstlisting}
%_<x>:=PolynomialRing(Rationals());
%k<i>:=NumberField(x^2+1);
%K<s,t>:=PolynomialRing(k,2);
%e:=1;
%F:=i^e*(1+i)*(s+t*i)^4;
%F;
%L<s,t>:=PolynomialRing(Rationals(),2);
%_<i>:=PolynomialRing(L);
%F:=(i - 1)*s^4 + (-4*i - 4)*s^3*t + (-6*i + 6)*s^2*t^2 + (4*i + 4)*s*t^3 + (i -
%    1)*t^4;
%F;
%A:= - s^4 - 4*s^3*t + 6*s^2*t^2 +4*s*t^3 - t^4;
%B:=s^4 - 4*s^3*t - 6*s^2*t^2 + 4*s*t^3 + t^4;
%F-A-i*B;
\begin{lstlisting}
P<s,t,u,v>:=ProjectiveSpace(Rationals(),3);
A:= - s^4 - 4*s^3*t + 6*s^2*t^2 +4*s*t^3 - t^4-5807*u^4;
B:=s^4 - 4*s^3*t - 6*s^2*t^2 + 4*s*t^3 + t^4-v^4;
S:=Scheme(P,[A,B]);
IsLocallySolvable(S,5);
\end{lstlisting}
{\bf Case 3:} $5807u^4+v^4i=i^2(1+i)(s+ti)^4$.  Equating the real and imaginary parts on both sides of this equation gives
\begin{equation}\label{5807-3}
\begin{cases}
-s^4 + 4s^3t + 6s^2t^2 - 4st^3 - t^4 - 5807u^4=0,\\
-s^4 - 4s^3t + 6s^2t^2 + 4st^3 - t^4 - v^4=0.
\end{cases}
\end{equation}
The scheme defined by system \eqref{5807-3} is locally insoluble at $2$.

%\begin{lstlisting}
%_<x>:=PolynomialRing(Rationals());
%k<i>:=NumberField(x^2+1);
%K<s,t>:=PolynomialRing(k,2);
%e:=2;
%F:=i^e*(1+i)*(s+t*i)^4;
%F;
%L<s,t>:=PolynomialRing(Rationals(),2);
%_<i>:=PolynomialRing(L);
%F:=(-i - 1)*s^4 + (-4*i + 4)*s^3*t + (6*i + 6)*s^2*t^2 + (4*i - 4)*s*t^3 + (-i -
%    1)*t^4;
%F;
%A:=- s^4 + 4*s^3*t + 6*s^2*t^2 - 4*s*t^3 - t^4;
%B:=-s^4 - 4*s^3*t + 6*s^2*t^2 + 4*s*t^3 - t^4;
%F-A-i*B;
\begin{lstlisting}
P<s,t,u,v>:=ProjectiveSpace(Rationals(),3);
A:=- s^4 + 4*s^3*t + 6*s^2*t^2 - 4*s*t^3 - t^4-5807*u^4;
B:=-s^4 - 4*s^3*t + 6*s^2*t^2 + 4*s*t^3 - t^4-v^4;
S:=Scheme(P,[A,B]);
IsLocallySolvable(S,2);
\end{lstlisting}

{\bf Case 4:} $5807u^4+v^4i=i^3(1+i)(s+ti)^4$.  Equating the real and imaginary parts on both sides of this equation gives
\begin{equation}\label{5807-4}
\begin{cases}
s^4 + 4s^3t - 6s^2t^2 - 4st^3 + t^4 - 5807u^4=0,\\
-s^4 + 4s^3t + 6s^2t^2 - 4st^3 - t^4 - v^4=0.
\end{cases}
\end{equation}
The scheme defined by system \eqref{5807-4} is locally insoluble at $2$.

%\begin{lstlisting}
%_<x>:=PolynomialRing(Rationals());
%k<i>:=NumberField(x^2+1);
%K<s,t>:=PolynomialRing(k,2);
%e:=3;
%F:=i^e*(1+i)*(s+t*i)^4;
%F;
%L<s,t>:=PolynomialRing(Rationals(),2);
%_<i>:=PolynomialRing(L);
%F:=(-i + 1)*s^4 + (4*i + 4)*s^3*t + (6*i - 6)*s^2*t^2 + (-4*i - 4)*s*t^3 + (-i +
%    1)*t^4;
%F;
%A:= s^4 + 4*s^3*t - 6*s^2*t^2 - 4*s*t^3 + t^4;
%B:=-s^4 + 4*s^3*t + 6*s^2*t^2 - 4*s*t^3 - t^4;
%F-A-i*B;
\begin{lstlisting}
P<s,t,u,v>:=ProjectiveSpace(Rationals(),3);
A:= s^4 + 4*s^3*t - 6*s^2*t^2 - 4*s*t^3 + t^4-5807*u^4;
B:=-s^4 + 4*s^3*t + 6*s^2*t^2 - 4*s*t^3 - t^4-v^4;
S:=Scheme(P,[A,B]);
IsLocallySolvable(S,2);
\end{lstlisting}

\subsection{The case of $n=5911$.}%\[n=5911\] 
According to Table \ref{tb:factorisation}, in the case $n=5911$ it remains to
show that equation
\begin{equation}\label{5911.1}
66049 u^8 + 2116 v^8 =w^4
\end{equation}
has no integer solutions satisfying \eqref{eq:cond}, which in this case reduces to \[\gcd(257u,46v)=\gcd(257u,w)=\gcd(46v,w)=1.\]  
Write \eqref{5911.1} as
\begin{equation} (257u^4+46v^4i)(257u^4-46v^4i)=w^4.
\end{equation}
This implies that there exist integers $s,t$ such that 
\[257u^4+46v^4i=i^{\epsilon}(s+ti)^4,\]
with $\epsilon \in \{0,1,2,3\}$.

{\bf Case 1:} $257u^4+46v^4i=(s+ti)^4$. Equating the real and imaginary parts on both sides of this equation gives
\begin{equation}\label{5911.1-1}
\begin{cases}
s^4 - 6s^2t^2 + t^4 - 257u^4=0,\\
4s^3t - 4st^3 - 46v^4=0.
\end{cases}
\end{equation}
The scheme defined by system \eqref{5911.1-1} is locally insoluble at $5$.
%\begin{lstlisting}
%_<x>:=PolynomialRing(Rationals());
%k<i>:=NumberField(x^2+1);
%K<s,t>:=PolynomialRing(k,2);
%e:=0;
%F:=i^e*(s+t*i)^4;
%F;
%L<s,t>:=PolynomialRing(Rationals(),2);
%_<i>:=PolynomialRing(L);
%F:=s^4 + 4*i*s^3*t - 6*s^2*t^2 - 4*i*s*t^3 + t^4;
%F;
%A:=s^4 - 6*s^2*t^2 + t^4;
%B:=4*s^3*t - 4*s*t^3;
%F-A-i*B;
\begin{lstlisting}
P<s,t,u,v>:=ProjectiveSpace(Rationals(),3);
A:=s^4 - 6*s^2*t^2 + t^4-257*u^4;
B:=4*s^3*t - 4*s*t^3-46*v^4;
S:=Scheme(P,[A,B]);
IsLocallySolvable(S,5);
\end{lstlisting}

{\bf Case 2:} $257u^4+46v^4i=i(s+ti)^4$. Equating the real and imaginary parts on both sides of this equation gives
\begin{equation}\label{5911.1-2}
\begin{cases}
-4s^3t + 4st^3 - 257u^4=0,\\
s^4 - 6s^2t^2 + t^4 - 46v^4=0.
\end{cases}
\end{equation}
The scheme defined by system \eqref{5911.1-2} is locally insoluble at $2$.
%\begin{lstlisting}
%_<x>:=PolynomialRing(Rationals());
%k<i>:=NumberField(x^2+1);
%K<s,t>:=PolynomialRing(k,2);
%e:=1;
%F:=i^e*(s+t*i)^4;
%F;
%L<s,t>:=PolynomialRing(Rationals(),2);
%_<i>:=PolynomialRing(L);
%F:=i*s^4 - 4*s^3*t - 6*i*s^2*t^2 + 4*s*t^3 + i*t^4;
%F;
%A:=- 4*s^3*t + 4*s*t^3;
%B:=s^4 - 6*s^2*t^2 + t^4;
%F-A-i*B;
\begin{lstlisting}
P<s,t,u,v>:=ProjectiveSpace(Rationals(),3);
A:=- 4*s^3*t + 4*s*t^3-257*u^4;
B:=s^4 - 6*s^2*t^2 + t^4-46*v^4;
S:=Scheme(P,[A,B]);
IsLocallySolvable(S,2);
\end{lstlisting}

{\bf Case 3:} $257u^4+46v^4i=i^2(s+ti)^4$. Equating the real and imaginary parts on both sides of this equation gives
\begin{equation}\label{5911.1-3}
\begin{cases}
-s^4 + 6s^2t^2 - t^4 - 257u^4=0,\\
-4s^3t + 4st^3 - 46v^4=0.
\end{cases}
\end{equation}
The scheme defined by system \eqref{5911.1-3} is locally insoluble at $2$.

%\begin{lstlisting}
%_<x>:=PolynomialRing(Rationals());
%k<i>:=NumberField(x^2+1);
%K<s,t>:=PolynomialRing(k,2);
%e:=2;
%F:=i^e*(s+t*i)^4;
%F;
%L<s,t>:=PolynomialRing(Rationals(),2);
%_<i>:=PolynomialRing(L);
%F:=-s^4 - 4*i*s^3*t + 6*s^2*t^2 + 4*i*s*t^3 - t^4;
%F;
%A:=- s^4 + 6*s^2*t^2 - t^4;
%B:=(-4*s^3*t + 4*s*t^3);
%F-A-i*B;

\begin{lstlisting}
P<s,t,u,v>:=ProjectiveSpace(Rationals(),3);
A:=- s^4 + 6*s^2*t^2 - t^4-257*u^4;
B:=-4*s^3*t + 4*s*t^3-46*v^4;
S:=Scheme(P,[A,B]);
IsLocallySolvable(S,2);
\end{lstlisting}

{\bf Case 4:} $257u^4+46v^4i=i^3(s+ti)^4$. Equating the real and imaginary parts on both sides of this equation gives
\begin{equation}\label{5911.1-4}
\begin{cases}
4s^3t - 4st^3 - 257u^4=0,\\
-s^4 + 6s^2t^2 - t^4 - 46v^4=0.
\end{cases}
\end{equation}
The scheme defined by system \eqref{5911.1-4} is locally insoluble at $2$.

%\begin{lstlisting}
%_<x>:=PolynomialRing(Rationals());
%k<i>:=NumberField(x^2+1);
%K<s,t>:=PolynomialRing(k,2);
%e:=3;
%F:=i^e*(s+t*i)^4;
%F;
%L<s,t>:=PolynomialRing(Rationals(),2);
%_<i>:=PolynomialRing(L);
%F:=-i*s^4 + 4*s^3*t + 6*i*s^2*t^2 - 4*s*t^3 - i*t^4;
%F;
%A:= 4*s^3*t - 4*s*t^3;
%B:=(-s^4 + 6*s^2*t^2 - t^4);
%F-A-i*B;
\begin{lstlisting}
P<s,t,u,v>:=ProjectiveSpace(Rationals(),3);
A:=4*s^3*t - 4*s*t^3-257*u^4;
B:=-s^4 + 6*s^2*t^2 - t^4-46*v^4;
S:=Scheme(P,[A,B]);
IsLocallySolvable(S,2);
\end{lstlisting}

\subsection{The case of $n=6001$.}%\[n=6001\] 
According to Table \ref{tb:factorisation}, in the case $n=6001$ it remains to
show that equation
\begin{equation}\label{6001}
289 u^8+4 v^8=353  w^4
\end{equation}
has no integer solutions satisfying \eqref{eq:cond}, which in this case reduces to \[\gcd(17u,2v)=\gcd(17u,353w)=\gcd(2v,353w)=1.\] Write \eqref{6001} as
\begin{equation} (17u^4+2v^4i)(17u^4-2v^4i)=(17+8i)(17-8i)w^4.
\end{equation}
This implies that there exist integers $s,t$ such that 
\[17u^4+2v^4i=i^{\epsilon}(17\pm 8i)(s+ti)^4,\]
with $\epsilon \in \{0,1,2,3\}$.

{\bf Case 1:} $17u^4+2v^4i=(17+8i)(s+ti)^4$. Equating the real and imaginary parts on both sides of this equation gives
\begin{equation}\label{6001-1}
\begin{cases}
17s^4 - 32s^3t - 102s^2t^2 + 32st^3 + 17t^4 - 17u^4=0,\\
8s^4 + 68s^3t - 48s^2t^2 - 68st^3 + 8t^4 - 2v^4=0.
\end{cases}
\end{equation}
The scheme defined by system \eqref{6001-1} is locally insoluble at $2$.

%\begin{lstlisting}
%_<x>:=PolynomialRing(Rationals());
%k<i>:=NumberField(x^2+1);
%K<s,t>:=PolynomialRing(k,2);
%e:=0;
%F:=i^e*(17+8*i)*(s+t*i)^4;
%F;
%L<s,t>:=PolynomialRing(Rationals(),2);
%_<i>:=PolynomialRing(L);
%F:=(8*i + 17)*s^4 + (68*i - 32)*s^3*t + (-48*i - 102)*s^2*t^2 + (-68*i + 32)*s*t^3
%    + (8*i + 17)*t^4;
%F;
%A:= 17*s^4 - 32*s^3*t -102*s^2*t^2 + 32*s*t^3 + 17*t^4;
%B:=8*s^4 + 68*s^3*t - 48*s^2*t^2 - 68*s*t^3 + 8*t^4;
%F-A-i*B;
\begin{lstlisting}
P<s,t,u,v>:=ProjectiveSpace(Rationals(),3);
A:= 17*s^4 - 32*s^3*t -102*s^2*t^2 + 32*s*t^3 + 17*t^4-17*u^4;
B:=8*s^4 + 68*s^3*t - 48*s^2*t^2 - 68*s*t^3 + 8*t^4-2*v^4;
S:=Scheme(P,[A,B]);
IsLocallySolvable(S,2);
\end{lstlisting}

{\bf Case 2:} $17u^4+2v^4i=i(17+8i)(s+ti)^4$. Equating the real and imaginary parts on both sides of this equation gives
\begin{equation}\label{6001-2}
\begin{cases}
-8s^4 - 68s^3t + 48s^2t^2 + 68st^3 - 8t^4 - 17u^4=0,\\
17s^4 - 32s^3t - 102s^2t^2 + 32st^3 + 17t^4 - 2v^4=0.
\end{cases}
\end{equation}
The scheme defined by system \eqref{6001-2} is locally insoluble at $2$.

%\begin{lstlisting}
%_<x>:=PolynomialRing(Rationals());
%k<i>:=NumberField(x^2+1);
%K<s,t>:=PolynomialRing(k,2);
%e:=1;
%F:=i^e*(17+8*i)*(s+t*i)^4;
%F;
%L<s,t>:=PolynomialRing(Rationals(),2);
%_<i>:=PolynomialRing(L);
%F:=(17*i - 8)*s^4 + (-32*i - 68)*s^3*t + (-102*i + 48)*s^2*t^2 + (32*i + 68)*s*t^3
%    + (17*i - 8)*t^4;
%F;
%A:= - 8*s^4 - 68*s^3*t +48*s^2*t^2 + 68*s*t^3 - 8*t^4;
%B:=17*s^4 - 32*s^3*t - 102*s^2*t^2 + 32*s*t^3 + 17*t^4;
%F-A-i*B;
\begin{lstlisting}
P<s,t,u,v>:=ProjectiveSpace(Rationals(),3);
A:= - 8*s^4 - 68*s^3*t +48*s^2*t^2 + 68*s*t^3 - 8*t^4-17*u^4;
B:=17*s^4 - 32*s^3*t - 102*s^2*t^2 + 32*s*t^3 + 17*t^4-2*v^4;
S:=Scheme(P,[A,B]);
IsLocallySolvable(S,2);
\end{lstlisting}

{\bf Case 3:} $17u^4+2v^4i=i^2(17+8i)(s+ti)^4$. Equating the real and imaginary parts on both sides of this equation gives
\begin{equation}\label{6001-3}
\begin{cases}
-17s^4 + 32s^3t + 102s^2t^2 - 32st^3 - 17t^4 - 17u^4=0,\\
-8s^4 - 68s^3t + 48s^2t^2 + 68st^3 - 8t^4 - 2v^4=0.
\end{cases}
\end{equation}
The scheme defined by system \eqref{6001-3} is locally insoluble at $2$.

%\begin{lstlisting}
%_<x>:=PolynomialRing(Rationals());
%k<i>:=NumberField(x^2+1);
%K<s,t>:=PolynomialRing(k,2);
%e:=2;
%F:=i^e*(17+8*i)*(s+t*i)^4;
%F;
%L<s,t>:=PolynomialRing(Rationals(),2);
%_<i>:=PolynomialRing(L);
%F:=(-8*i - 17)*s^4 + (-68*i + 32)*s^3*t + (48*i + 102)*s^2*t^2 + (68*i - 32)*s*t^3
%    + (-8*i - 17)*t^4;
%F;
%A:= - 17*s^4 + 32*s^3*t +102*s^2*t^2 - 32*s*t^3 - 17*t^4;
%B:=-8*s^4 - 68*s^3*t + 48*s^2*t^2 + 68*s*t^3 - 8*t^4;
%F-A-i*B;
\begin{lstlisting}
P<s,t,u,v>:=ProjectiveSpace(Rationals(),3);
A:= - 17*s^4 + 32*s^3*t +102*s^2*t^2 - 32*s*t^3 - 17*t^4-17*u^4;
B:=-8*s^4 - 68*s^3*t + 48*s^2*t^2 + 68*s*t^3 - 8*t^4-2*v^4;
S:=Scheme(P,[A,B]);
IsLocallySolvable(S,2);

\end{lstlisting}

{\bf Case 4:} $17u^4+2v^4i=i^3(17+8i)(s+ti)^4$. Equating the real and imaginary parts on both sides of this equation gives
\begin{equation}\label{6001-4}
\begin{cases}
8s^4 + 68s^3t - 48s^2t^2 - 68st^3 + 8t^4 - 17u^4=0,\\
-17s^4 + 32s^3t + 102s^2t^2 - 32st^3 - 17t^4 - 2v^4=0.
\end{cases}
\end{equation}
The scheme defined by system \eqref{6001-4} is locally insoluble at $2$.

%\begin{lstlisting}
%_<x>:=PolynomialRing(Rationals());
%k<i>:=NumberField(x^2+1);
%K<s,t>:=PolynomialRing(k,2);
%e:=3;
%F:=i^e*(17+8*i)*(s+t*i)^4;
%F;
%L<s,t>:=PolynomialRing(Rationals(),2);
%_<i>:=PolynomialRing(L);
%F:=(-17*i + 8)*s^4 + (32*i + 68)*s^3*t + (102*i - 48)*s^2*t^2 + (-32*i - 68)*s*t^3
%    + (-17*i + 8)*t^4;
%F;
%A:=8*s^4 + 68*s^3*t -  48*s^2*t^2 - 68*s*t^3 + 8*t^4;
%B:=-17*s^4 + 32*s^3*t + 102*s^2*t^2 - 32*s*t^3 - 17*t^4;
%F-A-i*B;
\begin{lstlisting}
P<s,t,u,v>:=ProjectiveSpace(Rationals(),3);
A:=8*s^4 + 68*s^3*t -  48*s^2*t^2 - 68*s*t^3 + 8*t^4-17*u^4;
B:=-17*s^4 + 32*s^3*t + 102*s^2*t^2 - 32*s*t^3 - 17*t^4-2*v^4;
S:=Scheme(P,[A,B]);
IsLocallySolvable(S,2);
\end{lstlisting}

{\bf Case 5:} $17u^4+2v^4i=(17-8i)(s+ti)^4$. Equating the real and imaginary parts on both sides of this equation gives
\begin{equation}\label{6001-5}
\begin{cases}
17s^4 + 32s^3t - 102s^2t^2 - 32st^3 + 17t^4 - 17u^4=0,\\
-8s^4 + 68s^3t + 48s^2t^2 - 68st^3 - 8t^4 - 2v^4=0.
\end{cases}
\end{equation}
The scheme defined by system \eqref{6001-5} is locally insoluble at $2$.

%\begin{lstlisting}
%_<x>:=PolynomialRing(Rationals());
%k<i>:=NumberField(x^2+1);
%K<s,t>:=PolynomialRing(k,2);
%e:=0;
%F:=i^e*(17-8*i)*(s+t*i)^4;
%F;
%L<s,t>:=PolynomialRing(Rationals(),2);
%_<i>:=PolynomialRing(L);
%F:=(-8*i + 17)*s^4 + (68*i + 32)*s^3*t + (48*i - 102)*s^2*t^2 + (-68*i - 32)*s*t^3
%    + (-8*i + 17)*t^4;
%F;
%A:=17*s^4 + 32*s^3*t -  102*s^2*t^2 - 32*s*t^3 + 17*t^4;
%B:=-8*s^4 + 68*s^3*t + 48*s^2*t^2 - 68*s*t^3 - 8*t^4;
%F-A-i*B;
\begin{lstlisting}
P<s,t,u,v>:=ProjectiveSpace(Rationals(),3);
A:=17*s^4 + 32*s^3*t -  102*s^2*t^2 - 32*s*t^3 + 17*t^4-17*u^4;
B:=-8*s^4 + 68*s^3*t + 48*s^2*t^2 - 68*s*t^3 - 8*t^4-2*v^4;
S:=Scheme(P,[A,B]);
IsLocallySolvable(S,2);
\end{lstlisting}

{\bf Case 6:} $17u^4+2v^4i=i(17-8i)(s+ti)^4$. Equating the real and imaginary parts on both sides of this equation gives

\begin{equation}\label{6001-6}
\begin{cases}
8s^4 - 68s^3t - 48s^2t^2 + 68st^3 + 8t^4 - 17u^4=0,\\
17s^4 + 32s^3t - 102s^2t^2 - 32st^3 + 17t^4 - 2v^4=0.
\end{cases}
\end{equation}
The scheme defined by system \eqref{6001-6} is locally insoluble at $2$.

%\begin{lstlisting}
%_<x>:=PolynomialRing(Rationals());
%k<i>:=NumberField(x^2+1);
%K<s,t>:=PolynomialRing(k,2);
%e:=1;
%F:=i^e*(17-8*i)*(s+t*i)^4;
%F;
%L<s,t>:=PolynomialRing(Rationals(),2);
%_<i>:=PolynomialRing(L);
%F:=(17*i + 8)*s^4 + (32*i - 68)*s^3*t + (-102*i - 48)*s^2*t^2 + (-32*i + 68)*s*t^3
%    + (17*i + 8)*t^4;
%F;
%A:=8*s^4 - 68*s^3*t -  48*s^2*t^2 + 68*s*t^3 + 8*t^4;
%B:=17*s^4 + 32*s^3*t - 102*s^2*t^2 - 32*s*t^3 + 17*t^4;
%F-A-i*B;
\begin{lstlisting}
P<s,t,u,v>:=ProjectiveSpace(Rationals(),3);
A:=8*s^4 - 68*s^3*t -  48*s^2*t^2 + 68*s*t^3 + 8*t^4-17*u^4;
B:=17*s^4 + 32*s^3*t - 102*s^2*t^2 - 32*s*t^3 + 17*t^4-2*v^4;
S:=Scheme(P,[A,B]);
IsLocallySolvable(S,2);
\end{lstlisting}

{\bf Case 7:} $17u^4+2v^4i=i^2(17-8i)(s+ti)^4$. Equating the real and imaginary parts on both sides of this equation gives
\begin{equation}\label{6001-7}
\begin{cases}
-17s^4 - 32s^3t + 102s^2t^2 + 32st^3 - 17t^4 - 17u^4=0,\\
8s^4 - 68s^3t - 48s^2t^2 + 68st^3 + 8t^4 - 2v^4=0.
\end{cases}
\end{equation}
The scheme defined by system \eqref{6001-7} is locally insoluble at $2$.

%\begin{lstlisting}
%_<x>:=PolynomialRing(Rationals());
%k<i>:=NumberField(x^2+1);
%K<s,t>:=PolynomialRing(k,2);
%e:=2;
%F:=i^e*(17-8*i)*(s+t*i)^4;
%F;
%L<s,t>:=PolynomialRing(Rationals(),2);
%_<i>:=PolynomialRing(L);
%F:=(8*i - 17)*s^4 + (-68*i - 32)*s^3*t + (-48*i + 102)*s^2*t^2 + (68*i + 32)*s*t^3
%    + (8*i - 17)*t^4;
%F;
%A:= - 17*s^4 - 32*s^3*t +102*s^2*t^2 + 32*s*t^3 - 17*t^4;
%B:=8*s^4 - 68*s^3*t - 48*s^2*t^2 + 68*s*t^3 + 8*t^4;
%F-A-i*B;
\begin{lstlisting}
P<s,t,u,v>:=ProjectiveSpace(Rationals(),3);
A:= - 17*s^4 - 32*s^3*t +102*s^2*t^2 + 32*s*t^3 - 17*t^4-17*u^4;
B:=8*s^4 - 68*s^3*t - 48*s^2*t^2 + 68*s*t^3 + 8*t^4-2*v^4;
S:=Scheme(P,[A,B]);
IsLocallySolvable(S,2);
\end{lstlisting}

{\bf Case 8:} $17u^4+2v^4i=i^3(17-8i)(s+ti)^4$. Equating the real and imaginary parts on both sides of this equation gives
\begin{equation}\label{6001-8}
\begin{cases}
-8s^4 + 68s^3t + 48s^2t^2 - 68st^3 - 8t^4 - 17u^4=0,\\
-17s^4 - 32s^3t + 102s^2t^2 + 32st^3 - 17t^4 - 2v^4=0.
\end{cases}
\end{equation}
The scheme defined by system \eqref{6001-8} is locally insoluble at $2$.

%\begin{lstlisting}
%_<x>:=PolynomialRing(Rationals());
%k<i>:=NumberField(x^2+1);
%K<s,t>:=PolynomialRing(k,2);
%e:=3;
%F:=i^e*(17-8*i)*(s+t*i)^4;
%F;
%L<s,t>:=PolynomialRing(Rationals(),2);
%_<i>:=PolynomialRing(L);
%F:=(-17*i - 8)*s^4 + (-32*i + 68)*s^3*t + (102*i + 48)*s^2*t^2 + (32*i - 68)*s*t^3
%    + (-17*i - 8)*t^4;
%F;
%A:= - 8*s^4 + 68*s^3*t +48*s^2*t^2 - 68*s*t^3 - 8*t^4;
%B:=-17*s^4 - 32*s^3*t + 102*s^2*t^2 + 32*s*t^3 - 17*t^4;
%F-A-i*B;
\begin{lstlisting}
P<s,t,u,v>:=ProjectiveSpace(Rationals(),3);
A:= - 8*s^4 + 68*s^3*t +48*s^2*t^2 - 68*s*t^3 - 8*t^4-17*u^4;
B:=-17*s^4 - 32*s^3*t + 102*s^2*t^2 + 32*s*t^3 - 17*t^4-2*v^4;
S:=Scheme(P,[A,B]);
IsLocallySolvable(S,2);

\end{lstlisting}

\subsection{The case of $n=6440$.}%\[n=6440\]
According to Table \ref{tb:factorisation}, in the case $n=6440$ it remains to
show that equation
\begin{equation}\label{6440}
25921 u^8 + 25 v^8= w^4
\end{equation}
has no integer solutions satisfying \eqref{eq:cond}, which in this case reduces to 
\begin{equation}\label{cond:6440}
\gcd(161u,5v)=\gcd(161u,w)=\gcd(5v,w)=1.
\end{equation}
Modulo 16 analysis shows that $2|v$, therefore by \eqref{cond:6440} we must have $2\nmid uw$. We can write \eqref{6440} as
\begin{equation} (161u^4+5v^4i)(161u^4-5v^4i)=w^4.
\end{equation}
This implies that there exist integers $s,t$ such that 
\[161u^4+5v^4i=i^{\epsilon}(s+ti)^4,\]
with $\epsilon \in \{0,1,2,3\}$.

{\bf Case 1:} $161u^4+5v^4i=(s+ti)^4$. Equating the real and imaginary parts on both sides of this equation gives
\begin{equation}\label{6440-1}
\begin{cases}
s^4 - 6s^2t^2 + t^4 - 161u^4=0,\\
4s^3t - 4st^3 - 5v^4=0.
\end{cases}
\end{equation}
The scheme defined by system \eqref{6440-1} is locally insoluble at $17$.

%\begin{lstlisting}
%_<x>:=PolynomialRing(Rationals());
%k<i>:=NumberField(x^2+1);
%K<s,t>:=PolynomialRing(k,2);
%e:=0;
%F:=i^e*(s+t*i)^4;
%F;
%L<s,t>:=PolynomialRing(Rationals(),2);
%_<i>:=PolynomialRing(L);
%F:=s^4 + 4*i*s^3*t - 6*s^2*t^2 - 4*i*s*t^3 + t^4;
%F;
%A:=s^4 - 6*s^2*t^2 + t^4;
%B:=4*s^3*t - 4*s*t^3;
%F-A-i*B;

\begin{lstlisting}
P<s,t,u,v>:=ProjectiveSpace(Rationals(),3);
A:=s^4 - 6*s^2*t^2 + t^4-161*u^4;
B:=4*s^3*t - 4*s*t^3-5*v^4;
S:=Scheme(P,[A,B]);
IsLocallySolvable(S,17);
\end{lstlisting}

{\bf Case 2:} $161u^4+5v^4i=i(s+ti)^4$. Equating the real and imaginary parts on both sides of this equation gives
\begin{equation}\label{6440-2}
\begin{cases}
-4s^3t + 4st^3 - 161u^4=0,\\
s^4 - 6s^2t^2 + t^4 - 5v^4=0.
\end{cases}
\end{equation}
The scheme defined by system \eqref{6440-2} is locally insoluble at $2$.

%\begin{lstlisting}
%_<x>:=PolynomialRing(Rationals());
%k<i>:=NumberField(x^2+1);
%K<s,t>:=PolynomialRing(k,2);
%e:=1;
%F:=i^e*(s+t*i)^4;
%F;
%L<s,t>:=PolynomialRing(Rationals(),2);
%_<i>:=PolynomialRing(L);
%F:=i*s^4 - 4*s^3*t - 6*i*s^2*t^2 + 4*s*t^3 + i*t^4;
%F;
%A:=- 4*s^3*t + 4*s*t^3;
%B:=s^4 - 6*s^2*t^2 + t^4;
%F-A-i*B;
\begin{lstlisting}
P<s,t,u,v>:=ProjectiveSpace(Rationals(),3);
A:=- 4*s^3*t + 4*s*t^3-161*u^4;
B:=s^4 - 6*s^2*t^2 + t^4-5*v^4;
S:=Scheme(P,[A,B]);
IsLocallySolvable(S,2);
\end{lstlisting}

{\bf Case 3:} $161u^4+5v^4i=i^2(s+ti)^4$. Equating the real and imaginary parts on both sides of this equation gives
\begin{equation}\label{6440-3}
\begin{cases}
-s^4 + 6s^2t^2 - t^4 -161u^4=0,\\
-4s^3t + 4st^3 - 5v^4=0.
\end{cases}
\end{equation}
The scheme defined by system \eqref{6440-3} is locally insoluble at $2$.

%\begin{lstlisting}
%_<x>:=PolynomialRing(Rationals());
%k<i>:=NumberField(x^2+1);
%K<s,t>:=PolynomialRing(k,2);
%e:=2;
%F:=i^e*(s+t*i)^4;
%F;
%L<s,t>:=PolynomialRing(Rationals(),2);
%_<i>:=PolynomialRing(L);
%F:=-s^4 - 4*i*s^3*t + 6*s^2*t^2 + 4*i*s*t^3 - t^4;
%F;
%A:=- s^4 + 6*s^2*t^2 - t^4;
%B:=(-4*s^3*t + 4*s*t^3);
%F-A-i*B;

\begin{lstlisting}
P<s,t,u,v>:=ProjectiveSpace(Rationals(),3);
A:=- s^4 + 6*s^2*t^2 - t^4-161*u^4;
B:=-4*s^3*t + 4*s*t^3-5*v^4;
S:=Scheme(P,[A,B]);
IsLocallySolvable(S,2);
\end{lstlisting}

{\bf Case 4:} $161u^4+5v^4i=i^3(s+ti)^4$. Equating the real and imaginary parts on both sides of this equation gives
\begin{equation}\label{6440-4}
\begin{cases}
4s^3t - 4st^3 -161u^4=0,\\
-s^4 + 6s^2t^2 - t^4 - 5v^4=0.
\end{cases}
\end{equation}
The scheme defined by system \eqref{6440-4} is locally insoluble at $5$.

%\begin{lstlisting}
%_<x>:=PolynomialRing(Rationals());
%k<i>:=NumberField(x^2+1);
%K<s,t>:=PolynomialRing(k,2);
%e:=3;
%F:=i^e*(s+t*i)^4;
%F;
%L<s,t>:=PolynomialRing(Rationals(),2);
%_<i>:=PolynomialRing(L);
%F:=-i*s^4 + 4*s^3*t + 6*i*s^2*t^2 - 4*s*t^3 - i*t^4;
%F;
%A:= 4*s^3*t - 4*s*t^3;
%B:=(-s^4 + 6*s^2*t^2 - t^4);
%F-A-i*B;

\begin{lstlisting}
P<s,t,u,v>:=ProjectiveSpace(Rationals(),3);
A:=4*s^3*t - 4*s*t^3-161*u^4;
B:=-s^4 + 6*s^2*t^2 - t^4-5*v^4;
S:=Scheme(P,[A,B]);
IsLocallySolvable(S,5);
\end{lstlisting}

\subsection{The case of $n=6625$.}%\[n=6625\quad (I)\]
According to Table \ref{tb:factorisation}, in the case $n=6625$ it remains to
show that equation
\begin{equation}\label{6625}
4 u^8 + v^8 = 6625  w^4
\end{equation}
has no integer solutions satisfying \eqref{eq:cond}, which in this case reduces to \[\gcd(2u,v)=\gcd(2u,6625w)=\gcd(v,6625w)=1,\]
and to show that equation
\begin{equation}\label{6625.2}
15625 u^8 + v^8= 106  w^4
\end{equation}
has no integer solutions satisfying \eqref{eq:cond}, which in this case reduces to \[\gcd(125u,v)=\gcd(125u,106w)=\gcd(v,106w)=1.\]

 \subsubsection{\bf{The case of \eqref{6625}.}}
 We first consider \eqref{6625}, which we can write as
\begin{equation} (2u^4+v^4i)(2u^4-v^4i)=(2+i)^3(2-i)^3(7+2i)(7-2i)w^4.
\end{equation}
Then,
\[\begin{split}
    2u^4+v^4i&\equiv 2+i\pmod{5}\\
    &\equiv 0\pmod{2+i}\\
    &\not\equiv 0\pmod{2-i}.
\end{split}\]
This implies that there exist integers $s,t$ such that 
\[2u^4+v^4i=i^{\epsilon}(2+i)^3(7\pm 2i)(s+ti)^4,\]
with $\epsilon \in \{0,1,2,3\}$.

{\bf Case 1:} $2u^4+v^4i=(2+i)^3(7+2i)(s+ti)^4$. Equating the real and imaginary parts on both sides of this equation gives
\begin{equation}\label{6625-1}
\begin{cases}
-8s^4 - 324s^3t + 48s^2t^2 + 324st^3 - 8t^4 - 2u^4=0,\\
81s^4 - 32s^3t - 486s^2t^2 + 32st^3 + 81t^4 - v^4=0.
\end{cases}
\end{equation}
The scheme defined by system \eqref{6625-1} is locally insoluble at $2$.

%\begin{lstlisting}
%_<x>:=PolynomialRing(Rationals());
%k<i>:=NumberField(x^2+1);
%K<s,t>:=PolynomialRing(k,2);
%e:=0;
%F:=i^e*(2+i)^3*(7+2*i)*(s+t*i)^4;
%F;
%L<s,t>:=PolynomialRing(Rationals(),2);
%_<i>:=PolynomialRing(L);
%F:=(81*i - 8)*s^4 + (-32*i - 324)*s^3*t + (-486*i + 48)*s^2*t^2 + (32*i +
%    324)*s*t^3 + (81*i - 8)*t^4;
%F;
%A:=- 8*s^4 - 324*s^3*t +48*s^2*t^2 + 324*s*t^3 - 8*t^4;
%B:=81*s^4 - 32*s^3*t - 486*s^2*t^2 + 32*s*t^3 + 81*t^4;
%F-A-i*B;
\begin{lstlisting}
P<s,t,u,v>:=ProjectiveSpace(Rationals(),3);
A:=- 8*s^4 - 324*s^3*t +48*s^2*t^2 + 324*s*t^3 - 8*t^4-2*u^4;
B:=81*s^4 - 32*s^3*t - 486*s^2*t^2 + 32*s*t^3 + 81*t^4-v^4;
S:=Scheme(P,[A,B]);
IsLocallySolvable(S,2);
\end{lstlisting}

{\bf Case 2:} $2u^4+v^4i=i(2+i)^3(7+2i)(s+ti)^4$. Equating the real and imaginary parts on both sides of this equation gives
\begin{equation}\label{6625-2}
\begin{cases}
-81s^4 + 32s^3t + 486s^2t^2 - 32st^3 - 81t^4 - 2u^4=0,\\
-8s^4 - 324s^3t + 48s^2t^2 + 324st^3 - 8t^4 - v^4=0.
\end{cases}
\end{equation}
The scheme defined by system \eqref{6625-2} is locally insoluble at $2$.

%\begin{lstlisting}
%_<x>:=PolynomialRing(Rationals());
%k<i>:=NumberField(x^2+1);
%K<s,t>:=PolynomialRing(k,2);
%e:=1;
%F:=i^e*(2+i)^3*(7+2*i)*(s+t*i)^4;
%F;
%L<s,t>:=PolynomialRing(Rationals(),2);
%_<i>:=PolynomialRing(L);
%F:=(-8*i - 81)*s^4 + (-324*i + 32)*s^3*t + (48*i + 486)*s^2*t^2 + (324*i -
%    32)*s*t^3 + (-8*i - 81)*t^4;
%F;
%A:=- 81*s^4 + 32*s^3*t +486*s^2*t^2 - 32*s*t^3 - 81*t^4;
%B:=-8*s^4 - 324*s^3*t + 48*s^2*t^2 + 324*s*t^3 - 8*t^4;
%F-A-i*B;

\begin{lstlisting}
P<s,t,u,v>:=ProjectiveSpace(Rationals(),3);
A:=- 81*s^4 + 32*s^3*t +486*s^2*t^2 - 32*s*t^3 - 81*t^4-2*u^4;
B:=-8*s^4 - 324*s^3*t + 48*s^2*t^2 + 324*s*t^3 - 8*t^4-v^4;
S:=Scheme(P,[A,B]);
IsLocallySolvable(S,2);
\end{lstlisting}
{\bf Case 3:} $2u^4+v^4i=i^2(2+i)^3(7+2i)(s+ti)^4$. Equating the real and imaginary parts on both sides of this equation gives
\begin{equation}\label{6625-3}
\begin{cases}
8s^4 + 324s^3t - 48s^2t^2 - 324st^3 + 8t^4 - 2u^4=0,\\
-81s^4 + 32s^3t + 486s^2t^2 - 32st^3 - 81t^4 - v^4=0.
\end{cases}
\end{equation}
The scheme defined by system \eqref{6625-3} is locally insoluble at $2$.

%\begin{lstlisting}
%_<x>:=PolynomialRing(Rationals());
%k<i>:=NumberField(x^2+1);
%K<s,t>:=PolynomialRing(k,2);
%e:=2;
%F:=i^e*(2+i)^3*(7+2*i)*(s+t*i)^4;
%F;
%L<s,t>:=PolynomialRing(Rationals(),2);
%_<i>:=PolynomialRing(L);
%F:=(-81*i + 8)*s^4 + (32*i + 324)*s^3*t + (486*i - 48)*s^2*t^2 + (-32*i -
%    324)*s*t^3 + (-81*i + 8)*t^4;
%F;
%A:=8*s^4 + 324*s^3*t -  48*s^2*t^2 - 324*s*t^3 + 8*t^4;
%B:=-81*s^4 + 32*s^3*t + 486*s^2*t^2 - 32*s*t^3 - 81*t^4;
%F-A-i*B;

\begin{lstlisting}
P<s,t,u,v>:=ProjectiveSpace(Rationals(),3);
A:=8*s^4 + 324*s^3*t -  48*s^2*t^2 - 324*s*t^3 + 8*t^4-2*u^4;
B:=-81*s^4 + 32*s^3*t + 486*s^2*t^2 - 32*s*t^3 - 81*t^4-v^4;
S:=Scheme(P,[A,B]);
IsLocallySolvable(S,2);
\end{lstlisting}

{\bf Case 4:} $2u^4+v^4i=i^3(2+i)^3(7+2i)(s+ti)^4$. Equating the real and imaginary parts on both sides of this equation gives
\begin{equation}\label{6625-4}
\begin{cases}
81s^4 - 32s^3t - 486s^2t^2 + 32st^3 + 81t^4 - 2u^4=0,\\
8s^4 + 324s^3t - 48s^2t^2 - 324st^3 + 8t^4 - v^4=0.
\end{cases}
\end{equation}
The scheme defined by system \eqref{6625-4} is locally insoluble at $2$.

%\begin{lstlisting}
%_<x>:=PolynomialRing(Rationals());
%k<i>:=NumberField(x^2+1);
%K<s,t>:=PolynomialRing(k,2);
%e:=3;
%F:=i^e*(2+i)^3*(7+2*i)*(s+t*i)^4;
%F;
%L<s,t>:=PolynomialRing(Rationals(),2);
%_<i>:=PolynomialRing(L);
%F:=(8*i + 81)*s^4 + (324*i - 32)*s^3*t + (-48*i - 486)*s^2*t^2 + (-324*i +
%    32)*s*t^3 + (8*i + 81)*t^4;
%F;
%A:=81*s^4 - 32*s^3*t -486*s^2*t^2 + 32*s*t^3 + 81*t^4;
%B:=8*s^4 + 324*s^3*t - 48*s^2*t^2 - 324*s*t^3 + 8*t^4;
%F-A-i*B;

\begin{lstlisting}
P<s,t,u,v>:=ProjectiveSpace(Rationals(),3);
A:=81*s^4 - 32*s^3*t -486*s^2*t^2 + 32*s*t^3 + 81*t^4-2*u^4;
B:=8*s^4 + 324*s^3*t - 48*s^2*t^2 - 324*s*t^3 + 8*t^4-v^4;
S:=Scheme(P,[A,B]);
IsLocallySolvable(S,2);
\end{lstlisting}

{\bf Case 5:} $2u^4+v^4i=(2+i)^3(7-2i)(s+ti)^4$. Equating the real and imaginary parts on both sides of this equation gives
\begin{equation}\label{6625-5}
\begin{cases}
36s^4 - 292s^3t - 216s^2t^2 + 292st^3 + 36t^4 - 2u^4=0,\\
73s^4 + 144s^3t - 438s^2t^2 - 144st^3 + 73t^4 - v^4=0.
\end{cases}
\end{equation}
The scheme defined by system \eqref{6625-5} is locally insoluble at $2$.

%\begin{lstlisting}
%_<x>:=PolynomialRing(Rationals());
%k<i>:=NumberField(x^2+1);
%K<s,t>:=PolynomialRing(k,2);
%e:=0;
%F:=i^e*(2+i)^3*(7-2*i)*(s+t*i)^4;
%F;
%L<s,t>:=PolynomialRing(Rationals(),2);
%_<i>:=PolynomialRing(L);
%F:=(73*i + 36)*s^4 + (144*i - 292)*s^3*t + (-438*i - 216)*s^2*t^2 + (-144*i +
%    292)*s*t^3 + (73*i + 36)*t^4;
%F;
%A:=36*s^4 - 292*s^3*t -216*s^2*t^2 + 292*s*t^3 + 36*t^4;
%B:=73*s^4 + 144*s^3*t - 438*s^2*t^2 - 144*s*t^3 + 73*t^4;
%F-A-i*B;
\begin{lstlisting}
P<s,t,u,v>:=ProjectiveSpace(Rationals(),3);
A:=36*s^4 - 292*s^3*t -216*s^2*t^2 + 292*s*t^3 + 36*t^4-2*u^4;
B:=73*s^4 + 144*s^3*t - 438*s^2*t^2 - 144*s*t^3 + 73*t^4-v^4;
S:=Scheme(P,[A,B]);
IsLocallySolvable(S,2);
\end{lstlisting}

{\bf Case 6:} $2u^4+v^4i=i(2+i)^3(7-2i)(s+ti)^4$. Equating the real and imaginary parts on both sides of this equation gives
\begin{equation}\label{6625-6}
\begin{cases}
-73s^4 - 144s^3t + 438s^2t^2 + 144st^3 - 73t^4 - 2u^4=0,\\
36s^4 - 292s^3t - 216s^2t^2 + 292st^3 + 36t^4 - v^4=0.
\end{cases}
\end{equation}
The scheme defined by system \eqref{6625-6} is locally insoluble at $2$.

%\begin{lstlisting}
%_<x>:=PolynomialRing(Rationals());
%k<i>:=NumberField(x^2+1);
%K<s,t>:=PolynomialRing(k,2);
%e:=1;
%F:=i^e*(2+i)^3*(7-2*i)*(s+t*i)^4;
%F;
%L<s,t>:=PolynomialRing(Rationals(),2);
%_<i>:=PolynomialRing(L);
%F:=(36*i - 73)*s^4 + (-292*i - 144)*s^3*t + (-216*i + 438)*s^2*t^2 + (292*i +
%    144)*s*t^3 + (36*i - 73)*t^4;
%F;
%A:=- 73*s^4 - 144*s^3*t +438*s^2*t^2 + 144*s*t^3 - 73*t^4;
%B:=36*s^4 - 292*s^3*t - 216*s^2*t^2 + 292*s*t^3 + 36*t^4;
%F-A-i*B;

\begin{lstlisting}
P<s,t,u,v>:=ProjectiveSpace(Rationals(),3);
A:=- 73*s^4 - 144*s^3*t +438*s^2*t^2 + 144*s*t^3 - 73*t^4-2*u^4;
B:=36*s^4 - 292*s^3*t - 216*s^2*t^2 + 292*s*t^3 + 36*t^4-v^4;
S:=Scheme(P,[A,B]);
IsLocallySolvable(S,2);
\end{lstlisting}
{\bf Case 7:} $2u^4+v^4i=i^2(2+i)^3(7-2i)(s+ti)^4$. Equating the real and imaginary parts on both sides of this equation gives
\begin{equation}\label{6625-7}
\begin{cases}
-36s^4 + 292s^3t + 216s^2t^2 - 292st^3 - 36t^4 - 2u^4=0,\\
-73s^4 - 144s^3t + 438s^2t^2 + 144st^3 - 73t^4 - v^4=0.
\end{cases}
\end{equation}
The scheme defined by system \eqref{6625-7} is locally insoluble at $2$.

%\begin{lstlisting}
%_<x>:=PolynomialRing(Rationals());
%k<i>:=NumberField(x^2+1);
%K<s,t>:=PolynomialRing(k,2);
%e:=2;
%F:=i^e*(2+i)^3*(7-2*i)*(s+t*i)^4;
%F;
%L<s,t>:=PolynomialRing(Rationals(),2);
%_<i>:=PolynomialRing(L);
%F:=(-73*i - 36)*s^4 + (-144*i + 292)*s^3*t + (438*i + 216)*s^2*t^2 + (144*i -
%    292)*s*t^3 + (-73*i - 36)*t^4;
%F;
%A:=- 36*s^4 + 292*s^3*t+ 216*s^2*t^2 - 292*s*t^3 - 36*t^4;
%B:=-73*s^4 - 144*s^3*t + 438*s^2*t^2 + 144*s*t^3 - 73*t^4;
%F-A-i*B;

\begin{lstlisting}
P<s,t,u,v>:=ProjectiveSpace(Rationals(),3);
A:=- 36*s^4 + 292*s^3*t+ 216*s^2*t^2 - 292*s*t^3 - 36*t^4-2*u^4;
B:=-73*s^4 - 144*s^3*t + 438*s^2*t^2 + 144*s*t^3 - 73*t^4-v^4;
S:=Scheme(P,[A,B]);
IsLocallySolvable(S,2);
\end{lstlisting}

{\bf Case 8:} $2u^4+v^4i=i^3(2+i)^3(7-2i)(s+ti)^4$. Equating the real and imaginary parts on both sides of this equation gives
\begin{equation}\label{6625-8}
\begin{cases}
73s^4 + 144s^3t - 438s^2t^2 - 144st^3 + 73t^4 - 2u^4=0,\\
-36s^4 + 292s^3t + 216s^2t^2 - 292st^3 - 36t^4 - v^4=0.
\end{cases}
\end{equation}
The scheme defined by system \eqref{6625-8} is locally insoluble at $2$.

%\begin{lstlisting}
%_<x>:=PolynomialRing(Rationals());
%k<i>:=NumberField(x^2+1);
%K<s,t>:=PolynomialRing(k,2);
%e:=3;
%F:=i^e*(2+i)^3*(7-2*i)*(s+t*i)^4;
%F;
%L<s,t>:=PolynomialRing(Rationals(),2);
%_<i>:=PolynomialRing(L);
%F:=(-36*i + 73)*s^4 + (292*i + 144)*s^3*t + (216*i - 438)*s^2*t^2 + (-292*i -
%    144)*s*t^3 + (-36*i + 73)*t^4;
%F;
%A:=73*s^4 + 144*s^3*t - 438*s^2*t^2 - 144*s*t^3 + 73*t^4;
%B:=-36*s^4 + 292*s^3*t + 216*s^2*t^2 - 292*s*t^3 - 36*t^4;
%F-A-i*B;

\begin{lstlisting}
P<s,t,u,v>:=ProjectiveSpace(Rationals(),3);
A:=73*s^4 + 144*s^3*t - 438*s^2*t^2 - 144*s*t^3 + 73*t^4-2*u^4;
B:=-36*s^4 + 292*s^3*t + 216*s^2*t^2 - 292*s*t^3 - 36*t^4-v^4;
S:=Scheme(P,[A,B]);
IsLocallySolvable(S,2);
\end{lstlisting}

\subsubsection{\bf{The case of \eqref{6625.2}.}}%\[n=6625\quad (II)\]
We now consider \eqref{6625.2} which we can write \eqref{6625.2} as
\begin{equation} (125u^4+v^4i)(125u^4-v^4i)=(1+i)(1-i)(7+2i)(7-2i)w^4.
\end{equation}
This implies that there exist integers $s,t$ such that 
\[125u^4+v^4i=i^{\epsilon}(1+i)(7\pm 2i)(s+ti)^4,\]
with $\epsilon \in \{0,1,2,3\}$.

{\bf Case 1:} $125u^4+v^4i=(1+i)(7+2i)(s+ti)^4$. Equating the real and imaginary parts on both sides of this equation gives
\begin{equation}\label{6625.2-1}
\begin{cases}
5s^4 - 36s^3t - 30s^2t^2 + 36st^3 + 5t^4 - 125u^4=0,\\
9s^4 + 20s^3t - 54s^2t^2 - 20st^3 + 9t^4 - v^4=0.
\end{cases}
\end{equation}
The scheme defined by system \eqref{6625.2-1} is locally insoluble at $2$.

%\begin{lstlisting}
%_<x>:=PolynomialRing(Rationals());
%k<i>:=NumberField(x^2+1);
%K<s,t>:=PolynomialRing(k,2);
%e:=0;
%F:=i^e*(1+i)*(7+2*i)*(s+t*i)^4;
%F;
%L<s,t>:=PolynomialRing(Rationals(),2);
%_<i>:=PolynomialRing(L);
%F:=(9*i + 5)*s^4 + (20*i - 36)*s^3*t + (-54*i - 30)*s^2*t^2 + (-20*i + 36)*s*t^3 +
%    (9*i + 5)*t^4;
%F;
%A:=5*s^4 - 36*s^3*t -30*s^2*t^2 + 36*s*t^3 + 5*t^4;
%B:=9*s^4 + 20*s^3*t - 54*s^2*t^2 - 20*s*t^3 + 9*t^4;
%F-A-i*B;
\begin{lstlisting}
P<s,t,u,v>:=ProjectiveSpace(Rationals(),3);
A:=5*s^4 - 36*s^3*t -30*s^2*t^2 + 36*s*t^3 + 5*t^4-125*u^4;
B:=9*s^4 + 20*s^3*t - 54*s^2*t^2 - 20*s*t^3 + 9*t^4-v^4;
S:=Scheme(P,[A,B]);
IsLocallySolvable(S,2);
\end{lstlisting}

{\bf Case 2:} $125u^4+v^4i=i(1+i)(7+2i)(s+ti)^4$.  Equating the real and imaginary parts on both sides of this equation gives
\begin{equation}\label{6625.2-2}
\begin{cases}
-9s^4 - 20s^3t + 54s^2t^2 + 20st^3 - 9t^4 - 125u^4=0,\\
5s^4 - 36s^3t - 30s^2t^2 + 36st^3 + 5t^4 - v^4=0.
\end{cases}
\end{equation}
The scheme defined by system \eqref{6625.2-2} is locally insoluble at $2$.

%\begin{lstlisting}
%_<x>:=PolynomialRing(Rationals());
%k<i>:=NumberField(x^2+1);
%K<s,t>:=PolynomialRing(k,2);
%e:=1;
%F:=i^e*(1+i)*(7+2*i)*(s+t*i)^4;
%F;
%L<s,t>:=PolynomialRing(Rationals(),2);
%_<i>:=PolynomialRing(L);
%F:=(5*i - 9)*s^4 + (-36*i - 20)*s^3*t + (-30*i + 54)*s^2*t^2 + (36*i + 20)*s*t^3 +
%    (5*i - 9)*t^4;
%F;
%A:=- 9*s^4 - 20*s^3*t +54*s^2*t^2 + 20*s*t^3 - 9*t^4;
%B:=5*s^4 - 36*s^3*t - 30*s^2*t^2 + 36*s*t^3 + 5*t^4;
%F-A-i*B;
\begin{lstlisting}
P<s,t,u,v>:=ProjectiveSpace(Rationals(),3);
A:=- 9*s^4 - 20*s^3*t +54*s^2*t^2 + 20*s*t^3 - 9*t^4-125*u^4;
B:=5*s^4 - 36*s^3*t - 30*s^2*t^2 + 36*s*t^3 + 5*t^4-v^4;
S:=Scheme(P,[A,B]);
IsLocallySolvable(S,2);
\end{lstlisting}

{\bf Case 3:} $125u^4+v^4i=i^2(1+i)(7+2i)(s+ti)^4$.  Equating the real and imaginary parts on both sides of this equation gives
\begin{equation}\label{6625.2-3}
\begin{cases}
-5s^4 + 36s^3t + 30s^2t^2 - 36st^3 - 5t^4 - 125u^4=0,\\
-9s^4 - 20s^3t + 54s^2t^2 + 20st^3 - 9t^4 - v^4=0.
\end{cases}
\end{equation}
The scheme defined by system \eqref{6625.2-3} is locally insoluble at $2$.

%\begin{lstlisting}
%_<x>:=PolynomialRing(Rationals());
%k<i>:=NumberField(x^2+1);
%K<s,t>:=PolynomialRing(k,2);
%e:=2;
%F:=i^e*(1+i)*(7+2*i)*(s+t*i)^4;
%F;
%L<s,t>:=PolynomialRing(Rationals(),2);
%_<i>:=PolynomialRing(L);
%F:=(-9*i - 5)*s^4 + (-20*i + 36)*s^3*t + (54*i + 30)*s^2*t^2 + (20*i - 36)*s*t^3 +
%    (-9*i - 5)*t^4;
%F;
%A:=- 5*s^4 + 36*s^3*t +30*s^2*t^2 - 36*s*t^3 - 5*t^4;
%B:=-9*s^4 - 20*s^3*t + 54*s^2*t^2 + 20*s*t^3 - 9*t^4;
%F-A-i*B;
\begin{lstlisting}
P<s,t,u,v>:=ProjectiveSpace(Rationals(),3);
A:=- 5*s^4 + 36*s^3*t +30*s^2*t^2 - 36*s*t^3 - 5*t^4-125*u^4;
B:=-9*s^4 - 20*s^3*t + 54*s^2*t^2 + 20*s*t^3 - 9*t^4-v^4;
S:=Scheme(P,[A,B]);
IsLocallySolvable(S,2);
\end{lstlisting}

{\bf Case 4:} $125u^4+v^4i=i^3(1+i)(7+2i)(s+ti)^4$.  Equating the real and imaginary parts on both sides of this equation gives
\begin{equation}\label{6625.2-4}
\begin{cases}
9s^4 + 20s^3t - 54s^2t^2 - 20st^3 + 9t^4 - 125u^4=0,\\
-5s^4 + 36s^3t + 30s^2t^2 - 36st^3 - 5t^4 - v^4=0.
\end{cases}
\end{equation}
The scheme defined by system \eqref{6625.2-4} is locally insoluble at $2$.

%\begin{lstlisting}
%_<x>:=PolynomialRing(Rationals());
%k<i>:=NumberField(x^2+1);
%K<s,t>:=PolynomialRing(k,2);
%e:=3;
%F:=i^e*(1+i)*(7+2*i)*(s+t*i)^4;
%F;
%L<s,t>:=PolynomialRing(Rationals(),2);
%_<i>:=PolynomialRing(L);
%F:=(-5*i + 9)*s^4 + (36*i + 20)*s^3*t + (30*i - 54)*s^2*t^2 + (-36*i - 20)*s*t^3 +
%    (-5*i + 9)*t^4;
%F;
%A:= 9*s^4 + 20*s^3*t -54*s^2*t^2 - 20*s*t^3 + 9*t^4;
%B:=-5*s^4 + 36*s^3*t + 30*s^2*t^2 - 36*s*t^3 - 5*t^4;
%F-A-i*B;
\begin{lstlisting}
P<s,t,u,v>:=ProjectiveSpace(Rationals(),3);
A:= 9*s^4 + 20*s^3*t -54*s^2*t^2 - 20*s*t^3 + 9*t^4-125*u^4;
B:=-5*s^4 + 36*s^3*t + 30*s^2*t^2 - 36*s*t^3 - 5*t^4-v^4;
S:=Scheme(P,[A,B]);
IsLocallySolvable(S,2);
\end{lstlisting}

{\bf Case 5:} $125u^4+v^4i=(1+i)(7-2i)(s+ti)^4$. Equating the real and imaginary parts on both sides of this equation gives
\begin{equation}\label{6625.2-5}
\begin{cases}
9s^4 - 20s^3t - 54s^2t^2 + 20st^3 + 9t^4 - 125u^4=0,\\
5s^4 + 36s^3t - 30s^2t^2 - 36st^3 + 5t^4 - v^4=0.
\end{cases}
\end{equation}
The scheme defined by system \eqref{6625.2-5} is locally insoluble at $2$.

%\begin{lstlisting}
%_<x>:=PolynomialRing(Rationals());
%k<i>:=NumberField(x^2+1);
%K<s,t>:=PolynomialRing(k,2);
%e:=0;
%F:=i^e*(1+i)*(7-2*i)*(s+t*i)^4;
%F;
%L<s,t>:=PolynomialRing(Rationals(),2);
%_<i>:=PolynomialRing(L);
%F:=(5*i + 9)*s^4 + (36*i - 20)*s^3*t + (-30*i - 54)*s^2*t^2 + (-36*i + 20)*s*t^3 +
%    (5*i + 9)*t^4;
%F;
%A:=  9*s^4 - 20*s^3*t -54*s^2*t^2 + 20*s*t^3 + 9*t^4;
%B:=5*s^4 + 36*s^3*t - 30*s^2*t^2 - 36*s*t^3 + 5*t^4;
%F-A-i*B;
\begin{lstlisting}
P<s,t,u,v>:=ProjectiveSpace(Rationals(),3);
A:=  9*s^4 - 20*s^3*t -54*s^2*t^2 + 20*s*t^3 + 9*t^4-125*u^4;
B:=5*s^4 + 36*s^3*t - 30*s^2*t^2 - 36*s*t^3 + 5*t^4-v^4;
S:=Scheme(P,[A,B]);
IsLocallySolvable(S,2);
\end{lstlisting}

{\bf Case 6:} $125u^4+v^4i=i(1+i)(7-2i)(s+ti)^4$. Equating the real and imaginary parts on both sides of this equation gives
\begin{equation}\label{6625.2-6}
\begin{cases}
-5s^4 - 36s^3t + 30s^2t^2 + 36st^3 - 5t^4 - 125u^4=0,\\
9s^4 - 20s^3t - 54s^2t^2 + 20st^3 + 9t^4 - v^4=0.
\end{cases}
\end{equation}
The scheme defined by system \eqref{6625.2-6} is locally insoluble at $2$.

%\begin{lstlisting}
%_<x>:=PolynomialRing(Rationals());
%k<i>:=NumberField(x^2+1);
%K<s,t>:=PolynomialRing(k,2);
%e:=1;
%F:=i^e*(1+i)*(7-2*i)*(s+t*i)^4;
%F;
%L<s,t>:=PolynomialRing(Rationals(),2);
%_<i>:=PolynomialRing(L);
%F:=(9*i - 5)*s^4 + (-20*i - 36)*s^3*t + (-54*i + 30)*s^2*t^2 + (20*i + 36)*s*t^3 +
%    (9*i - 5)*t^4;
%F;
%A:=- 5*s^4 - 36*s^3*t +  30*s^2*t^2 + 36*s*t^3 - 5*t^4;
%B:=9*s^4 - 20*s^3*t - 54*s^2*t^2 + 20*s*t^3 + 9*t^4;
%F-A-i*B;
\begin{lstlisting}
P<s,t,u,v>:=ProjectiveSpace(Rationals(),3);
A:=- 5*s^4 - 36*s^3*t +  30*s^2*t^2 + 36*s*t^3 - 5*t^4-125*u^4;
B:=9*s^4 - 20*s^3*t - 54*s^2*t^2 + 20*s*t^3 + 9*t^4-v^4;
S:=Scheme(P,[A,B]);
IsLocallySolvable(S,2);
\end{lstlisting}

{\bf Case 7:} $125u^4+v^4i=i^2(1+i)(7-2i)(s+ti)^4$. Equating the real and imaginary parts on both sides of this equation gives
\begin{equation}\label{6625.2-7}
\begin{cases}
-9s^4 + 20s^3t + 54s^2t^2 - 20st^3 - 9t^4 - 125u^4=0,\\
-5s^4 - 36s^3t + 30s^2t^2 + 36st^3 - 5t^4 - v^4=0.
\end{cases}
\end{equation}
The scheme defined by system \eqref{6625.2-7} is locally insoluble at $2$.

%\begin{lstlisting}
%_<x>:=PolynomialRing(Rationals());
%k<i>:=NumberField(x^2+1);
%K<s,t>:=PolynomialRing(k,2);
%e:=2;
%F:=i^e*(1+i)*(7-2*i)*(s+t*i)^4;
%F;
%L<s,t>:=PolynomialRing(Rationals(),2);
%_<i>:=PolynomialRing(L);
%F:=(-5*i - 9)*s^4 + (-36*i + 20)*s^3*t + (30*i + 54)*s^2*t^2 + (36*i - 20)*s*t^3 +
%    (-5*i - 9)*t^4;
%F;
%A:=- 9*s^4 + 20*s^3*t +54*s^2*t^2 - 20*s*t^3 - 9*t^4;
%B:=-5*s^4 - 36*s^3*t + 30*s^2*t^2 + 36*s*t^3 - 5*t^4;
%F-A-i*B;
\begin{lstlisting}
P<s,t,u,v>:=ProjectiveSpace(Rationals(),3);
A:=- 9*s^4 + 20*s^3*t +54*s^2*t^2 - 20*s*t^3 - 9*t^4-125*u^4;
B:=-5*s^4 - 36*s^3*t + 30*s^2*t^2 + 36*s*t^3 - 5*t^4-v^4;
S:=Scheme(P,[A,B]);
IsLocallySolvable(S,2);
\end{lstlisting}

{\bf Case 8:} $125u^4+v^4i=i^3(1+i)(7-2i)(s+ti)^4$. Equating the real and imaginary parts on both sides of this equation gives
\begin{equation}\label{6625.2-8}
\begin{cases}
5s^4 + 36s^3t - 30s^2t^2 - 36st^3 + 5t^4 - 125u^4=0,\\
-9s^4 + 20s^3t + 54s^2t^2 - 20st^3 - 9t^4 - v^4=0.
\end{cases}
\end{equation}
The scheme defined by system \eqref{6625.2-8} is locally insoluble at $2$.

%\begin{lstlisting}
%_<x>:=PolynomialRing(Rationals());
%k<i>:=NumberField(x^2+1);
%K<s,t>:=PolynomialRing(k,2);
%e:=3;
%F:=i^e*(1+i)*(7-2*i)*(s+t*i)^4;
%F;
%L<s,t>:=PolynomialRing(Rationals(),2);
%_<i>:=PolynomialRing(L);
%F:=(-9*i + 5)*s^4 + (20*i + 36)*s^3*t + (54*i - 30)*s^2*t^2 + (-20*i - 36)*s*t^3 +
%    (-9*i + 5)*t^4;
%F;
%A:=5*s^4 + 36*s^3*t - 30*s^2*t^2 - 36*s*t^3 + 5*t^4;
%B:=-9*s^4 + 20*s^3*t + 54*s^2*t^2 - 20*s*t^3 - 9*t^4;
%F-A-i*B;
\begin{lstlisting}
P<s,t,u,v>:=ProjectiveSpace(Rationals(),3);
A:=73*s^4 + 144*s^3*t - 438*s^2*t^2 - 144*s*t^3 + 73*t^4-2*u^4;
B:=-36*s^4 + 292*s^3*t + 216*s^2*t^2 - 292*s*t^3 - 36*t^4-v^4;
S:=Scheme(P,[A,B]);
IsLocallySolvable(S,2);
\end{lstlisting}

\subsection{The case of $n=6984$.} 
 According to Table \ref{tb:factorisation}, in the case $n=6984$ it remains to
show that equation
\begin{equation}\label{6984}
81 u^8 + v^8=97  w^4
\end{equation}
has no integer solutions satisfying \eqref{eq:cond}, which in this case reduces to \[\gcd(3u,v)=\gcd(3u,97w)=\gcd(v,97w)=1.\]
Write \eqref{6984} as 
\begin{equation} (9u^4+v^4i)(9u^4-v^4i)=(9+4i)(9-4i)w^4.
\end{equation}
This implies that there exist integers $s,t$ such that 
\[9u^4+v^4i=i^{\epsilon}(9\pm 4i)(s+ti)^4,\]
with $\epsilon \in \{0,1,2,3\}$.

{\bf Case 1:} $9u^4+v^4i=(9+4i)(s+ti)^4$. Equating the real and imaginary parts on both sides of this equation gives
\begin{equation}\label{6984-1}
\begin{cases}
9s^4 - 16s^3t - 54s^2t^2 + 16st^3 + 9t^4 - 9u^4=0,\\
4s^4 + 36s^3t - 24s^2t^2 - 36st^3 + 4t^4 - v^4=0.
\end{cases}
\end{equation}
The scheme defined by system \eqref{6984-1} is locally insoluble at $2$.
%
%\begin{lstlisting}
%_<x>:=PolynomialRing(Rationals());
%k<i>:=NumberField(x^2+1);
%K<s,t>:=PolynomialRing(k,2);
%e:=0;
%F:=i^e*(9+4*i)*(s+t*i)^4;
%F;
%L<s,t>:=PolynomialRing(Rationals(),2);
%_<i>:=PolynomialRing(L);
%F:=(4*i + 9)*s^4 + (36*i - 16)*s^3*t + (-24*i - 54)*s^2*t^2 + (-36*i + 16)*s*t^3 +
%    (4*i + 9)*t^4;
%F;
%A:=9*s^4 - 16*s^3*t -54*s^2*t^2 + 16*s*t^3 + 9*t^4;
%B:=4*s^4 + 36*s^3*t - 24*s^2*t^2 - 36*s*t^3 + 4*t^4;
%F-A-i*B;
\begin{lstlisting}
P<s,t,u,v>:=ProjectiveSpace(Rationals(),3);
A:=9*s^4 - 16*s^3*t -54*s^2*t^2 + 16*s*t^3 + 9*t^4-9*u^4;
B:=4*s^4 + 36*s^3*t - 24*s^2*t^2 - 36*s*t^3 + 4*t^4-v^4;
S:=Scheme(P,[A,B]);
IsLocallySolvable(S,2);
\end{lstlisting}

{\bf Case 2:} $9u^4+v^4i=i(9+4i)(s+ti)^4$. Equating the real and imaginary parts on both sides of this equation gives
\begin{equation}\label{6984-2}
\begin{cases}
-4s^4 - 36s^3t + 24s^2t^2 + 36st^3 - 4t^4 - 9u^4=0,\\
9s^4 - 16s^3t - 54s^2t^2 + 16st^3 + 9t^4 - v^4=0.
\end{cases}
\end{equation}
The scheme defined by system \eqref{6984-2} is locally insoluble at $2$.

%\begin{lstlisting}
%_<x>:=PolynomialRing(Rationals());
%k<i>:=NumberField(x^2+1);
%K<s,t>:=PolynomialRing(k,2);
%e:=1;
%F:=i^e*(9+4*i)*(s+t*i)^4;
%F;
%L<s,t>:=PolynomialRing(Rationals(),2);
%_<i>:=PolynomialRing(L);
%F:=(9*i - 4)*s^4 + (-16*i - 36)*s^3*t + (-54*i + 24)*s^2*t^2 + (16*i + 36)*s*t^3 +
%    (9*i - 4)*t^4;
%F;
%A:=- 4*s^4 - 36*s^3*t +  24*s^2*t^2 + 36*s*t^3 - 4*t^4;
%B:=9*s^4 - 16*s^3*t - 54*s^2*t^2 + 16*s*t^3 + 9*t^4;
%F-A-i*B;
\begin{lstlisting}
P<s,t,u,v>:=ProjectiveSpace(Rationals(),3);
A:=- 4*s^4 - 36*s^3*t +  24*s^2*t^2 + 36*s*t^3 - 4*t^4-9*u^4;
B:=9*s^4 - 16*s^3*t - 54*s^2*t^2 + 16*s*t^3 + 9*t^4-v^4;
S:=Scheme(P,[A,B]);
IsLocallySolvable(S,2);
\end{lstlisting}

{\bf Case 3:} $9u^4+v^4i=i^2(9+4i)(s+ti)^4$. Equating the real and imaginary parts on both sides of this equation gives
\begin{equation}\label{6984-3}
\begin{cases}
-9s^4 + 16s^3t + 54s^2t^2 - 16st^3 - 9t^4 - 9u^4=0,\\
-4s^4 - 36s^3t + 24s^2t^2 + 36st^3 - 4t^4 - v^4=0.
\end{cases}
\end{equation}
The scheme defined by system \eqref{6984-3} is locally insoluble at $2$.

%\begin{lstlisting}
%_<x>:=PolynomialRing(Rationals());
%k<i>:=NumberField(x^2+1);
%K<s,t>:=PolynomialRing(k,2);
%e:=2;
%F:=i^e*(9+4*i)*(s+t*i)^4;
%F;
%L<s,t>:=PolynomialRing(Rationals(),2);
%_<i>:=PolynomialRing(L);
%F:=(-4*i - 9)*s^4 + (-36*i + 16)*s^3*t + (24*i + 54)*s^2*t^2 + (36*i - 16)*s*t^3 +
%    (-4*i - 9)*t^4;
%F;
%A:=- 9*s^4 + 16*s^3*t +  54*s^2*t^2 - 16*s*t^3 - 9*t^4;
%B:=-4*s^4 - 36*s^3*t + 24*s^2*t^2 + 36*s*t^3 - 4*t^4;
%F-A-i*B;

\begin{lstlisting}
P<s,t,u,v>:=ProjectiveSpace(Rationals(),3);
A:=- 9*s^4 + 16*s^3*t +  54*s^2*t^2 - 16*s*t^3 - 9*t^4-9*u^4;
B:=-4*s^4 - 36*s^3*t + 24*s^2*t^2 + 36*s*t^3 - 4*t^4-v^4;
S:=Scheme(P,[A,B]);
IsLocallySolvable(S,2);
\end{lstlisting}

{\bf Case 4:}  $9u^4+v^4i=i^3(9+4i)(s+ti)^4$. Equating the real and imaginary parts on both sides of this equation gives
\begin{equation}\label{6984-4}
\begin{cases}
4s^4 + 36s^3t - 24s^2t^2 - 36st^3 + 4t^4 - 9u^4=0,\\
-9s^4 + 16s^3t + 54s^2t^2 - 16st^3 - 9t^4 - v^4=0.
\end{cases}
\end{equation}
The scheme defined by system \eqref{6984-4} is locally insoluble at $2$.
%
%\begin{lstlisting}
%_<x>:=PolynomialRing(Rationals());
%k<i>:=NumberField(x^2+1);
%K<s,t>:=PolynomialRing(k,2);
%e:=3;
%F:=i^e*(9+4*i)*(s+t*i)^4;
%F;
%L<s,t>:=PolynomialRing(Rationals(),2);
%_<i>:=PolynomialRing(L);
%F:=(-9*i + 4)*s^4 + (16*i + 36)*s^3*t + (54*i - 24)*s^2*t^2 + (-16*i - 36)*s*t^3 +
%    (-9*i + 4)*t^4;
%F;
%A:=4*s^4 + 36*s^3*t -24*s^2*t^2 - 36*s*t^3 + 4*t^4;
%B:=-9*s^4 + 16*s^3*t + 54*s^2*t^2 - 16*s*t^3 - 9*t^4;
%F-A-i*B;

\begin{lstlisting}
P<s,t,u,v>:=ProjectiveSpace(Rationals(),3);
A:=4*s^4 + 36*s^3*t -24*s^2*t^2 - 36*s*t^3 + 4*t^4-9*u^4;
B:=-9*s^4 + 16*s^3*t + 54*s^2*t^2 - 16*s*t^3 - 9*t^4-v^4;
S:=Scheme(P,[A,B]);
IsLocallySolvable(S,2);
\end{lstlisting}

\subsection{The case of $n=8001$.}%\[n=8001.\] 
 According to Table \ref{tb:factorisation}, in the case $n=8001$ it remains to
show that equation
\begin{equation}\label{8001}
64516 u^8 + 3969 v^8= w^4
\end{equation}
has no integer solutions satisfying \eqref{eq:cond}, which in this case reduces to  \[\gcd(254u,63v)=\gcd(254u,w)=\gcd(63v,w)=1.\]
Write \eqref{8001} as 
\begin{equation} (254u^4+63v^4i)(254u^4-63v^4i)=w^4.
\end{equation}
This implies that there exist integers $s,t$ such that 
\[254u^4+63v^4i=i^{\epsilon}(s+ti)^4,\]
with $\epsilon \in \{0,1,2,3\}$.

{\bf Case 1:} $254u^4+63v^4i=(s+ti)^4$. Equating the real and imaginary parts on both sides of this equation gives
\begin{equation}\label{8001-1}
\begin{cases}
s^4 - 6s^2t^2 + t^4 - 254u^4=0,\\
4s^3t - 4st^3 - 63v^4=0.
\end{cases}
\end{equation}
The scheme defined by system \eqref{8001-1} is locally insoluble at $2$.

%\begin{lstlisting}
%_<x>:=PolynomialRing(Rationals());
%k<i>:=NumberField(x^2+1);
%K<s,t>:=PolynomialRing(k,2);
%e:=0;
%F:=i^e*(s+t*i)^4;
%F;
%L<s,t>:=PolynomialRing(Rationals(),2);
%_<i>:=PolynomialRing(L);
%F:=s^4 + 4*i*s^3*t - 6*s^2*t^2 - 4*i*s*t^3 + t^4;
%F;
%A:=s^4 - 6*s^2*t^2 + t^4;
%B:=4*s^3*t - 4*s*t^3;
%F-A-i*B;

\begin{lstlisting}
P<s,t,u,v>:=ProjectiveSpace(Rationals(),3);
A:=s^4 - 6*s^2*t^2 + t^4-254*u^4;
B:=4*s^3*t - 4*s*t^3-63*v^4;
S:=Scheme(P,[A,B]);
IsLocallySolvable(S,2);
\end{lstlisting}

{\bf Case 2:} $254u^4+63v^4i=i(s+ti)^4$. Equating the real and imaginary parts on both sides of this equation gives
\begin{equation}\label{8001-2}
\begin{cases}
-4s^3t + 4st^3 - 254u^4=0,\\
s^4 - 6s^2t^2 + t^4 - 63v^4=0.
\end{cases}
\end{equation}
The scheme defined by system \eqref{8001-2} is locally insoluble at $2$.

%\begin{lstlisting}
%_<x>:=PolynomialRing(Rationals());
%k<i>:=NumberField(x^2+1);
%K<s,t>:=PolynomialRing(k,2);
%e:=1;
%F:=i^e*(s+t*i)^4;
%F;
%L<s,t>:=PolynomialRing(Rationals(),2);
%_<i>:=PolynomialRing(L);
%F:=i*s^4 - 4*s^3*t - 6*i*s^2*t^2 + 4*s*t^3 + i*t^4;
%F;
%A:=- 4*s^3*t + 4*s*t^3;
%B:=s^4 - 6*s^2*t^2 + t^4;
%F-A-i*B;

\begin{lstlisting}
P<s,t,u,v>:=ProjectiveSpace(Rationals(),3);
A:=- 4*s^3*t + 4*s*t^3-254*u^4;
B:=s^4 - 6*s^2*t^2 + t^4-63*v^4;
S:=Scheme(P,[A,B]);
IsLocallySolvable(S,2);
\end{lstlisting}

{\bf Case 3:} $254u^4+63v^4i=i^2(s+ti)^4$. Equating the real and imaginary parts on both sides of this equation gives
\begin{equation}\label{8001-3}
\begin{cases}
-s^4 + 6s^2t^2 - t^4 -254u^4=0,\\
-4s^3t + 4st^3 - 63v^4=0.
\end{cases}
\end{equation}
The scheme defined by system \eqref{8001-3} is locally insoluble at $2$.

%\begin{lstlisting}
%_<x>:=PolynomialRing(Rationals());
%k<i>:=NumberField(x^2+1);
%K<s,t>:=PolynomialRing(k,2);
%e:=2;
%F:=i^e*(s+t*i)^4;
%F;
%L<s,t>:=PolynomialRing(Rationals(),2);
%_<i>:=PolynomialRing(L);
%F:=-s^4 - 4*i*s^3*t + 6*s^2*t^2 + 4*i*s*t^3 - t^4;
%F;
%A:=- s^4 + 6*s^2*t^2 - t^4;
%B:=(-4*s^3*t + 4*s*t^3);
%F-A-i*B;
\begin{lstlisting}
P<s,t,u,v>:=ProjectiveSpace(Rationals(),3);
A:=- s^4 + 6*s^2*t^2 - t^4-254*u^4;
B:=-4*s^3*t + 4*s*t^3-63*v^4;
S:=Scheme(P,[A,B]);
IsLocallySolvable(S,2);
\end{lstlisting}

{\bf Case 4:} $254u^4+63v^4i=i^3(s+ti)^4$. Equating the real and imaginary parts on both sides of this equation gives
\begin{equation}\label{8001-4}
\begin{cases}
4s^3t - 4st^3 -254u^4=0,\\
-s^4 + 6s^2t^2 - t^4 - 63v^4=0.
\end{cases}
\end{equation}
The scheme defined by system \eqref{8001-4} is locally insoluble at $3$.

%\begin{lstlisting}
%_<x>:=PolynomialRing(Rationals());
%k<i>:=NumberField(x^2+1);
%K<s,t>:=PolynomialRing(k,2);
%e:=3;
%F:=i^e*(s+t*i)^4;
%F;
%L<s,t>:=PolynomialRing(Rationals(),2);
%_<i>:=PolynomialRing(L);
%F:=-i*s^4 + 4*s^3*t + 6*i*s^2*t^2 - 4*s*t^3 - i*t^4;
%F;
%A:= 4*s^3*t - 4*s*t^3;
%B:=(-s^4 + 6*s^2*t^2 - t^4);
%F-A-i*B;

\begin{lstlisting}
P<s,t,u,v>:=ProjectiveSpace(Rationals(),3);
A:=4*s^3*t - 4*s*t^3-254*u^4;
B:=-s^4 + 6*s^2*t^2 - t^4-63*v^4;
S:=Scheme(P,[A,B]);
IsLocallySolvable(S,3);
\end{lstlisting}

\subsection{The case of $n=8005$.}%\[n=8005.\] 
 According to Table \ref{tb:factorisation}, in the case $n=8005$ it remains to
show that equation
\begin{equation}\label{8005}
100 u^8 + v^8= 1601  w^4
\end{equation}
has no integer solutions satisfying \eqref{eq:cond}, which in this case reduces to \[\gcd(10u,v)=\gcd(10u,1601w)=\gcd(v,1601w)=1.\]
Write \eqref{8005} as 
\begin{equation} (10u^4+v^4i)(10u^4-v^4i)=(40+i)(40-i)w^4.
\end{equation}
This implies that there exist integers $s,t$ such that 
\[10u^4+v^4i=i^{\epsilon}(40\pm i)(s+ti)^4,\]
with $\epsilon \in \{0,1,2,3\}$.

{\bf Case 1:} $10u^4+v^4i=(40+i)(s+ti)^4$. Equating the real and imaginary parts on both sides of this equation gives
\begin{equation}\label{8005-1}
\begin{cases}
40s^4 - 4s^3t - 240s^2t^2 + 4st^3 + 40t^4 - 10u^4=0,\\
s^4 + 160s^3t - 6s^2t^2 - 160st^3 + t^4 - v^4=0.
\end{cases}
\end{equation}
The scheme defined by system \eqref{8005-1} is locally insoluble at $2$.

%\begin{lstlisting}
%_<x>:=PolynomialRing(Rationals());
%k<i>:=NumberField(x^2+1);
%K<s,t>:=PolynomialRing(k,2);
%e:=0;
%F:=i^e*(40+i)*(s+t*i)^4;
%F;
%L<s,t>:=PolynomialRing(Rationals(),2);
%_<i>:=PolynomialRing(L);
%F:=(i + 40)*s^4 + (160*i - 4)*s^3*t + (-6*i - 240)*s^2*t^2 + (-160*i + 4)*s*t^3 +
%    (i + 40)*t^4;
%F;
%A:=40*s^4 - 4*s^3*t -240*s^2*t^2 + 4*s*t^3 + 40*t^4;
%B:=s^4 + 160*s^3*t - 6*s^2*t^2 - 160*s*t^3 + t^4;
%F-A-i*B;

\begin{lstlisting}
P<s,t,u,v>:=ProjectiveSpace(Rationals(),3);
A:=40*s^4 - 4*s^3*t -240*s^2*t^2 + 4*s*t^3 + 40*t^4-10*u^4;
B:=s^4 + 160*s^3*t - 6*s^2*t^2 - 160*s*t^3 + t^4-v^4;
S:=Scheme(P,[A,B]);
IsLocallySolvable(S,2);
\end{lstlisting}

{\bf Case 2:} $10u^4+v^4i=i(40+i)(s+ti)^4$. Equating the real and imaginary parts on both sides of this equation gives
\begin{equation}\label{8005-2}
\begin{cases}
-s^4 - 160s^3t + 6s^2t^2 + 160st^3 - t^4 - 10u^4=0,\\
40s^4 - 4s^3t - 240s^2t^2 + 4st^3 + 40t^4 - v^4=0.
\end{cases}
\end{equation}
The scheme defined by system \eqref{8005-2} is locally insoluble at $2$.

%\begin{lstlisting}
%_<x>:=PolynomialRing(Rationals());
%k<i>:=NumberField(x^2+1);
%K<s,t>:=PolynomialRing(k,2);
%e:=1;
%F:=i^e*(40+i)*(s+t*i)^4;
%F;
%L<s,t>:=PolynomialRing(Rationals(),2);
%_<i>:=PolynomialRing(L);
%F:=(40*i - 1)*s^4 + (-4*i - 160)*s^3*t + (-240*i + 6)*s^2*t^2 + (4*i + 160)*s*t^3 +
%    (40*i - 1)*t^4;
%F;
%A:=- s^4 - 160*s^3*t + 6*s^2*t^2 + 160*s*t^3 - t^4;
%B:=40*s^4 - 4*s^3*t - 240*s^2*t^2 + 4*s*t^3 + 40*t^4;
%F-A-i*B;

\begin{lstlisting}
P<s,t,u,v>:=ProjectiveSpace(Rationals(),3);
A:=- s^4 - 160*s^3*t + 6*s^2*t^2 + 160*s*t^3 - t^4-10*u^4;
B:=40*s^4 - 4*s^3*t - 240*s^2*t^2 + 4*s*t^3 + 40*t^4-v^4;
S:=Scheme(P,[A,B]);
IsLocallySolvable(S,2);
\end{lstlisting}

{\bf Case 3:} $10u^4+v^4i=i^2(40+i)(s+ti)^4$. Equating the real and imaginary parts on both sides of this equation gives
\begin{equation}\label{8005-3}
\begin{cases}
-40s^4 + 4s^3t + 240s^2t^2 - 4st^3 - 40t^4 - 10u^4=0,\\
-s^4 - 160s^3t + 6s^2t^2 + 160st^3 - t^4 - v^4=0.
\end{cases}
\end{equation}
The scheme defined by system \eqref{8005-3} is locally insoluble at $2$.

%\begin{lstlisting}
%_<x>:=PolynomialRing(Rationals());
%k<i>:=NumberField(x^2+1);
%K<s,t>:=PolynomialRing(k,2);
%e:=2;
%F:=i^e*(40+i)*(s+t*i)^4;
%F;
%L<s,t>:=PolynomialRing(Rationals(),2);
%_<i>:=PolynomialRing(L);
%F:=(-i - 40)*s^4 + (-160*i + 4)*s^3*t + (6*i + 240)*s^2*t^2 + (160*i - 4)*s*t^3 +
%    (-i - 40)*t^4;
%F;
%A:=- 40*s^4 + 4*s^3*t + 240*s^2*t^2 - 4*s*t^3 - 40*t^4;
%B:=-s^4 - 160*s^3*t + 6*s^2*t^2 + 160*s*t^3 - t^4;
%F-A-i*B;
\begin{lstlisting}
P<s,t,u,v>:=ProjectiveSpace(Rationals(),3);
A:=- 40*s^4 + 4*s^3*t + 240*s^2*t^2 - 4*s*t^3 - 40*t^4-10*u^4;
B:=-s^4 - 160*s^3*t + 6*s^2*t^2 + 160*s*t^3 - t^4-v^4;
S:=Scheme(P,[A,B]);
IsLocallySolvable(S,2);
\end{lstlisting}

{\bf Case 4:} $10u^4+v^4i=i^3(40+i)(s+ti)^4$. Equating the real and imaginary parts on both sides of this equation gives
\begin{equation}\label{8005-4}
\begin{cases}
s^4 + 160s^3t - 6s^2t^2 - 160st^3 + t^4 - 10u^4=0,\\
-40s^4 + 4s^3t + 240s^2t^2 - 4st^3 - 40t^4 - v^4=0.
\end{cases}
\end{equation}
The scheme defined by system \eqref{8005-4} is locally insoluble at $2$.

%\begin{lstlisting}
%_<x>:=PolynomialRing(Rationals());
%k<i>:=NumberField(x^2+1);
%K<s,t>:=PolynomialRing(k,2);
%e:=3;
%F:=i^e*(40+i)*(s+t*i)^4;
%F;
%L<s,t>:=PolynomialRing(Rationals(),2);
%_<i>:=PolynomialRing(L);
%F:=(-40*i + 1)*s^4 + (4*i + 160)*s^3*t + (240*i - 6)*s^2*t^2 + (-4*i - 160)*s*t^3 +
%    (-40*i + 1)*t^4;
%F;
%A:=s^4 + 160*s^3*t - 6*s^2*t^2 - 160*s*t^3 + t^4;
%B:=-40*s^4 + 4*s^3*t + 240*s^2*t^2 - 4*s*t^3 - 40*t^4;
%F-A-i*B;
\begin{lstlisting}
P<s,t,u,v>:=ProjectiveSpace(Rationals(),3);
A:=s^4 + 160*s^3*t - 6*s^2*t^2 - 160*s*t^3 + t^4-10*u^4;
B:=-40*s^4 + 4*s^3*t + 240*s^2*t^2 - 4*s*t^3 - 40*t^4-v^4;
S:=Scheme(P,[A,B]);
IsLocallySolvable(S,2);
\end{lstlisting}

{\bf Case 5:} $10u^4+v^4i=(40-i)(s+ti)^4$. Equating the real and imaginary parts on both sides of this equation gives
\begin{equation}\label{8005-5}
\begin{cases}
40s^4 + 4s^3t - 240s^2t^2 - 4st^3 + 40t^4 - 10u^4=0,\\
-s^4 + 160s^3t + 6s^2t^2 - 160st^3 - t^4 - v^4=0.
\end{cases}
\end{equation}
The scheme defined by system \eqref{8005-5} is locally insoluble at $2$.

%\begin{lstlisting}
%_<x>:=PolynomialRing(Rationals());
%k<i>:=NumberField(x^2+1);
%K<s,t>:=PolynomialRing(k,2);
%e:=0;
%F:=i^e*(40-i)*(s+t*i)^4;
%F;
%L<s,t>:=PolynomialRing(Rationals(),2);
%_<i>:=PolynomialRing(L);
%F:=(-i + 40)*s^4 + (160*i + 4)*s^3*t + (6*i - 240)*s^2*t^2 + (-160*i - 4)*s*t^3 +
%    (-i + 40)*t^4;
%F;
%A:=40*s^4 + 4*s^3*t - 240*s^2*t^2 - 4*s*t^3 + 40*t^4;
%B:=-s^4 + 160*s^3*t + 6*s^2*t^2 - 160*s*t^3 - t^4;
%F-A-i*B;
\begin{lstlisting}
P<s,t,u,v>:=ProjectiveSpace(Rationals(),3);
A:=40*s^4 + 4*s^3*t - 240*s^2*t^2 - 4*s*t^3 + 40*t^4-10*u^4;
B:=-s^4 + 160*s^3*t + 6*s^2*t^2 - 160*s*t^3 - t^4-v^4;
S:=Scheme(P,[A,B]);
IsLocallySolvable(S,2);
\end{lstlisting}

{\bf Case 6:} $10u^4+v^4i=i(40-i)(s+ti)^4$. Equating the real and imaginary parts on both sides of this equation gives
\begin{equation}\label{8005-6}
\begin{cases}
s^4 - 160s^3t - 6s^2t^2 + 160st^3 + t^4 - 10u^4=0,\\
40s^4 + 4s^3t - 240s^2t^2 - 4st^3 + 40t^4 - v^4=0.
\end{cases}
\end{equation}
The scheme defined by system \eqref{8005-6} is locally insoluble at $2$.

%\begin{lstlisting}
%_<x>:=PolynomialRing(Rationals());
%k<i>:=NumberField(x^2+1);
%K<s,t>:=PolynomialRing(k,2);
%e:=1;
%F:=i^e*(40-i)*(s+t*i)^4;
%F;
%L<s,t>:=PolynomialRing(Rationals(),2);
%_<i>:=PolynomialRing(L);
%F:=(40*i + 1)*s^4 + (4*i - 160)*s^3*t + (-240*i - 6)*s^2*t^2 + (-4*i + 160)*s*t^3 +
%    (40*i + 1)*t^4;
%F;
%A:=s^4 - 160*s^3*t - 6*s^2*t^2 + 160*s*t^3 + t^4;
%B:=40*s^4 + 4*s^3*t - 240*s^2*t^2 - 4*s*t^3 + 40*t^4;
%F-A-i*B;
\begin{lstlisting}
P<s,t,u,v>:=ProjectiveSpace(Rationals(),3);
A:=s^4 - 160*s^3*t - 6*s^2*t^2 + 160*s*t^3 + t^4-10*u^4;
B:=40*s^4 + 4*s^3*t - 240*s^2*t^2 - 4*s*t^3 + 40*t^4-v^4;
S:=Scheme(P,[A,B]);
IsLocallySolvable(S,2);
\end{lstlisting}

{\bf Case 7:} $10u^4+v^4i=i^2(40-i)(s+ti)^4$. Equating the real and imaginary parts on both sides of this equation gives
\begin{equation}\label{8005-7}
\begin{cases}
-40s^4 - 4s^3t + 240s^2t^2 + 4st^3 - 40t^4 - 10u^4=0,\\
s^4 - 160s^3t - 6s^2t^2 + 160st^3 + t^4 - v^4=0.
\end{cases}
\end{equation}
The scheme defined by system \eqref{8005-7} is locally insoluble at $2$.

%\begin{lstlisting}
%_<x>:=PolynomialRing(Rationals());
%k<i>:=NumberField(x^2+1);
%K<s,t>:=PolynomialRing(k,2);
%e:=2;
%F:=i^e*(40-i)*(s+t*i)^4;
%F;
%L<s,t>:=PolynomialRing(Rationals(),2);
%_<i>:=PolynomialRing(L);
%F:=(i - 40)*s^4 + (-160*i - 4)*s^3*t + (-6*i + 240)*s^2*t^2 + (160*i + 4)*s*t^3 +
%    (i - 40)*t^4;
%F;
%A:=- 40*s^4 - 4*s^3*t +  240*s^2*t^2 + 4*s*t^3 - 40*t^4;
%B:=s^4 - 160*s^3*t - 6*s^2*t^2 + 160*s*t^3 + t^4;
%F-A-i*B;
\begin{lstlisting}
P<s,t,u,v>:=ProjectiveSpace(Rationals(),3);
A:=- 40*s^4 - 4*s^3*t +  240*s^2*t^2 + 4*s*t^3 - 40*t^4-10*u^4;
B:=s^4 - 160*s^3*t - 6*s^2*t^2 + 160*s*t^3 + t^4-v^4;
S:=Scheme(P,[A,B]);
IsLocallySolvable(S,2);
\end{lstlisting}

{\bf Case 8:} $10u^4+v^4i=i^3(40-i)(s+ti)^4$. Equating the real and imaginary parts on both sides of this equation gives
\begin{equation}\label{8005-8}
\begin{cases}
-s^4 + 160s^3t + 6s^2t^2 - 160st^3 - t^4 - 10u^4=0,\\
-40s^4 - 4s^3t + 240s^2t^2 + 4st^3 - 40t^4 - v^4=0.
\end{cases}
\end{equation}
The scheme defined by system \eqref{8005-8} is locally insoluble at $2$.

%\begin{lstlisting}
%_<x>:=PolynomialRing(Rationals());
%k<i>:=NumberField(x^2+1);
%K<s,t>:=PolynomialRing(k,2);
%e:=3;
%F:=i^e*(40-i)*(s+t*i)^4;
%F;
%L<s,t>:=PolynomialRing(Rationals(),2);
%_<i>:=PolynomialRing(L);
%F:=(-40*i - 1)*s^4 + (-4*i + 160)*s^3*t + (240*i + 6)*s^2*t^2 + (4*i - 160)*s*t^3 +
%    (-40*i - 1)*t^4;
%F;
%A:= - s^4 + 160*s^3*t +6*s^2*t^2 - 160*s*t^3 - t^4;
%B:=-40*s^4 - 4*s^3*t + 240*s^2*t^2 + 4*s*t^3 - 40*t^4;
%F-A-i*B;
\begin{lstlisting}
P<s,t,u,v>:=ProjectiveSpace(Rationals(),3);
A:= - s^4 + 160*s^3*t +6*s^2*t^2 - 160*s*t^3 - t^4-10*u^4;
B:=-40*s^4 - 4*s^3*t + 240*s^2*t^2 + 4*s*t^3 - 40*t^4-v^4;
S:=Scheme(P,[A,B]);
IsLocallySolvable(S,2);
\end{lstlisting}

\subsection{The case of $n=8075$.}%\[n=8075.\] 
 According to Table \ref{tb:factorisation}, in the case $n=8705$ it remains to
show that equation
\begin{equation}\label{8075}
361 u^8 + 4 v^8=425  w^4
\end{equation}
has no integer solutions satisfying \eqref{eq:cond}, which in this case reduces to \[\gcd(19u,2v)=\gcd(19u,85w)=\gcd(2v,85w)=1.\]
Write \eqref{8075} as 
\begin{equation} (19u^4+2v^4i)(19u^4-2v^4i)=(2+i)^2(2-i)^2(4+i)(4-i)w^4.
\end{equation}
Then, 
\[\begin{split}
    19u^4+2v^4i&\equiv -1+2i\pmod{5}\\
    &\equiv 2+i\pmod{5}\\
    &\not\equiv 2-i\pmod{5}.
\end{split}\]
This implies that there exist integers $s,t$ such that 
\[19u^4+2v^4i=i^{\epsilon}(2+i)^2(4\pm i)(s+ti)^4,\]
with $\epsilon \in \{0,1,2,3\}$.

{\bf Case 1:} $19u^4+2v^4i=(2+i)^2(4+i)(s+ti)^4$. Equating the real and imaginary parts on both sides of this equation gives
\begin{equation}\label{8075-1}
\begin{cases}
8s^4 - 76s^3t - 48s^2t^2 + 76st^3 + 8t^4 - 19u^4=0,\\
19s^4 + 32s^3t - 114s^2t^2 - 32st^3 + 19t^4 - 2v^4=0.
\end{cases}
\end{equation}
The scheme defined by system \eqref{8075-1} is locally insoluble at $2$.

%\begin{lstlisting}
%_<x>:=PolynomialRing(Rationals());
%k<i>:=NumberField(x^2+1);
%K<s,t>:=PolynomialRing(k,2);
%e:=0;
%F:=i^e*(2+i)^2*(4+i)*(s+t*i)^4;
%F;
%L<s,t>:=PolynomialRing(Rationals(),2);
%_<i>:=PolynomialRing(L);
%F:=(19*i + 8)*s^4 + (32*i - 76)*s^3*t + (-114*i - 48)*s^2*t^2 + (-32*i + 76)*s*t^3
%    + (19*i + 8)*t^4;
%F;
%A:=8*s^4 - 76*s^3*t - 48*s^2*t^2 + 76*s*t^3 + 8*t^4;
%B:=19*s^4 + 32*s^3*t - 114*s^2*t^2 - 32*s*t^3 + 19*t^4;
%F-A-i*B;

\begin{lstlisting}
P<s,t,u,v>:=ProjectiveSpace(Rationals(),3);
A:=8*s^4 - 76*s^3*t - 48*s^2*t^2 + 76*s*t^3 + 8*t^4-19*u^4;
B:=19*s^4 + 32*s^3*t - 114*s^2*t^2 - 32*s*t^3 + 19*t^4-2*v^4;
S:=Scheme(P,[A,B]);
IsLocallySolvable(S,2);
\end{lstlisting}

{\bf Case 2:} $19u^4+2v^4i=i(2+i)^2(4+i)(s+ti)^4$.  Equating the real and imaginary parts on both sides of this equation gives
\begin{equation}\label{8075-2}
\begin{cases}
-19s^4 - 32s^3t + 114s^2t^2 + 32st^3 - 19t^4 - 19u^4=0,\\
8s^4 - 76s^3t - 48s^2t^2 + 76st^3 + 8t^4 - 2v^4=0.
\end{cases}
\end{equation}
The scheme defined by system \eqref{8075-2} is locally insoluble at $2$.

%\begin{lstlisting}
%_<x>:=PolynomialRing(Rationals());
%k<i>:=NumberField(x^2+1);
%K<s,t>:=PolynomialRing(k,2);
%e:=1;
%F:=i^e*(2+i)^2*(4+i)*(s+t*i)^4;
%F;
%L<s,t>:=PolynomialRing(Rationals(),2);
%_<i>:=PolynomialRing(L);
%F:=(8*i - 19)*s^4 + (-76*i - 32)*s^3*t + (-48*i + 114)*s^2*t^2 + (76*i + 32)*s*t^3
%    + (8*i - 19)*t^4;
%F;
%A:=- 19*s^4 - 32*s^3*t +114*s^2*t^2 + 32*s*t^3 - 19*t^4;
%B:=8*s^4 - 76*s^3*t - 48*s^2*t^2 + 76*s*t^3 + 8*t^4;
%F-A-i*B;

\begin{lstlisting}
P<s,t,u,v>:=ProjectiveSpace(Rationals(),3);
A:=- 19*s^4 - 32*s^3*t +114*s^2*t^2 + 32*s*t^3 - 19*t^4-19*u^4;
B:=8*s^4 - 76*s^3*t - 48*s^2*t^2 + 76*s*t^3 + 8*t^4-2*v^4;
S:=Scheme(P,[A,B]);
IsLocallySolvable(S,2);
\end{lstlisting}

{\bf Case 3:} $19u^4+2v^4i=i^2(2+i)^2(4+i)(s+ti)^4$.  Equating the real and imaginary parts on both sides of this equation gives
\begin{equation}\label{8075-3}
\begin{cases}
-8s^4 + 76s^3t + 48s^2t^2 - 76st^3 - 8t^4 - 19u^4=0,\\
-19s^4 - 32s^3t + 114s^2t^2 + 32st^3 - 19t^4 - 2v^4=0.
\end{cases}
\end{equation}
The scheme defined by system \eqref{8075-3} is locally insoluble at $2$.

%\begin{lstlisting}
%_<x>:=PolynomialRing(Rationals());
%k<i>:=NumberField(x^2+1);
%K<s,t>:=PolynomialRing(k,2);
%e:=2;
%F:=i^e*(2+i)^2*(4+i)*(s+t*i)^4;
%F;
%L<s,t>:=PolynomialRing(Rationals(),2);
%_<i>:=PolynomialRing(L);
%F:=(-19*i - 8)*s^4 + (-32*i + 76)*s^3*t + (114*i + 48)*s^2*t^2 + (32*i - 76)*s*t^3
%    + (-19*i - 8)*t^4;
%F;
%A:=- 8*s^4 + 76*s^3*t +48*s^2*t^2 - 76*s*t^3 - 8*t^4;
%B:=-19*s^4 - 32*s^3*t + 114*s^2*t^2 + 32*s*t^3 - 19*t^4;
%F-A-i*B;
\begin{lstlisting}
P<s,t,u,v>:=ProjectiveSpace(Rationals(),3);
A:=- 8*s^4 + 76*s^3*t +48*s^2*t^2 - 76*s*t^3 - 8*t^4-19*u^4;
B:=-19*s^4 - 32*s^3*t + 114*s^2*t^2 + 32*s*t^3 - 19*t^4-2*v^4;
S:=Scheme(P,[A,B]);
IsLocallySolvable(S,2);
\end{lstlisting}

{\bf Case 4:} $19u^4+2v^4i=i^3(2+i)^2(4+i)(s+ti)^4$.  Equating the real and imaginary parts on both sides of this equation gives
\begin{equation}\label{8075-4}
\begin{cases}
19s^4 + 32s^3t - 114s^2t^2 - 32st^3 + 19t^4 - 19u^4=0,\\
-8s^4 + 76s^3t + 48s^2t^2 - 76st^3 - 8t^4 - 2v^4=0.
\end{cases}
\end{equation}
The scheme defined by system \eqref{8075-4} is locally insoluble at $2$.

%\begin{lstlisting}
%_<x>:=PolynomialRing(Rationals());
%k<i>:=NumberField(x^2+1);
%K<s,t>:=PolynomialRing(k,2);
%e:=3;
%F:=i^e*(2+i)^2*(4+i)*(s+t*i)^4;
%F;
%L<s,t>:=PolynomialRing(Rationals(),2);
%_<i>:=PolynomialRing(L);
%F:=(-8*i + 19)*s^4 + (76*i + 32)*s^3*t + (48*i - 114)*s^2*t^2 + (-76*i - 32)*s*t^3
%    + (-8*i + 19)*t^4;
%F;
%A:=19*s^4 + 32*s^3*t -114*s^2*t^2 - 32*s*t^3 + 19*t^4;
%B:=-8*s^4 + 76*s^3*t + 48*s^2*t^2 - 76*s*t^3 - 8*t^4;
%F-A-i*B;
\begin{lstlisting}
P<s,t,u,v>:=ProjectiveSpace(Rationals(),3);
A:=19*s^4 + 32*s^3*t -114*s^2*t^2 - 32*s*t^3 + 19*t^4-19*u^4;
B:=-8*s^4 + 76*s^3*t + 48*s^2*t^2 - 76*s*t^3 - 8*t^4-2*v^4;
S:=Scheme(P,[A,B]);
IsLocallySolvable(S,2);
\end{lstlisting}

{\bf Case 5:} $19u^4+2v^4i=(2+i)^2(4-i)(s+ti)^4$. Equating the real and imaginary parts on both sides of this equation gives
\begin{equation}\label{8075-5}
\begin{cases}
16s^4 - 52s^3t - 96s^2t^2 + 52st^3 + 16t^4 - 19u^4=0,\\
13s^4 + 64s^3t - 78s^2t^2 - 64st^3 + 13t^4 - 2v^4=0.
\end{cases}
\end{equation}
The scheme defined by system \eqref{8075-5} is locally insoluble at $2$.

%\begin{lstlisting}
%_<x>:=PolynomialRing(Rationals());
%k<i>:=NumberField(x^2+1);
%K<s,t>:=PolynomialRing(k,2);
%e:=0;
%F:=i^e*(2+i)^2*(4-i)*(s+t*i)^4;
%F;
%L<s,t>:=PolynomialRing(Rationals(),2);
%_<i>:=PolynomialRing(L);
%F:=(13*i + 16)*s^4 + (64*i - 52)*s^3*t + (-78*i - 96)*s^2*t^2 + (-64*i + 52)*s*t^3
%    + (13*i + 16)*t^4;
%F;
%A:=16*s^4 - 52*s^3*t -96*s^2*t^2 + 52*s*t^3 + 16*t^4;
%B:=13*s^4 + 64*s^3*t - 78*s^2*t^2 - 64*s*t^3 + 13*t^4;
%F-A-i*B;

\begin{lstlisting}
P<s,t,u,v>:=ProjectiveSpace(Rationals(),3);
A:=16*s^4 - 52*s^3*t -96*s^2*t^2 + 52*s*t^3 + 16*t^4-19*u^4;
B:=13*s^4 + 64*s^3*t - 78*s^2*t^2 - 64*s*t^3 + 13*t^4-2*v^4;
S:=Scheme(P,[A,B]);
IsLocallySolvable(S,2);
\end{lstlisting}

{\bf Case 6:} $19u^4+2v^4i=i(2+i)^2(4-i)(s+ti)^4$.  Equating the real and imaginary parts on both sides of this equation gives
\begin{equation}\label{8075-6}
\begin{cases}
-13s^4 - 64s^3t + 78s^2t^2 + 64st^3 - 13t^4 - 19u^4=0,\\
16s^4 - 52s^3t - 96s^2t^2 + 52st^3 + 16t^4 - 2v^4=0.
\end{cases}
\end{equation}
The scheme defined by system \eqref{8075-6} is locally insoluble at $5$.

%\begin{lstlisting}
%_<x>:=PolynomialRing(Rationals());
%k<i>:=NumberField(x^2+1);
%K<s,t>:=PolynomialRing(k,2);
%e:=1;
%F:=i^e*(2+i)^2*(4-i)*(s+t*i)^4;
%F;
%L<s,t>:=PolynomialRing(Rationals(),2);
%_<i>:=PolynomialRing(L);
%F:=(16*i - 13)*s^4 + (-52*i - 64)*s^3*t + (-96*i + 78)*s^2*t^2 + (52*i + 64)*s*t^3
%    + (16*i - 13)*t^4;
%F;
%A:= - 13*s^4 - 64*s^3*t +78*s^2*t^2 + 64*s*t^3 - 13*t^4;
%B:=16*s^4 - 52*s^3*t - 96*s^2*t^2 + 52*s*t^3 + 16*t^4;
%F-A-i*B;
\begin{lstlisting}
P<s,t,u,v>:=ProjectiveSpace(Rationals(),3);
A:= - 13*s^4 - 64*s^3*t +78*s^2*t^2 + 64*s*t^3 - 13*t^4-19*u^4;
B:=16*s^4 - 52*s^3*t - 96*s^2*t^2 + 52*s*t^3 + 16*t^4-2*v^4;
S:=Scheme(P,[A,B]);
IsLocallySolvable(S,5);
\end{lstlisting}

{\bf Case 7:} $19u^4+2v^4i=i^2(2+i)^2(4-i)(s+ti)^4$.  Equating the real and imaginary parts on both sides of this equation gives
\begin{equation}\label{8075-7}
\begin{cases}
-16s^4 + 52s^3t + 96s^2t^2 - 52st^3 - 16t^4 - 19u^4=0,\\
-13s^4 - 64s^3t + 78s^2t^2 + 64st^3 - 13t^4 - 2v^4=0.
\end{cases}
\end{equation}
The scheme defined by system \eqref{8075-7} is locally insoluble at $2$.

%\begin{lstlisting}
%_<x>:=PolynomialRing(Rationals());
%k<i>:=NumberField(x^2+1);
%K<s,t>:=PolynomialRing(k,2);
%e:=2;
%F:=i^e*(2+i)^2*(4-i)*(s+t*i)^4;
%F;
%L<s,t>:=PolynomialRing(Rationals(),2);
%_<i>:=PolynomialRing(L);
%F:=(-13*i - 16)*s^4 + (-64*i + 52)*s^3*t + (78*i + 96)*s^2*t^2 + (64*i - 52)*s*t^3
%    + (-13*i - 16)*t^4;
%F;
%A:=- 16*s^4 + 52*s^3*t +96*s^2*t^2 - 52*s*t^3 - 16*t^4;
%B:=-13*s^4 - 64*s^3*t + 78*s^2*t^2 + 64*s*t^3 - 13*t^4;
%F-A-i*B;

\begin{lstlisting}
P<s,t,u,v>:=ProjectiveSpace(Rationals(),3);
A:=- 16*s^4 + 52*s^3*t +96*s^2*t^2 - 52*s*t^3 - 16*t^4-19*u^4;
B:=-13*s^4 - 64*s^3*t + 78*s^2*t^2 + 64*s*t^3 - 13*t^4-2*v^4;
S:=Scheme(P,[A,B]);
IsLocallySolvable(S,2);
\end{lstlisting}

{\bf Case 8:} $19u^4+2v^4i=i^3(2+i)^2(4-i)(s+ti)^4$.  Equating the real and imaginary parts on both sides of this equation gives
\begin{equation}\label{8075-8}
\begin{cases}
13s^4 + 64s^3t - 78s^2t^2 - 64st^3 + 13t^4 - 19u^4=0,\\
-16s^4 + 52s^3t + 96s^2t^2 - 52st^3 - 16t^4 - 2v^4=0.
\end{cases}
\end{equation}
The scheme defined by system \eqref{8075-8} is locally insoluble at $2$.

%\begin{lstlisting}
%_<x>:=PolynomialRing(Rationals());
%k<i>:=NumberField(x^2+1);
%K<s,t>:=PolynomialRing(k,2);
%e:=3;
%F:=i^e*(2+i)^2*(4-i)*(s+t*i)^4;
%F;
%L<s,t>:=PolynomialRing(Rationals(),2);
%_<i>:=PolynomialRing(L);
%F:=(-16*i + 13)*s^4 + (52*i + 64)*s^3*t + (96*i - 78)*s^2*t^2 + (-52*i - 64)*s*t^3
%    + (-16*i + 13)*t^4;
%F;
%A:=13*s^4 + 64*s^3*t -78*s^2*t^2 - 64*s*t^3 + 13*t^4;
%B:=-16*s^4 + 52*s^3*t + 96*s^2*t^2 - 52*s*t^3 - 16*t^4;
%F-A-i*B;

\begin{lstlisting}
P<s,t,u,v>:=ProjectiveSpace(Rationals(),3);
A:=13*s^4 + 64*s^3*t -78*s^2*t^2 - 64*s*t^3 + 13*t^4-19*u^4;
B:=-16*s^4 + 52*s^3*t + 96*s^2*t^2 - 52*s*t^3 - 16*t^4-2*v^4;
S:=Scheme(P,[A,B]);
IsLocallySolvable(S,2);
\end{lstlisting}

\subsection{The case of $n=8235$.}%\[n=8235\]
According to Table \ref{tb:factorisation}, in the case $n=8235$ it remains to
show that equation
\begin{equation}\label{8235}
2916u^8+25v^8=61 w^4
\end{equation}
has no integer solutions satisfying \eqref{eq:cond}, which in this case reduces to  \[\gcd(54u,5v)=\gcd(54u,61w)=\gcd(5v,61w)=1.\]
Write \eqref{8235} as 
\begin{equation} (54u^4+5v^4i)(54u^4-5v^4i)=(6+5i)(6-5i)w^4.
\end{equation}
This implies that there exist integers $s,t$ such that 
\[54u^4+5v^4i=i^{\epsilon}(6\pm 5i)(s+ti)^4,\]
with $\epsilon \in \{0,1,2,3\}$.

{\bf Case 1:} $54u^4+5v^4i=(6+5i)(s+ti)^4$. Equating the real and imaginary parts on both sides of this equation gives
\begin{equation}\label{8235-1}
\begin{cases}
6s^4 - 20s^3t - 36s^2t^2 + 20st^3 + 6t^4 - 54u^4=0,\\
5s^4 + 24s^3t - 30s^2t^2 - 24st^3 + 5t^4 - 5v^4=0.
\end{cases}
\end{equation}
The scheme defined by system \eqref{8235-1} is locally insoluble at $5$.

%\begin{lstlisting}
%_<x>:=PolynomialRing(Rationals());
%k<i>:=NumberField(x^2+1);
%K<s,t>:=PolynomialRing(k,2);
%e:=0;
%F:=i^e*(6+5*i)*(s+t*i)^4;
%F;
%L<s,t>:=PolynomialRing(Rationals(),2);
%_<i>:=PolynomialRing(L);
%F:=(5*i + 6)*s^4 + (24*i - 20)*s^3*t + (-30*i - 36)*s^2*t^2 + (-24*i + 20)*s*t^3 +
%    (5*i + 6)*t^4;
%F;
%A:=6*s^4 - 20*s^3*t - 36*s^2*t^2 + 20*s*t^3 + 6*t^4;
%B:=5*s^4 + 24*s^3*t - 30*s^2*t^2 - 24*s*t^3 + 5*t^4;
%F-A-i*B;

\begin{lstlisting}
P<s,t,u,v>:=ProjectiveSpace(Rationals(),3);
A:=6*s^4 - 20*s^3*t - 36*s^2*t^2 + 20*s*t^3 + 6*t^4-54*u^4;
B:=5*s^4 + 24*s^3*t - 30*s^2*t^2 - 24*s*t^3 + 5*t^4-5*v^4;
S:=Scheme(P,[A,B]);
IsLocallySolvable(S,5);
\end{lstlisting}

{\bf Case 2:} $54u^4+5v^4i=i(6+5i)(s+ti)^4$. Equating the real and imaginary parts on both sides of this equation gives
\begin{equation}\label{8235-2}
\begin{cases}
-5s^4 - 24s^3t + 30s^2t^2 + 24st^3 - 5t^4 - 54u^4=0,\\
6s^4 - 20s^3t - 36s^2t^2 + 20st^3 + 6t^4 - 5v^4=0.
\end{cases}
\end{equation}
The scheme defined by system \eqref{8235-2} is locally insoluble at $5$.
%
%\begin{lstlisting}
%_<x>:=PolynomialRing(Rationals());
%k<i>:=NumberField(x^2+1);
%K<s,t>:=PolynomialRing(k,2);
%e:=1;
%F:=i^e*(6+5*i)*(s+t*i)^4;
%F;
%L<s,t>:=PolynomialRing(Rationals(),2);
%_<i>:=PolynomialRing(L);
%F:=(6*i - 5)*s^4 + (-20*i - 24)*s^3*t + (-36*i + 30)*s^2*t^2 + (20*i + 24)*s*t^3 +
%    (6*i - 5)*t^4;
%F;
%A:=- 5*s^4 - 24*s^3*t +30*s^2*t^2 + 24*s*t^3 - 5*t^4;
%B:=6*s^4 - 20*s^3*t - 36*s^2*t^2 + 20*s*t^3 + 6*t^4;
%F-A-i*B;

\begin{lstlisting}
P<s,t,u,v>:=ProjectiveSpace(Rationals(),3);
A:=- 5*s^4 - 24*s^3*t +30*s^2*t^2 + 24*s*t^3 - 5*t^4-54*u^4;
B:=6*s^4 - 20*s^3*t - 36*s^2*t^2 + 20*s*t^3 + 6*t^4-5*v^4;
S:=Scheme(P,[A,B]);
IsLocallySolvable(S,5);
\end{lstlisting}

{\bf Case 3:} $54u^4+5v^4i=i^2(6+5i)(s+ti)^4$. Equating the real and imaginary parts on both sides of this equation gives
\begin{equation}\label{8235-3}
\begin{cases}
-6s^4 + 20s^3t + 36s^2t^2 - 20st^3 - 6t^4 - 54u^4=0,\\
-5s^4 - 24s^3t + 30s^2t^2 + 24st^3 - 5t^4 - 5v^4=0.
\end{cases}
\end{equation}
The scheme defined by system \eqref{8235-3} is locally insoluble at $2$.

%\begin{lstlisting}
%_<x>:=PolynomialRing(Rationals());
%k<i>:=NumberField(x^2+1);
%K<s,t>:=PolynomialRing(k,2);
%e:=2;
%F:=i^e*(6+5*i)*(s+t*i)^4;
%F;
%L<s,t>:=PolynomialRing(Rationals(),2);
%_<i>:=PolynomialRing(L);
%F:=(-5*i - 6)*s^4 + (-24*i + 20)*s^3*t + (30*i + 36)*s^2*t^2 + (24*i - 20)*s*t^3 +
%    (-5*i - 6)*t^4;
%F;
%A:=- 6*s^4 + 20*s^3*t +36*s^2*t^2 - 20*s*t^3 - 6*t^4;
%B:=-5*s^4 - 24*s^3*t + 30*s^2*t^2 + 24*s*t^3 - 5*t^4;
%F-A-i*B;

\begin{lstlisting}
P<s,t,u,v>:=ProjectiveSpace(Rationals(),3);
A:=- 6*s^4 + 20*s^3*t +36*s^2*t^2 - 20*s*t^3 - 6*t^4-54*u^4;
B:=-5*s^4 - 24*s^3*t + 30*s^2*t^2 + 24*s*t^3 - 5*t^4-5*v^4;
S:=Scheme(P,[A,B]);
IsLocallySolvable(S,2);
\end{lstlisting}

{\bf Case 4:} $54u^4+5v^4i=i^3(6+5i)(s+ti)^4$. Equating the real and imaginary parts on both sides of this equation gives
\begin{equation}\label{8235-4}
\begin{cases}
5s^4 + 24s^3t - 30s^2t^2 - 24st^3 + 5t^4 - 54u^4=0,\\
-6s^4 + 20s^3t + 36s^2t^2 - 20st^3 - 6t^4 - 5v^4=0.
\end{cases}
\end{equation}
The scheme defined by system \eqref{8235-4} is locally insoluble at $2$.

%\begin{lstlisting}
%_<x>:=PolynomialRing(Rationals());
%k<i>:=NumberField(x^2+1);
%K<s,t>:=PolynomialRing(k,2);
%e:=3;
%F:=i^e*(6+5*i)*(s+t*i)^4;
%F;
%L<s,t>:=PolynomialRing(Rationals(),2);
%_<i>:=PolynomialRing(L);
%F:=(-6*i + 5)*s^4 + (20*i + 24)*s^3*t + (36*i - 30)*s^2*t^2 + (-20*i - 24)*s*t^3 +
%    (-6*i + 5)*t^4;
%F;
%A:=5*s^4 + 24*s^3*t -30*s^2*t^2 - 24*s*t^3 + 5*t^4;
%B:=-6*s^4 + 20*s^3*t + 36*s^2*t^2 - 20*s*t^3 - 6*t^4;
%F-A-i*B;

\begin{lstlisting}
P<s,t,u,v>:=ProjectiveSpace(Rationals(),3);
A:=5*s^4 + 24*s^3*t -30*s^2*t^2 - 24*s*t^3 + 5*t^4-54*u^4;
B:=-6*s^4 + 20*s^3*t + 36*s^2*t^2 - 20*s*t^3 - 6*t^4-5*v^4;
S:=Scheme(P,[A,B]);
IsLocallySolvable(S,2);
\end{lstlisting}

{\bf Case 5:} $54u^4+5v^4i=(6-5i)(s+ti)^4$. Equating the real and imaginary parts on both sides of this equation gives
\begin{equation}\label{8235-5}
\begin{cases}
6s^4 + 20s^3t - 36s^2t^2 - 20st^3 + 6t^4 - 54u^4=0,\\
-5s^4 + 24s^3t + 30s^2t^2 - 24st^3 - 5t^4 - 5v^4=0.
\end{cases}
\end{equation}
The scheme defined by system \eqref{8235-5} is locally insoluble at $2$.

%\begin{lstlisting}
%_<x>:=PolynomialRing(Rationals());
%k<i>:=NumberField(x^2+1);
%K<s,t>:=PolynomialRing(k,2);
%e:=0;
%F:=i^e*(6-5*i)*(s+t*i)^4;
%F;
%L<s,t>:=PolynomialRing(Rationals(),2);
%_<i>:=PolynomialRing(L);
%F:=(-5*i + 6)*s^4 + (24*i + 20)*s^3*t + (30*i - 36)*s^2*t^2 + (-24*i - 20)*s*t^3 +
%    (-5*i + 6)*t^4;
%F;
%A:=6*s^4 + 20*s^3*t - 36*s^2*t^2 - 20*s*t^3 + 6*t^4;
%B:=-5*s^4 + 24*s^3*t + 30*s^2*t^2 - 24*s*t^3 - 5*t^4;
%F-A-i*B;

\begin{lstlisting}
P<s,t,u,v>:=ProjectiveSpace(Rationals(),3);
A:=6*s^4 + 20*s^3*t - 36*s^2*t^2 - 20*s*t^3 + 6*t^4-54*u^4;
B:=-5*s^4 + 24*s^3*t + 30*s^2*t^2 - 24*s*t^3 - 5*t^4-5*v^4;
S:=Scheme(P,[A,B]);
IsLocallySolvable(S,2);
\end{lstlisting}

{\bf Case 6:} $54u^4+5v^4i=i(6-5i)(s+ti)^4$. Equating the real and imaginary parts on both sides of this equation gives
\begin{equation}\label{8235-6}
\begin{cases}
5s^4 - 24s^3t - 30s^2t^2 + 24st^3 + 5t^4 - 54u^4=0, \\
6s^4 + 20s^3t - 36s^2t^2 - 20st^3 + 6t^4 - 5v^4=0.
\end{cases}
\end{equation}
The scheme defined by system \eqref{8235-6} is locally insoluble at $2$.

%\begin{lstlisting}
%_<x>:=PolynomialRing(Rationals());
%k<i>:=NumberField(x^2+1);
%K<s,t>:=PolynomialRing(k,2);
%e:=1;
%F:=i^e*(6-5*i)*(s+t*i)^4;
%F;
%L<s,t>:=PolynomialRing(Rationals(),2);
%_<i>:=PolynomialRing(L);
%F:=(6*i + 5)*s^4 + (20*i - 24)*s^3*t + (-36*i - 30)*s^2*t^2 + (-20*i + 24)*s*t^3 +
%    (6*i + 5)*t^4;
%F;
%A:=5*s^4 - 24*s^3*t -30*s^2*t^2 + 24*s*t^3 + 5*t^4;
%B:=6*s^4 + 20*s^3*t - 36*s^2*t^2 - 20*s*t^3 + 6*t^4;
%F-A-i*B;

\begin{lstlisting}
P<s,t,u,v>:=ProjectiveSpace(Rationals(),3);
A:=5*s^4 - 24*s^3*t -30*s^2*t^2 + 24*s*t^3 + 5*t^4-54*u^4;
B:=6*s^4 + 20*s^3*t - 36*s^2*t^2 - 20*s*t^3 + 6*t^4-5*v^4;
S:=Scheme(P,[A,B]);
IsLocallySolvable(S,2);
\end{lstlisting}

{\bf Case 7:} $54u^4+5v^4i=i^2(6-5i)(s+ti)^4$. Equating the real and imaginary parts on both sides of this equation gives
\begin{equation}\label{8235-7}
\begin{cases}
-6s^4 - 20s^3t + 36s^2t^2 + 20st^3 - 6t^4 - 54u^4=0, \\
5s^4 - 24s^3t - 30s^2t^2 + 24st^3 + 5t^4 - 5v^4=0.
\end{cases}
\end{equation}
The scheme defined by system \eqref{8235-7} is locally insoluble at $2$.

%\begin{lstlisting}
%_<x>:=PolynomialRing(Rationals());
%k<i>:=NumberField(x^2+1);
%K<s,t>:=PolynomialRing(k,2);
%e:=2;
%F:=i^e*(6-5*i)*(s+t*i)^4;
%F;
%L<s,t>:=PolynomialRing(Rationals(),2);
%_<i>:=PolynomialRing(L);
%F:=(5*i - 6)*s^4 + (-24*i - 20)*s^3*t + (-30*i + 36)*s^2*t^2 + (24*i + 20)*s*t^3 +
%    (5*i - 6)*t^4;
%F;
%A:=- 6*s^4 - 20*s^3*t +36*s^2*t^2 + 20*s*t^3 - 6*t^4;
%B:=5*s^4 - 24*s^3*t - 30*s^2*t^2 + 24*s*t^3 + 5*t^4;
%F-A-i*B;

\begin{lstlisting}
P<s,t,u,v>:=ProjectiveSpace(Rationals(),3);
A:=- 6*s^4 - 20*s^3*t +36*s^2*t^2 + 20*s*t^3 - 6*t^4-54*u^4;
B:=5*s^4 - 24*s^3*t - 30*s^2*t^2 + 24*s*t^3 + 5*t^4-5*v^4;
S:=Scheme(P,[A,B]);
IsLocallySolvable(S,2);
\end{lstlisting}

{\bf Case 8:} $54u^4+5v^4i=i^3(6-5i)(s+ti)^4$. Equating the real and imaginary parts on both sides of this equation gives
\begin{equation}\label{8235-8}
\begin{cases}
-5s^4 + 24s^3t + 30s^2t^2 - 24st^3 - 5t^4 - 54u^4=0,\\
-6s^4 - 20s^3t + 36s^2t^2 + 20st^3 - 6t^4 - 5v^4=0.
\end{cases}
\end{equation}
The scheme defined by system \eqref{8235-8} is locally insoluble at $2$.

%\begin{lstlisting}
%_<x>:=PolynomialRing(Rationals());
%k<i>:=NumberField(x^2+1);
%K<s,t>:=PolynomialRing(k,2);
%e:=3;
%F:=i^e*(6-5*i)*(s+t*i)^4;
%F;
%L<s,t>:=PolynomialRing(Rationals(),2);
%_<i>:=PolynomialRing(L);
%F:=(-6*i - 5)*s^4 + (-20*i + 24)*s^3*t + (36*i + 30)*s^2*t^2 + (20*i - 24)*s*t^3 +
%    (-6*i - 5)*t^4;
%F;
%A:=- 5*s^4 + 24*s^3*t + 30*s^2*t^2 - 24*s*t^3 - 5*t^4;
%B:=-6*s^4 - 20*s^3*t + 36*s^2*t^2 + 20*s*t^3 - 6*t^4;
%F-A-i*B;

\begin{lstlisting}
P<s,t,u,v>:=ProjectiveSpace(Rationals(),3);
A:=- 5*s^4 + 24*s^3*t + 30*s^2*t^2 - 24*s*t^3 - 5*t^4-54*u^4;
B:=-6*s^4 - 20*s^3*t + 36*s^2*t^2 + 20*s*t^3 - 6*t^4-5*v^4;
S:=Scheme(P,[A,B]);
IsLocallySolvable(S,2);
\end{lstlisting}

\subsection{The case of $n=8705$.}%\[n=8705\]

 According to Table \ref{tb:factorisation}, in the case $n=8705$ it remains to
show that equation
\begin{equation}\label{8705}
4u^8+v^8=8705 w^4
\end{equation}
has no integer solutions satisfying \eqref{eq:cond}, which in this case reduces to  \[\gcd(2u,v)=\gcd(2u,8705w)=\gcd(v,8705w)=1.\]
Write \eqref{8705} as 
\begin{equation} (2u^4+v^4i)(2u^4-v^4i)=(2+i)(2-i)(30+29i)(30-29i)w^4.
\end{equation}
Then, \[\begin{split}
    2u^4+v^4i&\equiv 2+i\pmod{5}\\
    &\equiv 0\pmod{2+i}\\
    &\not\equiv 0\pmod{2-i}.
\end{split}\]
This implies that there exist integers $s,t$ such that 
\[2u^4+v^4i=i^{\epsilon}(2+i)(30\pm 29i)(s+ti)^4,\]
with $\epsilon \in \{0,1,2,3\}$.

{\bf Case 1:} $2u^4+v^4i=(2+i)(30+29i)(s+ti)^4$. Equating the real and imaginary parts on both sides of this equation gives
\begin{equation}\label{8705-1}
\begin{cases}
31s^4 - 352s^3t - 186s^2t^2 + 352st^3 + 31t^4 - 2u^4=0,\\
88s^4 + 124s^3t - 528s^2t^2 - 124st^3 + 88t^4 - v^4=0.
\end{cases}
\end{equation}
The scheme defined by system \eqref{8705-1} is locally insoluble at $2$.

%\begin{lstlisting}
%_<x>:=PolynomialRing(Rationals());
%k<i>:=NumberField(x^2+1);
%K<s,t>:=PolynomialRing(k,2);
%e:=0;
%F:=i^e*(2+i)*(30+29*i)*(s+t*i)^4;
%F;
%L<s,t>:=PolynomialRing(Rationals(),2);
%_<i>:=PolynomialRing(L);
%F:=(88*i + 31)*s^4 + (124*i - 352)*s^3*t + (-528*i - 186)*s^2*t^2 + (-124*i +
%    352)*s*t^3 + (88*i + 31)*t^4;
%F;
%A:=31*s^4 - 352*s^3*t -186*s^2*t^2 + 352*s*t^3 + 31*t^4;
%B:=88*s^4 + 124*s^3*t - 528*s^2*t^2 - 124*s*t^3 + 88*t^4;
%F-A-i*B;
\begin{lstlisting}
P<s,t,u,v>:=ProjectiveSpace(Rationals(),3);
A:=31*s^4 - 352*s^3*t -186*s^2*t^2 + 352*s*t^3 + 31*t^4-2*u^4;
B:=88*s^4 + 124*s^3*t - 528*s^2*t^2 - 124*s*t^3 + 88*t^4-v^4;
S:=Scheme(P,[A,B]);
IsLocallySolvable(S,2);
\end{lstlisting}

{\bf Case 2:} $2u^4+v^4i=i(2+i)(30+29i)(s+ti)^4$. Equating the real and imaginary parts on both sides of this equation gives
\begin{equation}\label{8705-2}
\begin{cases}
-88s^4 - 124s^3t + 528s^2t^2 + 124st^3 - 88t^4 - 2u^4=0,\\
31s^4 - 352s^3t - 186s^2t^2 + 352st^3 + 31t^4 - v^4=0.
\end{cases}
\end{equation}
The scheme defined by system \eqref{8705-2} is locally insoluble at $2$.

%\begin{lstlisting}
%_<x>:=PolynomialRing(Rationals());
%k<i>:=NumberField(x^2+1);
%K<s,t>:=PolynomialRing(k,2);
%e:=1;
%F:=i^e*(2+i)*(30+29*i)*(s+t*i)^4;
%F;
%L<s,t>:=PolynomialRing(Rationals(),2);
%_<i>:=PolynomialRing(L);
%F:=(31*i - 88)*s^4 + (-352*i - 124)*s^3*t + (-186*i + 528)*s^2*t^2 + (352*i +
%    124)*s*t^3 + (31*i - 88)*t^4;
%F;
%A:=- 88*s^4 - 124*s^3*t + 528*s^2*t^2 + 124*s*t^3 - 88*t^4;
%B:=31*s^4 - 352*s^3*t - 186*s^2*t^2 + 352*s*t^3 + 31*t^4;
%F-A-i*B;

\begin{lstlisting}
P<s,t,u,v>:=ProjectiveSpace(Rationals(),3);
A:=- 88*s^4 - 124*s^3*t + 528*s^2*t^2 + 124*s*t^3 - 88*t^4-2*u^4;
B:=31*s^4 - 352*s^3*t - 186*s^2*t^2 + 352*s*t^3 + 31*t^4-v^4;
S:=Scheme(P,[A,B]);
IsLocallySolvable(S,2);
\end{lstlisting}

{\bf Case 3:} $2u^4+v^4i=i^2(2+i)(30+29i)(s+ti)^4$. Equating the real and imaginary parts on both sides of this equation gives
\begin{equation}\label{8705-3}
\begin{cases}
-31s^4 + 352s^3t + 186s^2t^2 - 352st^3 - 31t^4 - 2u^4=0,\\
-88s^4 - 124s^3t + 528s^2t^2 + 124st^3 - 88t^4 - v^4=0.
\end{cases}
\end{equation}
The scheme defined by system \eqref{8705-3} is locally insoluble at $2$.

%\begin{lstlisting}
%_<x>:=PolynomialRing(Rationals());
%k<i>:=NumberField(x^2+1);
%K<s,t>:=PolynomialRing(k,2);
%e:=2;
%F:=i^e*(2+i)*(30+29*i)*(s+t*i)^4;
%F;
%L<s,t>:=PolynomialRing(Rationals(),2);
%_<i>:=PolynomialRing(L);
%F:=(-88*i - 31)*s^4 + (-124*i + 352)*s^3*t + (528*i + 186)*s^2*t^2 + (124*i -
%    352)*s*t^3 + (-88*i - 31)*t^4;
%F;
%A:=- 31*s^4 + 352*s^3*t + 186*s^2*t^2 - 352*s*t^3 - 31*t^4;
%B:=-88*s^4 - 124*s^3*t + 528*s^2*t^2 + 124*s*t^3 - 88*t^4;
%F-A-i*B;
\begin{lstlisting}
P<s,t,u,v>:=ProjectiveSpace(Rationals(),3);
A:=- 31*s^4 + 352*s^3*t + 186*s^2*t^2 - 352*s*t^3 - 31*t^4-2*u^4;
B:=-88*s^4 - 124*s^3*t + 528*s^2*t^2 + 124*s*t^3 - 88*t^4-v^4;
S:=Scheme(P,[A,B]);
IsLocallySolvable(S,2);
\end{lstlisting}

{\bf Case 4:} $2u^4+v^4i=i^3(2+i)(30+29i)(s+ti)^4$. Equating the real and imaginary parts on both sides of this equation gives
\begin{equation}\label{8705-4}
\begin{cases}
88s^4 + 124s^3t - 528s^2t^2 - 124st^3 + 88t^4 - 2u^4=0,\\
-31s^4 + 352s^3t + 186s^2t^2 - 352st^3 - 31t^4 - v^4=0.
\end{cases}
\end{equation}
The scheme defined by system \eqref{8705-4} is locally insoluble at $2$.

%\begin{lstlisting}
%_<x>:=PolynomialRing(Rationals());
%k<i>:=NumberField(x^2+1);
%K<s,t>:=PolynomialRing(k,2);
%e:=3;
%F:=i^e*(2+i)*(30+29*i)*(s+t*i)^4;
%F;
%L<s,t>:=PolynomialRing(Rationals(),2);
%_<i>:=PolynomialRing(L);
%F:=(-31*i + 88)*s^4 + (352*i + 124)*s^3*t + (186*i - 528)*s^2*t^2 + (-352*i -
%    124)*s*t^3 + (-31*i + 88)*t^4;
%F;
%A:=88*s^4 + 124*s^3*t- 528*s^2*t^2 - 124*s*t^3 + 88*t^4;
%B:=-31*s^4 + 352*s^3*t + 186*s^2*t^2 - 352*s*t^3 - 31*t^4;
%F-A-i*B;
\begin{lstlisting}
P<s,t,u,v>:=ProjectiveSpace(Rationals(),3);
A:=88*s^4 + 124*s^3*t- 528*s^2*t^2 - 124*s*t^3 + 88*t^4-2*u^4;
B:=-31*s^4 + 352*s^3*t + 186*s^2*t^2 - 352*s*t^3 - 31*t^4-v^4;
S:=Scheme(P,[A,B]);
IsLocallySolvable(S,2);
\end{lstlisting}

{\bf Case 5:} $2u^4+v^4i=(2+i)(30-29i)(s+ti)^4$. Equating the real and imaginary parts on both sides of this equation gives
\begin{equation}\label{8705-5}
\begin{cases}
89s^4 + 112s^3t - 534s^2t^2 - 112st^3 + 89t^4 - 2u^4=0,\\
-28s^4 + 356s^3t + 168s^2t^2 - 356st^3 - 28t^4 - v^4=0.
\end{cases}
\end{equation}
The scheme defined by system \eqref{8705-5} is locally insoluble at $2$.

%\begin{lstlisting}
%_<x>:=PolynomialRing(Rationals());
%k<i>:=NumberField(x^2+1);
%K<s,t>:=PolynomialRing(k,2);
%e:=0;
%F:=i^e*(2+i)*(30-29*i)*(s+t*i)^4;
%F;
%L<s,t>:=PolynomialRing(Rationals(),2);
%_<i>:=PolynomialRing(L);
%F:=(-28*i + 89)*s^4 + (356*i + 112)*s^3*t + (168*i - 534)*s^2*t^2 + (-356*i -
%    112)*s*t^3 + (-28*i + 89)*t^4;
%F;
%A:=89*s^4 + 112*s^3*t- 534*s^2*t^2 - 112*s*t^3 + 89*t^4;
%B:=-28*s^4 + 356*s^3*t + 168*s^2*t^2 - 356*s*t^3 - 28*t^4;
%F-A-i*B;
\begin{lstlisting}
P<s,t,u,v>:=ProjectiveSpace(Rationals(),3);
A:=89*s^4 + 112*s^3*t- 534*s^2*t^2 - 112*s*t^3 + 89*t^4-2*u^4;
B:=-28*s^4 + 356*s^3*t + 168*s^2*t^2 - 356*s*t^3 - 28*t^4-v^4;
S:=Scheme(P,[A,B]);
IsLocallySolvable(S,2);
\end{lstlisting}

{\bf Case 6:} $2u^4+v^4i=i(2+i)(30-29i)(s+ti)^4$. Equating the real and imaginary parts on both sides of this equation gives
\begin{equation}\label{8705-6}
\begin{cases}
28s^4 - 356s^3t - 168s^2t^2 + 356st^3 + 28t^4 - 2u^4=0,\\
89s^4 + 112s^3t - 534s^2t^2 - 112st^3 + 89t^4 - v^4=0.
\end{cases}
\end{equation}
The scheme defined by system \eqref{8705-6} is locally insoluble at $2$.

%\begin{lstlisting}
%_<x>:=PolynomialRing(Rationals());
%k<i>:=NumberField(x^2+1);
%K<s,t>:=PolynomialRing(k,2);
%e:=1;
%F:=i^e*(2+i)*(30-29*i)*(s+t*i)^4;
%F;
%L<s,t>:=PolynomialRing(Rationals(),2);
%_<i>:=PolynomialRing(L);
%F:=(89*i + 28)*s^4 + (112*i - 356)*s^3*t + (-534*i - 168)*s^2*t^2 + (-112*i +
%    356)*s*t^3 + (89*i + 28)*t^4;
%F;
%A:= 28*s^4 - 356*s^3*t - 168*s^2*t^2 + 356*s*t^3 + 28*t^4;
%B:=89*s^4 + 112*s^3*t - 534*s^2*t^2 - 112*s*t^3 + 89*t^4;
%F-A-i*B;
\begin{lstlisting}
P<s,t,u,v>:=ProjectiveSpace(Rationals(),3);
A:= 28*s^4 - 356*s^3*t - 168*s^2*t^2 + 356*s*t^3 + 28*t^4-2*u^4;
B:=89*s^4 + 112*s^3*t - 534*s^2*t^2 - 112*s*t^3 + 89*t^4-v^4;
S:=Scheme(P,[A,B]);
IsLocallySolvable(S,2);
\end{lstlisting}

{\bf Case 7:} $2u^4+v^4i=i^2(2+i)(30-29i)(s+ti)^4$. Equating the real and imaginary parts on both sides of this equation gives
\begin{equation}\label{8705-7}
\begin{cases}
-89s^4 - 112s^3t + 534s^2t^2 + 112st^3 - 89t^4 - 2u^4=0,\\
28s^4 - 356s^3t - 168s^2t^2 + 356st^3 + 28t^4 - v^4=0.
\end{cases}
\end{equation}
The scheme defined by system \eqref{8705-7} is locally insoluble at $2$.

%\begin{lstlisting}
%_<x>:=PolynomialRing(Rationals());
%k<i>:=NumberField(x^2+1);
%K<s,t>:=PolynomialRing(k,2);
%e:=2;
%F:=i^e*(2+i)*(30-29*i)*(s+t*i)^4;
%F;
%L<s,t>:=PolynomialRing(Rationals(),2);
%_<i>:=PolynomialRing(L);
%F:=(28*i - 89)*s^4 + (-356*i - 112)*s^3*t + (-168*i + 534)*s^2*t^2 + (356*i +
%    112)*s*t^3 + (28*i - 89)*t^4;
%F;
%A:=- 89*s^4 - 112*s^3*t +534*s^2*t^2 + 112*s*t^3 - 89*t^4;
%B:=28*s^4 - 356*s^3*t - 168*s^2*t^2 + 356*s*t^3 + 28*t^4;
%F-A-i*B;
\begin{lstlisting}
P<s,t,u,v>:=ProjectiveSpace(Rationals(),3);
A:=- 89*s^4 - 112*s^3*t +534*s^2*t^2 + 112*s*t^3 - 89*t^4-2*u^4;
B:=28*s^4 - 356*s^3*t - 168*s^2*t^2 + 356*s*t^3 + 28*t^4-v^4;
S:=Scheme(P,[A,B]);
IsLocallySolvable(S,2);
\end{lstlisting}

{\bf Case 8:} $2u^4+v^4i=i^3(2+i)(30-29i)(s+ti)^4$. Equating the real and imaginary parts on both sides of this equation gives
\begin{equation}\label{8705-8}
\begin{cases}
-28s^4 + 356s^3t + 168s^2t^2 - 356st^3 - 28t^4 - 2u^4=0,\\
-89s^4 - 112s^3t + 534s^2t^2 + 112st^3 - 89t^4 - v^4=0.
\end{cases}
\end{equation}
The scheme defined by system \eqref{8705-8} is locally insoluble at $2$.

%\begin{lstlisting}
%_<x>:=PolynomialRing(Rationals());
%k<i>:=NumberField(x^2+1);
%K<s,t>:=PolynomialRing(k,2);
%e:=3;
%F:=i^e*(2+i)*(30-29*i)*(s+t*i)^4;
%F;
%L<s,t>:=PolynomialRing(Rationals(),2);
%_<i>:=PolynomialRing(L);
%F:=(-89*i - 28)*s^4 + (-112*i + 356)*s^3*t + (534*i + 168)*s^2*t^2 + (112*i -
%    356)*s*t^3 + (-89*i - 28)*t^4;
%F;
%A:=- 28*s^4 + 356*s^3*t + 168*s^2*t^2 - 356*s*t^3 - 28*t^4;
%B:=-89*s^4 - 112*s^3*t + 534*s^2*t^2 + 112*s*t^3 - 89*t^4;
%F-A-i*B;
\begin{lstlisting}
P<s,t,u,v>:=ProjectiveSpace(Rationals(),3);
A:=- 28*s^4 + 356*s^3*t + 168*s^2*t^2 - 356*s*t^3 - 28*t^4-2*u^4;
B:=-89*s^4 - 112*s^3*t + 534*s^2*t^2 + 112*s*t^3 - 89*t^4-v^4;
S:=Scheme(P,[A,B]);
IsLocallySolvable(S,2);
\end{lstlisting}

\subsection{The case of $n=8931$.}%\[n=8931\]
 According to Table \ref{tb:factorisation}, in the case $n=8931$ it remains to
show that equation
\begin{equation}\label{8931}
36u^8+v^8=2977 w^4
\end{equation}
has no integer solutions satisfying \eqref{eq:cond}, which in this case reduces to  \[\gcd(6u,v)=\gcd(6u,2977w)=\gcd(v,2977w)=1.\]
Write \eqref{8931} as 
\begin{equation} (6u^4+v^4i)(6u^4-v^4i)=(3+2i)(3-2i)(15+2i)(15-2i)w^4.
\end{equation}
This implies that there exist integers $s,t$ such that 
\[6u^4+v^4i=i^{\epsilon}(3\pm 2i)(15\pm 2i)(s+ti)^4,\]
with $\epsilon \in \{0,1,2,3\}$.

{\bf Case 1:} $6u^4+v^4i=(3+2i)(15+2i)(s+ti)^4$. Equating the real and imaginary parts on both sides of this equation gives
\begin{equation}\label{8931-1}
\begin{cases}
41*s^4 - 144*s^3*t - 246*s^2*t^2 + 144*s*t^3 + 41*t^4 - 6*u^4=0,\\
36*s^4 + 164*s^3*t - 216*s^2*t^2 - 164*s*t^3 + 36*t^4 - v^4=0.
\end{cases}
\end{equation}
The scheme defined by system \eqref{8931-1} is locally insoluble at $2$.

%\begin{lstlisting}
%_<x>:=PolynomialRing(Rationals());
%k<i>:=NumberField(x^2+1);
%K<s,t>:=PolynomialRing(k,2);
%e:=0;
%F:=i^e*(3+2*i)*(15+2*i)*(s+t*i)^4;
%F;
%L<s,t>:=PolynomialRing(Rationals(),2);
%_<i>:=PolynomialRing(L);
%F:=(36*i + 41)*s^4 + (164*i - 144)*s^3*t + (-216*i - 246)*s^2*t^2 + (-164*i +
%    144)*s*t^3 + (36*i + 41)*t^4;
%F;
%A:=41*s^4 - 144*s^3*t - 246*s^2*t^2 + 144*s*t^3 + 41*t^4;
%B:=36*s^4 + 164*s^3*t - 216*s^2*t^2 - 164*s*t^3 + 36*t^4;
%F-A-i*B;

\begin{lstlisting}
P<s,t,u,v>:=ProjectiveSpace(Rationals(),3);
A:=41*s^4 - 144*s^3*t - 246*s^2*t^2 + 144*s*t^3 + 41*t^4-6*u^4;
B:=36*s^4 + 164*s^3*t - 216*s^2*t^2 - 164*s*t^3 + 36*t^4-v^4;
S:=Scheme(P,[A,B]);
IsLocallySolvable(S,2);
\end{lstlisting}

{\bf Case 2:} $6u^4+v^4i=i(3+2i)(15+2i)(s+ti)^4$. Equating the real and imaginary parts on both sides of this equation gives
\begin{equation}\label{8931-2}
\begin{cases}
-36*s^4 - 164*s^3*t + 216*s^2*t^2 + 164*s*t^3 - 36*t^4 - 6*u^4=0,\\
41*s^4 - 144*s^3*t - 246*s^2*t^2 + 144*s*t^3 + 41*t^4 - v^4=0.
\end{cases}
\end{equation}
The scheme defined by system \eqref{8931-2} is locally insoluble at $2$.

%\begin{lstlisting}
%_<x>:=PolynomialRing(Rationals());
%k<i>:=NumberField(x^2+1);
%K<s,t>:=PolynomialRing(k,2);
%e:=1;
%F:=i^e*(3+2*i)*(15+2*i)*(s+t*i)^4;
%F;
%L<s,t>:=PolynomialRing(Rationals(),2);
%_<i>:=PolynomialRing(L);
%F:=(41*i - 36)*s^4 + (-144*i - 164)*s^3*t + (-246*i + 216)*s^2*t^2 + (144*i +
%    164)*s*t^3 + (41*i - 36)*t^4;
%F;
%A:=- 36*s^4 - 164*s^3*t +216*s^2*t^2 + 164*s*t^3 - 36*t^4;
%B:=41*s^4 - 144*s^3*t - 246*s^2*t^2 + 144*s*t^3 + 41*t^4;
%F-A-i*B;
\begin{lstlisting}
P<s,t,u,v>:=ProjectiveSpace(Rationals(),3);
A:=- 36*s^4 - 164*s^3*t +216*s^2*t^2 + 164*s*t^3 - 36*t^4-6*u^4;
B:=41*s^4 - 144*s^3*t - 246*s^2*t^2 + 144*s*t^3 + 41*t^4-v^4;
S:=Scheme(P,[A,B]);
IsLocallySolvable(S,2);
\end{lstlisting}
{\bf Case 3:} $6u^4+v^4i=i^2(3+2i)(15+2i)(s+ti)^4$.  Equating the real and imaginary parts on both sides of this equation gives
\begin{equation}\label{8931-3}
\begin{cases}
-41*s^4 + 144*s^3*t + 246*s^2*t^2 - 144*s*t^3 - 41*t^4 - 6*u^4=0,\\
-36*s^4 - 164*s^3*t + 216*s^2*t^2 + 164*s*t^3 - 36*t^4 - v^4=0.
\end{cases}
\end{equation}
The scheme defined by system \eqref{8931-3} is locally insoluble at $2$.

%\begin{lstlisting}
%_<x>:=PolynomialRing(Rationals());
%k<i>:=NumberField(x^2+1);
%K<s,t>:=PolynomialRing(k,2);
%e:=2;
%F:=i^e*(3+2*i)*(15+2*i)*(s+t*i)^4;
%F;
%L<s,t>:=PolynomialRing(Rationals(),2);
%_<i>:=PolynomialRing(L);
%F:=(-36*i - 41)*s^4 + (-164*i + 144)*s^3*t + (216*i + 246)*s^2*t^2 + (164*i -
%    144)*s*t^3 + (-36*i - 41)*t^4;
%F;
%A:= - 41*s^4 + 144*s^3*t+ 246*s^2*t^2 - 144*s*t^3 - 41*t^4;
%B:=-36*s^4 - 164*s^3*t + 216*s^2*t^2 + 164*s*t^3 - 36*t^4;
%F-A-i*B;
\begin{lstlisting}
P<s,t,u,v>:=ProjectiveSpace(Rationals(),3);
A:= - 41*s^4 + 144*s^3*t+ 246*s^2*t^2 - 144*s*t^3 - 41*t^4-6*u^4;
B:=-36*s^4 - 164*s^3*t + 216*s^2*t^2 + 164*s*t^3 - 36*t^4-v^4;
S:=Scheme(P,[A,B]);
IsLocallySolvable(S,2);
\end{lstlisting}

{\bf Case 4:} $6u^4+v^4i=i^3(3+2i)(15+2i)(s+ti)^4$.  Equating the real and imaginary parts on both sides of this equation gives
\begin{equation}\label{8931-4}
\begin{cases}
-41*s^4 + 144*s^3*t + 246*s^2*t^2 - 144*s*t^3 - 41*t^4 - 6*u^4=0,\\
-36*s^4 - 164*s^3*t + 216*s^2*t^2 + 164*s*t^3 - 36*t^4 - v^4=0.
\end{cases}
\end{equation}
The scheme defined by system \eqref{8931-4} is locally insoluble at $2$.
%
%\begin{lstlisting}
%_<x>:=PolynomialRing(Rationals());
%k<i>:=NumberField(x^2+1);
%K<s,t>:=PolynomialRing(k,2);
%e:=3;
%F:=i^e*(3+2*i)*(15+2*i)*(s+t*i)^4;
%F;
%L<s,t>:=PolynomialRing(Rationals(),2);
%_<i>:=PolynomialRing(L);
%F:=(-36*i - 41)*s^4 + (-164*i + 144)*s^3*t + (216*i + 246)*s^2*t^2 + (164*i -
%    144)*s*t^3 + (-36*i - 41)*t^4;
%F;
%A:= - 41*s^4 + 144*s^3*t+ 246*s^2*t^2 - 144*s*t^3 - 41*t^4;
%B:=-36*s^4 - 164*s^3*t + 216*s^2*t^2 + 164*s*t^3 - 36*t^4;
%F-A-i*B;
\begin{lstlisting}
P<s,t,u,v>:=ProjectiveSpace(Rationals(),3);
A:= - 41*s^4 + 144*s^3*t+ 246*s^2*t^2 - 144*s*t^3 - 41*t^4-6*u^4;
B:=-36*s^4 - 164*s^3*t + 216*s^2*t^2 + 164*s*t^3 - 36*t^4-v^4;
S:=Scheme(P,[A,B]);
IsLocallySolvable(S,2);
\end{lstlisting}

{\bf Case 5:} $6u^4+v^4i=(3+2i)(15+2i)(s+ti)^4$. Equating the real and imaginary parts on both sides of this equation gives
\begin{equation}\label{8931-5}
\begin{cases}
49*s^4 - 96*s^3*t - 294*s^2*t^2 + 96*s*t^3 + 49*t^4 - 6*u^4=0,\\
24*s^4 + 196*s^3*t - 144*s^2*t^2 - 196*s*t^3 + 24*t^4 - v^4=0.
\end{cases}
\end{equation}
The scheme defined by system \eqref{8931-5} is locally insoluble at $2$.

%\begin{lstlisting}
%_<x>:=PolynomialRing(Rationals());
%k<i>:=NumberField(x^2+1);
%K<s,t>:=PolynomialRing(k,2);
%e:=0;
%F:=i^e*(3+2*i)*(15-2*i)*(s+t*i)^4;
%F;
%L<s,t>:=PolynomialRing(Rationals(),2);
%_<i>:=PolynomialRing(L);
%F:=(24*i + 49)*s^4 + (196*i - 96)*s^3*t + (-144*i - 294)*s^2*t^2 + (-196*i +
%    96)*s*t^3 + (24*i + 49)*t^4;
%F;
%A:= 49*s^4 - 96*s^3*t -294*s^2*t^2 + 96*s*t^3 + 49*t^4;
%B:=24*s^4 + 196*s^3*t - 144*s^2*t^2 - 196*s*t^3 + 24*t^4;
%F-A-i*B;
\begin{lstlisting}
P<s,t,u,v>:=ProjectiveSpace(Rationals(),3);
A:= 49*s^4 - 96*s^3*t -294*s^2*t^2 + 96*s*t^3 + 49*t^4-6*u^4;
B:=24*s^4 + 196*s^3*t - 144*s^2*t^2 - 196*s*t^3 + 24*t^4-v^4;
S:=Scheme(P,[A,B]);
IsLocallySolvable(S,2);
\end{lstlisting}

{\bf Case 6:} $6u^4+v^4i=i(3+2i)(15+2i)(s+ti)^4$. Equating the real and imaginary parts on both sides of this equation gives
\begin{equation}\label{8931-6}
\begin{cases}
-24*s^4 - 196*s^3*t + 144*s^2*t^2 + 196*s*t^3 - 24*t^4 - 6*u^4=0,\\
49*s^4 - 96*s^3*t - 294*s^2*t^2 + 96*s*t^3 + 49*t^4 - v^4=0.
\end{cases}
\end{equation}
The scheme defined by system \eqref{8931-6} is locally insoluble at $2$.

%\begin{lstlisting}
%_<x>:=PolynomialRing(Rationals());
%k<i>:=NumberField(x^2+1);
%K<s,t>:=PolynomialRing(k,2);
%e:=1;
%F:=i^e*(3+2*i)*(15-2*i)*(s+t*i)^4;
%F;
%L<s,t>:=PolynomialRing(Rationals(),2);
%_<i>:=PolynomialRing(L);
%F:=(49*i - 24)*s^4 + (-96*i - 196)*s^3*t + (-294*i + 144)*s^2*t^2 + (96*i +
%    196)*s*t^3 + (49*i - 24)*t^4;
%F;
%A:=- 24*s^4 - 196*s^3*t +144*s^2*t^2 + 196*s*t^3 - 24*t^4;
%B:=49*s^4 - 96*s^3*t - 294*s^2*t^2 + 96*s*t^3 + 49*t^4;
%F-A-i*B;
\begin{lstlisting}
P<s,t,u,v>:=ProjectiveSpace(Rationals(),3);
A:=- 24*s^4 - 196*s^3*t +144*s^2*t^2 + 196*s*t^3 - 24*t^4-6*u^4;
B:=49*s^4 - 96*s^3*t - 294*s^2*t^2 + 96*s*t^3 + 49*t^4-v^4;
S:=Scheme(P,[A,B]);
IsLocallySolvable(S,2);
\end{lstlisting}

{\bf Case 7:} $6u^4+v^4i=i^2(3+2i)(15+2i)(s+ti)^4$.  Equating the real and imaginary parts on both sides of this equation gives
\begin{equation}\label{8931-7}
\begin{cases}
-49*s^4 + 96*s^3*t + 294*s^2*t^2 - 96*s*t^3 - 49*t^4 - 6*u^4=0,\\
-24*s^4 - 196*s^3*t + 144*s^2*t^2 + 196*s*t^3 - 24*t^4 - v^4=0.
\end{cases}
\end{equation}
The scheme defined by system \eqref{8931-7} is locally insoluble at $2$.

%\begin{lstlisting}
%_<x>:=PolynomialRing(Rationals());
%k<i>:=NumberField(x^2+1);
%K<s,t>:=PolynomialRing(k,2);
%e:=2;
%F:=i^e*(3+2*i)*(15-2*i)*(s+t*i)^4;
%F;
%L<s,t>:=PolynomialRing(Rationals(),2);
%_<i>:=PolynomialRing(L);
%F:=(-24*i - 49)*s^4 + (-196*i + 96)*s^3*t + (144*i + 294)*s^2*t^2 + (196*i -
%    96)*s*t^3 + (-24*i - 49)*t^4;
%F;
%A:=- 49*s^4 + 96*s^3*t +294*s^2*t^2 - 96*s*t^3 - 49*t^4;
%B:=-24*s^4 - 196*s^3*t + 144*s^2*t^2 + 196*s*t^3 - 24*t^4;
%F-A-i*B;
\begin{lstlisting}
P<s,t,u,v>:=ProjectiveSpace(Rationals(),3);
A:=- 49*s^4 + 96*s^3*t +294*s^2*t^2 - 96*s*t^3 - 49*t^4-6*u^4;
B:=-24*s^4 - 196*s^3*t + 144*s^2*t^2 + 196*s*t^3 - 24*t^4-v^4;
S:=Scheme(P,[A,B]);
IsLocallySolvable(S,2);
\end{lstlisting}

{\bf Case 8:} $6u^4+v^4i=i^3(3+2i)(15+2i)(s+ti)^4$.  Equating the real and imaginary parts on both sides of this equation gives
\begin{equation}\label{8931-8}
\begin{cases}
24*s^4 + 196*s^3*t - 144*s^2*t^2 - 196*s*t^3 + 24*t^4 - 6*u^4=0,\\
-49*s^4 + 96*s^3*t + 294*s^2*t^2 - 96*s*t^3 - 49*t^4 - v^4=0.
\end{cases}
\end{equation}
The scheme defined by system \eqref{8931-8} is locally insoluble at $2$.

%\begin{lstlisting}
%_<x>:=PolynomialRing(Rationals());
%k<i>:=NumberField(x^2+1);
%K<s,t>:=PolynomialRing(k,2);
%e:=3;
%F:=i^e*(3+2*i)*(15-2*i)*(s+t*i)^4;
%F;
%L<s,t>:=PolynomialRing(Rationals(),2);
%_<i>:=PolynomialRing(L);
%F:=(-49*i + 24)*s^4 + (96*i + 196)*s^3*t + (294*i - 144)*s^2*t^2 + (-96*i -
%    196)*s*t^3 + (-49*i + 24)*t^4;
%F;
%A:= 24*s^4 + 196*s^3*t -144*s^2*t^2 - 196*s*t^3 + 24*t^4;
%B:=-49*s^4 + 96*s^3*t + 294*s^2*t^2 - 96*s*t^3 - 49*t^4;
%F-A-i*B;
\begin{lstlisting}
P<s,t,u,v>:=ProjectiveSpace(Rationals(),3);
A:= 24*s^4 + 196*s^3*t -144*s^2*t^2 - 196*s*t^3 + 24*t^4-6*u^4;
B:=-49*s^4 + 96*s^3*t + 294*s^2*t^2 - 96*s*t^3 - 49*t^4-v^4;
S:=Scheme(P,[A,B]);
IsLocallySolvable(S,2);
\end{lstlisting}

{\bf Case 9:} $6u^4+v^4i=(3-2i)(15+2i)(s+ti)^4$. Equating the real and imaginary parts on both sides of this equation gives
\begin{equation}\label{8931-9}
\begin{cases}
49*s^4 + 96*s^3*t - 294*s^2*t^2 - 96*s*t^3 + 49*t^4 - 6*u^4=0,\\
-24*s^4 + 196*s^3*t + 144*s^2*t^2 - 196*s*t^3 - 24*t^4 - v^4=0.
\end{cases}
\end{equation}
The scheme defined by system \eqref{8931-9} is locally insoluble at $2$.

%\begin{lstlisting}
%_<x>:=PolynomialRing(Rationals());
%k<i>:=NumberField(x^2+1);
%K<s,t>:=PolynomialRing(k,2);
%e:=0;
%F:=i^e*(3-2*i)*(15+2*i)*(s+t*i)^4;
%F;
%L<s,t>:=PolynomialRing(Rationals(),2);
%_<i>:=PolynomialRing(L);
%F:=(-24*i + 49)*s^4 + (196*i + 96)*s^3*t + (144*i - 294)*s^2*t^2 + (-196*i -
%    96)*s*t^3 + (-24*i + 49)*t^4;
%F;
%A:=49*s^4 + 96*s^3*t -294*s^2*t^2 - 96*s*t^3 + 49*t^4;
%B:=-24*s^4 + 196*s^3*t + 144*s^2*t^2 - 196*s*t^3 - 24*t^4;
%F-A-i*B;
\begin{lstlisting}
P<s,t,u,v>:=ProjectiveSpace(Rationals(),3);
A:=49*s^4 + 96*s^3*t -294*s^2*t^2 - 96*s*t^3 + 49*t^4-6*u^4;
B:=-24*s^4 + 196*s^3*t + 144*s^2*t^2 - 196*s*t^3 - 24*t^4-v^4;
S:=Scheme(P,[A,B]);
IsLocallySolvable(S,2);
\end{lstlisting}

{\bf Case 10:} $6u^4+v^4i=i(3-2i)(15+2i)(s+ti)^4$. Equating the real and imaginary parts on both sides of this equation gives
\begin{equation}\label{8931-10}
\begin{cases}
24*s^4 - 196*s^3*t - 144*s^2*t^2 + 196*s*t^3 + 24*t^4 - 6*u^4=0,\\
49*s^4 + 96*s^3*t - 294*s^2*t^2 - 96*s*t^3 + 49*t^4 - v^4=0.
\end{cases}
\end{equation}
The scheme defined by system \eqref{8931-10} is locally insoluble at $2$.

%\begin{lstlisting}
%_<x>:=PolynomialRing(Rationals());
%k<i>:=NumberField(x^2+1);
%K<s,t>:=PolynomialRing(k,2);
%e:=1;
%F:=i^e*(3-2*i)*(15+2*i)*(s+t*i)^4;
%F;
%L<s,t>:=PolynomialRing(Rationals(),2);
%_<i>:=PolynomialRing(L);
%F:=(49*i + 24)*s^4 + (96*i - 196)*s^3*t + (-294*i - 144)*s^2*t^2 + (-96*i +
%    196)*s*t^3 + (49*i + 24)*t^4;
%F;
%A:=24*s^4 - 196*s^3*t -144*s^2*t^2 + 196*s*t^3 + 24*t^4;
%B:=49*s^4 + 96*s^3*t - 294*s^2*t^2 - 96*s*t^3 + 49*t^4;
%F-A-i*B;
\begin{lstlisting}
P<s,t,u,v>:=ProjectiveSpace(Rationals(),3);
A:=24*s^4 - 196*s^3*t -144*s^2*t^2 + 196*s*t^3 + 24*t^4-6*u^4;
B:=49*s^4 + 96*s^3*t - 294*s^2*t^2 - 96*s*t^3 + 49*t^4-v^4;
S:=Scheme(P,[A,B]);
IsLocallySolvable(S,2);
\end{lstlisting}
{\bf Case 11:} $6u^4+v^4i=i^2(3-2i)(15+2i)(s+ti)^4$.  Equating the real and imaginary parts on both sides of this equation gives

\begin{equation}\label{8931-11}
\begin{cases}
-49*s^4 - 96*s^3*t + 294*s^2*t^2 + 96*s*t^3 - 49*t^4 - 6*u^4=0,\\
24*s^4 - 196*s^3*t - 144*s^2*t^2 + 196*s*t^3 + 24*t^4 - v^4=0.
\end{cases}
\end{equation}
The scheme defined by system \eqref{8931-11} is locally insoluble at $2$.

%\begin{lstlisting}
%_<x>:=PolynomialRing(Rationals());
%k<i>:=NumberField(x^2+1);
%K<s,t>:=PolynomialRing(k,2);
%e:=2;
%F:=i^e*(3-2*i)*(15+2*i)*(s+t*i)^4;
%F;
%L<s,t>:=PolynomialRing(Rationals(),2);
%_<i>:=PolynomialRing(L);
%F:=(24*i - 49)*s^4 + (-196*i - 96)*s^3*t + (-144*i + 294)*s^2*t^2 + (196*i +
%    96)*s*t^3 + (24*i - 49)*t^4;
%F;
%A:=- 49*s^4 - 96*s^3*t + 294*s^2*t^2 + 96*s*t^3 - 49*t^4;
%B:=24*s^4 - 196*s^3*t - 144*s^2*t^2 + 196*s*t^3 + 24*t^4;
%F-A-i*B;
\begin{lstlisting}
P<s,t,u,v>:=ProjectiveSpace(Rationals(),3);
A:=- 49*s^4 - 96*s^3*t + 294*s^2*t^2 + 96*s*t^3 - 49*t^4-6*u^4;
B:=24*s^4 - 196*s^3*t - 144*s^2*t^2 + 196*s*t^3 + 24*t^4-v^4;
S:=Scheme(P,[A,B]);
IsLocallySolvable(S,2);
\end{lstlisting}

{\bf Case 12:} $6u^4+v^4i=i^3(3-2i)(15+2i)(s+ti)^4$.  Equating the real and imaginary parts on both sides of this equation gives

\begin{equation}\label{8931-12}
\begin{cases}
-24*s^4 + 196*s^3*t + 144*s^2*t^2 - 196*s*t^3 - 24*t^4 - 6*u^4=0,\\
-49*s^4 - 96*s^3*t + 294*s^2*t^2 + 96*s*t^3 - 49*t^4 - v^4=0.
\end{cases}
\end{equation}
The scheme defined by system \eqref{8931-12} is locally insoluble at $2$.

%\begin{lstlisting}
%_<x>:=PolynomialRing(Rationals());
%k<i>:=NumberField(x^2+1);
%K<s,t>:=PolynomialRing(k,2);
%e:=3;
%F:=i^e*(3-2*i)*(15+2*i)*(s+t*i)^4;
%F;
%L<s,t>:=PolynomialRing(Rationals(),2);
%_<i>:=PolynomialRing(L);
%F:=(-49*i - 24)*s^4 + (-96*i + 196)*s^3*t + (294*i + 144)*s^2*t^2 + (96*i -
%    196)*s*t^3 + (-49*i - 24)*t^4;
%F;
%A:=- 24*s^4 + 196*s^3*t + 144*s^2*t^2 - 196*s*t^3 - 24*t^4;
%B:=-49*s^4 - 96*s^3*t + 294*s^2*t^2 + 96*s*t^3 - 49*t^4;
%F-A-i*B;
\begin{lstlisting}
P<s,t,u,v>:=ProjectiveSpace(Rationals(),3);
A:=- 24*s^4 + 196*s^3*t + 144*s^2*t^2 - 196*s*t^3 - 24*t^4-6*u^4;
B:=-49*s^4 - 96*s^3*t + 294*s^2*t^2 + 96*s*t^3 - 49*t^4-v^4;
S:=Scheme(P,[A,B]);
IsLocallySolvable(S,2);
\end{lstlisting}
{\bf Case 13:} $6u^4+v^4i=(3-2i)(15-2i)(s+ti)^4$. Equating the real and imaginary parts on both sides of this equation gives

\begin{equation}\label{8931-13}
\begin{cases}
41*s^4 + 144*s^3*t - 246*s^2*t^2 - 144*s*t^3 + 41*t^4 - 6*u^4=0,\\
-36*s^4 + 164*s^3*t + 216*s^2*t^2 - 164*s*t^3 - 36*t^4 - v^4=0.
\end{cases}
\end{equation}
The scheme defined by system \eqref{8931-13} is locally insoluble at $2$.

%\begin{lstlisting}
%_<x>:=PolynomialRing(Rationals());
%k<i>:=NumberField(x^2+1);
%K<s,t>:=PolynomialRing(k,2);
%e:=0;
%F:=i^e*(3-2*i)*(15-2*i)*(s+t*i)^4;
%F;
%L<s,t>:=PolynomialRing(Rationals(),2);
%_<i>:=PolynomialRing(L);
%F:=(-36*i + 41)*s^4 + (164*i + 144)*s^3*t + (216*i - 246)*s^2*t^2 + (-164*i -
%    144)*s*t^3 + (-36*i + 41)*t^4;
%F;
%A:= 41*s^4 + 144*s^3*t- 246*s^2*t^2 - 144*s*t^3 + 41*t^4;
%B:=-36*s^4 + 164*s^3*t + 216*s^2*t^2 - 164*s*t^3 - 36*t^4;
%F-A-i*B;
\begin{lstlisting}
P<s,t,u,v>:=ProjectiveSpace(Rationals(),3);
A:= 41*s^4 + 144*s^3*t- 246*s^2*t^2 - 144*s*t^3 + 41*t^4-6*u^4;
B:=-36*s^4 + 164*s^3*t + 216*s^2*t^2 - 164*s*t^3 - 36*t^4-v^4;
S:=Scheme(P,[A,B]);
IsLocallySolvable(S,2);
\end{lstlisting}

{\bf Case 14:} $6u^4+v^4i=i(3-2i)(15-2i)(s+ti)^4$. Equating the real and imaginary parts on both sides of this equation gives

\begin{equation}\label{8931-14}
\begin{cases}
36*s^4 - 164*s^3*t - 216*s^2*t^2 + 164*s*t^3 + 36*t^4 - 6*u^4=0,\\
41*s^4 + 144*s^3*t - 246*s^2*t^2 - 144*s*t^3 + 41*t^4 - v^4=0.
\end{cases}
\end{equation}
The scheme defined by system \eqref{8931-14} is locally insoluble at $2$.

%\begin{lstlisting}
%_<x>:=PolynomialRing(Rationals());
%k<i>:=NumberField(x^2+1);
%K<s,t>:=PolynomialRing(k,2);
%e:=1;
%F:=i^e*(3-2*i)*(15-2*i)*(s+t*i)^4;
%F;
%L<s,t>:=PolynomialRing(Rationals(),2);
%_<i>:=PolynomialRing(L);
%F:=(41*i + 36)*s^4 + (144*i - 164)*s^3*t + (-246*i - 216)*s^2*t^2 + (-144*i +
%    164)*s*t^3 + (41*i + 36)*t^4;
%F;
%A:=36*s^4 - 164*s^3*t -216*s^2*t^2 + 164*s*t^3 + 36*t^4;
%B:=41*s^4 + 144*s^3*t - 246*s^2*t^2 - 144*s*t^3 + 41*t^4;
%F-A-i*B;
\begin{lstlisting}
P<s,t,u,v>:=ProjectiveSpace(Rationals(),3);
A:=36*s^4 - 164*s^3*t -216*s^2*t^2 + 164*s*t^3 + 36*t^4-6*u^4;
B:=41*s^4 + 144*s^3*t - 246*s^2*t^2 - 144*s*t^3 + 41*t^4-v^4;
S:=Scheme(P,[A,B]);
IsLocallySolvable(S,2);
\end{lstlisting}
{\bf Case 15:} $6u^4+v^4i=i^2(3-2i)(15-2i)(s+ti)^4$.  Equating the real and imaginary parts on both sides of this equation gives

\begin{equation}\label{8931-15}
\begin{cases}
-41*s^4 - 144*s^3*t + 246*s^2*t^2 + 144*s*t^3 - 41*t^4 - 6*u^4=0,\\
36*s^4 - 164*s^3*t - 216*s^2*t^2 + 164*s*t^3 + 36*t^4 - v^4=0.
\end{cases}
\end{equation}
The scheme defined by system \eqref{8931-15} is locally insoluble at $2$.

%\begin{lstlisting}
%_<x>:=PolynomialRing(Rationals());
%k<i>:=NumberField(x^2+1);
%K<s,t>:=PolynomialRing(k,2);
%e:=2;
%F:=i^e*(3-2*i)*(15-2*i)*(s+t*i)^4;
%F;
%L<s,t>:=PolynomialRing(Rationals(),2);
%_<i>:=PolynomialRing(L);
%F:=(36*i - 41)*s^4 + (-164*i - 144)*s^3*t + (-216*i + 246)*s^2*t^2 + (164*i +
%    144)*s*t^3 + (36*i - 41)*t^4;
%F;
%A:=- 41*s^4 - 144*s^3*t + 246*s^2*t^2 + 144*s*t^3 - 41*t^4;
%B:=36*s^4 - 164*s^3*t - 216*s^2*t^2 + 164*s*t^3 + 36*t^4;
%F-A-i*B;
\begin{lstlisting}
P<s,t,u,v>:=ProjectiveSpace(Rationals(),3);
A:=- 41*s^4 - 144*s^3*t + 246*s^2*t^2 + 144*s*t^3 - 41*t^4-6*u^4;
B:=36*s^4 - 164*s^3*t - 216*s^2*t^2 + 164*s*t^3 + 36*t^4-v^4;
S:=Scheme(P,[A,B]);
IsLocallySolvable(S,2);
\end{lstlisting}

{\bf Case 16:} $6u^4+v^4i=i^3(3-2i)(15-2i)(s+ti)^4$.  Equating the real and imaginary parts on both sides of this equation gives

\begin{equation}\label{8931-16}
\begin{cases}
-36*s^4 + 164*s^3*t + 216*s^2*t^2 - 164*s*t^3 - 36*t^4 - 6*u^4=0,\\
-41*s^4 - 144*s^3*t + 246*s^2*t^2 + 144*s*t^3 - 41*t^4 - v^4=0.
\end{cases}
\end{equation}
The scheme defined by system \eqref{8931-16} is locally insoluble at $2$.

%\begin{lstlisting}
%_<x>:=PolynomialRing(Rationals());
%k<i>:=NumberField(x^2+1);
%K<s,t>:=PolynomialRing(k,2);
%e:=3;
%F:=i^e*(3-2*i)*(15-2*i)*(s+t*i)^4;
%F;
%L<s,t>:=PolynomialRing(Rationals(),2);
%_<i>:=PolynomialRing(L);
%F:=(-41*i - 36)*s^4 + (-144*i + 164)*s^3*t + (246*i + 216)*s^2*t^2 + (144*i -
%    164)*s*t^3 + (-41*i - 36)*t^4;
%F;
%A:=- 36*s^4 + 164*s^3*t + 216*s^2*t^2 - 164*s*t^3 - 36*t^4;
%B:=-41*s^4 - 144*s^3*t + 246*s^2*t^2 + 144*s*t^3 - 41*t^4;
%F-A-i*B;
\begin{lstlisting}
P<s,t,u,v>:=ProjectiveSpace(Rationals(),3);
A:=- 36*s^4 + 164*s^3*t + 216*s^2*t^2 - 164*s*t^3 - 36*t^4-6*u^4;
B:=-41*s^4 - 144*s^3*t + 246*s^2*t^2 + 144*s*t^3 - 41*t^4-v^4;
S:=Scheme(P,[A,B]);
IsLocallySolvable(S,2);
\end{lstlisting}

\subsection{The case of $n=9015$.}%\[n=9015\]

 According to Table \ref{tb:factorisation}, in the case $n=9015$ it remains to
show that equation
\begin{equation}\label{9015}
36u^8+25v^8=601 w^4
\end{equation}
has no integer solutions satisfying \eqref{eq:cond}, which in this case reduces to \[\gcd(6u,5v)=\gcd(6u,601w)=\gcd(5v,601w)=1.\]
Write \eqref{9015} as 
\begin{equation} (6u^4+5v^4i)(6u^4-5v^4i)=(24+ 5i)(24- 5i)w^4.
\end{equation}
This implies that there exist integers $s,t$ such that 
\[6u^4+5v^4i=i^{\epsilon}(24\pm 5i)(s+ti)^4,\]
with $\epsilon \in \{0,1,2,3\}$.

{\bf Case 1:} $6u^4+5v^4i=(24+5i)(s+ti)^4$. Equating the real and imaginary parts on both sides of this equation gives
\begin{equation}\label{9015-1}
\begin{cases}
24s^4 - 20s^3t - 144s^2t^2 + 20st^3 + 24t^4 - 6u^4=0,\\
5s^4 + 96s^3t - 30s^2t^2 - 96st^3 + 5t^4 - 5v^4=0.
\end{cases}
\end{equation}
The scheme defined by system \eqref{9015-1} is locally insoluble at $2$.

%\begin{lstlisting}
%_<x>:=PolynomialRing(Rationals());
%k<i>:=NumberField(x^2+1);
%K<s,t>:=PolynomialRing(k,2);
%e:=0;
%F:=i^e*(24+5*i)*(s+t*i)^4;
%F;
%L<s,t>:=PolynomialRing(Rationals(),2);
%_<i>:=PolynomialRing(L);
%F:=(5*i + 24)*s^4 + (96*i - 20)*s^3*t + (-30*i - 144)*s^2*t^2 + (-96*i + 20)*s*t^3
%    + (5*i + 24)*t^4;
%F;
%A:=24*s^4 - 20*s^3*t -144*s^2*t^2 + 20*s*t^3 + 24*t^4;
%B:=5*s^4 + 96*s^3*t - 30*s^2*t^2 - 96*s*t^3 + 5*t^4;
%F-A-i*B;
\begin{lstlisting}
P<s,t,u,v>:=ProjectiveSpace(Rationals(),3);
A:=24*s^4 - 20*s^3*t -144*s^2*t^2 + 20*s*t^3 + 24*t^4-6*u^4;
B:=5*s^4 + 96*s^3*t - 30*s^2*t^2 - 96*s*t^3 + 5*t^4-5*v^4;
S:=Scheme(P,[A,B]);
IsLocallySolvable(S,2);
\end{lstlisting}

{\bf Case 2:} $6u^4+5v^4i=i(24+5i)(s+ti)^4$. Equating the real and imaginary parts on both sides of this equation gives
\begin{equation}\label{9015-2}
\begin{cases}
-5s^4 - 96s^3t + 30s^2t^2 + 96st^3 - 5t^4 - 6u^4=0,\\
24s^4 - 20s^3t - 144s^2t^2 + 20st^3 + 24t^4 - 5v^4=0.
\end{cases}
\end{equation}
The scheme defined by system \eqref{9015-2} is locally insoluble at $2$.

%\begin{lstlisting}
%_<x>:=PolynomialRing(Rationals());
%k<i>:=NumberField(x^2+1);
%K<s,t>:=PolynomialRing(k,2);
%e:=1;
%F:=i^e*(24+5*i)*(s+t*i)^4;
%F;
%L<s,t>:=PolynomialRing(Rationals(),2);
%_<i>:=PolynomialRing(L);
%F:=(24*i - 5)*s^4 + (-20*i - 96)*s^3*t + (-144*i + 30)*s^2*t^2 + (20*i + 96)*s*t^3
%    + (24*i - 5)*t^4;
%F;
%A:=- 5*s^4 - 96*s^3*t + 30*s^2*t^2 + 96*s*t^3 - 5*t^4;
%B:=24*s^4 - 20*s^3*t - 144*s^2*t^2 + 20*s*t^3 + 24*t^4;
%F-A-i*B;

\begin{lstlisting}
P<s,t,u,v>:=ProjectiveSpace(Rationals(),3);
A:=- 5*s^4 - 96*s^3*t + 30*s^2*t^2 + 96*s*t^3 - 5*t^4-6*u^4;
B:=24*s^4 - 20*s^3*t - 144*s^2*t^2 + 20*s*t^3 + 24*t^4-5*v^4;
S:=Scheme(P,[A,B]);
IsLocallySolvable(S,2);
\end{lstlisting}

{\bf Case 3:} $6u^4+5v^4i=i^2(24+5i)(s+ti)^4$. Equating the real and imaginary parts on both sides of this equation gives
\begin{equation}\label{9015-3}
\begin{cases}
-24s^4 + 20s^3t + 144s^2t^2 - 20st^3 - 24t^4 - 6u^4=0,\\
-5s^4 - 96s^3t + 30s^2t^2 + 96st^3 - 5t^4 - 5v^4=0.
\end{cases}
\end{equation}
The scheme defined by system \eqref{9015-3} is locally insoluble at $2$.

%\begin{lstlisting}
%_<x>:=PolynomialRing(Rationals());
%k<i>:=NumberField(x^2+1);
%K<s,t>:=PolynomialRing(k,2);
%e:=2;
%F:=i^e*(24+5*i)*(s+t*i)^4;
%F;
%L<s,t>:=PolynomialRing(Rationals(),2);
%_<i>:=PolynomialRing(L);
%F:=(-5*i - 24)*s^4 + (-96*i + 20)*s^3*t + (30*i + 144)*s^2*t^2 + (96*i - 20)*s*t^3
%    + (-5*i - 24)*t^4;
%F;
%A:=- 24*s^4 + 20*s^3*t +144*s^2*t^2 - 20*s*t^3 - 24*t^4;
%B:=-5*s^4 - 96*s^3*t + 30*s^2*t^2 + 96*s*t^3 - 5*t^4;
%F-A-i*B;
\begin{lstlisting}
P<s,t,u,v>:=ProjectiveSpace(Rationals(),3);
A:=- 24*s^4 + 20*s^3*t +144*s^2*t^2 - 20*s*t^3 - 24*t^4-6*u^4;
B:=-5*s^4 - 96*s^3*t + 30*s^2*t^2 + 96*s*t^3 - 5*t^4-5*v^4;
S:=Scheme(P,[A,B]);
IsLocallySolvable(S,2);
\end{lstlisting}

{\bf Case 4:} $6u^4+5v^4i=i^3(24+5i)(s+ti)^4$. Equating the real and imaginary parts on both sides of this equation gives
\begin{equation}\label{9015-4}
\begin{cases}
5s^4 + 96s^3t - 30s^2t^2 - 96st^3 + 5t^4 - 6u^4=0,\\
-24s^4 + 20s^3t + 144s^2t^2 - 20st^3 - 24t^4 - 5v^4=0.
\end{cases}
\end{equation}
The scheme defined by system \eqref{9015-4} is locally insoluble at $2$.

%\begin{lstlisting}
%_<x>:=PolynomialRing(Rationals());
%k<i>:=NumberField(x^2+1);
%K<s,t>:=PolynomialRing(k,2);
%e:=3;
%F:=i^e*(24+5*i)*(s+t*i)^4;
%F;
%L<s,t>:=PolynomialRing(Rationals(),2);
%_<i>:=PolynomialRing(L);
%F:=(-24*i + 5)*s^4 + (20*i + 96)*s^3*t + (144*i - 30)*s^2*t^2 + (-20*i - 96)*s*t^3
%    + (-24*i + 5)*t^4;
%F;
%A:= 5*s^4 + 96*s^3*t -30*s^2*t^2 - 96*s*t^3 + 5*t^4;
%B:=-24*s^4 + 20*s^3*t + 144*s^2*t^2 - 20*s*t^3 - 24*t^4;
%F-A-i*B;
\begin{lstlisting}
P<s,t,u,v>:=ProjectiveSpace(Rationals(),3);
A:= 5*s^4 + 96*s^3*t -30*s^2*t^2 - 96*s*t^3 + 5*t^4-6*u^4;
B:=-24*s^4 + 20*s^3*t + 144*s^2*t^2 - 20*s*t^3 - 24*t^4-5*v^4;
S:=Scheme(P,[A,B]);
IsLocallySolvable(S,2);
\end{lstlisting}

{\bf Case 5:} $6u^4+5v^4i=(24-5i)(s+ti)^4$. Equating the real and imaginary parts on both sides of this equation gives
\begin{equation}\label{9015-5}
\begin{cases}
24s^4 + 20s^3t - 144s^2t^2 - 20st^3 + 24t^4 - 6u^4=0,\\
-5s^4 + 96s^3t + 30s^2t^2 - 96st^3 - 5t^4 - 5v^4=0.
\end{cases}
\end{equation}
The scheme defined by system \eqref{9015-5} is locally insoluble at $2$.

%\begin{lstlisting}
%_<x>:=PolynomialRing(Rationals());
%k<i>:=NumberField(x^2+1);
%K<s,t>:=PolynomialRing(k,2);
%e:=0;
%F:=i^e*(24+5*i)*(s+t*i)^4;
%F;
%L<s,t>:=PolynomialRing(Rationals(),2);
%_<i>:=PolynomialRing(L);
%F:=(5*i + 24)*s^4 + (96*i - 20)*s^3*t + (-30*i - 144)*s^2*t^2 + (-96*i + 20)*s*t^3
%    + (5*i + 24)*t^4;
%F;
%A:=24*s^4 - 20*s^3*t -144*s^2*t^2 + 20*s*t^3 + 24*t^4;
%B:=5*s^4 + 96*s^3*t - 30*s^2*t^2 - 96*s*t^3 + 5*t^4;
%F-A-i*B;
\begin{lstlisting}
P<s,t,u,v>:=ProjectiveSpace(Rationals(),3);
A:=24*s^4 + 20*s^3*t - 144*s^2*t^2 - 20*s*t^3 + 24*t^4-6*u^4;
B:=-5*s^4 + 96*s^3*t + 30*s^2*t^2 - 96*s*t^3 - 5*t^4-5*v^4;
S:=Scheme(P,[A,B]);
IsLocallySolvable(S,2);
\end{lstlisting}

{\bf Case 6:} $6u^4+5v^4i=i(24-5i)(s+ti)^4$. Equating the real and imaginary parts on both sides of this equation gives
\begin{equation}\label{9015-6}
\begin{cases}
5s^4 - 96s^3t - 30s^2t^2 + 96st^3 + 5t^4 - 6u^4=0,\\
24s^4 + 20s^3t - 144s^2t^2 - 20st^3 + 24t^4 - 5v^4=0
\end{cases}
\end{equation}
The scheme defined by system \eqref{9015-6} is locally insoluble at $2$.

%\begin{lstlisting}
%_<x>:=PolynomialRing(Rationals());
%k<i>:=NumberField(x^2+1);
%K<s,t>:=PolynomialRing(k,2);
%e:=1;
%F:=i^e*(24-5*i)*(s+t*i)^4;
%F;
%L<s,t>:=PolynomialRing(Rationals(),2);
%_<i>:=PolynomialRing(L);
%F:=(24*i + 5)*s^4 + (20*i - 96)*s^3*t + (-144*i - 30)*s^2*t^2 + (-20*i + 96)*s*t^3
%    + (24*i + 5)*t^4;
%F;
%A:=5*s^4 - 96*s^3*t -30*s^2*t^2 + 96*s*t^3 + 5*t^4;
%B:=24*s^4 + 20*s^3*t - 144*s^2*t^2 - 20*s*t^3 + 24*t^4;
%F-A-i*B;
\begin{lstlisting}
P<s,t,u,v>:=ProjectiveSpace(Rationals(),3);
A:=5*s^4 - 96*s^3*t -30*s^2*t^2 + 96*s*t^3 + 5*t^4-6*u^4;
B:=24*s^4 + 20*s^3*t - 144*s^2*t^2 - 20*s*t^3 + 24*t^4-5*v^4;
S:=Scheme(P,[A,B]);
IsLocallySolvable(S,2);
\end{lstlisting}

{\bf Case 7:} $6u^4+5v^4i=i^2(24-5i)(s+ti)^4$. Equating the real and imaginary parts on both sides of this equation gives
\begin{equation}\label{9015-7}
\begin{cases}
-24s^4 - 20s^3t + 144s^2t^2 + 20st^3 - 24t^4 - 6u^4=0,\\
5s^4 - 96s^3t - 30s^2t^2 + 96st^3 + 5t^4 - 5v^4=0.
\end{cases}
\end{equation}
The scheme defined by system \eqref{9015-7} is locally insoluble at $2$.

%\begin{lstlisting}
%_<x>:=PolynomialRing(Rationals());
%k<i>:=NumberField(x^2+1);
%K<s,t>:=PolynomialRing(k,2);
%e:=2;
%F:=i^e*(24-5*i)*(s+t*i)^4;
%F;
%L<s,t>:=PolynomialRing(Rationals(),2);
%_<i>:=PolynomialRing(L);
%F:=(5*i - 24)*s^4 + (-96*i - 20)*s^3*t + (-30*i + 144)*s^2*t^2 + (96*i + 20)*s*t^3
%    + (5*i - 24)*t^4;
%F;
%A:=- 24*s^4 - 20*s^3*t +144*s^2*t^2 + 20*s*t^3 - 24*t^4;
%B:=5*s^4 - 96*s^3*t - 30*s^2*t^2 + 96*s*t^3 + 5*t^4;
%F-A-i*B;
\begin{lstlisting}
P<s,t,u,v>:=ProjectiveSpace(Rationals(),3);
A:=- 24*s^4 - 20*s^3*t +144*s^2*t^2 + 20*s*t^3 - 24*t^4-6*u^4;
B:=5*s^4 - 96*s^3*t - 30*s^2*t^2 + 96*s*t^3 + 5*t^4-5*v^4;
S:=Scheme(P,[A,B]);
IsLocallySolvable(S,2);
\end{lstlisting}

{\bf Case 8:} $6u^4+5v^4i=i^3(24-5i)(s+ti)^4$. Equating the real and imaginary parts on both sides of this equation gives
\begin{equation}\label{9015-8}
\begin{cases}
-5s^4 + 96s^3t + 30s^2t^2 - 96st^3 - 5t^4 - 6u^4=0,\\
-24s^4 - 20s^3t + 144s^2t^2 + 20st^3 - 24t^4 - 5v^4=0.
\end{cases}
\end{equation}
The scheme defined by system \eqref{9015-8} is locally insoluble at $2$.

%\begin{lstlisting}
%_<x>:=PolynomialRing(Rationals());
%k<i>:=NumberField(x^2+1);
%K<s,t>:=PolynomialRing(k,2);
%e:=3;
%F:=i^e*(24-5*i)*(s+t*i)^4;
%F;
%L<s,t>:=PolynomialRing(Rationals(),2);
%_<i>:=PolynomialRing(L);
%F:=(-24*i - 5)*s^4 + (-20*i + 96)*s^3*t + (144*i + 30)*s^2*t^2 + (20*i - 96)*s*t^3
%    + (-24*i - 5)*t^4;
%F;
%A:=- 5*s^4 + 96*s^3*t + 30*s^2*t^2 - 96*s*t^3 - 5*t^4;
%B:=-24*s^4 - 20*s^3*t + 144*s^2*t^2 + 20*s*t^3 - 24*t^4;
%F-A-i*B;
\begin{lstlisting}
P<s,t,u,v>:=ProjectiveSpace(Rationals(),3);
A:=- 5*s^4 + 96*s^3*t + 30*s^2*t^2 - 96*s*t^3 - 5*t^4-6*u^4;
B:=-24*s^4 - 20*s^3*t + 144*s^2*t^2 + 20*s*t^3 - 24*t^4-5*v^4;
S:=Scheme(P,[A,B]);
IsLocallySolvable(S,2);
\end{lstlisting}

\bibliographystyle{plain}
\bibliography{references}

\end{document}